\documentclass[11pt,reqno]{amsart}
\usepackage{times}
\usepackage{amsmath,amsfonts,amstext,amssymb,amsbsy,amsopn,amsthm,eucal}
\usepackage{txfonts}
\usepackage{dsfont}
\usepackage{graphicx}   
\usepackage{hyperref}
\usepackage{color}
\usepackage{verbatim}
\usepackage[all]{xy}

\numberwithin{equation}{section}

\hypersetup{
  pdftitle={Energy Identity for Stationary Yang Mills},
  pdfauthor={Aaron Naber and Daniele Valtorta},
  pdfsubject={},
  pdfkeywords={},
  pdfpagelayout=SinglePage,
  pdfpagemode=UseOutlines,
  colorlinks,
  bookmarksopen,
  linkcolor=[rgb]{0,0,0.7},
  urlcolor=[rgb]{0,0,0.4},
  citecolor=[rgb]{0.4,0.1,0}
}


\setlength{\textheight}{8.50in} \setlength{\textwidth}{6.5in}
\setlength{\columnsep}{0.5in} \setlength{\topmargin}{0.0in}
\setlength{\headheight}{0in} \setlength{\headsep}{0.5in}
\setlength{\parindent}{1pc}
\setlength{\oddsidemargin}{0in}  
\setlength{\evensidemargin}{0in}

\newcommand{\SO}{\text{SO}}

\newcommand{\Lip}{\text{Lip}}

\newcommand{\Vol}{\text{Vol}}

\newcommand{\inj}{\text{inj}}

\newcommand{\bb}{\bf b}

\newcommand{\dN}{\mathds{N}}

\newcommand{\dR}{\mathds{R}}

\newcommand{\cA}{\mathcal{A}}
\newcommand{\cB}{\mathcal{B}}

\newcommand{\cC}{\mathcal{C}}

\newcommand{\cF}{\mathcal{F}}
\newcommand{\cG}{\mathcal{G}}

\newcommand{\cL}{\mathcal{L}}

\newcommand{\cM}{\mathcal{M}}

\newcommand{\cS}{\mathcal{S}}

\newcommand{\cW}{\mathcal{W}}

\newcommand{\ton}[1]{\left(#1\right)}

\newcommand{\cur}[1]{\left\{#1\right\}}
\newcommand{\abs}[1]{\left|#1\right|}

\newcommand{\B}[2]{B_{#1}\ton{#2}}

\newcommand{\supp}[1]{\operatorname{supp}\ton{#1}}

\renewcommand{\paragraph}[1]{\ \newline \ \textbf{#1\ }}

\newtheorem{theorem}{Theorem}[section]

\newtheorem{proposition}[theorem]{Proposition}
\newtheorem{lemma}[theorem]{Lemma}
\newtheorem{corollary}[theorem]{Corollary}
\theoremstyle{definition}
\newtheorem{definition}[theorem]{Definition}
\theoremstyle{remark}
\newtheorem{remark}{Remark}[section]
\theoremstyle{remark}

\theoremstyle{remark}

\theoremstyle{remark}

\theoremstyle{remark}

\begin{document}

\title{Energy Identity for Stationary Yang Mills}


\author{Aaron Naber and Daniele Valtorta}\thanks{The first author has been supported by NSF grant DMS-1406259, the second author has been supported by SNSF grant 200021\_159403/1}

\date{\today}

\begin{abstract}
Given a principal bundle $P\to M$ over a Riemannian manifold with compact structure group $G$, let us consider a stationary Yang-Mills connection $A$ with energy $\int_M |F_A|^2\le \Lambda$.  If we consider a sequence of such connections $A_i$, then it is understood by \cite{Tian_CalYM} that up to subsequence we can converge $A_i\to A$ to a singular limit connection such that the energy measures converge $|F_{A_i}|^2 dv_g\to |F_A|^2dv_g +\nu$, where $\nu=e(x)d\lambda^{n-4}$ is the $n-4$ rectifiable defect measure.  Our main result is to show, without additional assumptions, that for $n-4$ a.e. point the energy density $e(x)$ may be computed explicitly as the sum of the bubble energies arising from blow ups at $x$.  Each of these bubbles may be realized as a Yang Mills connection over $S^4$ itself.

This energy quantization was proved in \cite{Riv_YM} assuming a uniform $L^1$ hessian bound on the curvatures in the sequence.  In fact, our second main theorem is to show this hessian bound holds automatically.  Precisely, given a connection $A$ as above we have the apriori estimate $\int_M |\nabla^2 F_A| < C(\Lambda,\dim G,M)$ for the curvature.  It is important to note this result is proved in tandem with the energy quantization, and not before it.  Indeed, we will in fact prove an effective version of the energy identity, and it is this effective version which will lead to both the $L^1$ hessian bound and the classical energy quantization results.  In the course of the proof we will provide a quantitative version of the bubble tree decomposition which hold in all dimensions with effective estimates for a fixed stationary connections.  To produce to strongest estimates in the paper we introduce an $\epsilon$-gauge condition, which generalizes the usual Coulomb gauge and which will exist, with effective control, even over singular regions.  On these $\epsilon$-gauges we will provide a new superconvexity estimate which will be a key tool in analyzing higher dimensional annular regions.
\end{abstract}

\maketitle

\tableofcontents

\section{Introduction}

This paper is focused on studying principle bundles $P\stackrel{G}{\to} M$ over Riemannian manifolds $(M^n,g)$ with compact structure groups $G\subseteq \SO(k)$.  A Yang-Mills connection $A$ on $P$ is a critical point of the Yang-Mills $L^2$ curvature functional 
\begin{align}\label{e:intro:YM_functional}
	\cF[A]\equiv \int |F_A|^2\, .
\end{align}
Most of the results of this paper are local in nature, and therefore it will be sufficient to consider connections over some ball $B_2 \subseteq M$.  For technical simplicity we will restrict ourselves to smooth critical points of \eqref{e:intro:YM_functional}, however all of our techniques generalize to stationary points in more general singular contexts (e.g. admissible connections in the sense of \cite{TaoTian_YM} or stationary connections in the sense of \cite{RiPe},\cite{RiPe2}), and we will make comments on the necessary ingredients to make such generalizations, which are fairly straight forward.  \\

It is often times the case one is interested in not just a fixed connection, but in sequences and limits of such connections.  For instance, when studying moduli spaces \cite{Donaldson_Compactification},\cite{DoTh_gauge} of connections, or when considering contradiction arguments.  This is the appropriate context to study the energy identity.  Indeed, if $A_i$ are a sequence of Yang-Mills connections satisfying the uniform energy bound $\int |F_A|^2\leq \Lambda$, then after passing to a subsequence it is known by \cite{Uhl_Rem},\cite{TaoTian_YM},\cite{RiPe3} that we can converge the $A_i$ modulo gauge
\begin{align}
A_i\to A\, ,	
\end{align}
to a connection $A$ which is smooth away from a set of $n-4$ measure zero.  More than that, by \cite{Tian_CalYM} we can limit the energy measures
\begin{align}
|F_{A_i}|^2 dv_g \to |F_A|^2dv_g	 + \nu = |F_A|^2dv_g	 + e(x)\,d\lambda^{n-4}\big|_{S}\, ,
\end{align}
where $\nu = e(x)\,d\lambda^{n-4}$ is the $n-4$ rectifiable defect measure supported on $S=\text{supp}[\nu]$.\\

The goal of this paper is then two fold.  We wish to study better the regularity properties of stationary Yang-Mills connections, and we wish to understand better the defect measure $e(x)d\lambda^{n-4}$ which arises as the singular part of the limit of energy measures from a sequence of Yang-Mills connections.  \\

To state accurately our results on the structure of $e(x)$ let us first define with some accuracy the notion of a bubble.  Notationally, let us remark that if $A$ is a connection on the pointed manifold $(M,g,x)$, then we write $r^{-1}A$ to denote the induced connection on $(M,r^{-2}g,x)$.  This has the effect of rescaling the ball $B_{r}(x)\to B_1(x)$ to unit size, and therefore {\it blows up} $A$ at $x$ at scale $r$.  Given this we define the notion of bubbling:

\begin{definition}\label{d:bubble}
We define the following:
\begin{enumerate}
\item A bubble $B$ is a smooth Yang Mills connection on $\dR^n\times G$ which is invariant under translation with respect to some $n-4$ subspace $\cL_B\subseteq \dR^n$.  We define the energy of $B$ to be $E[B]\equiv \int_{\cL^\perp_B} |F_B|^2$.
\item We say that $B$ is a bubble at $x\in \text{supp}\{\nu\} $ if there exists a sequence $x_i\to x$ and $r_i\to 0$ such that the blow ups converge $r_i^{-1}A_i\to B$.  We denote by $\cB[x]$ the collection of all bubbles at $x$.
\end{enumerate}
\end{definition}

Notice that if $B$ is a bubble, then by restricting $B$ to $\cL_B^\perp$ and transforming conformally we may view $B$ as a smooth Yang Mills connection on $S^4$.  The bubbles at a point $x\in \text{supp}[\nu]$ turn out to be related to energy density $e(x)$.  It turns out that if $B_1,\ldots, B_k\in \cB[x]$ are {\it distinct} bubbles at $x$ then one can rather easily prove the inequality
\begin{align}
e(x)\geq \sum_{B_j} E[B_j] = \sum_{B_j} \int_{\cL^\perp_{B_j}}|F_{B_j}|^2 \, .
\end{align}
In words, the energy density $e(x)$ is at least as large as the energy contribution of every bubble at $x$.  It has been an open problem about whether this inequality is an equality.  This was first considered and proved for four dimensional instantons.  In higher dimensions, the first results were due to \cite{Tian_CalYM}, where for generalized instantons it was shown that for $n-4$ a.e. point that $x\in \text{supp}[\nu]$ we can indeed compute $e(x)$ explicitly by the energy identity
\begin{align}\label{e:intro:energy_identity}
	e(x) = \sum_{B_j\in\cB[x]} \int_{\cL^\perp_{B_j}}|F_{B_j}|^2\, ,
\end{align}
for a collection of distinct bubbles.  There is a variety of work in the literature toward dropping the instanton assumption.  In dimension four the energy identity \eqref{e:intro:energy_identity} was proved in full generality in \cite{Riv_YM}.  In higher dimensions, the best result is by Riviere \cite{Riv_YM}, where it was shown that \eqref{e:intro:energy_identity} holds if one additionally assumes a uniform $L^1$ bound on the hessian of the curvature.  The idea of \cite{Riv_YM} was in the spirit of \cite{LinRiv_HarEQ}, where a similar result was proved for harmonic maps, and exploited certain Lorentz space estimates.  The first main result of this paper is to prove \eqref{e:intro:energy_identity} in full generality, and in particular we drop the assumed $L^1$ hessian bound.\\

In fact, the second main result of this paper is to show that if $A$ is a stationary Yang-Mills connection, then one does in fact automatically have the apriori $L^1$ hessian estimate
\begin{align}
	\int_M |\nabla^2 F_A| < C(\Lambda,k,M)\, ,
\end{align}
where the dependence of $C$ above on $M$ is only on the $C^2$ geometry of $M$.  It is worth noting that this result is proved in tandem with the energy identity of \eqref{e:intro:energy_identity}, and not before it.  In fact, as the paper is arranged we shall prove the energy identity first.  However, fundamentally both results will follow from an effective version of the energy identity, which will describe the breakup of a fixed stationary connection.  This will be outlined in Section \ref{ss:outline_proof} and is described in rigor in Section \ref{s:quant_bubbletree}.\\

\subsection{Main Result for Stationary Yang Mills Connections}\label{ss:main_results_stationary}

Our regularity results are all local in nature, and therefore we will only ever consider connections on open balls in manifolds.  Precisely, we consider in this subsection a Yang-Mills connection $A$ living on a principle $G$-bundle $P\to B_2(p)\subseteq M$, with $G\subseteq \SO(k)$ compact, which satisfies
\begin{align}\label{e:YM_assumptions}
	&|\sec| < K^2\, ,\notag\\
	&\inj(p)>K^{-1}\, .
\end{align}

Our main result on the regularity of stationary Yang Mills connections is the following:\\

\begin{theorem}[$L^1$ Hessian Estimate]\label{t:main_L1_hessian}
	Let $A$ be a stationary Yang-Mills connection satisfying \eqref{e:YM_assumptions} and $\fint_{B_2} |F_A|^2 \leq \Lambda$.  Then we have
	\begin{align}
		\fint_{B_1} |\nabla^2 F_A| < C(n,k,K,\Lambda)\, .	
	\end{align}
\end{theorem}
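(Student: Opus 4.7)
The plan is to reduce Theorem \ref{t:main_L1_hessian} to an effective $(n-4)$-Minkowski content estimate on the scales at which $F_A$ concentrates; this content bound is precisely what the effective bubble tree decomposition of Section \ref{s:quant_bubbletree} is designed to supply. The reduction itself is a direct $\epsilon$-regularity plus Vitali covering argument.

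\textbf{Step 1 (regularity scale from $\epsilon$-regularity).} I would start from classical $\epsilon$-regularity for stationary Yang-Mills: there is $\epsilon_0=\epsilon_0(n,K)$ so that if the normalized energy $\theta_A(x,r):=r^{4-n}\int_{B_r(x)}|F_A|^2$ is below $\epsilon_0$, then one has the pointwise estimate $|\nabla^k F_A|(y) \leq C(n,k,K)\,r^{-2-k}$ for $y\in B_{r/2}(x)$ (the quantity $|\nabla^k F_A|$ being gauge-invariant as a norm). Since $A$ is stationary, $r \mapsto \theta_A(x,r)$ is almost monotone nondecreasing, so the regularity scale
\begin{equation*}
r_A(x) \ :=\ \sup\{\,r\leq 1 : \theta_A(x,s)<\epsilon_0 \text{ for all } s\leq r\,\}
\end{equation*}
is well-defined, and off a set of $n$-measure zero we have $|\nabla^2 F_A|(x) \leq C(n,K)\,r_A(x)^{-4}$.

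\textbf{Step 2 (Vitali cover).} Next I would extract a Vitali family $\{B_{r_A(x_i)/10}(x_i)\}$ of disjoint balls with centers $x_i\in B_1$, whose fivefold enlargements cover $B_1$ up to a null set. On each enlargement Step 1 gives
\begin{equation*}
\int_{B_{r_A(x_i)/2}(x_i)} |\nabla^2 F_A|\,dv_g \ \leq\ C(n,K)\,r_A(x_i)^{-4}\cdot r_A(x_i)^n \ =\ C\,r_A(x_i)^{n-4},
\end{equation*}
so summing reduces the claim to the packing bound
\begin{equation*}
\sum_i r_A(x_i)^{n-4} \ \leq\ C(n,k,K,\Lambda).
\end{equation*}

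\textbf{Step 3 (the packing --- main obstacle).} By continuity and monotonicity each center satisfies $\theta_A(x_i,r_A(x_i))=\epsilon_0$, so the pairs $(x_i,r_A(x_i))$ mark disjoint scales of quantitative energy concentration; a bound of the above form is exactly an effective $(n-4)$-dimensional packing matching the codimension of the expected singular stratum. I would establish it in the spirit of the quantitative stratification of Cheeger-Naber: at each dyadic scale a cone-splitting lemma fed by the almost-monotonicity of $\theta_A$ either splits off an additional symmetry direction (dropping us into a strictly smaller stratum) or forces a definite drop in $\theta_A$, which can occur at most $C(\Lambda)$ times. The hard part --- and the reason a direct adaptation of Cheeger-Naber does not succeed --- is that the cone-splitting induction must be run across annular neck regions on which $|F_A|$ is not small and classical Coulomb gauges fail to exist. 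This is exactly why the paper introduces its $\epsilon$-gauge and proves a superconvexity estimate there; the superconvexity forces linear decay of the angular energy across each neck, closes the induction, and yields the packing estimate. Combined with Steps 1--2 this then produces the $L^1$ hessian bound.
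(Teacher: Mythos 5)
You correctly single out the real analytic engine of the paper --- the harmonic $\epsilon$-gauge, the superconvexity, and the Dini estimate on annular regions --- but the reduction in Steps~1--2 does not close: the packing bound $\sum_i r_A(x_i)^{n-4}\leq C(n,k,K,\Lambda)$ over a Vitali cover of $B_1$ at the regularity scale is \emph{false}. Since $r_A$ is $1$-Lipschitz, that sum is comparable to $\int_{B_1} r_A(x)^{-4}\,dx$, and for a connection whose energy concentrates along an $(n-4)$-plane $\cL$ with density above the $\epsilon$-regularity threshold one has $r_A(x)\sim d(x,\cL)$; integrating in a tube over the 4-dimensional cross section gives $\int_{B_1} r_A^{-4}\sim\int_0^1 \rho^{-4}\cdot\rho^3\,d\rho$, a logarithmic divergence which for a sequence forming a defect measure is unbounded in $\Lambda$. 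In other words, $\fint_{B_1}|\nabla^2 F_A|$ is bounded \emph{even though} $\fint_{B_1} r_A^{-4}$ is not: the pointwise estimate $|\nabla^2 F_A|\lesssim r_A^{-4}$ you feed into the Vitali sum is far from sharp on annular necks, and once you discard the cancellation by reducing to a regularity-scale packing estimate, no amount of cone-splitting or quantitative stratification can recover it, because the target inequality is simply wrong. The superconvexity is not a tool for proving that packing bound; it is the mechanism that replaces it.

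The paper's proof is structured to avoid exactly this trap. Theorem~\ref{t:bubble_decomposition} covers $B_1$ by $\delta$-bubble regions $\cB_b\subseteq B_{\delta^{-1}r_b}$ and $\delta$-annular regions $\cA_a\subseteq B_{2r_a}$ with $\sum r_a^{n-4}+\sum r_b^{n-4}\leq C(n,k,K,\Lambda,\delta)$: this is a content estimate at the level of \emph{regions} spanning arbitrarily many dyadic scales, not at the regularity scale, and it is provable because each nontrivially removed ball carries a definite drop of $\overline\theta$, so only finitely many refinement levels occur. On a bubble region the regularity scale is uniformly bounded below by construction (condition $(b3)$), so standard elliptic estimates give $\int_{\cB_b}|\nabla^2 F|\leq C\,r_b^{n-4}$; on an annular region, where $r_A$ degenerates over an unbounded number of scales, Theorem~\ref{t:annular_region}.2 gives $\int_{\cA_a}|\nabla^2 F|\leq\epsilon\,r_a^{n-4}$, and this is where the superconvexity lives: it upgrades the trivial scale-by-scale bound (which would give $\log$ growth) to the Dini bound $\int_0^\infty S(r)\,dr/r<\epsilon$, a genuinely stronger integral cancellation that cannot be extracted from the pointwise $r_A^{-4}$ estimate. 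Summing the two region estimates against the content bound gives the theorem; no covering of the ball by regularity-scale balls is ever taken.
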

\begin{remark}
	In fact one can easily see that the estimate does not depend on the lower injectivity radius bound by simply lifting to a local cover.
\end{remark}

\vspace{.25cm}

\subsection{Main Results for Weak Limits of Yang Mills}

Let us now discuss limits $A_i\to A$ of stationary Yang-Mills connections with uniformly bounded energy.  In this case we consider the defect measure
\begin{align}
|F_{A_i}|^2 dv_g \to |F_A|^2 dv_g + \nu\, ,	
\end{align}
where by \cite{Tian_CalYM} we have that the defect measure $\nu=e(x) \lambda^{n-4}$ is $n-4$ rectifiable.  Recall from Definition \ref{d:bubble} the precise meaning of a bubble at a point $x$.  Our main result is the following energy quantization, which tells us that we may compute the energy density $e(x)$ through the bubbles at $x$:\\

\begin{theorem}[Energy Identity]\label{t:main_energy_quantization}
	Let $A_i\to A$ be a limit of stationary Yang-Mills connections satisfying \eqref{e:YM_assumptions} and $\fint_{B_2} |F_{A_i}|^2 \leq \Lambda$, and let $\nu$ be the associated defect measure.  Then for $n-4$ a.e. $x\in \text{supp}\{\nu\}$, there exists a finite collection of distinct bubbles $B_1,\ldots,B_k\in\cB[x]$ such that
	\begin{align}
		e(x) = \sum_{B_j} E[B_j] = \sum_{B_j} \int_{\cL^\perp_{B_j}}|F_{B_j}|^2\, .	
		\end{align}
\end{theorem}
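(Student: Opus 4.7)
The plan is to derive Theorem \ref{t:main_energy_quantization} as a consequence of a quantitative bubble-tree decomposition that holds for a single stationary Yang-Mills connection, specialized to a generic point of the $(n-4)$-rectifiable defect measure $\nu$.

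\emph{Step 1 (Reduction to a good blow-up point).} Since $\nu = e(x)\,d\lambda^{n-4}$ is $(n-4)$-rectifiable, for $\lambda^{n-4}$-a.e.\ $x \in \text{supp}\{\nu\}$ there is an approximate tangent $(n-4)$-plane $L_x \subseteq T_xM$, and the rescaled measures $r^{4-n}(\eta_{x,r})_*\nu$ converge as $r\to 0$ to a constant multiple of $e(x)$ times the Hausdorff measure on $L_x$. Combined with the almost-monotonicity of $r^{4-n}\int_{B_r(x)}|F_A|^2$ and the standard cone-splitting principle, this forces every bubble-tree limit of the rescalings $r^{-1}A_i$ at $x$ to be translation invariant along $L_x$, so that, after quotienting by translations in $L_x$, the analysis reduces to the essentially $4$-dimensional problem on $L_x^\perp$ appearing in Definition \ref{d:bubble}.

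\emph{Step 2 (Quantitative bubble-tree decomposition).} The central output of the paper provides, for each individual stationary connection $A_i$ and each tolerance $\delta>0$, a covering of $B_r(x)$ by finitely many ``bubble balls'' $\{B_{s_j}(x_j^i)\}_{j=1}^N$ together with complementary ``neck regions,'' such that: (i) the rescaled connection $s_j^{-1}A_i$ on each bubble ball is $\delta$-close modulo gauge to a genuine bubble $B_j^i$ of $A_i$; (ii) the scale-invariant curvature energy in the union of neck regions is at most $\delta$; and (iii) both $N$ and the number of bubble balls carrying nontrivial energy are bounded by $N(n,\dim G,\Lambda)$, thanks to Uhlenbeck $\epsilon$-regularity.

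\emph{Step 3 (Neck estimate, the main obstacle).} The decisive step is (ii). A neck between two adjacent bubble scales consists of annuli $A_{s,S}(y)$ that may intersect the singular locus of the limit in codimension $\geq 4$, so the classical Coulomb gauge is not available; one instead uses the newly constructed $\epsilon$-gauge, which extends with effective control across such singular regions. On this gauge one establishes a superconvexity estimate, schematically of the form $\partial_t^2 E(t) \geq c\,|\partial_t E(t)|^2$, for a suitable scale-invariant neck energy $E(t)$ in the log-scale $t=\log r$. Because $E(t)$ is already small at the two endpoints corresponding to the adjacent bubble scales, convexity with small boundary data forces $E$, and therefore the neck energy, to vanish uniformly as the decomposition is refined.

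\emph{Step 4 (Passing to the limit).} Applying Steps 2--3 to each $A_i$ at scale $r$ with tolerance $\delta \to 0$, one extracts along a subsequence bubbles $B_j \in \cB[x]$ as limits of the rescalings on each bubble ball, with $E[B_j] = \lim_i s_j^{4-n} \int_{B_{s_j}(x_j^i)}|F_{A_i}|^2$. Summing over bubble balls and using the neck vanishing from Step 3 yields
\begin{equation*}
\nu(B_r(x)) \;=\; \Bigl(\sum_{j=1}^k E[B_j]\Bigr)\,\omega_{n-4}\, r^{n-4} \;+\; o(r^{n-4})\,.
\end{equation*}
Dividing by $\omega_{n-4}\,r^{n-4}$ and sending $r \to 0$ at the good point of Step 1 gives $e(x) = \sum_j E[B_j]$. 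Translation invariance along $L_x$ together with the $\epsilon$-regularity threshold ensures that the bubbles $B_j$ are distinct in the sense of Definition \ref{d:bubble} and finite in number.
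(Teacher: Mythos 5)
Your proposal follows the paper's overall architecture: reduce to flat-tangent points of $\nu$, apply a quantitative bubble-tree decomposition (Theorem \ref{t:quantitative_bubbletree} and its slicewise refinement Theorem \ref{t:quantitative_eq}), control neck/annular regions via $\epsilon$-gauges and a superconvexity estimate, and pass to the limit. Steps 1, 2, and 4 are a fair sketch of this. However, Step 3 contains a genuine gap, and it is precisely at the point where the paper's new contribution lies.

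First, the form of the superconvexity you invoke is wrong. You write $\partial_t^2 E(t) \geq c\,|\partial_t E(t)|^2$, which merely implies convexity of $E$ in the log-scale $t$. Convexity with small boundary values gives $\sup E \leq \epsilon$, but the argument needs much more: the annular regions $\cA$ live over a potentially unbounded range of log-scales, and what is required is a Dini-type integral bound $\int E(t)\,dt < \epsilon$. A convex function bounded by $\epsilon$ on an interval of length $T$ only gives $\int E \leq \epsilon T$, which blows up. The paper's Proposition \ref{p:super_convexity} instead proves a \emph{linear} superconvexity $\ddot S \geq (1-\epsilon)^2 S - e(r)$ for the $L^1$ gauge-gradient quantity $S(r)=r^{-2}\int_{b=r}|\nabla V|\,\varphi_\cA|\nabla b|$, whose homogeneous solutions grow and decay like $r^{\pm(1-\epsilon)}$; it is this exponential decay, not mere convexity, that yields the Dini bound. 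The quantity being controlled is also the gauge gradient $|\nabla V|$, not the curvature energy directly; the passage from $\nabla V$ to $F$ goes through the $\epsilon$-gauge property via $|F(V)|\leq 2|\nabla^2 V|$.

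Second, and more fundamentally, your ``small boundary data at the two adjacent bubble scales forces the interior to vanish'' reasoning is exactly the four-dimensional three-annulus argument, and it fails in higher dimensions. In $n>4$ the annular region is $B_2\setminus\bigcup_{x\in\cC}\overline B_{r_x}(x)$ with holes removed at every scale along the $(n-4)$-dimensional center set $\cC$. Every hole contributes an error $e(r)$ to the superconvexity inequality, and there is no ``clean annulus'' with only two boundary components. The mechanism that closes the estimate in the paper is Proposition \ref{p:dini_ode} (an ODE comparison yielding $\int_0^R \bar S/r \lesssim \int_0^R e/r$), combined with the Ahlfors regularity of the packing measure $\mu$ from Theorem \ref{t:annular_region}, which gives $\int_0^\infty e(r)/r \lesssim \mu(\cC) \lesssim 1$. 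Without explicitly accounting for and summing these internal-scale errors, the neck estimate does not go through, and this is the step your sketch does not address.
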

\vspace{.25cm}

\subsection{Outline of Proofs and Techniques}\label{ss:outline_proof}
 
Let us now outline the paper and the techniques involved in the proofs of Theorem \ref{t:main_L1_hessian} and Theorem \ref{t:main_energy_quantization}.  Everything in this subsection is rough in nature, and is meant to convey intuition without being dragged down by the large number of necessary technical details needed for the rigorous statements. \\

To begin with some basics, by covering our ball $B_1(p)$ with balls of small radius we can always assume our bound $K<<1$ from \eqref{e:YM_assumptions} is very small.  In particular, by writing in harmonic coordinates we can assume we are on a chart so that our ball is a Euclidean ball with metric $g_{ij}$ which satisfies for all $\alpha<1$
\begin{align}\label{e:YM_assumptions2}
||g_{ij}-\delta_{ij}||_{C^{1,\alpha}}\leq C(n,\alpha)K\, .	
\end{align}
The $K$ in the above is not quite the same as the one in \eqref{e:YM_assumptions}, however it is does tend to zero as the sectional curvature does.  In fact, there is really little lost in just assuming we are working on Euclidean space itself, as the proof of the general case requires only some minor extra technical work of a non fundamental nature.  We will do just that for the remainder of this outline.\\

The proof of the main Theorems will center around the two main decomposition theorems given as Theorem \ref{t:quantitative_bubbletree} and Theorem \ref{t:bubble_decomposition}.  The content of the quantitative annulus/bubble decomposition of Theorem \ref{t:bubble_decomposition} is to split a ball $B_1(p)$ into two primary types of pieces
\begin{align}\label{e:outline:covering}
B_1(p)\subseteq \bigcup_a \cA_a \cup \bigcup_b \cB_b\, ,	
\end{align}
where $\cB_b\subseteq B_{r_b}(x_b)$ and $\cA_a\subseteq B_{r_a}(x_a)$ are quantitative versions of bubble and annular regions, such that we have the quantitative content covering control
\begin{align}\label{e:outline:content}
	\sum r_a^{n-4} + \sum r_b^{n-4}\leq C\, .
\end{align}
This annulus/bubble decomposition will be the primary decomposition used toward the proof of the $L^1$ hessian estimate.  For the energy identity we will rely on the quantitative bubble tree decomposition of Theorem \ref{t:quantitative_bubbletree}, which we will describe after we have discussed the quantitative bubble and annular regions in more detail. \\

The quantitative bubble regions $\cB_b\subseteq B_{r_b}(x_b)$ are relatively easy to analyze, however the quantitative annular regions $\cA_a$ will require several new ideas and will take up the majority of our discussion.  Let us begin with a brief discussion of our goals with both, and then we will turn our attention to the methods.

Recall that a bubble, see Definition \ref{d:bubble}, is a smooth solution to the Yang-Mills equation on $\dR^n$ which is invariant under translation by some $n-4$ dimensional subspace $\cL\subseteq \dR^n$.  Likewise, a quantitative bubble $\cB_b$ should be a solution which is close to looking like a bubble in an appropriate sense.  The actual definition, given in Section \ref{s:bubble}, requires a little work because one has to account for the possibility for bubbles inside of bubbles, however for our outline the main point to emphasize is that it follows directly from the definition that the quantitative bubbles $\cB_b$ are regions which are nearly invariant by $\cL$ and have uniformly bounded curvature:  
\begin{align}
	r_b^{4-n}\int_{B_{r_b}}|F[\cL]|^2 < \delta\, ,\notag\\
	r_b^2 |F| \leq C \text{ on }\cB_b\, .
\end{align}
Using elliptic estimates one can then obtain pointwise scale invariant hessian estimates $r_b^4|\nabla^2 F_A|<C$ on the curvature, which in particular lead to the integral estimates
\begin{align}\label{e:outline:bubble_estimate}
\int_{\cB_b} |\nabla^2 F_A| \leq C r_b^{n-4}\, .	
\end{align}
Since the above are straightforward we will take it all in a blackbox in this outline and refer to Section \ref{s:bubble} for a more detailed description of the quantitative bubble regions.\\

To discuss quantitative annular regions let us begin with a review of $\delta$-flat and $\delta$-weakly flat balls.  Namely, we say a ball $B_r(x)$ is $\delta$-flat if one has the scale invariant curvature estimate $r^2 |F_A|<\delta$ in $B_r(x)$.  This is essentially the strongest condition one might ask for on a ball, and in fact too strong for practical applications.  A weaker condition is that of a $\delta$-weakly flat ball.  As in Definition \ref{d:weak_flat} we say that $B_r(x)$ is $\delta$-weakly flat if for some $n-4$ subspace $\cL\subseteq B_r(x)$ we have the scale invariant estimate $d(x,\cL)^2 |F_A|<\delta$ on $A_{\delta r,r}(\cL)\cap B_r(x)$.  That is, $B_r(x)$ is $\delta$-weakly flat if the curvature is small away from $\cL$, however it is still quite possible to have large curvature concentration near $\cL$ itself.  This situation happens frequently, and is in fact typical when studying defect measures.  One could attempt to cover $B_1(p)$ as in \eqref{e:outline:covering} by quantitative bubble regions and weakly flat balls, unfortunately such a covering cannot necessarily be built to satisfy the content estimate \eqref{e:outline:content}, which will be crucial.  



A $\delta$-annular region $\cA\subseteq B_{2r}$ is a region which looks $\delta$-weakly flat on many scales.  Slightly more precisely, if $\cC$ is a closed set of center points and $r_x:\cC\to \dR$ is a positive function then
\begin{align}
\cA\equiv B_{2r}\setminus \overline B_{r_x}(\cC) \equiv B_{2r}\setminus \bigcup_{x\in\cC} \overline B_{r_x}(x)\, .	
\end{align}
There are variety of useful technical conditions given in Definition \ref{d:annulus} in the definition of a $\delta$-annular region, e.g. a Vitali condition on the balls, however the relevant assumptions to keep in mind is that for each center point $x\in \cC$ and all $r_x<s<2r$ we have that $B_s(x)$ is $\delta$-weakly flat and that $\cC\cap B_s(x)$ looks approximately like the subspace $\cL_x = x+\cL$.  Thus as claimed we have that annular regions are those which look weakly flat on a potentially arbitrary number of scales.  We also have that $\cC$ looks approximately like an $n-4$ dimensional  space, and it is convenient to define the packing measure $\mu = \sum r_x^{n-4}$ associated to it.  In Theorem \ref{t:annular_region} we prove a structure theorem for annular regions, which from the analysis point of view is the most important in the paper.  The main results of this structure theorem are the following:
\begin{align}\label{e:outline:annular_estimate}
	&c s^{n-4} < \mu(B_s(x)) < C s^{n-4}\, \text{ for }r_x<s<2r\, ,\notag\\
	&r^{4-n}\int_{\cA} |F|^2 < \epsilon\, ,\notag\\
	&r^{4-n}\int_{\cA} |\nabla^2 F| < \epsilon\, .
\end{align}  
The first result is an Ahlfor's regularity type result on $\mu$, which tells us that $\cC$ approximates an $n-4$ dimensional space in a strong sense.  For the energy identity of Theorem \ref{t:main_energy_quantization} it is the second estimate above which plays the key role, while for the $L^1$ hessian estimate of Theorem \ref{t:main_L1_hessian} is it the third estimate which plays the important role.  Let us first roughly see how to conclude the main theorems once \eqref{e:outline:annular_estimate} is known, and then the rest of the outline will focus on the proof of \eqref{e:outline:annular_estimate} itself.  \\

The proof of the $L^1$ hessian estimate of Theorem \ref{t:main_L1_hessian} is now nothing more than a combination of the covering \eqref{e:outline:covering} with the content estimate \eqref{e:outline:content} and the scale invariant integral estimates \eqref{e:outline:bubble_estimate}, \eqref{e:outline:annular_estimate}.  Indeed:
\begin{align}
\int_{B_1} |\nabla^2 F|\leq \sum_a \int_{\cA_a}	|\nabla^2 F| + \sum_b \int_{\cB_b} |\nabla^2 F| \leq C\Big(\sum_a r_a^{n-4} + \sum_b r_b^{n-4}\Big) \leq C\, .
\end{align}

To discuss the energy identity of Theorem \ref{t:main_energy_quantization} we describe a quantitative version, which is given in Theorem \ref{t:quantitative_eq}.  To accomplish this we need the quantitative bubble tree decomposition of Theorem \ref{t:quantitative_bubbletree}, which is a refinement of the decomposition of \eqref{e:outline:covering} (though in fact we prove the following decomposition first).  Under the assumption that $B_1(p)$ is weakly flat with respect to $\cL=\cL^{n-4}\subseteq B_1$ we will decompose the ball
\begin{align}\label{e:outline:covering2}
B_1(p)\subseteq \bigcup_a \cA_a \cup \bigcup_b \cB_b\cup \bigcup_c B_{r_c}(x_c)\, ,	
\end{align}
where in addition to the content estimate \eqref{e:outline:content} for the $\delta$-annular and $\delta$-bubble regions we also have the small content estimate $\sum_c r_c^{n-4}<\epsilon$.  The key difference between the two decompositions is that by assuming $B_1$ is weakly flat and throwing out this set of small content we can assume every annular region $\cA_a$ and every bubble region $\cB_b$ are with respect to the {\it same} $n-4$ plane $\cL$.  This does not hold for the decomposition in \eqref{e:outline:covering}.  More than that, the quantitative energy identity of Theorem \ref{t:quantitative_eq} gives us that for each $q\in \cL$ such that the $4$-plane $\cL^\perp_q\equiv q+\cL^\perp$ satisfies $\cL^\perp_q\cap \bigcup_c B_{r_c}=\emptyset$, then if $\cB_q \equiv \cL^\perp_q\cap \bigcup_b \cB_b$ are the bubble regions which intersect the slice $\cL^\perp_q$ we have the following:
\begin{align}\label{e:outline:quant_ei}
	&\#\{\cA_a\cap \cL^\perp_q \neq \emptyset\}\, ,\; \#\{\cB_b\cap \cL^\perp_q \neq \emptyset\} \leq N=N(n,k,\Lambda)\, ,\notag\\
	&\big|\int_{B_1(q)}|F_A|^2 - \int_{\cB_q}|F_A|^2 \big|<\epsilon\, .
\end{align}
The second estimate above is primarily due to the first estimate in \eqref{e:outline:annular_estimate}, so that most slices in an annular region have small energy, and that the constant $N$ above is independent of $\delta$, so that a typical slice only intersects a bounded number of small energy regions.  The $\delta$-independent bound on $N$ takes a bit of work, and morally follows because the covering is built to satisfy the correct nontriviality assumptions.  More specifically, each time an annular region is intersected by $\cL^\perp_q$ there is a corresponding bubble region which also gets intersected, and further each such bubble region contains some definite amount of energy, so that this may happen only a uniformly bounded number of times.  To conclude the classical energy identity we study the $n-4$ rectifiable defect measure $\nu$ of a limiting sequence and observe that a.e. there exists a tangent measure and it is a multiple of the Hausdorff measure on some $\cL$.  In particular, by blowing up at such points we get that neighborhoods are arbitrarily weakly flat, and so we may apply \eqref{e:outline:quant_ei} with $\epsilon\to 0$.  This will conclude the classical energy identity.  See Section \ref{s:energy_identity} for more on this.\\

What is then left in our outline is to describe the proof of the structure theorem estimates of \eqref{e:outline:annular_estimate} for annular regions.  For $n=4$ there are several known ways of doing such estimates, unfortunately all such previous methods break down in higher dimensions.  To describe our methods, which take place over Sections \ref{s:annular}, \ref{s:eps_gauge}, and \ref{s:eps_gauge_annular_estimates} and are related to the arguments of \cite{JiNa}, we begin with a discussion of gauges.  The choice of a good gauge is often crucial in regularity issues, and in the context of Yang Mills the standard gauge condition is a Coulomb gauge.  Unfortunately, the existence of a Coulomb gauge is typically only over balls which are geometrically very simple.  For instance, if $B_r(x)$ is $\delta$-flat as above, then it is easy to prove the existence of a Coulomb gauge on $B_r(x)$.  However, if $B_r(x)$ is only $\delta$-weakly flat, then a Coulomb gauge simply need not exist on the ball.\\

Instead, in Section \ref{s:eps_gauge} we describe the notion of a harmonic $\epsilon$-gauge, which in Theorem \ref{t:eps_gauge_existence} we show does exist on weakly flat balls.  To describe it recall that our structure group $G\subseteq \SO(k)$ so that we have an induced vector bundle $E\to B_1$.  In Definition \ref{d:eps_gauge} we say that sections $V^1,\ldots,V^k\in\Gamma(B_r,E)$ form a harmonic $\epsilon$-gauge on $B_r(x)$ if the following hold:
\begin{align}\label{e:outline:eps_gauge}
	&\Delta V^a = 0\, ,\notag\\
	&|V^a|\leq 1+\epsilon\, ,\;\;\;\fint_{B_r}|\langle V^a,V^b\rangle - \delta^{ab}|<\epsilon\, ,\;\;\; r^2\fint_{B_r}|\nabla V|^2<\epsilon^2\, .
\end{align}
We call the sections an $\epsilon$-gauge because the condition $\fint_{B_r}|\langle V^a,V^b\rangle - \delta^{ab}|<\epsilon$ only guarantees that away from a set of small measure that the $V^a$ form an $\epsilon$-orthonormal basis.  On a $\delta$-flat ball it is not so hard to see that the $V^a$ form an actual $\epsilon$-orthonormal basis at every point, however if the ball is only $\delta$-weakly flat ball then this need not hold.\\

Section \ref{s:eps_gauge} and in particular Section \ref{s:eps_gauge_annular_estimates} are dedicated to proving much more powerful estimates on $\epsilon$-gauges than those in \eqref{e:outline:eps_gauge}.  Before describing these, let us outline how the $V^a$ may be used to control the curvature on annular regions.  Indeed, using the definition of curvature one may compute at any point the bounds
\begin{align}
|F(V)|\leq 2|\nabla^2 V|\, ,\;\;\; |\nabla^2 F(V)|\leq 2|\nabla^4 V|+2|\nabla F|\,|\nabla V| + |F|\,|\nabla^2 V|\, .	
\end{align}
Imagine now that we are at a point $x$ such that $|\langle V^a,V^b\rangle - \delta^{ab}|<\epsilon$ and $r^2|F|$, $r^3|\nabla F|<1$, then we have
\begin{align}
	|F|^2(x)\leq 8\sum |\nabla^2 V^a|^2\, ,\;\;\; |\nabla^2 F|(x)\leq 8\sum |\nabla^4 V^a|+r^{-3}\,|\nabla V| + r^{-2}|\nabla^2 V|\, .
\end{align}
In particular, if $\cA$ is an annular region and $d(x,\cC)$ is the distance from a point in $\cA$ to the ball centers which were subtracted away, imagine we could prove 
\begin{align}\label{e:outline:eps_gauge_estimates}
&\int_{\cA} d(x,\cC)^{-3}|\nabla V| < \epsilon\, ,\\
&\int_{\cA} d(x,\cC)^{-2}|\nabla^2 V|\, ,\; \int_{\cA} |\nabla^2 V|^2 < \epsilon\, ,
\end{align}
and that $|\langle V^a,V^b\rangle - \delta^{ab}|<\epsilon$ in $\cA$, then we will have finished the annular structure estimates of Theorem \ref{t:annular_region} given in \eqref{e:outline:annular_estimate} and hence the proof of our main Theorems.  \\

We will indeed prove these integral gradient estimates on $V$, though the pointwise $\epsilon$-orthogonality on $\cA$ is a little too much to hope for.  However, we will show in Theorem \ref{t:eps_gauge_transformation} that the $V^a$ do at least form a legitimate vector bundle gauge on all of $\cA$, and in Theorem \ref{t:eps_gauge_on} we will see they even form an $\epsilon$-orthonormal basis away from a set whose $n-4$ content is less than $\epsilon$.  If one is careful, this turns out to be good enough because one can iterate on this bad set in order to eventually get the curvature bounds on all of $\cA$.\\

The integral estimates on $V$ and the $\epsilon$-orthogonality on $\cA$ all follow from the first estimate of \eqref{e:outline:eps_gauge_estimates}, and therefore our main goal is to show this.  Indeed, the various other integral estimates on the hessian of $V$ follow from \eqref{e:outline:eps_gauge_estimates} in combination with $\epsilon$-regularity theorems in $\cA$, while the $\epsilon$-orthonormality follows from \eqref{e:outline:eps_gauge_estimates} combined with a telescoping argument.  We refer the reader to Section \ref{s:eps_gauge_annular_estimates} for more on this, and focus now on the proof of \eqref{e:outline:eps_gauge_estimates}.\\

In the proof of \eqref{e:outline:eps_gauge_estimates} we begin by defining a smoothing of $d(x,\cC)$.  Indeed, recall the packing measure $\mu$ associated to the annular region, and let us define the Green's function and associated Green's distance function:
\begin{align}
&-\Delta G_\mu = \mu\, ,	\notag\\
&b^{-2} = G_\mu\, .
\end{align}
Note that if $\mu$ were exactly the $n-4$ Hausdorff measure on a $n-4$ dimensional subspace then $b$ would be proportional to the distance to that subspace.  Using the Ahlfor's regularity on $\mu$ we prove in Lemma \ref{l:annular_distance} that in $\cA$ we at least have the uniform estimates
\begin{align}
c d(x,\cC) < b(x) < Cd(x,\cC) \, ,\text{ and } c<|\nabla b|<C\, ,
\end{align}
and thus $b$ is a legitimate smoothing of the distance $d(x,\cC)$.\\

Now let $\phi$ be a reasonable cutoff function with $\phi\equiv 1$ on $\cA$, and let us define the scale invariant quantity
\begin{align}
S(r)\equiv r\cdot r^{-3}\int_{b=r} |\nabla V|\phi |\nabla b|\, .	
\end{align}
Then the estimate \eqref{e:outline:eps_gauge_estimates} is equivalent to the Dini estimate
\begin{align}\label{e:outline:dini}
	\int_0^\infty \frac{S(r)}{r} < \epsilon\, .
\end{align}
In order to prove this we will show in Proposition \ref{p:super_convexity} the following superconvexity:
\begin{align}\label{e:outline:superconvex}
r\frac{d}{dr}\Big(d\frac{d}{dr} S\Big)\geq (1-\epsilon)^2 S(r) - e(r)\, ,	
\end{align}
where $e(r)\leq \epsilon \mu\big(\{x\in \cC: c r_x < r < C r_x\}\big)$.  Note that if there was no error, then this superconvexity tells us that $S(r)$ decays and grows polynomially in $r$, which is more than enough for \eqref{e:outline:dini}.  In fact, this is exactly what happens in the four dimensional case.  In the general case, we can at do an ode estimate using \eqref{e:outline:superconvex} in Proposition \ref{p:dini_ode} in order to conclude
\begin{align}
	\int_0^\infty \frac{S(r)}{r} &\leq \int_0^\infty \frac{e(r)}{r}\leq \epsilon \int_0^\infty \frac{\mu\big(\{x\in \cC: c r_x < r < C r_x\}\big)}{r} \leq \epsilon \mu(\cC)<\epsilon\, ,
\end{align}
which finishes the proof of the Dini estimate, and hence the Theorems themselves.

\vspace{1cm}
  
\section{Preliminaries}\label{s:Prelim}

\subsection{Stationary Yang Mills and Monotonicity}\label{ss:monotonicity}

Given a principal bundle $P\to M$ over a Riemannian manifold we can consider the Yang Mills functional, which associates to a connection $A$ the $L^2$ curvature
\begin{align}
  \cF[A] = \int_M |F_A|^2\, .	
\end{align}

A stationary Yang-Mills connection is one which is a critical point of the above functional.  Such a connection solves the Yang-Mills equations
\begin{align}
&d_A F_A = 0\, ,\notag\\
&d_A^* F_A = 0\, .	
\end{align}

The terminology {\it stationary} comes from the fact that such a connection also solves the stationary equation
\begin{align}
\text{div}\Big( |F_A|^2 g_{ij} - 4\langle F_i, F_j\rangle\Big)	= 0\, .
\end{align}
By pairing the above with the radial vector field $\nabla d_x^2$ from a point $x\in M$ we see as in \cite{Price_Mon} that the scale invariant curvature functional
\begin{align}\label{e:monotonicity}
\theta_r(x) \equiv r^{4-n}\int_{B_r(x)} |F_A|^2\, ,	
\end{align}
is a monotone quantity with 
\begin{align}
	\frac{d}{dr} \theta_r(x) = 4r^{4-n}\int_{S_r(x)} \big|F_A[\partial_r]\big|^2\, .
\end{align}

It will often be useful in the constructions to focus on the following, which measures the energy a whole ball:

\begin{align}\label{e:improved_monotonicity}
\overline \theta_r(x) \equiv \sup_{y\in B_r(x)}\theta_r(y)\, .
\end{align}

\subsection{Symmetry of Connections}

We briefly review the notion of symmetry in this subsection.  In this paper we will only be interested in top dimensional symmetry (i.e., $k=n-4$ in what follows). However we give the general definition as it is instructive:

\begin{definition}[$(k,\epsilon)$-Symmetry of Connections]\label{d:symmetry}
	If $A$ is a connection on $B_1(p)$ with $K<\epsilon$ from \eqref{e:YM_assumptions2}, then we say $A$ is $(k,\epsilon)$-symmetric if the following hold:
	\begin{enumerate}
	\item[(0)] For the radial vector field $\partial_r = \nabla d_p$ we have $\fint_{B_1} r^4|F[\partial_r]|^2<\epsilon$.
	\item[(k)] There exists a $k$-dimensional subspace $\cL^k$ such that $\fint_{B_1} |F[\cL]|^2<\epsilon$.
	\end{enumerate}
\end{definition}
\begin{remark}
We say $A$ is $0$-symmetric if only condition $(0)$ holds above. 	
\end{remark}

Using the monotone quantity $\theta_r(x)$ the following is a nice exercise, see for instance \cite{Wa} or \cite{NaVa+}:\\

\begin{theorem}\label{t:independent_points_symmetry}
Let $A$ be a stationary Yang-Mills with $\fint_{B_2}|F_A|^2\leq \Lambda$.  Then for each $\epsilon,\tau>0$ there exists $\delta(n,\epsilon,\tau,\Lambda)>0$ such that if $K<\delta$ from \eqref{e:YM_assumptions2} and there exists $x_0,\ldots,x_{k}\in B_1$ with
\begin{enumerate}
\item if $\cL^\ell = \text{span}\{x_1-x_0,\ldots,x_\ell-x_0\}$ then $d(x_{\ell+1},\cL^\ell)>\tau$,
\item $|\theta_3-\theta_\delta|(x_i)<\delta$,	
\end{enumerate}
	then $B_1$ is $(k,\epsilon)$-symmetric.
\end{theorem}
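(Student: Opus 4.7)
The plan is a standard contradiction/compactness argument combined with a cone-splitting principle, in the spirit of Cheeger--Naber and \cite{Wa}. Suppose the statement fails for some fixed $\epsilon,\tau>0$. Then we can find a sequence $A_i$ of stationary Yang--Mills connections with $\fint_{B_2}|F_{A_i}|^2\le\Lambda$, parameters $K_i,\delta_i\to 0$, and points $x_0^i,\ldots,x_k^i\in B_1$ that are $\tau$-independent and satisfy $|\theta_3-\theta_{\delta_i}|(x_j^i)<\delta_i$, yet such that no ball $B_1(p_i)$ is $(k,\epsilon)$-symmetric. Since $K_i\to 0$, the background metrics converge to the flat Euclidean metric in $C^{1,\alpha}$ by \eqref{e:YM_assumptions2}. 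After extracting a subsequence, $x_j^i\to x_j$ with the limit points still $\tau$-independent. By the Uhlenbeck--Tao--Tian compactness theorem for stationary Yang--Mills (\cite{Uhl_Rem},\cite{TaoTian_YM}), $A_i\to A_\infty$ modulo gauge on the smooth part, and the energy measures converge $|F_{A_i}|^2 dv_{g_i}\to \mu$, where $\mu=|F_{A_\infty}|^2 dv+\nu$ and $\nu$ is the defect measure.

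Paragraph 2 -- cone points in the limit. By Price's monotonicity \eqref{e:monotonicity}, the quantity $\theta_r(x_j^i)$ is nondecreasing up to a controlled error vanishing with $K_i$, and its derivative is the radial curvature integral $4r^{4-n}\int_{S_r(x_j^i)}|F_{A_i}[\partial_r]|^2$. The assumption $|\theta_3-\theta_{\delta_i}|(x_j^i)<\delta_i\to 0$ together with the weak convergence of energy measures passes to the limit to give that the renormalized mass $r^{4-n}\mu(B_r(x_j))$ is constant in $r\in(0,3]$. Equivalently, the radial component $F_{A_\infty}[\partial_{r,x_j}]$ vanishes $\mu$-a.e.\ in $B_3(x_j)$, i.e.\ the limit $(A_\infty,\nu)$ is $0$-symmetric about each $x_j$.

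Paragraph 3 -- cone splitting. The standard identity
\[
|z-x|\,\partial_{r,x}(z)-|z-y|\,\partial_{r,y}(z)=y-x
\]
shows that the simultaneous radial invariance at two distinct cone points $x,y$ forces $F_{A_\infty}[y-x]=0$ (measured against $\mu$). Iterating with the $\tau$-independent points $x_0,\ldots,x_k$, we conclude that $F_{A_\infty}[v]=0$ $\mu$-a.e.\ for every $v\in\cL:=\mathrm{span}\{x_1-x_0,\ldots,x_k-x_0\}$. Together with the radial vanishing about $p=x_0$, this says $\mu$ is $(k,0)$-symmetric with respect to the plane $\cL$.

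Paragraph 4 -- passing back and main obstacle. Both defining quantities of $(k,\epsilon)$-symmetry, namely $\fint_{B_1}r^4|F[\partial_r]|^2$ and $\fint_{B_1}|F[\cL]|^2$, are continuous under the convergence $|F_{A_i}|^2dv\to \mu$ -- the first because it is essentially $\theta_1-\lim_{r\to 0}\theta_r$ via the integrated monotonicity formula, and the second because the integrand is pointwise dominated by $|F_{A_i}|^2$ and weak-$*$ convergence suffices once we know the limit vanishes. Therefore for $i$ large both quantities are less than $\epsilon$ with $\cL^k_i\to\cL$, contradicting the assumption and proving the theorem. The main subtlety, and really the only one, is step 4: one must verify that the defect measure $\nu$ does not artificially create $|F[\cL]|^2$-mass in the limit. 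This is guaranteed by the cone-splitting conclusion that $\mu$ (and hence $\nu$) is translation-invariant along $\cL$, which forces all bubbling to happen in $\cL^\perp$-directions and thus contributes zero to $|F[\cL]|^2$ in the natural measure-theoretic sense used in Definition \ref{d:symmetry}.
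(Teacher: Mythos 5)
The overall strategy — contradiction, compactness, monotonicity pinching, and cone splitting — is the right circle of ideas, and your steps 1--3 correctly identify the limit object and its symmetries. Step 4, however, contains a genuine gap. You need to show $\fint_{B_1}|F_{A_i}[\cL_i]|^2\to 0$, but the justification given — that translation-invariance of $\nu$ along $\cL$ ``forces all bubbling to happen in $\cL^\perp$-directions and thus contributes zero to $|F[\cL]|^2$'' — does not follow. Translation-invariance of the defect measure only constrains how the \emph{total} limiting energy distributes itself, not the directional components of the curvature of the approximating connections: a sequence can concentrate onto an $\cL$-invariant set while still carrying curvature in $\cL$-directions at the concentration scales. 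The implication available from Theorem \ref{t:defect_measure_symmetries} runs in the other direction ($\int|F_{A_i}[\cL]|^2\to 0\Rightarrow\nu$ is $\cL$-invariant), so invoking the converse here is circular, and pointwise domination $|F_{A_i}[\cL]|^2\leq|F_{A_i}|^2$ together with weak-$*$ convergence and vanishing of the absolutely continuous limit does not rule out the directional part of the energy escaping into $\nu$.

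The fix is to apply the cone-splitting identity you already wrote \emph{directly to the sequence} rather than only to the limit. The pinching and Price's monotonicity give, via the coarea formula,
\begin{align}
4\int_{A_{\delta_i,3}(x^i_j)}|z-x^i_j|^{4-n}\,\big|F_{A_i}[\partial_{r,x^i_j}]\big|^2\,dz \;=\; |\theta_3-\theta_{\delta_i}|(x^i_j)\;<\;\delta_i\, ,
\end{align}
while $|z-x^i_j|\leq 2$ on $B_1$ gives $|z-x^i_j|^2\leq 2^{n-2}|z-x^i_j|^{4-n}$, and the contribution from $B_{\delta_i}(x^i_j)$ is at most $\delta_i^2\int_{B_{\delta_i}(x^i_j)}|F_{A_i}|^2\leq C(n)\Lambda\,\delta_i^{n-2}$. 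Squaring the identity $F_{A_i}[x^i_j-x^i_0]=|z-x^i_j|\,F_{A_i}[\partial_{r,x^i_j}]-|z-x^i_0|\,F_{A_i}[\partial_{r,x^i_0}]$ and integrating over $B_1$ therefore gives $\int_{B_1}|F_{A_i}[x^i_j-x^i_0]|^2\to 0$. The $\tau$-independence lets you write any unit $v\in\cL_i$ as $\sum c_j(x^i_j-x^i_0)$ with $|c_j|\leq C(\tau,k)$, so $\fint_{B_1}|F_{A_i}[\cL_i]|^2\to 0$, and the $0$-symmetry condition is the same integral computed at $x_0$. With this in hand the compactness framework and the entire defect-measure discussion become unnecessary: the theorem has a short direct quantitative proof, which is why the paper calls it an exercise.
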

\vspace{.25cm}

\subsection{Defect Measures}

The first technical result we review is from \cite{Tian_CalYM} and relates the symmetry of the defect measure to the symmetry of the converging connections:

\begin{theorem}[Symmetries of Defect Measures \cite{Tian_CalYM}]\label{t:defect_measure_symmetries}
	Let $A_i\to A$ with $|F_{A_i}|^2dv_{g_i}\to |F_A|^2dv_g+\nu$ be converging stationary Yang-Mills connections with $\fint_{B_4}|F_A|^2\leq \Lambda$.  The following hold:
	\begin{enumerate}
	\item If $\int_{B_2}|F_{A_i}[\cL^k]|^2\to 0$ and $K_i\to 0$ from \eqref{e:YM_assumptions2} then $A$ and $\nu$ are both translation invariant by $\cL^k$.  If $k=n-4$ then $A$ is smooth and $\nu\equiv c\,\lambda^{n-4}_{\cL}$ is a constant multiple of the Hausdorff measure on $\cL$. 
	\item If $A_i$ are $(n-4,i^{-1})$-symmetric wrt $\cL^{n-4}$ then $A\equiv 0$ and $\nu\equiv c\,\lambda^{n-4}_{\cL}$ is a constant multiple of the Hausdorff measure on $\cL$. 
	\end{enumerate}
\end{theorem}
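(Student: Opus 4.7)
The overall strategy is to deduce translation invariance of the weak limit $A$ and of the defect measure $\nu$ separately, then upgrade to smoothness/triviality using the classical 4D theory; for Part (2) we add a radial symmetry to the same skeleton. For the invariance of $A$: since $K_i\to 0$, the limit metric is flat, and Uhlenbeck--Tian convergence gives $A_i\to A$ smoothly modulo gauge on the regular set $\cR=B_2\setminus\mathrm{supp}(\nu)$. The hypothesis $\int_{B_2}|F_{A_i}[\cL^k]|^2\to 0$ yields $F_A[\cL^k]\equiv 0$ on $\cR$, and the Bianchi identity $d_AF_A=0$ forces $A$, in a gauge adapted to $\cL^k$, to be independent of the $\cL^k$-coordinates, i.e.\ $\cL^k$-translation invariant.

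For the invariance of $\nu$ I would test the stationary identity $\nabla^iT_{ij}=0$ for $T_{ij}=|F|^2g_{ij}-4\langle F_i,F_j\rangle$ against a compactly supported vector field $X=\phi\, v$ with $v\in \cL^k$ a parallel direction. Integration by parts gives
\begin{align}
\int|F_{A_i}|^2(v\cdot\nabla\phi)\,dv_{g_i}=4\int\langle F_{A_i}(\nabla\phi,\cdot),F_{A_i}[v]\rangle\,dv_{g_i}+o(1),
\end{align}
with the $o(1)$ absorbing metric--curvature corrections of size $K_i$. Cauchy--Schwarz using $\|F_{A_i}\|_{L^2(B_2)}\leq C$ together with $\|F_{A_i}[\cL^k]\|_{L^2(B_2)}\to 0$ kills the right-hand side, and the left converges to $\int(v\cdot\nabla\phi)\,d(|F_A|^2dv_g+\nu)$. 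Combined with the $\cL^k$-invariance of $|F_A|^2dv_g$ already obtained, this gives $\cL^k$-translation invariance of $\nu$.

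For the top-dimensional case $k=n-4$, the $\cL^{n-4}$-invariance reduces $A$ to a stationary Yang--Mills connection of finite energy on the four-plane $\cL^\perp\cong\R^4$. By the 4D $\varepsilon$-regularity and the uniform energy bound the reduced connection has at most finitely many point singularities, across each of which Uhlenbeck's removable singularity theorem extends it smoothly, so $A$ is smooth on $B_2$. The $(n-4)$-rectifiable measure $\nu$, being $\cL^{n-4}$-translation invariant, concentrates on a locally finite union of translates of $\cL^{n-4}$ and is a constant multiple of $\lambda^{n-4}$ on each; taking $\cL$ to be the relevant translate gives the stated form $\nu\equiv c\,\lambda^{n-4}_{\cL}$.

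For Part (2), the additional condition $\fint_{B_1} r^4|F_{A_i}[\partial_r]|^2\to 0$ turns the monotone quantity $\theta_r$ asymptotically constant in $r$, and the analogous stress-energy test against a radial vector field $X=\phi\,r\partial_r$ makes this rigorous, forcing $\nu$ to be invariant under dilation about $0$. A measure which is simultaneously $\cL^{n-4}$-translation invariant, dilation invariant, and supported in a countable union of translates $\cL+x_j$ must have every $x_j=0$: otherwise the orbit $\{\cL+tx_j:t>0\}$ would be a continuous family of parallel planes, contradicting the local $(n-4)$-content bound from rectifiability. Hence $\mathrm{supp}(\nu)=\cL$ and $\nu=c\,\lambda^{n-4}_{\cL}$. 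For $A$ itself, the smoothness and $\cL^{n-4}$-invariance from Part (1) combine with the radial invariance to make $A$ a smooth conical connection on $\R^4$; such a connection is determined by its restriction to $S^3$ via radial parallel transport, and smoothness at the origin forces that restriction to be flat, hence gauge-trivial on the simply connected $S^3$. Therefore $A\equiv 0$. The most delicate step is the collapse of $\nu$ to a single plane in Part (2): it is precisely the combination of dilation invariance, $\cL^{n-4}$-translation invariance, and $(n-4)$-rectifiability (together with the $\varepsilon$-regularity lower bound on content) that rules out more complicated conical defect structures.
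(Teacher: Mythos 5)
The paper does not prove this statement; it cites \cite{Tian_CalYM} directly, so there is no in-text argument to compare against. Your route is the standard one: test the stationarity identity against translation (and, in Part (2), dilation) vector fields to propagate symmetry to the defect measure, reduce the $k=n-4$ case to a finite-energy connection on $\R^4$, and invoke four-dimensional $\epsilon$-regularity together with Uhlenbeck's removable singularity theorem. One small correction: the passage from $F_A[\cL^k]\equiv 0$ to $\cL^k$-invariance of $A$ is not the Bianchi identity. One fixes a gauge with $A(v)\equiv 0$ for $v\in\cL^k$, which is consistent because $F(v,w)=0$ for $v,w\in\cL^k$ makes $\cL^k$-parallel transport path-independent, and then $0=F(v,u)=\partial_v A(u)$ forces the remaining components of $A$ to be $\cL^k$-independent.

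The genuine gap is in the $k=n-4$ conclusion of Part (1). Your argument correctly shows that $\nu$ is $\cL^{n-4}$-translation invariant, $(n-4)$-rectifiable with a positive lower density bound, and therefore a locally finite sum of constant multiples of $\lambda^{n-4}$ over parallel translates of $\cL$; but the phrase ``taking $\cL$ to be the relevant translate'' presumes that sum has a single term. Under the hypotheses of Part (1) alone it need not: two families of shrinking instantons centered on two distinct parallel $(n-4)$-planes, each parallel to $\cL$, still satisfy $\int_{B_2}|F_{A_i}[\cL^{n-4}]|^2\to 0$ while the limit defect measure lives on both planes. Already when $n=4$ and $k=0$ the hypothesis $F_{A_i}[\cL^0]=0$ is vacuous, so Part (1) would then assert that every defect measure is a single Dirac mass $c\,\delta_0$, which double bubbling visibly violates. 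What collapses the support to a single plane is precisely the extra ingredient available in Part (2), namely the radial/dilation invariance, or (in the paper's actual applications of Part (1), e.g.\ Theorems \ref{t:weakly_flat_structure} and \ref{t:bubble_existence}) an accompanying weakly-flat hypothesis that confines $\nu$ to a tube about one plane. Your Part (2) argument correctly combines dilation invariance, translation invariance, and the rectifiability/density bound to rule out off-center translates; for Part (1) you should either import one of those additional hypotheses or weaken the conclusion to a locally finite union of constant multiples of $\lambda^{n-4}$ on parallel translates of $\cL$.
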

 \vspace{.25cm}
  
The above can be viewed as the basis for the basic regularity result:

\begin{theorem}[\cite{Tian_CalYM}]\label{t:defect_measure_rectifiable}
	Let $A_i\to A$ with $|F_{A_i}|^2dv_{g_i}\to |F_A|^2dv_g+\nu$ be converging stationary Yang-Mills connections with $\fint_{B_4}|F_A|^2\leq \Lambda$.    Then $\nu=e(x)\lambda^{n-4}_S$ is $n-4$ rectifiable with density $e(x)> \epsilon_{n,k}$.
\end{theorem}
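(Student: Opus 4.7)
\emph{Proof proposal.} The plan is to combine Price's monotonicity formula, a Uhlenbeck-type $\epsilon$-regularity theorem, and the symmetry machinery of Theorems \ref{t:independent_points_symmetry} and \ref{t:defect_measure_symmetries}, in the spirit of Tian's original argument.

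I would first establish the density lower bound $e(x)\geq \epsilon_{n,k}$. By the standard stationary Yang--Mills $\epsilon$-regularity theorem (Uhlenbeck/Nakajima, or the stationary version in \cite{TaoTian_YM}), there exists $\epsilon_{n,k}>0$ such that $\theta_r(y)<\epsilon_{n,k}$ forces a pointwise curvature bound $r^2|F|(y)\leq C$, and hence precludes any nontrivial defect concentration in $B_{r/2}(y)$. Therefore, if $x\in \text{supp}(\nu)$, then for every small $r$ and every sequence $x_i\to x$ realizing the defect we must have $\theta_r(x_i,A_i)\geq \epsilon_{n,k}$. Passing to the limit through the convergence $|F_{A_i}|^2\,dv_{g_i}\to |F_A|^2\,dv_g+\nu$ gives
\begin{equation*}
\Theta^{n-4}(\nu,x)\;\equiv\;\lim_{r\to 0}\,r^{4-n}\,\nu(B_r(x))\;\geq\;\epsilon_{n,k}\, ,
\end{equation*}
where the limit exists by monotonicity applied to each $A_i$ and then passed to $\nu$. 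The matching upper bound $r^{4-n}\nu(B_r(x))\leq C(n,\Lambda)$ is immediate from the energy bound and monotonicity, so $S\equiv \text{supp}(\nu)$ has finite and positive $(n-4)$-density at every point of $S$ and $\mathcal{H}^{n-4}(S\cap B_1)<\infty$.

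Next I would prove rectifiability by showing that $\nu$-a.e. tangent measure is flat. At $\nu$-a.e. point $x$, the equality case in monotonicity guarantees that every blowup limit $(A_\infty,\nu_\infty)$ is $0$-symmetric in the sense of Definition \ref{d:symmetry}, i.e.\ dilation invariant. Using that $\nu_\infty$ has positive $(n-4)$-density at $\nu_\infty$-a.e. point and the density upper bound, one finds for any $\tau>0$ a $\tau$-independent configuration $x_0,\dots,x_{n-4}\in B_1$ along which $|\theta_3-\theta_\delta|$ is arbitrarily small as $\delta\to 0$ (this is the standard dimension-reduction step: if no such configuration existed for some $\tau$, the density would concentrate on a set of dimension strictly less than $n-4$, contradicting the positive density bound). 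Theorem \ref{t:independent_points_symmetry} then upgrades this to $(n-4,\epsilon_j)$-symmetry for the approximating connections along a subsequence with $\epsilon_j\to 0$, and Theorem \ref{t:defect_measure_symmetries}(2) identifies the tangent as $c\,\lambda^{n-4}_{\cL}$ for some $(n-4)$-plane $\cL$.

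Finally, having shown that at $\nu$-a.e. point every tangent measure is a multiple of Hausdorff measure on an $(n-4)$-plane, rectifiability of $\nu$ follows from the Marstrand--Mattila--Preiss rectifiability criterion applied to a measure of locally finite and positive $(n-4)$-density. The main obstacle is the middle step: producing the $\tau$-independent configuration of points with nearly constant monotone quantity at a generic tangent. The cleanest way to carry this out is a quantitative stratification / dimension-reduction argument, ruling out the scenario where the defect concentrates along a set of dimension less than $n-4$ by combining the density lower bound with the monotonicity-enforced cone structure of $\nu_\infty$.
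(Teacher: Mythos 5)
This statement is quoted verbatim from \cite{Tian_CalYM}; the paper does not prove it, so there is no internal proof to compare your attempt against. Evaluating the sketch on its own merits: the overall strategy (density lower bound from $\epsilon$-regularity and monotonicity; rectifiability from showing that $\nu$-a.e.\ tangent measure is $c\,\lambda^{n-4}_\cL$ via cone-splitting and then invoking the Mattila criterion) does follow the broad shape of the known argument.

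That said, there is a genuine gap precisely where you flag it, and it is not a small one. Positive lower density of a dilation-invariant tangent $\nu_\infty$ at the origin does not by itself produce $n-3$ uniformly $\tau$-independent points of $\supp{\nu_\infty}$ with pinched monotone quantity \emph{for the approximating connections}. To run Theorem~\ref{t:independent_points_symmetry} one needs such points for $A_i$ (or a rescaled subsequence), and extracting them requires a Federer-type dimension-reduction: one shows the strata $\cS^k$ where no $k+1$ such points exist have Hausdorff dimension $\leq k$, and then uses $\nu\ll \mathcal{H}^{n-4}$ (from the density upper bound) to conclude $\nu(\cS^{n-5})=0$, so $\nu$-a.e.\ point lives in the top stratum where cone-splitting can be iterated to maximal symmetry. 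As written, the sentence ``if no such configuration existed for some $\tau$, the density would concentrate on a set of dimension strictly less than $n-4$'' asserts the conclusion of this argument rather than providing it. A second, smaller point you leave implicit: monotonicity controls the density of the \emph{total} measure $|F_{A}|^2\,dv_g+\nu$, not of $\nu$ alone; to deduce that $\Theta^{n-4}(\nu,x)$ exists and equals the total density at $\nu$-a.e.\ $x$ (and so that tangent measures of the total measure coincide with tangent measures of $\nu$) you need the standard observation that $\Theta^{*,n-4}(|F_A|^2\,dv_g,x)=0$ for $\mathcal{H}^{n-4}$-a.e.\ $x$, since $|F_A|^2\in L^1$ and $n-4<n$. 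Neither issue is fatal, but both must be supplied for the proof to close.
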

\vspace{.25cm}

\subsection{\texorpdfstring{$\epsilon$-regularity for Stationary Yang-Mills}{epsilon-regularity for Stationary Maps}}\label{ss:prelim:eps_reg}

One of the key tools in the study of any nonlinear equation are $\epsilon$-regularity theorems.  In this subsection we will discuss two such theorems.  The first is the classical $\epsilon$-regularity theorem, which tells us that balls with small energy must be smooth.  Precisely, we have the following:


\begin{theorem}[\cite{Uhl_Rem}]\label{t:eps_reg}
	There exists an $\epsilon_{n,k}>0$ such that if $A$ is a stationary Yang-Mills connection with $K<\epsilon_{n,k}$ from \eqref{e:YM_assumptions2} and $\theta(0,2)= 2^{4-n}\fint_{B_2} |F_A|^2<\epsilon_{n,k}$, then $\operatorname{sup}_{B_1}|F_A|^2\leq C(n,k)\fint_{B_2} |F_A|^2\leq 1$.
\end{theorem}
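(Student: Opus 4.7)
My plan is to combine a Bochner subsolution inequality for the curvature, Price's monotonicity formula, and a standard Schoen-style point-picking argument. The $K$-dependent corrections coming from the ambient metric are lower order and will be absorbed by taking $\epsilon_{n,k}$ small.

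The starting point is the Weitzenb\"ock identity $\nabla_A^*\nabla_A F_A = \mathcal{R}(F_A) + F_A\# F_A$, which uses only the two Yang-Mills equations $d_A F_A = 0$ and $d_A^* F_A = 0$. Combined with Kato's inequality this yields
\[
\Delta |F_A|^2 \geq 2|\nabla_A F_A|^2 - C_n |F_A|^3 - C_n K^2 |F_A|^2,
\]
so that $u=|F_A|^2$ is a subsolution $\Delta u + V u \geq 0$ with $V = C_n(|F_A| + K^2)$. On any ball where $|F_A|$ is already pointwise controlled, the bounded-coefficient Moser iteration gives the mean value estimate $\sup_{B_{r/2}} u \leq C(n,k) \fint_{B_r} u$.

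To produce such a ball I would point-pick. Define $f(x)=(\tfrac{3}{2}-|x|)^2|F_A|(x)$, continuous on $\overline B_{3/2}$ and vanishing on the boundary, and let $x_0$ be an interior maximum; set $M=|F_A|(x_0)$ and $\sigma=\tfrac{1}{2}(\tfrac{3}{2}-|x_0|)$. The elementary comparison $(\tfrac{3}{2}-|y|)^2\geq \tfrac14(\tfrac{3}{2}-|x_0|)^2$ for $y\in B_\sigma(x_0)$ forces $|F_A|\leq 4M$ on $B_\sigma(x_0)$. Rescaling by $\sigma$ produces a connection $\tilde A$ on $B_1$ with $|F_{\tilde A}|\leq 4M\sigma^2$. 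I would argue by contradiction: if the theorem failed one could produce a sequence $A_i$ with $M_i\sigma_i^2\to\infty$ and $\theta^{A_i}_2(0)\to 0$. Applying bounded-coefficient Moser iteration to $|F_{\tilde A}|^2$ gives
\[
(M\sigma^2)^2 = |F_{\tilde A}|^2(0) \leq C(n,k)\fint_{B_1}|F_{\tilde A}|^2 = C(n,k)\sigma^{4-n}\int_{B_\sigma(x_0)}|F_A|^2,
\]
and Price monotonicity \eqref{e:monotonicity}, together with the inclusion $B_\sigma(x_0)\subset B_2(0)$, controls the right-hand side by $C(n,k)\theta_2(0)$. Smallness of $\theta_2(0)$ combined with $M_i\sigma_i^2\to\infty$ yields the contradiction. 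Choosing $\epsilon_{n,k}$ small enough then both gives the estimate $\sup_{B_1}|F_A|^2 \leq C(n,k)\fint_{B_2}|F_A|^2$ and forces the final product to be at most $1$.

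The only genuine obstacle is the self-referential nature of the Bochner inequality: the coefficient $V$ in $\Delta u + V u\geq 0$ is proportional to $|F_A|=u^{1/2}$, which is exactly the quantity we are trying to bound, so one cannot directly apply Moser iteration to the subsolution as written. Point-picking circumvents this by first producing a scale on which $|F_A|$ is pointwise bounded, at which stage Moser iteration runs with a truly bounded coefficient and monotonicity closes the loop. Aside from this, the remaining care is only in verifying that Price's identity and the Weitzenb\"ock remainder behave stably under the $C^{1,\alpha}$ ambient geometry, which follows from \eqref{e:YM_assumptions2} and the smallness of $K$.
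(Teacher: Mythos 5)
The paper does not actually prove Theorem \ref{t:eps_reg}; it is quoted as a classical result from Uhlenbeck's removable-singularities paper (see the citation \cite{Uhl_Rem}), so there is no internal proof to compare against. Your overall strategy — Weitzenb\"ock/Bochner inequality, Price monotonicity, Heinz--Schoen point-picking, and Moser iteration — is the standard route to this $\epsilon$-regularity statement, and you correctly identify the self-referential coefficient in the Bochner inequality as the issue that point-picking is meant to resolve.

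There is, however, a real gap in the rescaling step. Rescaling by $\sigma$ maps $B_\sigma(x_0)$ to $B_1$, but it produces a curvature bound $|F_{\tilde A}|\leq 4M\sigma^2$ on $B_1$ — and in the contradiction scenario $M_i\sigma_i^2\to\infty$, so this bound is \emph{not} uniform. Consequently the potential in the inequality $\Delta|F_{\tilde A}|^2 + V|F_{\tilde A}|^2\geq 0$ has $\|V\|_{L^\infty}\sim M\sigma^2\to\infty$, and the Moser constant you invoke degenerates (it depends exponentially on $\sqrt{\|V\|_\infty}$). You cannot then conclude $(M\sigma^2)^2\lesssim\theta_2(0)$ with a uniform $C(n,k)$, and the contradiction does not close. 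The standard fix is to rescale by the \emph{curvature} scale $M^{-1/2}$ rather than by the \emph{distance} scale $\sigma$: set $\hat A = M^{-1/2}\cdot A$ centered at $x_0$. Then $|F_{\hat A}|\leq 4$ on $B_{\sqrt{M\sigma^2}}(0)\supseteq B_1(0)$, $|F_{\hat A}|(0)=1$, and $\theta^{\hat A}_1(0)\leq C\,\theta^A_2(0)\to 0$ by Price monotonicity. Now the Bochner potential is uniformly bounded, Moser iteration has a genuinely universal constant, and $1=|F_{\hat A}|^2(0)\leq C(n,k)\,\theta^{\hat A}_1(0)\to 0$ is the desired contradiction, giving $\sup_{B_1}|F_A|\leq 16$; a second pass of Moser iteration with the now-bounded coefficient then yields the stated mean-value estimate $\sup_{B_1}|F_A|^2\leq C(n,k)\fint_{B_2}|F_A|^2$. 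The rest of your outline — Weitzenb\"ock sign, the comparison $(\tfrac{3}{2}-|y|)^2\geq\tfrac14(\tfrac{3}{2}-|x_0|)^2$ on $B_\sigma(x_0)$, absorbing $K$-errors by taking $\epsilon_{n,k}$ small — is correct.
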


\vspace{.25cm}

For convenience, we also introduce the concept of regularity scale at a point.
\begin{definition}\label{deph_r_A}
 Given a smooth Yang-Mills connection $A$, we define the regularity scale $r_A(x)$ by
 \begin{gather}
  r_A(x)= \sup \cur{s\geq 0 \ \ s.t. \ \ \forall y\in \B {s}{x}\, , \ \ \abs{F_A(y)}\leq s^{-2}}\, .
 \end{gather}
Note that this quantity is scale-invariant, and that an immediate consequence of the $\epsilon$-regularity theorem is that $\theta(0,2r)\leq \epsilon_{n,k}$ implies $r_A(x)\geq r$.
\end{definition}

\vspace{.25cm}

Let us now discuss one further $\epsilon$-regularity result which will play a role in our paper.  The following tells us that instead of assuming the energy is small on a ball, we need only assume the energy is small in a sufficient number of directions.  That is, if a ball is sufficiently symmetric in the sense of Definition \ref{d:symmetry} then automatically the energy is small and a smaller ball is smooth.  Precisely:

\begin{theorem}\label{t:eps_reg_symmetry}
	Let $A$ be a stationary Yang-Mills connection with $\fint_{B_4}|F_A|^2\leq \Lambda$.  Then for each $\epsilon>0$ there exists $\delta(n,\Lambda,\epsilon)>0$ such that if $K<\delta$ from \eqref{e:YM_assumptions2} and $B_2$ is $(n-3,\delta)$ symmetric, then $sup_{B_1}|F_A|\leq \epsilon$.
\end{theorem}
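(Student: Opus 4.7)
We argue by compactness and contradiction. Suppose the claim fails: fix $\epsilon_0>0$ together with $\delta_i\to 0$ and stationary Yang--Mills connections $A_i$ on $B_4$ satisfying $\fint_{B_4}|F_{A_i}|^2\leq\Lambda$, $K_i<\delta_i$, and such that $B_2$ is $(n-3,\delta_i)$-symmetric relative to subspaces $\cL^{n-3}_i$, yet $\sup_{B_1}|F_{A_i}|>\epsilon_0$. Passing to a subsequence, the usual Uhlenbeck/Tian compactness yields a limit connection $A$ on $B_4$ modulo gauge together with an $(n-4)$-rectifiable defect measure $\nu$ with $|F_{A_i}|^2\,dv_{g_i}\wto |F_A|^2\,dv_g+\nu$; after a further subsequence we may assume $\cL^{n-3}_i\to\cL^{n-3}$ in the Grassmannian. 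Condition $(k)$ in Definition \ref{d:symmetry} combined with Theorem \ref{t:defect_measure_symmetries}(1) (applied with $k=n-3$) shows that both $A$ and $\nu$ are translation-invariant by $\cL^{n-3}$, while condition $(0)$ passes to the limit to give $F_A[\partial_r]\equiv 0$, so $A$ is a cone with vertex $p$.

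Next, the $(n-3)$-dimensional translation invariance of $\nu$ forces $\supp{\nu}$ to be a union of translates of $\cL^{n-3}$; any such set intersecting a cylinder in $B_2$ would have positive $\lambda^{n-3}$-density on an open portion of its support, hence infinite $\lambda^{n-4}$-measure there. This is incompatible with the $(n-4)$-rectifiability supplied by Theorem \ref{t:defect_measure_rectifiable}, and so $\nu\equiv 0$. In particular the weak convergence of energy measures upgrades to $L^1_{\mathrm{loc}}$ convergence $|F_{A_i}|^2\to |F_A|^2$ on $B_2$.

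To finish we show $F_A\equiv 0$ on $B_{3/2}(p)$. Placing $p=0$ and writing $x=(x_\parallel,s\omega)$ in coordinates adapted to $\cL^{n-3}\oplus\cL^\perp$, with $s=|x_\perp|$ and $\omega\in S^2\subseteq\cL^\perp$, the cone condition combined with $\cL^{n-3}$-translation invariance yields the pointwise scaling
\begin{align*}
|F_A|^2(x_\parallel,s\omega)\;=\;s^{-4}\,|F_A|^2(0,\omega).
\end{align*}
Integrating using polar coordinates on $\cL^\perp$ and Lebesgue measure on the fibers $\cL^{n-3}\cap B_r(0)$ gives
\begin{align*}
\int_{B_r(p)}|F_A|^2\;=\;C\int_0^r (r^2-s^2)^{(n-3)/2}\,s^{-2}\,ds\int_{S^2}|F_A|^2(0,\omega)\,d\omega,
\end{align*}
which diverges at $s=0$ unless $|F_A|^2(0,\omega)\equiv 0$. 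Since $\nu\equiv 0$ ensures $\int_{B_{3/2}}|F_A|^2<\infty$, necessarily $F_A\equiv 0$ on $B_{3/2}(p)$.

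Combining this with the $L^1_{\mathrm{loc}}$ convergence above, $\fint_{B_{3/2}}|F_{A_i}|^2\to 0$, so for large $i$ it falls below the $\epsilon$-regularity threshold $\epsilon_{n,k}$ from Theorem \ref{t:eps_reg}; that theorem then yields $\sup_{B_1}|F_{A_i}|^2\leq C(n,k)\,\fint_{B_{3/2}}|F_{A_i}|^2\to 0$, contradicting $\sup_{B_1}|F_{A_i}|>\epsilon_0$. The crux of the argument is the energy obstruction in the third paragraph: the homogeneity $s^{-4}$ of $|F_A|^2$, integrated against the $3$-dimensional polar volume element $s^2\,ds\,d\omega$, leaves a nonintegrable $s^{-2}$. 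This numerical coincidence encodes that $(n-3)$-symmetry is exactly one dimension too many for any nontrivial curvature concentration to coexist with the $(n-4)$-rectifiable singular stratum, and it is this passage from symmetry to flatness which is the main (though structurally mild) obstacle.
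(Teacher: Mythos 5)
Your proof is correct and follows the same compactness--contradiction scheme the paper sketches: extract a defect-measure limit, invoke Theorem \ref{t:defect_measure_symmetries}(1) for $(n-3)$-translation invariance of $A$ and $\nu$, observe that this is incompatible with the $(n-4)$-rectifiability of $\nu$ from Theorem \ref{t:defect_measure_rectifiable} unless $\nu\equiv 0$, show the limit connection is flat, and close with the classical $\epsilon$-regularity. The one place you go beyond the paper is that you spell out why the limit connection vanishes: the paper merely asserts that $A$ ``reduces to a Yang-Mills connection on some $\dR^3$, and is therefore itself a flat connection,'' whereas you derive this by passing condition $(0)$ of Definition \ref{d:symmetry} to the limit to get $F_A[\partial_r]\equiv 0$, combining the resulting homogeneity $|F_A|^2\sim s^{-4}$ with $(n-3)$-translation invariance, and noting that the $s^{-2}\,ds$ tail makes the energy infinite unless $F_A\equiv 0$. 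That is exactly the missing justification — the conclusion is \emph{not} that an arbitrary Yang-Mills connection on a ball in $\dR^3$ is flat, but that a conical one with finite stationary energy is — so your explicit calculation is a genuine improvement on the write-up rather than a deviation from it.
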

\begin{proof}
We will only sketch the proof, as we refer to \cite{ChNa2} for a verbatim argument in the nonlinear harmonic maps context.	  So indeed, assume for some $\epsilon>0$ such a $\delta(n,\Lambda,\epsilon)>0$ does not exist, so that we can find a sequence of connections $A_i$ for which $B_2$ is $(n-3,\delta_i)$-symmetric, but the curvature is not uniformly bounded.  We may pass to a subsequence to limit $A_i\to A$ with corresponding defect measure $|F_{A_i}|^2dv_{g_i}\to |F_A|^2dv_{\dR^n} + \nu$.  However, by theorem \ref{t:defect_measure_symmetries} we then have that both $A$ and $\nu$ are now $n-3$ symmetric, which is to say invariant under translations of some $n-3$ subspace in $\dR^n$.  However, by theorem \ref{t:defect_measure_rectifiable} we also have that $\nu$ is $n-4$ rectifiable.  Combining these two points tells us that $\nu=0$ must be trivial.  Therefore $A$ is smooth and reduces to a Yang-Mills connection on some $\dR^3$, and is therefore itself a flat connection.  Thus we have that $|F_{A_i}|^2dv_{g_i}\to 0$.  In particular, for far enough down the sequence we may apply the classical $\epsilon$-regularity of Theorem \ref{t:eps_reg} to get a contradiction, and thus prove the Theorem.
\end{proof}
\vspace{.25cm}

\subsection{Yang Mills in Four Dimensions}\label{ss:prelim:n=4}

In this subsection we recall a couple basic points of four dimensional solutions to the Yang Mills equations.  The results of this subsection are well understood, even if they are packaged in a form which is not completely standard.  In addition to the classical methods for proving such results, the methods of Sections \ref{s:eps_gauge} and \ref{s:quant_bubbletree} may be used to give distinct proofs which have the advantage of generalizing to higher dimensions (as we shall see).

We begin by discussing four dimensional annular regions.  The notion of an annular region will be a central point to this paper, especially in higher dimensions where the analysis has been lacking to study such regions.  In dimension four they are quite well understood, and so we briefly review them here.  The classical techniques used to prove the results of this section do not pass to higher dimensions, where the singularities are not isolated.  However, it is still helpful for intuition to review the isolated singularity case, and additionally we will explicitly use these results in the study of finite regions in higher dimensions.  The main result is the following:

\begin{theorem}[Annular Regions in $n=4$]\label{t:annular_n=4}
	Let $A$ be a stationary Yang-Mills connection on a four dimensional space with $r_+^{4-n}\int_{B_{2r_+}} |F_B|^2 \leq \Lambda$, and let $\cA\equiv A_{r_-,r_+}(p)$.  There exists $\epsilon(k)>0$ such that if
\begin{enumerate}
\item $K<\epsilon$ from \eqref{e:YM_assumptions2}.
\item $d_x^2 \,|F_A|(x)< \epsilon$ for $x\in A_{r_-,r_+}$ with $d_x\equiv d(p,x)$.	
\end{enumerate}
then there exists $\alpha(k)>0$ and $C(k)>0$ such that if $\delta\equiv \sup_{A_{r_-,2r_-}} |F_A|+ \sup_{A_{r_+,2r_+}} |F_A|$ then we have the improved estimate for $x\in A_{r_-,r_+}$:
\begin{align}
	d_x^2\,|F_A|(x) < C\Big(\Big(\frac{d_x}{r_+}\Big)^\alpha + \Big(\frac{r_-}{d_x}\Big)^\alpha\Big)\cdot \delta\, .
\end{align}
In particular, we have that $\int_{A_{r_-,r_+}}|F_A|^2 < C(k)\delta^2$ and $\int_{A_{r_-,r_+}}|\nabla^2F_A| < C(k)\delta$
\end{theorem}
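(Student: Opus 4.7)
The plan is to use the pointwise smallness $d_x^2|F_A|<\epsilon$ to reduce to a perturbative elliptic regime on each dyadic subannulus, and then derive the polynomial decay from a spectral-gap (or three-annulus) argument. Throughout I may rescale so that $r_+=1$, and exploit the four-dimensional conformal invariance of the Yang-Mills functional.

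\textbf{Step 1 (Higher-order pointwise bounds and local Coulomb gauges).} First I would observe that the hypothesis $d_x^2|F_A|(x)<\epsilon$ combined with Theorem~\ref{t:eps_reg} and Definition~\ref{deph_r_A} yields the regularity scale estimate $r_A(x)\geq c\,d_x$. Bootstrapping the Yang-Mills equation on $B_{cd_x}(x)$ then gives pointwise derivative estimates $d_x^{2+k}|\nabla^k F_A|(x)\leq C_k\epsilon$ for every $k$. For each dyadic scale $\rho\in[r_-,1/4]$ the normalized curvature energy on the annular shell $A_{\rho,4\rho}(p)$ is $O(\epsilon^2)$, so Uhlenbeck's theorem produces a Coulomb gauge on this shell in which $\rho\|A\|_{L^\infty}+\rho^2\|\nabla A\|_{L^\infty}\leq C\epsilon$.

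\textbf{Step 2 (Polynomial decay).} In this gauge the Yang-Mills equation gives $\Delta F_A = Q(A,F_A,\nabla F_A)$, a quadratic nonlinearity bounded on the shell by $C\epsilon\rho^{-2}|F_A|$. Writing the flat Laplacian in polar coordinates $(r,\omega)\in(0,\infty)\times S^3$, this becomes a perturbation of an ODE-eigenvalue problem for the Hodge Laplacian on two-forms over $S^3$. Let $\{\mu_j>0\}$ denote its spectrum (positivity is the four-dimensional input). Each $j$-th Fourier coefficient $f_j(r)$ of the $S^3$-restriction of $F_A$ solves $r^2 f_j''+3rf_j'-\mu_j f_j = O(\epsilon f_j)$, whose homogeneous solutions are $r^{\pm\alpha_j}$ with $\alpha_j(\alpha_j+2)=\mu_j$, so $\alpha:=\min_j\alpha_j>0$ survives the $O(\epsilon)$ perturbation. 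Matching against the boundary sup norm $\delta$ at $r=r_-$ and $r=r_+$ yields
\begin{align*}
d_x^2|F_A|(x)\leq C\Big((d_x/r_+)^\alpha+(r_-/d_x)^\alpha\Big)\,\delta.
\end{align*}
An equivalent route is a direct three-annulus lemma for the scale-invariant quantity $W(\rho):=\sup_{d_x=\rho}d_x^4|F_A|^2$: if $W$ fails to contract by a factor $\lambda=\lambda(k)<1$ at some middle scale, a Bochner-type subharmonicity $\Delta|F_A|^2\geq -C\epsilon d_x^{-2}|F_A|^2$ contradicts the smallness assumption after rescaling.

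\textbf{Step 3 (Integral corollaries).} The global integral bounds now follow by direct integration using the derivative control from Step 1. In 4D polar coordinates,
\begin{align*}
\int_{A_{r_-,r_+}}|F_A|^2 \leq C\int_{r_-}^{r_+}\!\rho^{-4}\bigl((\rho/r_+)^\alpha+(r_-/\rho)^\alpha\bigr)^2\delta^2\cdot\rho^3\,d\rho\leq C(k)\delta^2,
\end{align*}
and because Step 1 gives $d_x^4|\nabla^2 F_A|(x)\leq C\,d_x^2|F_A|(x)$, the same polynomial decay factor yields $\int_{A_{r_-,r_+}}|\nabla^2 F_A|\leq C(k)\delta$ after integration against $\rho^3\,d\rho$.

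\textbf{Main obstacle.} The substantive step is Step 2. The difficulty is that the Coulomb gauge is only local (it changes when we move between dyadic shells), so pointwise spectral analysis is not globally well-defined. This is resolved by the observation that both the equation $\Delta_A F_A=Q(F_A)$ and the conclusion are gauge-invariant, so one extracts a decay statement for $|F_A|$ scale-by-scale using only the gauge on each shell. The crucial input enabling a positive $\alpha(k)$ is the positive spectral gap of the Hodge Laplacian on two-forms over $S^3$; this is precisely where the four-dimensionality of the target enters, and is the feature that fails in higher dimensions where singular sets are not isolated (motivating the more elaborate annular theory developed later in the paper).
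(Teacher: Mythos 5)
The paper does not actually prove Theorem~\ref{t:annular_n=4}: it is stated as a well-understood four-dimensional result, with the remark that ``the classical method for proving results like the above is through a \emph{three annulus} type lemma,'' and that the machinery of Section~\ref{s:eps_gauge_annular_estimates} gives an alternative (more elaborate) proof that also works in higher dimensions. Your proposal is precisely the classical route, so you have reached for the right tools, and Steps~1 and~3 are sound (the bootstrap to $d_x^{2+k}|\nabla^k F_A|\lesssim C_k\epsilon$ on each dyadic shell, the deduction $d_x^4|\nabla^2F_A|\lesssim d_x^2|F_A|$ by interior estimates on slightly enlarged shells, and the elementary integration against $\rho^3\,d\rho$ all check out). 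The one place where the write-up is softer than the argument needs to be is Step~2. The Fourier-on-$S^3$ version of the argument does not directly make sense, because there is no single Coulomb gauge on the whole annulus $A_{r_-,r_+}$ in which to decompose $F_A$: the shell-by-shell gauges must be patched, and the patching maps contribute zeroth-order terms of the same size as the quantity you are trying to decay. Your ``main obstacle'' paragraph flags this but waves at gauge-invariance of $|F_A|$ rather than resolving it. The cleaner and standard execution is the compactness form of the three-annulus lemma: if the quantity $W(\rho)=\sup_{A_{\rho,2\rho}}d_x^4|F_A|^2$ fails to satisfy $W(\rho)\le \lambda\max\{W(\rho/2),W(2\rho)\}$ on a sequence of scales, rescale, use the $\epsilon$-regularity and Uhlenbeck compactness to extract a limiting finite-energy Yang-Mills connection on $\dR^4\setminus\{0\}$ (or on the cylinder $\dR\times S^3$), and invoke the removable-singularity theorem together with the absence of nontrivial homogeneous solutions to get a contradiction. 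This is what produces the $\alpha(k)>0$; it replaces your spectral gap of the Hodge Laplacian on $S^3$ (which is the linearized version of the same fact), and it is the piece that is genuinely four-dimensional — in higher dimensions the singular set is not isolated and compactness does not reduce you to a tame model, which is why the paper develops the $\epsilon$-gauge/superconvexity machinery instead.
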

\vspace{.25cm}

The classical method for proving results like the above is through a {\it three annulus} type lemma.  The techniques in Section \ref{s:eps_gauge_annular_estimates} also give a (more involved) proof of the above result, though has the advantage of working in higher dimensions.

We now turn our attention to more global information about four dimensional solutions.  The content of the next result is to see that the energy of a bubble can concentrate on at most a finite number of regions.  

\begin{theorem}\label{t:prelim:bubbles}
Let $B$ be a stationary Yang-Mills connection on $\dR^4\times G$ with finite energy $\int_{\dR^4} |F_B|^2 \leq \Lambda$.  We have $C(k,\Lambda), N(k,\Lambda)$ such that $\exists$ disjoint balls $\{B_{r_i}(c_i)\}_1^N\subseteq \dR^4$ with $\int_{B_{r_i}}|F_B|^2>\epsilon(k)$ such that for every $\eta>0$ if $R\geq R(k,\Lambda,\eta)$ then
\begin{align}
\Big|\int_{\dR^4} |F_B|^2-\int_{\bigcup B_{R r_i}(c_i)} |F_B|^2\Big|<\eta\, .
\end{align}
\end{theorem}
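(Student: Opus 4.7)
The plan is to extract a finite collection of ``bubble'' concentration balls via monotonicity and $\epsilon$-regularity, then to control the energy in the complement via the $4$-dimensional annular decay of Theorem \ref{t:annular_n=4}. Let $\epsilon$ denote the constant $\epsilon_{4,k}$ from Theorem \ref{t:eps_reg}. If $\int_{\dR^4}|F_B|^2<\epsilon$, then applying $\epsilon$-regularity at arbitrarily large scales around any point forces $F_B\equiv 0$, and we take $N=0$. Otherwise, for each $x\in\dR^4$, smoothness and the finite energy bound give a unique scale $r(x)\in(0,\infty)$ with $\theta_{r(x)}(x)=\epsilon$, where $\theta_r(y)=\int_{B_r(y)}|F_B|^2$ is the monotone quantity of Section \ref{ss:monotonicity}.

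I would then select disjoint concentration balls greedily, processing points in order of increasing $r(x)$: inductively pick $(c_j,r_j)$ with $r_j=r(c_j)$ minimal subject to $B_{r_j}(c_j)$ being disjoint from all previously chosen balls. Since the selected balls are disjoint and each carries energy $\epsilon$, this yields $N\leq\Lambda/\epsilon$. The crucial consequence is a pointwise estimate outside the selected balls. Any $y\notin\{c_j\}$ was rejected by the greedy algorithm, so $B_{r(y)}(y)$ must overlap some already-chosen $B_{r_i}(c_i)$ with $r_i\leq r(y)$; this forces $d(y):=\min_i|y-c_i|\leq r(y)+r_i\leq 2r(y)$. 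Applying $\epsilon$-regularity on $B_{r(y)/2}(y)$ then gives
\[
    d(y)^2\,|F_B|(y)\leq C\sqrt{\epsilon}\qquad \text{for all } y\notin\{c_j\}.
\]

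With this estimate, I would invoke Theorem \ref{t:annular_n=4} on the Voronoi-intersected annuli $\{|x-c_i|\in(2r_i,Rr_i)\}\cap V_i$ around each $c_i$, where $V_i$ is the Voronoi cell of $c_i$; on $V_i$ one has $d_x=|x-c_i|=d(x)$, so the pointwise estimate above verifies the hypothesis $d_x^2\,|F_B|(x)<\epsilon$ of the annular theorem. The conclusion bounds each annular integral by $C(k)\delta_i^2$, where $\delta_i$ is the supremum of $|F_B|$ on the outer boundary shell. The main obstacle is then twofold: (i) organizing the greedy balls and Voronoi cells into a clean bubble-tree hierarchy so that the resulting annular regions cover $\dR^4\setminus\bigcup B_{Rr_i}(c_i)$, and (ii) establishing uniform-in-$\Lambda$ smallness of $\delta$ on the outermost shells as $R\to\infty$. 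For (ii) one uses that $B$ extends to a smooth connection on $S^4$ by Uhlenbeck's removable singularity theorem, so that $|F_B|$ decays at infinity in $\dR^4$ coordinates; combined with the pointwise estimate this yields quantitative decay of $\delta$ in terms of $R$ and $\Lambda$ only. Summing over the $N\leq \Lambda/\epsilon$ annular contributions and choosing $R=R(k,\Lambda,\eta)$ large then delivers the desired bound $\bigl|\int_{\dR^4}|F_B|^2-\int_{\bigcup B_{Rr_i}(c_i)}|F_B|^2\bigr|<\eta$.
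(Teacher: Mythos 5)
Your overall strategy — extract concentration balls via monotonicity and $\epsilon$-regularity, then kill the remaining energy with the four-dimensional annular decay of Theorem~\ref{t:annular_n=4} — is exactly the ``effective bubble tree'' route the paper alludes to (the paper itself states this theorem without proof, noting that it follows classically from a compactness/contradiction argument, or effectively via the methods of Section~\ref{s:bubble_decomp}). Your greedy ball selection and the resulting scale-invariant pointwise bound $d(y)^2|F_B|(y)\leq C\sqrt\epsilon$ away from the chosen centers are correct and do form the right starting point.

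However, the step where you invoke Theorem~\ref{t:annular_n=4} on the Voronoi-intersected regions $\{|x-c_i|\in(2r_i,Rr_i)\}\cap V_i$ does not work, and it is not a bookkeeping issue that can be deferred to ``obstacle (i).'' Theorem~\ref{t:annular_n=4} is a three-annulus-type estimate: its hypothesis $d_x^2|F_A|<\epsilon$ must hold on a \emph{full} topological annulus $A_{r_-,r_+}(p)$ (so that one has subharmonicity/mean-value arguments on entire spherical shells), and its conclusion propagates decay radially across that annulus. When two concentration points $c_i$, $c_j$ sit at comparable scales with $|c_i-c_j|\sim r_i$, the full annulus $A_{2r_i,Rr_i}(c_i)$ contains $B_{r_j}(c_j)$ and the hypothesis fails there; restricting to the Voronoi cell $V_i$ produces a wedge, not an annulus, on which Theorem~\ref{t:annular_n=4} simply does not apply. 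The genuine content of the bubble tree is precisely to replace the naive Voronoi partition by a hierarchy of scales: one clusters concentration centers that are mutually $O(1)$-comparable in scale and separation into a single ``parent'' ball, so that between a parent scale and the next ancestral scale one has an honest annulus with $d_x^2|F|<\epsilon$ throughout, and one then applies Theorem~\ref{t:annular_n=4} level by level. That construction (and the bookkeeping that the number of levels is controlled by $\Lambda/\epsilon(k)$) is the heart of the argument and is missing.

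Your obstacle (ii), by contrast, is less serious than you suggest, provided the tree is set up correctly. The pointwise estimate $d(y)^2|F_B|\leq C\sqrt\epsilon$ alone gives only the borderline decay $|F_B|\lesssim |x|^{-2}$, whose energy over $\{|x|>S\}$ is merely logarithmically controlled — so it cannot by itself produce a uniform $R(k,\Lambda,\eta)$. What one does is apply Theorem~\ref{t:annular_n=4} on $A_{r_-,r_+}$ with $r_+$ large and let $r_+\to\infty$: Uhlenbeck's removable singularity theorem is used only \emph{qualitatively} to ensure $\sup_{A_{r_+,2r_+}}d_x^2|F_B|\to 0$, after which the quantitative bound $\int_{A_{r_-,\infty}}|F_B|^2\leq C(k)\bigl(\sup_{A_{r_-,2r_-}}d_x^2|F_B|\bigr)^2\leq C(k)\epsilon^2$ depends only on $(k,\Lambda)$. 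Iterating this once more from inner radius $\sim Rr_{\max}$ turns the inner boundary supremum into $O(\epsilon R^{-\alpha})$ via the decay estimate, giving the required $R^{-2\alpha}$ rate uniformly in $B$. So (ii) resolves once (i) is in place; as written, though, (i) is a real gap and the proof is incomplete.
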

\begin{remark}
There is a local version as well if $A$ is a connection on $B_{2r}(0^4)$ with $\fint_{B_{2r}}|F_A|^2\leq \Lambda$.  Then one has the estimate $\Big|\int_{B_r} |F_B|^2-\int_{B_r\cap \bigcup B_{R r_i}(c_i)} |F_B|^2\Big|<\eta$.
\end{remark}

\vspace{.25cm}

In particular, if we consider a sequence of bubbles then the resulting limit may split into at most $N$ independent limiting bubbles.  Classically one could prove this by contradiction using a bubble tree argument and Theorem \ref{t:annular_n=4}.  The techniques of Section \ref{s:bubble_decomp} can be used to give an effective proof, even in higher dimensions.\\

\vspace{.5cm}

\section{Weakly Flat Balls}\label{s:weakly_flat}

In Section \ref{ss:prelim:eps_reg} it was discussed that the structure of a Yang-Mills connection $A$ which is $(n-3,\delta)$-symmetric is quite trivial, namely $A$ is smooth and $\epsilon$-flat.  In this subsection we want to study the structure of connections which are $(n-4,\delta)$-symmetric.  In this case, $A$ need not be trivially smooth, however the structure of $A$ is still quite simple and will form the basic building block for the top stratum of the defect measure.  To understand this better let us discuss the notion of a weakly flat ball:\\

\begin{definition}[$\delta$-weakly flat]\label{d:weak_flat}
If $K<\delta$ from \eqref{e:YM_assumptions2}, then we say a connection $A$ is $\delta$-weakly flat on $B_1(p)$ with respect to $\cL^{n-4}$ if 
\begin{enumerate}
\item For each $y\in \cL_p\cap B_1$ and $\delta\leq r\leq 1$ we have $r^{4}\fint_{B_r(y)}|F[\cL]|^2<\delta$.
\item For $y\in B_1(p)\setminus B_{\delta}(\cL_p)$ we have the estimate $|F_A|(y)\leq \delta\, d(y,\cL_p)^{-2}$.
\end{enumerate}
\end{definition}
\begin{remark}
As with all definitions in this paper we will apply this in a scale invariant sense to any ball.  That is, we call $B_r(p)$ a $\delta$-weakly flat ball if after rescaling $r^{-1}B_r(p)\to B_1(\tilde p)$ we have that the above holds.	
\end{remark}
\vspace{.5cm}

Therefore a weakly flat ball does not have uniformly small curvature, but the curvature is quite small away from a neighborhood of a $n-4$ plane $\cL$.  The notion of a weakly flat ball will come into play at several stages, in particular in our defining of annular regions (one of our two main building blocks) in Section \ref{s:annular}.  Our first result, which follows quite easily from Theorem \ref{t:eps_reg_symmetry}, tells us that balls which are very $n-4$ symmetric must be weakly flat:\\

\begin{theorem}[Existence of Weakly Flat Balls]\label{t:symmetry_implies_weakly_flat}
Let $A$ be a stationary Yang-Mills connection satisfying \eqref{e:YM_assumptions2} with $\fint_{B_2} |F_A|^2 \leq \Lambda$ and let $\epsilon>0$.   Then there exists $\delta(n,k,K,\Lambda,\epsilon)>0$ such that if $B_2$ is $(n-4,\delta)$-symmetric, then either:
\begin{enumerate}
\item $\sup_{B_{3/2}} |F|\leq \epsilon$, or
\item $B_1$ is $\epsilon$-weakly flat and $\fint_{B_1}|F|^2>\epsilon_{n,k}$ .
\end{enumerate}
\end{theorem}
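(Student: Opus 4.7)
I would argue by contradiction and compactness, leveraging Theorems~\ref{t:defect_measure_symmetries} and~\ref{t:eps_reg} to pin down the structure of limiting connections. Suppose the conclusion fails for some fixed $\epsilon>0$. Then one obtains a sequence of stationary Yang-Mills connections $A_i$ with $K_i\to 0$, $\fint_{B_2}|F_{A_i}|^2\leq\Lambda$, and $B_2$ being $(n-4,\delta_i)$-symmetric with respect to subspaces $\cL_i$ for some $\delta_i\to 0$, such that neither conclusion~(1) nor~(2) holds. After passing to a subsequence we may assume $\cL_i\to\cL$ in the affine Grassmannian over $B_2$, $A_i\to A$ modulo gauge, and $|F_{A_i}|^2\,dv_{g_i}\to |F_A|^2\,dv+\nu$.

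By the $(n-4,\delta_i)$-symmetry we have $\int_{B_2}|F_{A_i}[\cL_i]|^2<\delta_i\to 0$, and since $\cL_i\to\cL$ we also get $\int_{B_2}|F_{A_i}[\cL]|^2\to 0$. Thus Theorem~\ref{t:defect_measure_symmetries}(1) identifies $A$ as smooth and translation invariant by $\cL$, with $\nu=c\lambda^{n-4}_{\cL}$ for some $c\geq 0$. Next, the $0$-symmetry bound $\fint_{B_2} r^4|F_{A_i}[\partial_r]|^2<\delta_i$ passes to the limit to yield $F_A[\partial_r]\equiv 0$, so after reducing modulo $\cL$ the connection $A$ becomes a smooth dilation-invariant finite-energy Yang-Mills connection on $\dR^4\cong\cL^\perp$. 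Dilation invariance forces $\int_{B_r}|F_A|^2$ to be independent of $r$ on this $\dR^4$, which combined with smoothness gives $F_A\equiv 0$. Hence $A$ is flat on $B_2$ and the limiting energy measure equals $c\lambda^{n-4}_{\cL}$.

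Now I split according to $c$, choosing $\epsilon_{n,k}$ smaller than both the $\epsilon$-regularity threshold of Theorem~\ref{t:eps_reg} and $\omega_n^{-1}\omega_{n-4}$ times that threshold. \emph{Case~1:} $c\omega_{n-4}<\epsilon_{n,k}$. By Price-type monotonicity together with compactness of $\overline{B_{3/2}}$ and weak convergence of energy measures, there exists a uniform scale $r_0>0$ such that $\theta_{r_0}(y)[A_i]<\epsilon_{n,k}$ for every $y\in B_{3/2}$ and $i$ large; Theorem~\ref{t:eps_reg} then yields $\sup_{B_{3/2}}|F_{A_i}|\leq\epsilon$ for such $i$, contradicting the assumed failure of~(1). \emph{Case~2:} $c\omega_{n-4}\geq\epsilon_{n,k}$. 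Then $\fint_{B_1}|F_{A_i}|^2\to c\omega_{n-4}/\omega_n>\epsilon_{n,k}$ for $i$ large, and I verify both conditions of Definition~\ref{d:weak_flat}. Condition~(1) follows from
\begin{align*}
r^4\fint_{B_r(y)}|F_{A_i}[\cL]|^2\leq C(n)\epsilon^{4-n}\bigl(\delta_i+\|\cL-\cL_i\|^2\Lambda\bigr)\to 0
\end{align*}
uniformly in $y\in\cL\cap B_1$ and $r\geq\epsilon$. For condition~(2), the limit measure $|F_A|^2\,dv+\nu$ vanishes on the compact set $\overline{B_1}\setminus B_{\epsilon/2}(\cL)$, so applying Theorem~\ref{t:eps_reg} at scale $d(y,\cL)/10$ and using weak convergence of measures gives $d(y,\cL)^2|F_{A_i}|(y)\to 0$ uniformly there, contradicting the assumed failure of~(2).

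The main obstacle is the uniformity in condition~(2) as $y$ approaches $\cL$: the scale at which $\epsilon$-regularity is invoked shrinks like $d(y,\cL)$, so one must carefully couple the compactness of $\{d(\cdot,\cL)\geq\epsilon\}\cap\overline{B_1}$ with weak convergence of the energy measures, and exploit that the limit measure is concentrated on $\cL$. Once this uniform off-$\cL$ bound is in hand, the remainder is a fairly standard contradiction/compactness scheme built on the limiting structure.
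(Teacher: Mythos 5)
Your overall strategy coincides with the paper's: a compactness/contradiction argument that invokes Theorem~\ref{t:defect_measure_symmetries} to pin down the limit as $A\equiv 0$ with $\nu = c\lambda^{n-4}_{\cL}$, followed by a dichotomy on the size of $c$. One remark on efficiency: your route to $A\equiv 0$ goes through Theorem~\ref{t:defect_measure_symmetries}(1) (translation invariance) plus a separate dilation-invariance argument from the $0$-symmetry bound, whereas Theorem~\ref{t:defect_measure_symmetries}(2) gives $A\equiv 0$ directly under the $(n-4,\delta_i)$-symmetry hypothesis — that is the route the paper takes. Your version is valid, just longer than necessary.

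There is however a genuine gap in Case~1. You assert that $\theta_{r_0}(y)[A_i]<\epsilon_{n,k}$ together with Theorem~\ref{t:eps_reg} yields $\sup_{B_{3/2}}|F_{A_i}|\leq\epsilon$. But $\epsilon$-regularity at scale $r_0$ only gives $\sup_{B_{r_0/2}(y)}|F_{A_i}|^2\leq C(n,k)\,r_0^{-n}\int_{B_{r_0}(y)}|F_{A_i}|^2$, which is a \emph{bounded} quantity of size roughly $r_0^{-4}\epsilon_{n,k}$, not one that is $\leq\epsilon^2$. Since $\theta_{r_0}(y)[A_i]\to\theta_{r_0}(y)[\nu]=c\omega_{n-4}$ for $y\in\cL$ independently of $r_0$, you cannot drive this to zero by shrinking $r_0$ alone. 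The missing step is to observe that $0<c\omega_{n-4}<\epsilon_{n,k}$ is actually impossible: the density lower bound of Theorem~\ref{t:defect_measure_rectifiable} (or, as the paper argues, the uniform $C^0$ bound forcing no energy concentration) forces $c=0$, hence $\nu\equiv 0$. Only after that does one get $|F_{A_i}|^2\,dv\to 0$ weakly and hence $\sup_{B_{3/2}}|F_{A_i}|\to 0$ by smooth convergence, yielding the $\epsilon$-smallness you claimed. With this intermediate step inserted, the argument is correct.
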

\begin{remark}
The constant $\epsilon_{n,k}>0$ is from the $\epsilon$-regularity of Theorem \ref{t:eps_reg}.	
\end{remark}
\vspace{.25cm}

Let us now consider the following self-improvement theorem for weakly flat balls, which will be used for local estimates in the study of both quantitative bubble regions and annular regions.  In short, it gives us a local pinching estimate which tells us that if $B_1(p)$ looks $\delta$-weakly flat on many scales, then $B_1$ improves and it actually $10^{-2}\delta$-weakly flat.

\begin{theorem}[Curvature Pinching of Weakly Flat Balls]\label{t:weakly_flat_structure}
Let $A$ be a stationary Yang-Mills connection with $0<\kappa<1$ and $\delta>0$.  There exists $c(k,\kappa)>0$ and $\delta'(n,k,\Lambda,\kappa,\delta)>0$ such that if $K<\delta'$ from \eqref{e:YM_assumptions2} with $\fint_{B_{c^{-1}}} |F_A|^2 \leq \Lambda$, $\fint_{B_{c^{-1}}}|F_A[\cL]|^2<\delta'$ and such that $B_r(p)$ is $\delta$-weakly flat ball wrt $\cL$ for $c\leq r\leq c^{-1}$, then we have that $B_1(p)$ is $\kappa\delta$-weakly flat.
\end{theorem}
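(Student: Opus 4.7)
\textit{Proof proposal.} The plan is a contradiction/compactness argument that reduces the pinching statement to the four-dimensional annular estimate of Theorem \ref{t:annular_n=4} via dimension reduction along $\cL$. First fix $c = c(k,\kappa)>0$, to be chosen below in terms of the exponent $\alpha(k)$ and constant $C(k)$ appearing in Theorem \ref{t:annular_n=4}. Assume for contradiction that no such $\delta'$ exists; then one produces a sequence of stationary Yang--Mills connections $A_i$ satisfying \eqref{e:YM_assumptions2} with $K_i \to 0$, $\fint_{B_{c^{-1}}}|F_{A_i}|^2 \leq \Lambda$, $\fint_{B_{c^{-1}}}|F_{A_i}[\cL]|^2 \to 0$, each $B_r(p)$ is $\delta$-weakly flat with respect to $\cL$ for every $r\in[c,c^{-1}]$, yet $B_1(p)$ fails to be $\kappa\delta$-weakly flat for $A_i$.

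By Uhlenbeck's compactness \cite{Uhl_Rem} and convergence of energy measures \cite{Tian_CalYM}, pass to a subsequence so that $A_i \to A_\infty$ modulo gauge on $B_{c^{-1}}$, with $|F_{A_i}|^2\, dv_{g_i} \rightharpoonup |F_{A_\infty}|^2\, dv + \nu$. Since the $\cL$-component of the curvature tends to zero in $L^2$, Theorem \ref{t:defect_measure_symmetries}(1) forces $A_\infty$ to be smooth on $B_{c^{-1}}$, both $A_\infty$ and $\nu$ to be $\cL$-translation invariant, and $\nu = c_0\, \lambda^{n-4}_{\cL_p}$ for some $c_0\geq 0$. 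Dimension reduction along $\cL$ then produces a smooth stationary Yang--Mills connection $B$ on a four-dimensional ball in $\cL^\perp \cong \dR^4$ of bounded energy.

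Now verify the two $\kappa\delta$-weak flatness conditions for $A_i$ with $i$ large. For condition (1), uniformly in $y\in \cL_p\cap B_1$ and $r\geq \kappa\delta$,
\[
r^4 \fint_{B_r(y)}|F_{A_i}[\cL]|^2 \;\leq\; \omega_n^{-1}(\kappa\delta)^{4-n}\!\int_{B_{c^{-1}}}|F_{A_i}[\cL]|^2 \;\longrightarrow\; 0,
\]
which is eventually less than $\kappa\delta$. For condition (2), the scale-$r$ weak flatness of $A_i$ says $d(y,\cL_p)^2 |F_{A_i}|(y)\leq \delta$ on $B_r(p)\setminus B_{\delta r}(\cL_p)$. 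Passing to the limit using smooth convergence off $\cL_p$ and projecting to $\cL^\perp$ via $\cL$-translation invariance, the union over $r\in[c,c^{-1}]$ gives $|y'|^2|F_B|(y')\leq \delta$ on the four-dimensional annulus $A_{\delta c,\, c^{-1}}(0^4)$. Applying Theorem \ref{t:annular_n=4} with $r_-\sim \delta c$, $r_+\sim c^{-1}$, upgrades this to the polynomial bound
\[
|y'|^2|F_B|(y') \;\leq\; C(k)\Big((|y'|c)^\alpha + (\delta c/|y'|)^\alpha\Big)\delta
\]
on a slightly smaller annulus. For $|y'|\in[\kappa\delta,1]$ both ratios are bounded by $c/\kappa$, so the right side is at most $2C(k)(c/\kappa)^\alpha\delta$. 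Fixing $c(k,\kappa)$ so that $2C(k)(c/\kappa)^\alpha \leq \kappa/2$ yields $|y'|^2|F_B|(y')\leq \kappa\delta/2$, and smooth convergence $A_i\to A_\infty$ on $B_1\setminus B_{\kappa\delta/2}(\cL_p)$ transfers this to $d(y,\cL_p)^2|F_{A_i}|(y)\leq \kappa\delta$ for $i$ large, verifying condition (2) and contradicting the failure of $\kappa\delta$-weak flatness.

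The main obstacle is the careful stitching of scale-$r$ weak flatness across the entire range $r\in[c,c^{-1}]$ into a single pointwise curvature bound on the dimensionally-reduced connection $B$; once this is in place, Theorem \ref{t:annular_n=4} can be invoked as a single blackbox to produce the polynomial decay, and the choice of $c$ against the exponent $\alpha(k)$ implements the pinching.
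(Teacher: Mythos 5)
Your proposal is correct and follows essentially the same route as the paper: a compactness/contradiction argument, using Theorem \ref{t:defect_measure_symmetries} to pass to an $\cL$-invariant smooth limit connection, dimension reduction along $\cL$, and an application of the four-dimensional annular estimate of Theorem \ref{t:annular_n=4} with the scale $c(k,\kappa)$ chosen against the exponent $\alpha(k)$ to force the polynomial decay to beat $\kappa\delta$. The only difference is that you spell out the verification of both conditions in Definition \ref{d:weak_flat} explicitly, whereas the paper records only the pointwise bound; the substance of the argument is identical.
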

\vspace{.25cm}

If $n=4$ the above can be viewed as a rewriting of Theorem \ref{t:annular_n=4}.  However, in higher dimensions the above result is morally much weaker because the bound on $\fint_{B_2}|F_A[\cL]|^2$ depends on $\delta$.  In particular, iterating this result to obtain further improvements on more scales requires apriori better bounds in the $\cL$-directions.  Improving this estimate to a true generalization of Theorem \ref{t:annular_n=4} which is scale independent is accomplished in the structure theorem on annular regions in Section \ref{s:annular}.  Regardless, the above local version will be useful at several stages as a more coarse estimate.  \\

We end with a key covering tool, which for us will be the dichotomy that either a ball is weakly flat, or away from a set of small $n-4$ content we must have the energy strictly drops.  Precisely, our result is the following:\\

\begin{theorem}[Weakly Flat Covering]\label{t:weakly_flat_decomp}
Let $A$ be a stationary Yang-Mills connection with $K<\delta$ from \eqref{e:YM_assumptions2},  $\fint_{B_{16}} |F_A|^2 \leq \Lambda$.   Then there exists $\eta(n,k,K,\Lambda,\delta)>0$ such that either
\begin{enumerate}
\item $B_4(p)$ is $\delta$-weakly flat, or
\item We can cover $B_1(p)\subseteq \bigcup_c B_{r_c}(x_c)\cup\bigcup_d B_{r_d}(x_d)$ such that
\begin{enumerate}
\item $\sum_c r_c^{n-4}<\delta$,
\item $\overline\theta_{r_d}(x_d)\leq \overline\theta_1(p)-\eta$ with $\sum_d r_d^{n-4}<C(n,k,K,\Lambda,\delta)$.
\end{enumerate}
\end{enumerate}
\end{theorem}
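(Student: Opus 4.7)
The plan is a dichotomy-plus-covering argument in the spirit of the quantitative stratification program. Parameters are fixed in the order $\delta', \tau, \eta$. Take $\delta' = \delta'(n,k,K,\Lambda,\delta)>0$ small enough that applying Theorem \ref{t:symmetry_implies_weakly_flat} at the appropriate scale with symmetry input $\delta'$ yields the $\delta$-weak flatness of $B_4(p)$---either directly or via the small-curvature conclusion, which trivially implies weak flatness with respect to any $\cL$. Next, take $\tau = \tau(n,\delta)$ small enough that every $\tau$-tube around an $(n-5)$-plane inside $B_2$ has $(n-4)$-Minkowski content at most $\delta/2$. Finally, choose $\eta = \eta(n,k,K,\Lambda,\delta)$ small enough that Theorem \ref{t:independent_points_symmetry} is applicable with independence $\tau$ and output $\delta'$ whenever the monotone quantity $\overline\theta$ varies by at most $\eta$ across the relevant range of scales.

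The key splitting is by the monotone quantity $\overline\theta_r(\cdot)$ from \eqref{e:improved_monotonicity}. Call a point $x$ \emph{stable} if $\overline\theta_r(x) > \overline\theta_1(p) - \eta$ for every $0 < r \leq 1$, and \emph{unstable} otherwise. For each unstable $x$, pick a scale $r(x)$ realizing the drop, and apply a standard $5r$-Vitali selection to obtain disjoint $B_{r(x_d)/5}(x_d)$ whose enlargements $B_{r_d}(x_d)$ cover the unstable subset of $B_1$. Each such base point satisfies $\overline\theta_{r_d}(x_d) \leq \overline\theta_1(p) - \eta$, and the packing estimate $\sum r_d^{n-4} \leq C(n,k,K,\Lambda,\delta)$ follows from the standard monotonicity-based content lemma, using the disjointness of the $B_{r_d/5}(x_d)$ together with the $\Lambda$-bound and the quantitative drop $\eta$.

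It remains to analyze the stable set $E$ via a trichotomy on the number of $\tau$-independent points (in the sense of Theorem \ref{t:independent_points_symmetry}) that it contains inside a suitably enlarged ball. If $E$ contains $n-3$ such points, Theorem \ref{t:independent_points_symmetry} produces $(n-3,\delta')$-symmetry, and then Theorem \ref{t:eps_reg_symmetry} forces a small-curvature bound on $B_4(p)$, making it $\delta$-weakly flat. If $E$ contains exactly $n-4$ such points, Theorem \ref{t:independent_points_symmetry} gives $(n-4,\delta')$-symmetry with respect to the plane they span, and Theorem \ref{t:symmetry_implies_weakly_flat} finishes case (1). Otherwise $E \cap B_1 \subseteq B_\tau(\cL^{n-5})$ for some $(n-5)$-plane, and covering this tube by balls of radius $\tau$ yields $B_{r_c}(x_c)$ with $\sum r_c^{n-4} \leq \delta$ by the choice of $\tau$; the union with the drop balls from the previous paragraph is the covering of case (2).

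The main obstacle will be the consistent choice of the three parameters $\eta, \tau, \delta'$ across the three regimes (small-curvature, weakly-flat, and tube-cover) and the rigorous verification of the packing bound $\sum r_d^{n-4} \leq C$. The packing estimate itself is standard for stationary Yang-Mills once one exploits the $\overline\theta$-upgrade of the monotone quantity, but matching radii so that the symmetry-and-$\epsilon$-regularity chain lands precisely on $B_4(p)$ requires careful tracking across rescalings. A subsidiary point is that the unstable-ball Vitali selection may produce balls based slightly outside $B_1$, which is handled by running the construction on an enlarged reference ball and only then restricting the final cover to $B_1$.
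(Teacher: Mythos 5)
Your overall strategy matches the paper's: split $B_1$ into stable and unstable points via the monotone quantity, apply the cone-splitting theorem (Theorem \ref{t:independent_points_symmetry}) to the stable set to get either $(n-4)$-symmetry (hence weak flatness via Theorem \ref{t:symmetry_implies_weakly_flat}) or containment in a thin tube covered by small-content $c$-balls, and cover the unstable set by $d$-balls. Two remarks: your trichotomy is an unnecessary elaboration, since once you have $n-3$ independent points you already get $(n-4,\delta')$-symmetry and Theorem \ref{t:symmetry_implies_weakly_flat} already absorbs both the small-curvature and the weakly-flat outcomes into case (1); and your stability criterion $\overline\theta_r(x)>\overline\theta_1(p)-\eta$ does not directly supply the \emph{pointwise pinching} hypothesis $|\theta_3-\theta_\delta|(x_i)<\delta$ of Theorem \ref{t:independent_points_symmetry}, since $\overline\theta$ is a supremum over a ball and $\overline\theta_1(p)$ is evaluated at a different center. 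The paper instead defines the stable set by a scale-pinching condition $E_\eta=\{x:|\theta_{10}(x)-\theta_\eta(x)|<\eta\}$, which by monotonicity of $\theta$ immediately yields the required $|\theta_3-\theta_\delta|(x)<\eta<\delta$.

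The more serious gap is in the unstable packing estimate. You cover the unstable set by a variable-radius Vitali family $\{B_{r_d}(x_d)\}$ with $r_d=r(x_d)$ a per-point drop scale, and assert $\sum r_d^{n-4}\le C$ by ``the standard monotonicity-based content lemma.'' This is not standard: disjointness of $B_{r_d/5}(x_d)$ in a bounded set only controls $\sum r_d^n$, not $\sum r_d^{n-4}$, and an energy drop at scale $r_d$ gives neither a uniform lower bound on $r_d$ nor a scale-invariant \emph{lower} bound on the energy in $B_{r_d}(x_d)$ (which is the hypothesis that makes such content lemmas work, as in the bubble-region argument). Obtaining a $k$-content bound from variable-radius drop balls is exactly the kind of thing that requires the full quantitative-stratification/discrete-Reifenberg machinery. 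The paper sidesteps all of this: because the stable set $E_\eta$ is defined using a \emph{fixed} pinching scale $\eta$, the unstable complement is covered by balls of the fixed radius $r_d\equiv\eta$, so $\sum r_d^{n-4}\le C(n)\eta^{-n}\cdot\eta^{n-4}=C(n)\eta^{-4}=C(n,k,K,\Lambda,\delta)$ by a trivial counting argument. If you rework your stable/unstable split so that the unstable set comes with a uniform lower bound on the drop scale (as the definition of $E_\eta$ does), the content estimate becomes immediate and the rest of your argument goes through.
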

\vspace{.25cm}

\subsection{Proof of Theorem \ref{t:symmetry_implies_weakly_flat}}

The proof is really just an application of theorem \ref{t:defect_measure_symmetries} with a contradiction argument.  Thus let us assume for some $\epsilon>0$ no such $\delta>0$ exists.  Then there exits a sequence $A_i$ of connections which are $(n-4,\delta_i)$-symmetric with $\int_{B_2}|F_{A_i}|^2\leq \Lambda$.  After passing to a subsequence we have
\begin{align}
&A_i\to A\, ,\notag\\
&|F_{A_i}|^2 dv_{g_i}\to |F_A|^2 dv_g + \nu\, .	
\end{align}

Using theorem \ref{t:defect_measure_symmetries} we have that $A\equiv 0$ with $\nu = c \lambda^{n-4}_\cL$ a constant multiple of the $n-4$ Hausdorff measure on $\cL$. In particular, for far enough into the sequence $B_1$ is $\epsilon$-weakly flat.\\

To finish the proof we have two options, either $\nu(B_1)\geq \epsilon_{n,k}$ or not.  In the first case we have $(2)$, thus let us assume $\nu(B_1)< \epsilon_{n,k}$.  In this case we have by Theorem \ref{t:eps_reg} that $A_i$ are uniformly smooth on $B_{1/2}$ sufficiently far in the sequence, and in particular we actually have $\nu\equiv 0$.  Thus we have that $A_i\to 0$ smoothly on $B_{3/2}$, and for far enough into the sequence we have $|F|<\epsilon$ on the ball, which shows case $(1)$ is satisfied. $\square$\\

\subsection{Proof of Theorem \ref{t:weakly_flat_structure}}

For $\delta,\kappa>0$ fixed let us assume no such $\delta'(n,k,\Lambda,\delta,\kappa)>0$ exists.  Thus we have a sequence of connections $A_i$ with $\int_{B_{c^{-1}}}|F_{A_i}[\cL]|^2\to 0$ such that for $c\leq r\leq c^{-1}$ we have that $B_r(p)$ is $\delta$-weakly flat.  We will choose $c=c(n,\kappa)>0$ before the end of the proof.  Passing to a subsequence we can use theorem \ref{t:defect_measure_symmetries} to conclude that $A_i\to A$ and $|F_{A_i}|^2dv_{g_i}\to |F_A|^2dv_g+\nu$, where $A$ defines a smooth $n-4$ symmetric Yang Mills connection on $B_{c^{-1}}(p)$ and $\nu=c\lambda^{n-4}_{\cL}$.  In particular, we can now apply theorem \ref{t:annular_n=4} to conclude for $c\delta \leq d(x,\cL)\leq 1$ the estimate
\begin{align}
	d(x,\cL)^2|F_A|(x)\leq C(k)c(k,\kappa)^\alpha\Big(\Big(\frac{d_x}{1}\Big)^\alpha + \Big(\frac{\delta}{d_x}\Big)^\alpha\Big)\delta\, .
\end{align}
For $c(k,\kappa)$ sufficiently small we see that $B_1(p)$ is $\frac{1}{2}\kappa\delta$-weakly flat.  In particular, for far enough in the sequence we have that $B_1(p_i)$ is $\kappa\delta$-weakly flat, which proves the Theorem. $\square$\\

\subsection{Proof of Theorem \ref{t:weakly_flat_decomp}}

Let us define the set
\begin{align}
E_\eta \equiv \{x\in B_1 : |\theta_{10}-\theta_{\eta}|<\eta\}\, .	
\end{align}

Picking $\tau=\delta^2$ and $c(n)\equiv 10^{-6n}$ and using Theorem \ref{t:independent_points_symmetry}, if $\eta<\eta(n,k,\Lambda,\delta')$ and $Vol(B_{2\tau} E_\eta)>c(n)\delta \tau^{4}$ then we know $B_8$ is $(n-4,\delta')$-symmetric, and hence by Theorem \ref{t:symmetry_implies_weakly_flat} that $B_4$ is $\delta$-weakly flat so that $(1)$ is satisfied.  Therefore, we may assume this is not the case and choose a covering
\begin{align}
B_{2\tau}(E_\eta)\subseteq \bigcup_c B_{r_c}(x_c)\, ,
\end{align}
where $r_c\equiv \tau$ and $\sum r_c^{n-4} < \delta$.  Let us define $r_d\equiv \eta$, and note now that for every point $x\in B_1\setminus \bigcup_c B_{r_c}(x_c)$ we have that $\overline\theta_{r_d}(x)\leq \overline\theta_1(p)-\eta$.  By picking a Vitali covering of $B_1\setminus \bigcup_c B_{r_c}(x_c)\subseteq \bigcup B_{r_d}(x_d)$ of such balls we have completed the construction.  $\square$\\

\vspace{.5cm}

\section{\texorpdfstring{$\delta$-Bubble Regions}{delta-Bubble Regions}}\label{s:bubble}

In this section we study a quantitative form of a bubble region.  These will play a role in both our Quantitative Bubble Tree decomposition in Section \ref{s:quant_bubbletree} and the Annulus/Bubble decomposition of Section \ref{s:bubble_decomp}.  Recall from Definition \ref{d:bubble} the notion of a bubble $B$.  We begin with a definition:

\begin{definition}[$\delta$-Bubble Region]\label{d:bubble_quantitative}
Given that $K<\delta$ from \eqref{e:YM_assumptions2} with $\bb$ $=\{b_j\}$ a discrete set and $r:\bb\to \dR$ such that $r_j\leq \delta$, then $\cB=B_{\delta^{-1}}(p)\setminus \overline B_{r_j}(\bb)$ is a $\delta$-bubble region with respect to $\cL=\cL_B^{n-4}$ if 
\begin{enumerate}
\item[(b1)] If $\bb^\perp\equiv \bb\cap \cL^\perp_p$ then for any $b_i\in \bb$ we have $b_i\in \cL_{b_j^\perp}$ with $r_i=r^\perp_j$ for some $b_j^\perp\in\bb^\perp$.
\item[(b2)] $B_{\delta^{-1}}(p)$ and $\{B_{r_j}(b_j)\}$ are $\delta$-weakly flat.
\item[(b3)] $r_A>\bar r(\Lambda,\delta)$ on $\cB$. 
\item[(b4)] $\{B_{r_j/10}(b_j)\}$ are disjoint with $r_j^{4-n}\int_{B_{r_j}(b_j)}|F|^2>\epsilon(n,k)$ for each $b\in \bb$.
\end{enumerate}
\end{definition}
\begin{remark}
The constant $\bar r(\Lambda,\delta)>0$ is fixed according to Theorem \ref{t:bubble_existence}.  Indeed, with a little work one could estimate $\bar r$ explicitly as a polynomial of $\delta$, but this would require many pages of tolling with little added value.
\end{remark}
\begin{remark}
	In $(b4)$ we can take $\epsilon(n,k)=\frac{1}{2}\epsilon_{n,k}$ where $\epsilon_{n,k}$ is from the $\epsilon$-regularity of Theorem \ref{t:eps_reg}.
\end{remark}

 One should view a bubble region in the following manner.  In dimension $4$, consider a $\delta$-weakly flat ball $\B {\delta}{p}$. Ideally, a perfect bubble would be a smooth nontrivial YM connection such that for all $x\in \B {\delta^{-1}}{p}$ we have $r_A\geq r(\Lambda,\delta)$.  This implies that in some sense the curvature is not concentrating on too small scales on this bubble, and in particular the $\epsilon$-regularity theorem ensures that a sequence of such bubbles would automatically converge smoothly with estimates to another $YM$ connection.  However, it may happen that this is not the case. Thus, if we have some concentration of energy on scales smaller than $r(\Lambda,\delta)$, we do not want these pieces to be part of our bubble, and so we cut them out by our balls $B_{r_j}(b_j)$. By definition, each one of this pieces will carry some definite amount of scale-invariant curvature, since otherwise we need not have cut them out in the first place.
 
 In higher dimensions, the situation is very similar. The only difference is that, instead of cutting out $n$-dimensional balls, we cut out tubes around $n-4$ dimensional planes. 
 
 Indeed, one should think of a bubble as being
 \begin{gather}
  \cB = B_{\delta^{-1}}(p) \setminus \bigcup_{b_i\in \bb^\perp} \overline B_{r_i}(b_i+\cL)\, .
 \end{gather}
 For future convenience, instead of writing a bubble as subtracting off a tube around the planes $\cL_{\bb^\perp}$ from $B_{\delta^{-1}}$, we will instead cover those planes by a Vitali collection of balls centered on them and subtract this collection off.  There is no fundamental difference except it is more convenient for technical reasons later.  Condition (b1) is a rephrasing of this idea.\\

We will prove two primary results in this section.  The first is an existence theorem, which will both tell us when bubble regions exist and fix for us our constant $\bar r$ in our definition.  The criteria for existence will help us in our construction of bubble regions in our quantitative bubble tree decomposition.  The second purpose of this section is a structure theorem, the results of which are mostly a straight forward consequence of the definition combined with the knowledge of the behavior of four dimensional solutions given in Section \ref{ss:prelim:n=4} .  This is as opposed to the corresponding structure theorem on $\delta$-annular regions which will be introduced in the next section, which will be quite challenging.\\  

Let us begin with our main existence theorem.  The result will build for us nontrivial bubble regions in the sense that the energy of any removed ball $B_{r_i}(b_i)$ will drop by some strict amount.  This will be an important aspect of future constructions:

\begin{theorem}[Existence of Bubble Regions]\label{t:bubble_existence}
Let $A$ be a stationary Yang-Mills connection, and assume $\int_{B_{2\delta^{-1}}} | F[\cL] |^2<\delta'$, $K<\delta'$ from \eqref{e:YM_assumptions2} and that $B_{\delta^{-1}}(p)$ is $\delta$-weakly flat wrt $\cL$.  If $\delta'<\delta'(n,k,\Lambda,\delta)$, then for $\bar r(\Lambda,\delta)>0$ in Definition \ref{d:bubble_quantitative} there exists a $\delta$-bubble region $\cB=B_{\delta^{-1}}(p)\setminus \overline B_{r_x}(\bb)$ with best subspace $\cL$.  If further we assume $B_r(b)$ is not a $\delta$-weakly flat ball for some $\delta^3<r<\delta^{-1}$ and some $b\in\bb$, then we can build the bubble so that $|\overline\theta_{2}-\overline\theta_{r_b}|(b)>\epsilon(n,k)$ for all $b\in \bb$.
\end{theorem}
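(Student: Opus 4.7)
The plan is to construct the bubble by selecting concentration centers in the transverse slice $\cL^\perp_p$ at the scale where weak flatness and uniform regularity first hold, and then extending along $\cL$ by a Vitali covering. The constant $\bar r(\Lambda,\delta)$ will be fixed by a compactness argument: if no such $\bar r$ existed, a sequence $A_i$ of counterexamples with $\delta'_i \to 0$ would, after subsequence, converge via Theorem \ref{t:defect_measure_symmetries} to a smooth $\cL$-translation invariant Yang--Mills connection $A_\infty$ with defect measure a constant multiple of $\lambda^{n-4}_{\cL}$. The transverse slice $A_\infty|_{\cL^\perp_p}$ is then a finite-energy four-dimensional YM connection, to which Theorem \ref{t:prelim:bubbles} applies and produces the uniform scale.

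For the actual construction, for each $b^\perp \in \cL^\perp_p \cap B_{\delta^{-1}}(p)$ I would define
\begin{align*}
\rho(b^\perp) \equiv \inf\Big\{r \in (0,\delta] :\ B_s(b^\perp) \text{ is }\delta\text{-weakly flat wrt } \cL \text{ and } r_A > \bar r \text{ on } B_s(b^\perp)\ \forall s\in[r,\delta^{-1}]\Big\},
\end{align*}
with $\rho(b^\perp) = 0$ if all scales are already good, and call $b^\perp$ bad when $\rho(b^\perp)>0$. A Vitali selection on the bad set inside the four-dimensional slice $\cL^\perp_p\cap B_{\delta^{-1}}$ produces $\bb^\perp$ with radii $r_{b^\perp} = \rho(b^\perp)$ such that $\{B_{r_{b^\perp}/10}(b^\perp)\}$ are disjoint. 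For each $b^\perp \in \bb^\perp$ I would perform a second Vitali selection along the affine plane $b^\perp+\cL$ with the common radius $r_{b^\perp}$ to produce the full collection $\bb$, so that (b1) is immediate.

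The verification of (b2) and (b4) is then fairly direct: (b2) is built into the infimum defining $\rho$, the disjointness in (b4) comes from the Vitali choice, and the energy lower bound $r_j^{4-n}\int_{B_{r_j}(b_j)}|F|^2 > \epsilon(n,k)$ follows by contradiction from Theorem \ref{t:eps_reg}, since smaller scale-invariant energy at $B_{r_j}(b_j)$ would force $r_A \geq r_j$ on $B_{r_j/2}(b_j)$ and contradict the definition of $\rho$. The main obstacle, and the step where the bulk of work is done, is (b3): the uniform lower bound $r_A > \bar r$ on all of $\cB$. For points outside $B_\delta(\cL_p)$, this is immediate from $\delta$-weak flatness of $B_{\delta^{-1}}(p)$. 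For points inside this tube but outside every removed ball, I would exploit the approximate $\cL$-translation invariance provided by $\int|F[\cL]|^2 < \delta'$ to reduce to a four-dimensional problem on transverse slices; on each such slice, the complement of the removed discs forms a union of four-dimensional annular regions on which Theorem \ref{t:annular_n=4} yields the scale-invariant curvature bound, and hence the required lower bound on $r_A$. The compactness argument from the first paragraph is what closes the loop and pins down the explicit value of $\bar r(\Lambda,\delta)$.

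For the additional energy-drop assertion, suppose some $b \in \bb$ and $r \in (\delta^3,\delta^{-1})$ are such that $B_r(b)$ is not $\delta$-weakly flat. By Theorem \ref{t:weakly_flat_decomp} applied at this scale, the failure of weak flatness produces a strict energy drop $\overline\theta_{r_d}(x_d) \leq \overline\theta_r(b) - \eta(n,k,K,\Lambda,\delta)$ on a portion of positive $(n{-}4)$-content. By choosing $r_b$ to be a controlled multiple of the largest scale in $(\delta^3,\delta^{-1})$ at which $B_r(b)$ fails weak flatness, the construction can be arranged so that $B_{r_b}(b)$ inherits the strict drop $|\overline\theta_2 - \overline\theta_{r_b}|(b) > \epsilon(n,k)$; the $\cL$-translation invariance then propagates this estimate uniformly to all $b \in \bb$.
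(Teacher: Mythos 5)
Your high-level strategy (construct $\bb^\perp$ in the transverse slice, Vitali cover, extend along $\cL$, fix $\bar r$ by compactness) is in the same spirit as the paper's, but the mechanism by which you select the removed radii is flawed, and this flaw propagates into (b3) and (b4).

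First, the definition of $\rho(b^\perp)$ is ill-posed: the constraint ``$r_A > \bar r$ on $B_s(b^\perp)$ for all $s\in[r,\delta^{-1}]$'' is monotone in the wrong direction. Once some $y_0$ with $r_A(y_0)\leq \bar r$ lies in $B_{s_0}(b^\perp)$, it lies in $B_s(b^\perp)$ for all $s\geq s_0$, so the condition fails at every $s\geq s_0$ regardless of how small $r$ is. Whenever any low-regularity point exists in $B_{\delta^{-1}}(b^\perp)$ (which is exactly the case of interest) the infimum is taken over the empty set. Even if you repair this by restricting the regularity test to an annulus or rescaling, you still have no positive lower bound on the removed radii, and this is precisely what (b3) needs: if a removed ball has tiny radius, a point of $\cB$ just outside it can still have tiny regularity scale. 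The paper instead finds, for each center $x$, a \emph{pinched scale} $r_x\in(\bar r(\Lambda,\delta''),\delta^5)$ via pigeonhole on the monotone quantity $\theta_r$ (since $\sum_a|\theta_{s_a}-\theta_{s_{a+1}}|\leq\Lambda$ forces a small gap at one of finitely many dyadic-type scales). The pinching plus the $\cL$-energy smallness gives $(n-4,\delta'')$-symmetry at scale $r_x$, and then Theorem \ref{t:symmetry_implies_weakly_flat} gives the dichotomy directly: either $r_A>\tfrac12 r_j\geq\bar r$ on the ball (kept), or it is $\delta/2$-weakly flat with $\theta_{r_j}>\epsilon_{n,k}$ (removed). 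This both produces the definite lower bound on removed radii and gives (b2), (b3), (b4) simultaneously. Your appeal to Theorem \ref{t:eps_reg} for (b4) has a related gap: a smallness of $\theta_{r_j}$ gives regularity on $B_{r_j/2}$, but to contradict $\rho=r_j$ you would also need weak flatness at all scales below $r_j$, which does not follow from $\epsilon$-regularity alone. Your slice-by-slice appeal to Theorem \ref{t:annular_n=4} for (b3) also does not go through as stated: the connection restricted to a transverse slice is not a four-dimensional Yang--Mills connection, only approximately $\cL$-invariant in $L^2$, and that approximation can only be exploited through a compactness argument (which is what the paper does).

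Second, the energy-drop addendum. The paper proves $|\overline\theta_{2}-\overline\theta_{r_b}|(b)>\epsilon(n,k)$ by contradiction: assuming the drop fails for a sequence with $\delta'_i\to 0$, limiting via Theorem \ref{t:defect_measure_symmetries}, then invoking Theorem \ref{t:eps_reg} to kill the defect on the annulus and Theorem \ref{t:annular_n=4} to show that $B_r(b)$ would have to be $\delta$-weakly flat after all, contradicting the hypothesis. Your attempted direct route via Theorem \ref{t:weakly_flat_decomp} does not give this: that theorem produces a covering in which \emph{some} balls have an energy drop, but it neither identifies those balls with your $b\in\bb$ nor yields the quantitative estimate at the specific radii $r_b$ fixed by your construction. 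The final sentence, that ``$\cL$-translation invariance then propagates this estimate uniformly to all $b\in\bb$,'' treats approximate invariance as exact; the compactness/contradiction step is what turns the approximate symmetry into a usable statement.
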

\begin{remark}	
	$|\overline\theta_{1}-\overline\theta_{r_b}|(b)>\epsilon(n,k)$ is a nontriviality condition, which roughly says that if the bubble is not really an annular region in disguise, then there is a definite energy drop blow the bubble.
\end{remark}
\vspace{.25cm}

Our primary structure theorem for bubble regions is now the following.

\begin{theorem}[Structure of Bubble Regions]\label{t:bubble_region}
	Let $A$ be a stationary Yang-Mills connection on a $\delta$-bubble region $\cB = B_{\delta^{-1}}\setminus \overline B_{r_b}(\bb )$ with $\fint_{B_{2\delta^{-1}}} |F_A|^2 \leq \Lambda$ and $\fint_{B_{2\delta^{-1}}} |F[\cL]|^2 \leq \delta'$.  For each $\epsilon>0$ if $\delta<\delta(n,k,\Lambda,\epsilon)$ and $\delta'<\delta'(n,k,\Lambda,\delta)$  then:
	\begin{enumerate}
	\item If $b_i\in\bb$ with $r_i\leq r\leq \delta^{-1}$ then $\sum_{r_j\in B_r(b_i)} r_j^{n-4} < N(n,k,\Lambda) r^{n-4}$.
	\item For $q\in \cL\cap B_1$ let $b_{q,j}=\cL^\perp_q\cap \cL_{b^\perp_j}$, then $\big|\theta_{\delta^{-1}}(q)-\omega_{n-4}\int_{\cB_q}|F_A|^2 - \sum \theta_{r_j}(b_{q,j})\big|<\epsilon$.
	\item For $q\in \cL\cap B_1$ $\exists$ $c_{i}\in \cB_q\cap B_1$ and $s_{i}>0$ with $\#\{c_i\}\leq N(n,k,\Lambda)$ and $s_i^{4-n}\int_{B_{s_i}}|F|^2>\epsilon(k)$ such that if $R\geq R(n,k,\Lambda,\epsilon)$ then $\big|\int_{\cB_q\cap \bigcup B_{R s_i}(c_i)}|F_A|^2 - \int_{\cB_q}|F_A|^2\big|<\epsilon$.
	\item We have the estimate $\delta^{n-4}\int_{\cB} |\nabla^2 F_A|<C(n,k,\Lambda,\delta)$.
	\end{enumerate}
\end{theorem}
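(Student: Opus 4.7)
The plan is to treat the four parts in order; the main work is in part (2), while parts (1), (3), and (4) follow from the definition together with the four dimensional input from Section \ref{ss:prelim:n=4}.

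For part (1), I would combine the Vitali-type disjointness from (b4) with the structure from (b1). By (b1) the centers $\bb$ arrange as translates along $\cL$ through the perpendicular centers $\bb^\perp \subset \cL^\perp_p$, with $r_j$ constant along each translate. Within a single $\cL$-translate, disjointness of $\{B_{r_j/10}(b_j)\}$ packs an $n{-}4$ dimensional line and bounds the count in $B_r(b_i)$ by $C(r/r^\perp_j)^{n-4}$, contributing $\leq C r^{n-4}$ to the packing sum. Across perpendicular centers, each $b_j^\perp$ corresponds by (b4) to a definite amount of four dimensional energy $\geq \epsilon$, and the number of such centers in a transverse ball is bounded by $N(k,\Lambda)$ via the total energy bound. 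Hence the full sum is $\leq N r^{n-4}$. For part (4), (b3) gives $r_A \geq \bar r(\Lambda,\delta)$ on $\cB$, so $|F_A| \leq \bar r^{-2}$ by Theorem \ref{t:eps_reg} and standard Coulomb gauge bootstrap for the Yang-Mills equation yields $|\nabla^2 F_A| \leq C(n,k)\bar r^{-4}$ pointwise on $\cB$. Integrating over $\cB \subseteq B_{\delta^{-1}}(p)$ and multiplying by $\delta^{n-4}$ gives the claim with a constant depending on $\delta$ through $\bar r$.

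Part (2) is the technical heart. It is a Fubini-type identity that uses the approximate $\cL$-invariance of $A$, quantified by $\fint_{B_{2\delta^{-1}}}|F[\cL]|^2 \leq \delta'$. If $A$ were exactly $\cL$-invariant, then
\begin{align*}
\theta_{\delta^{-1}}(q) = \omega_{n-4}\int_{B_{\delta^{-1}}(q)\cap \cL^\perp_q}|F_A|^2, \qquad \theta_{r_j}(b_{q,j}) = \omega_{n-4}\int_{B_{r_j^\perp}(b_{q,j})\cap \cL^\perp_q}|F_A|^2,
\end{align*}
and the slice $B_{\delta^{-1}}(q)\cap \cL^\perp_q$ decomposes as $\cB_q$ together with the disjoint $4$-balls $B_{r_j^\perp}(b_{q,j})\cap \cL^\perp_q$ which by (b1) are the exact slice images of the removed tubes. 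These ingredients combine to give the identity exactly in the $\cL$-invariant case. I would then prove the quantitative version by compactness: if it failed, a sequence with $\delta_i' \to 0$ would by Theorem \ref{t:defect_measure_symmetries} converge to an exactly $\cL$-invariant limit, contradicting the failure of the identity. Part (3) follows by a parallel compactness argument: on each slice $\cL^\perp_q$ the restriction of $A$ is, up to error $\delta'$, a four dimensional stationary Yang-Mills connection of energy $\leq \Lambda$, and Theorem \ref{t:prelim:bubbles} produces the concentration centers $\{c_i\}$ with scales $\{s_i\}$ and the bound $\#\{c_i\}\leq N(k,\Lambda)$ together with the outer energy control for $R$ large.

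The main obstacle will be executing the compactness argument in (2). Specifically, the slice decomposition of the Vitali cover $\{B_{r_j}(b_j)\}$ into the disjoint $4$-balls $\{B_{r_j^\perp}(b_{q,j})\}$ on each $\cL^\perp_q$ requires care: different $b_j \in b_j^\perp + \cL$ may intersect $\cL^\perp_q$ at nearby points, so one must identify the contribution with the correct unique closest $b_j$ per line. In addition, quantifying the Fubini error by $\delta'$ requires invoking approximate $\cL$-invariance uniformly at every relevant scale $r_j$, which is delicate because the bubble radii may range all the way down to comparable to $\bar r(\Lambda,\delta)\cdot \delta$.
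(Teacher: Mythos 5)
Your proposal matches the paper's strategy part by part: (1) via the disjointness from (b4) together with the definite energy lower bound, (4) via the regularity-scale bound (b3) and elliptic estimates, and (2)–(3) via a compactness/contradiction argument invoking Theorem \ref{t:defect_measure_symmetries} to produce an exactly $\cL$-invariant limit and then applying the four-dimensional results (Theorem \ref{t:prelim:bubbles}, Theorem \ref{t:annular_n=4}). The paper's proof of (1) is simpler than your two-step translate/perpendicular count — it just sums $r_j^{n-4} \leq 2\epsilon(n,k)^{-1}\int_{B_{r_j/10}}|F_A|^2$ over the disjoint balls — but both rest on the same ingredients, and your anticipated difficulty in (2) about matching slice contributions to the correct $b_j^\perp$ is exactly what condition (b1) is designed to handle, and the compactness argument resolves the Fubini error as you expected.
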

\begin{remark}
In fact, it will be a consequence of Theorem \ref{t:main_L1_hessian} that we have the better estimate $\delta^{n-4}\int_{\cB} |\nabla^2 F_A|<C(n,k,\Lambda)$, without the $\delta$-dependence.  However, to prove Theorem \ref{t:main_L1_hessian} it will be enough to first prove this weaker result.	
\end{remark}
\begin{remark}
	$(2)$ is the energy identity for bubbles and tells us that we can compute the energy at a point in terms of the energy of a slice.  
\end{remark}
\begin{remark}	
	$(3)$ is a concentration condition and tells us that the energy of the bubble can separate into at most $N$ clumps, independent of $\delta$.
\end{remark}

\vspace{.25cm}

\subsection{Existence of Bubble Regions}
 
In this subsection we prove Theorem \ref{t:bubble_existence}.  Thus let us fix $\delta>0$ and assume $\fint_{B_{2\delta^{-1}}}|F[\cL]|^2<\delta'$, which will be fixed later.  For $0<\delta''<\delta$ fixed observe that $\forall$ $x\in B_{\delta^{-1}}$ there exists a radius $r_x>0$ with $\bar r(\delta'',\Lambda)<r_x<\delta^5$ such that $|\theta_{2r_x}-\theta_{\delta'' r_x}|(x)<\delta''$.  Indeed, to see this consider the sequence of radii $s_a\equiv (\delta'')^a \delta^5$ and note for all $N\in \dN$ that
\begin{align}
\sum_1^N |\theta_{s_a}-\theta_{s_{a+1}}|(x)\leq \Lambda\, .	
\end{align}
Therefore for $N=\Lambda(\delta'')^{-1}+1$ we see that for one of the radii $s_0,\ldots s_N$ we must have $|\theta_{s_a}-\theta_{s_{a+1}}|(x)<\delta''$ as claimed, otherwise by monotonocity of $\theta$ we contradict the above sum.  Given this we can choose $\delta'<\delta'(\Lambda,\delta'')$ so that we can also be assured $r_x^{4-n}\fint_{B_{2r_x}(x)}|F[\cL]|^2<\delta''$, so that $B_{2r_x}(x)$ is $(n-4,\delta'')$-symmetric.  We now pick $\delta''\leq \delta''(n,k,\Lambda,\delta)$ so that Theorem \ref{t:symmetry_implies_weakly_flat} holds with $\delta$.\\

To finish the construction of $\cB$ let us consider for $x\in \cL^\perp_p\cap B_1$ the covering $\{B_{r_x}(x)\}$ of $\cL^\perp_p\cap B_1$.  Let $\{B_{r^\perp_j}(x^\perp_j)\}$ be a Vitali subcovering with $x^\perp_j\in \cL^\perp\cap B_1$ such that $r^\perp_j = r_{x_j}$ and with $\{B_{r^\perp_j/10}(x^\perp_j)\}$ disjoint.  By translating the centers balls over $\cL$ and picking another Vitali subcovering we can extend this to a covering $\{B_{r'_j}(x'_j)\}$ of $B_1$ such that for each $x'_j$ there exists $x^\perp_i$ with $x'_j\in \cL_{x^\perp_i}$ and $r'_j=r^\perp_i$.  Now by using the uniform lower bound on the radii we can apply Theorem \ref{t:symmetry_implies_weakly_flat} to each ball $B_{r^\perp_j}(x^\perp_j)$ in order to conclude that either $r_A(x^\perp_j) > \frac{1}{2}r^\perp_j\geq \bar r(\Lambda,\delta)$ on $B_{r^\perp_j}(x^\perp_j)$ or that $B_{r^\perp_j}(x^\perp_j)$ is $\delta/2$-weakly flat with $\theta_{r^\perp_j}(x^\perp_j)> \epsilon_{n,k}$.  Let us now define $\bb^\perp=\{b^\perp_j\}\subseteq \{x'_j\}$ to be the collection of balls for which this second condition holds, and let $\bb\subseteq \{x'_j\}$ be the subset for which $b_j\in \cL_{b^\perp_i}$ for some $b^\perp_i\in \bb^\perp$.  We then define $\cB\equiv B_{\delta^{-1}}\setminus \bigcup B_{r_j}(b_j)$ as expected.  Note that on $\cB$ we have the regularity estimate $r_A\geq \frac{1}{2}\min r'_j \geq \bar r(\Lambda,\delta)$.  Therefore with $\delta'<\delta'(n,k,\Lambda,\delta)$ we have that $\cB$ is indeed a $\delta$-bubble region.\\

Let us now focus on proving $|\theta_{2}-\theta_{r_j}|(b_j)>\epsilon(n,k)$ under the assumption that for some $b\in\bb$ and $\delta^4<r<1$ we have that $B_r(b)$ is not $\delta$-weakly flat, at least if we choose $\delta'<\delta'(n,k,\Lambda,\delta)$ sufficiently small.  So assume this is not the case for any $\delta'$, then we can find a sequence of connections $\cA_i$ with bubble regions $\cB_i\equiv B_1\setminus B_{r_{i,x}}(\bb_{i})$ as above with $\fint_{B_1}|F[\cL_i]|^2\to 0$.  After passing to a subsequence we can limit $A_i\to A$ with $|F_{A_i}|^2dv_{g_i}\to |F_A|^2dv_g + \nu$ and $\cB_i\to \cB=B_1\setminus B_{r_x}(\bb)$, and by using Theorem \ref{t:defect_measure_symmetries} we have that $A$ defines a smooth Yang Mills connection on $\cL^\perp$ and that $\nu$ is invariant under translation by $\cL$.  By our contradicting assumption we have that
\begin{align}
\big|(|F_A|^2dv_g + \nu)[B_{2}(b)] - (|F_A|^2dv_g + \nu)[B_{r_b}(b)]\big|\leq \epsilon(n,k)\, .	
\end{align}
By choosing $\epsilon(n,k)$ sufficiently small, from the $\epsilon$-regularity of Theorem \ref{t:eps_reg}, we then have that $\nu\equiv 0$ on $A_{r_b,2}(b)$ with $d_x^2 |F_A|< \epsilon'(n,k)$ on $A_{r_b,2}(b)$, where $d_x=d(x,b)$ and $\epsilon'(n,k)$ is from Theorem \ref{t:annular_n=4}.  Indeed, since $B_{\delta^{-1}}(p)$ is $\delta$-weakly flat we even have that $d_x^2 |F_A|< \epsilon'(n,k)$ on $A_{r_b,\delta^{-1}}(b)$.  Now let us apply Theorem \ref{t:eps_gauge_existence}, and using that $B_{\delta^{-1}}(b)$ and $B_{r_b}(b)$ are $\delta$-weakly flat, we then get the improved estimate for $x\in A_{r_b,\delta^{-1}}$:
\begin{align}
	d_x^2\,|F_A|(x)< C(k)\delta\Big(\big(\delta\,d_x\big)^{\alpha}+\Big(\frac{r_b}{d_x}\Big)^\alpha\Big)\, ,
\end{align}
Now recall our assumption is that for some $\delta^3<r<\delta^{-1}$ we have that $B_r(b)$ is not $\delta$-weakly flat.  In particular, this implies in our case that for some $x\in A_{\delta^4,10}(b)$ we must have that $d_x^2\,|F_A|(x)>\delta$.  However, for $\delta<\delta(k)$ and $r_b<\delta^5$ as constructed we see from the above estimate that this is not possible, and thus we have found our desired contradiction and proved the Theorem. $\square$
\vspace{.25cm}

\subsection{Structure of Bubble Regions}

In this subsection we give a proof of Theorem \ref{t:bubble_region}.  The proof of the content estimate $(1)$ follows immediately from $(b2)$ and $(b4)$ in the definition of a $\delta$-bubble region.  Indeed, using the lower bound $r_j^{n-4}\int_{B_{r_j}(x_j)}|F_A|^2 > \epsilon(n,k)$ together with the fact that $B_{r_j}(x_j)$ are $\delta$-weakly flat we immediately have lower bounds on the slightly smaller balls
\begin{align}
	r_j^{n-4}\int_{B_{r_j/10}(x_j)}|F_A|^2 > \frac{1}{2}\epsilon(n,k)\, .
\end{align}
Combining this with the disjoint property of $\{B_{r_j/10}(x_j)\}$ we have 
\begin{align}
\sum r_j^{n-4}\leq \sum 2\epsilon(n,k)^{-1}\int_{B_{r_j/10}}|F_A|^2 \leq C(n,k)\int_{B_2}|F_A|^2\leq C(n,k)\Lambda\, ,
\end{align}
which proves the content estimate.

Let us now focus on the hessian estimate of $(4)$.  Indeed, for this we use $(b3)$ to see that $r_A>\bar r(\Lambda,\delta)$ on $\cB\subseteq B_{\delta^{-1}}$.  Standard elliptic estimates then give us that $|\nabla^2 F_A|<C(n,k,\Lambda,\delta)$ pointwise on $\cB$, which in particular implies the weaker $L^1$ estimate 
\begin{align}
	\int_{\cB} |\nabla^2 F_A| \leq C(n,k,\Lambda,\delta)\Vol(B_{\delta^{-1}})\, ,
\end{align}
as claimed. \\

We are now left with proving that $(2)$ and $(3)$ hold for $\delta'$ sufficiently small.  So assume this is not the case for any $\delta'$, then we can find a sequence of connections $\cA_i$ with bubble regions $\cB_i\equiv B_1\setminus B_{r_{i,x}}(\bb_{i})$ as above with $\fint_{B_1}|F[\cL_i]|^2\to 0$.  After passing to a subsequence we can limit $A_i\to A$ with $|F_{A_i}|^2dv_{g_i}\to |F_A|^2dv_g + \nu$ and $\cB_i\to \cB=B_1\setminus B_{r_x}(\bb)$, and by using Theorem \ref{t:defect_measure_symmetries} we have that $A$ defines a smooth Yang Mills connection on $\cL^\perp$ and that $\nu$ is invariant under translation by $\cL$. 

Let us first focus on $(2)$, and it is enough to prove this for $q=p$ as the other cases are verbatim.  Indeed, viewing $A$ as a connection on the four dimensional space $\cL^\perp$ we have the equality
\begin{align}\label{e:bubble_existence:1}
&\nu[B_{\delta^{-1}}]+\int_{B_{\delta^{-1}}(0^\perp)}|F_A|^2 = \sum\Big(\nu[B_{r^\perp_j/10}(b^\perp_j)]+ \int_{B_{r^\perp_j}(b_j^\perp)}|F_A|^2\Big)+ \int_{\cB\cap \cL^\perp} |F_A|^2\, .
\end{align}
Using that $\theta_{\delta^{-1}}(p_i)\to \nu[B_{\delta^{-1}}]+\int_{B_{\delta^{-1}}}|F_A|^2$ with $\theta_{r_{i,j}}(b^\perp_{i,j})\to \nu[B_{r_j/10}]+\int_{B_{r_j}(b_j^\perp)}|F_A|^2$ and $\omega_{n-4}\int_{\cB_{p_i,i}}|F_{A_i}|^2\to \int_{\cB\cap \cL^\perp} |F_A|^2$ this shows $(2)$ must hold for sufficiently far in the sequence.  Thus we can concentrate on $(3)$.

To prove $(3)$ let us apply Theorem \ref{t:prelim:bubbles} to the limit $A$ on $\cL^\perp\cap B_4$ to get points $c^A_1,\ldots, c^A_N\in \cL^\perp\cap B_1$.   By construction we have for $R\geq R(n,k,\Lambda,\epsilon)$ that
\begin{align}
\int_{B_2\cap B_{Rs_j}(c_j)}|F_A|^2 < \frac{\epsilon}{4}\, .	
\end{align}
Since $B_{\delta^{-1}}(p)$ is $\delta$-weakly flat we may apply Theorem \ref{t:annular_n=4} in order to see that $\int_{A_{1,\delta^{-1}}(p)}|F_A|^2 < C(k)\delta<\frac{\epsilon}{4}$, and hence
\begin{align}
\int_{B_{\delta^{-1}}\cap B_{Rs_j}(c_j)}|F_A|^2 < \frac{\epsilon}{2}\, .	
\end{align}
Finally, observing that $\text{supp}[\nu]\subseteq \bigcup_{\bb} B_{r_b}(b)$ this gives us for sufficiently far in the sequence that
\begin{align}
\int_{\cB_p\cap B_{\delta^{-1}}\cap B_{Rs_j}(c_j)}|F_{A_i}|^2 < \epsilon\, ,	
\end{align}
which shows that $(3)$ holds and thus finishes the proof. $\square$

\vspace{.5cm}

\section{\texorpdfstring{$\delta$-Annular Regions}{delta-Annular Regions}}\label{s:annular}

In this section we consider the second piece of our quantitative decompositions, namely the $\delta$-annular regions.  These are the regions which will turn out to be the most challenging to analyze, and the next several sections of this paper will be dedicated to proving the results stated in this section.  \\

Quantitative bubble regions have the property that they have large curvature, but only on bounded domains.  On the other hand, quantitative annular regions have small curvature, but over potentially an infinite number of scales.  Recalling the notion of a weakly flat region in Definition \ref{d:weak_flat} we define an annular region in the following manner:\\

\begin{definition}\label{d:annulus}
If $K_M<\delta$ from \eqref{e:YM_assumptions2}, then we call $\cA\subseteq B_2(p)$ a $\delta$-annular region if there exists a closed subset $\cC = \cC_0\cup \cC_+=\cC_0\cup\{x_i\}$, a radius function $r:\cC\to \dR^+$ with $0<r_x\leq \delta$ on $\cC_+$ and $r_x=0$ on $\cC_0$, and a $n-4$ subspace $\cL$ such that $\cA \equiv B_2\setminus \overline B_{r_x}(\cC)$ satisfies
\begin{enumerate}
	\item[(a1)] $\{B_{\tau^2 r_x}(x)\}$ are pairwise disjoint.
	\item[(a2)] For each $x\in \cC$ and $r_x\leq r\leq 2$ we have that $B_r(x)$ is $\delta$-weakly flat wrt $\cL_x\equiv \cL+x$.
	\item[(a3)] For each $x\in \cC$ and $r_x\leq r$ with $B_{2r}(x)\subseteq B_2$ we have that $\cL_{x}\cap B_r \subseteq B_{\tau r}(\cC)$ and $\cC\cap B_r\subseteq B_{\delta r}(\cL_x)$.
	\item[(a4)] $|\Lip\, r_x|\leq \delta$.
\end{enumerate}	
For each $\tau\leq s\leq 1$ we define the regions $\cA_s\equiv B_2\setminus \overline B_{s\cdot r_x}(\cC)$ as well as the wedge regions \newline $\cW^\theta(x)\equiv \big\{y\in A_{r_x/2,2}(x): d(y,\cL_x)\geq \cos(\theta)\, d(y,x)\big\}$ and $\cW^\theta_r(x) \equiv \cW^\theta(x)\cap A_{\cos\theta\, r,r/\cos\theta}(x)$.
\end{definition}
\begin{remark}
The constant $\tau=\tau_n = 10^{-10n}\omega_n$ is a dimensionally chosen constant designed to neutralize any errors obtained overlaps in covering constructions.
\end{remark}
\begin{remark}
For a smooth connection we have $\cC_0=\emptyset$.  For nonsmooth stationary connections one must allow for the possibility that $\cC_0\neq \emptyset$.	
\end{remark}

\begin{remark}\label{r:annular_intersection}
Note that $(a1)$ and $(a4)$ imply that $B_{10r_x}(x)$ intersects at most $C(n)$ other balls in the covering, all of which have radii which are in the range $[\frac{1}{2}r_x,2r_x]$.	
\end{remark}

\vspace{.5cm}

Associated to each annular region is its packing measure.  In the same way in which $\cC$ approximates the $n-4$ defect measure of the connection, we have that the packing measure approximates the $n-4$ Hausdorff measure on the support of this defect measure.  Precisely we have the following:\\

\begin{definition}
	Let $\cA \equiv B_2\setminus \overline B_{r_x}(\cC)$ be an annular region, then we define the associated packing measure
	\begin{align}
    \mu=\mu_\cA \equiv \sum_{x\in \cC_+} r_x^{n-4}\delta_{x} + \lambda^{n-4}|_{\cC_0}\, ,
\end{align}
where $\lambda^{n-4}|_{\cC_0}$ is the $n-4$-dimensional Hausdorff measure restricted to $\cC_0$.
\end{definition}
\vspace{.5cm}

The goal of this section is two fold.  We will first discuss some properties of annular regions.  Our three main properties about annular regions will be to show that the packing measure is Ahlfor's regular, and that in the annular region we have apriori $L^1$ hessian and $L^2$ curvature bounds.  These estimates will be the eventual key to both the global $L^1$ hessian estimate of Theorem \ref{t:main_L1_hessian} and the energy identity of Theorem \ref{t:main_energy_quantization}.  We will prove the Ahlfor's regularity statement in this section, however the curvatures estimates will not be proved until later in the paper, as there is a lot of new technical constructions needed in their proofs.

Our second main goal will be to prove the existence of annular regions.  In order for an annular region to be useful in the end analysis, we will need to know many exist.  In this section we will give some basic criteria used to construct {\it maximal} annular regions.  The maximal property of the constructed annular regions will be crucial in the proof of the annulus/bubble decomposition in Section \ref{s:bubble_decomp}.\\

Let us now begin by stating our main structural result on the properties of annular regions:

\begin{theorem}[Structure of Annular Regions]\label{t:annular_region}
	Let $A$ be a stationary Yang-Mills connection on a $\delta$-annular region $\cA = B_2\setminus \overline B_{r_x}(\cC)$ satisfying \eqref{e:YM_assumptions2} and $\fint_{B_4} |F_A|^2 \leq \Lambda$.  For each $\epsilon>0$ if $\delta<\delta(n,\Lambda,k,\epsilon)$ we then have:
	\begin{enumerate}
	\item For each $x\in \cC$ and $r_x<r<4$ we have that $A(n)^{-1} r^{n-4}\leq \mu\big(B_r(x)\big)\leq A(n) r^{n-4}$.
	\item We have the estimate $\int_{\cA\cap B_1} |\nabla^2 F|<\epsilon$.
	\item We have the estimate $\int_{\cA\cap B_1} |F_A|^2 < \epsilon$.
	\end{enumerate}
\end{theorem}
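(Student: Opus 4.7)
The theorem splits into a geometric packing statement (1) and the two analytic integral bounds (2), (3). I would treat them separately, with (1) derived directly from the defining conditions (a1)--(a4) of an annular region, and (2), (3) derived from the harmonic $\epsilon$-gauge machinery sketched in the introduction together with a superconvexity / Dini estimate.

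For (1), the upper bound $\mu(B_r(x)) \le A(n) r^{n-4}$ is a packing argument. By (a3) we have $\cC \cap B_r(x) \subseteq B_{\delta r}(\cL_x)$, confining every center in $\cC \cap B_r(x)$ to a thin tube of width $\delta r$ around the $(n-4)$-plane $\cL_x$; by (a1) the balls $\{B_{\tau^{2} r_y}(y)\}_{y \in \cC_+}$ are disjoint, and by (a4) the radii $r_y$ satisfy a $\delta$-Lipschitz bound, so neighboring radii are comparable. An $(n-4)$-dimensional volume comparison inside this tube, together with the treatment of $\cC_0$ via Hausdorff $(n-4)$-measure, yields the upper bound. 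The lower bound $\mu(B_r(x)) \ge A(n)^{-1} r^{n-4}$ uses the reverse inclusion in (a3), $\cL_x \cap B_r \subseteq B_{\tau r}(\cC)$: since a Euclidean $(n-4)$-plane has $(n-4)$-content of order $r^{n-4}$ in $B_r$, a covering of $\cL_x \cap B_r$ by balls of radius $\tau r$ centered in $\cC$ (plus a Vitali refinement accounting for which balls are in $\cC_+$ versus $\cC_0$) forces $\sum r_y^{n-4}+\lambda^{n-4}(\cC_0\cap B_r) \gtrsim r^{n-4}$.

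For (2) and (3), the plan is to transfer curvature estimates to estimates on a harmonic $\epsilon$-gauge. Since $\cA \subseteq B_2$ with $B_2$ weakly flat (condition (a2) at the outermost scale), Theorem \ref{t:eps_gauge_existence} produces $V=(V^1,\dots,V^k)$ with $\Delta V^a = 0$ and the controls in \eqref{e:outline:eps_gauge}. Using the algebraic identities $|F(V)| \le 2|\nabla^2 V|$ and $|\nabla^2 F(V)| \le 2|\nabla^4 V| + 2|\nabla F||\nabla V| + |F||\nabla^2 V|$, together with Theorem \ref{t:eps_gauge_on}/\ref{t:eps_gauge_transformation} (which upgrade the average orthonormality of $V$ to a pointwise $\epsilon$-orthonormal frame on $\cA$, up to iteration on a bad set of small $(n-4)$-content), the estimates (2) and (3) reduce to
\begin{align*}
\int_{\cA} d(x,\cC)^{-3}|\nabla V| < \epsilon,\quad \int_{\cA} d(x,\cC)^{-2}|\nabla^2 V| < \epsilon,\quad \int_{\cA} |\nabla^2 V|^2 < \epsilon.
\end{align*}
By elliptic regularity for the harmonic equation $\Delta V^a=0$ on the sub-annuli given by $\epsilon$-regularity (Theorem \ref{t:eps_reg}, equivalently Theorem \ref{t:annular_n=4} in each scale), the two hessian estimates follow from the gradient estimate.

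The technical heart is thus the first bound. I would set up the Green's distance $b$ with $-\Delta G_\mu = \mu$ and $b^{-2} = G_\mu$; by (1) and Lemma \ref{l:annular_distance}, $b \asymp d(\cdot,\cC)$ with $|\nabla b| \asymp 1$ on $\cA$, so the gradient bound is equivalent to the Dini estimate $\int_0^\infty S(r)/r\, dr < \epsilon$ with $S(r) = r^{-2} \int_{\{b=r\}} |\nabla V|\,\phi\,|\nabla b|$. Proposition \ref{p:super_convexity} supplies the superconvexity $r(rS')' \ge (1-\epsilon)^2 S(r) - e(r)$, and the ODE argument of Proposition \ref{p:dini_ode} converts this into the Dini bound, since $\int_0^\infty e(r)/r\, dr \lesssim \epsilon \mu(\cC) \lesssim \epsilon$ by the Ahlfors regularity of $\mu$ from (1) and the $\cC$-localized nature of the error term.

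The main obstacle is the superconvexity estimate on $S(r)$ in dimensions $n>4$: because the singular set $\cC$ is not isolated, the classical three-annulus arguments of Theorem \ref{t:annular_n=4} are unavailable, and one must extract convexity from the harmonicity of $V^a$ against the level sets $\{b=r\}$ of a Green's distance whose level sets can be geometrically complicated. Boundary contributions from the cutoffs near $\partial B_2$ and from the subtracted balls $\overline B_{r_x}(\cC_+)$ must be absorbed into $e(r)$ in a manner compatible with the Ahlfors bound, and the $(1-\epsilon)^2$ constant in the convexity is sharp enough that the error cannot be lossy. A secondary but nontrivial issue is the iteration used to pass from the weak $\epsilon$-orthonormality of $V$ (good only off a set of small $(n-4)$-content) to pointwise curvature control everywhere in $\cA$; this requires scale-invariant Vitali coverings of the bad set by smaller annular regions, on each of which the argument is repeated.
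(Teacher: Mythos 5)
Your proposal follows essentially the same route as the paper: Ahlfors regularity via the packing/bilipschitz-projection argument built from (a1), (a3), (a4); existence of a harmonic $\epsilon$-gauge $V$ on the weakly flat enclosing ball; the algebraic reduction of $|F|$ and $|\nabla^2 F|$ to $|\nabla^{(k)}V|$; the Green's distance $b$ and the superconvexity inequality for $S(r)$ fed into the Dini ODE estimate; and the geometric-series iteration (the paper's Lemma \ref{l:annular_region}) to upgrade from control off a small $(n-4)$-content bad set to control on all of $\cA$. The plan is accurate and complete in its identification of the key steps and obstacles.
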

\vspace{.5cm}

With this in hand let us see what criteria may be used to build annular regions.  A key result in the annulus/bubble decomposition of Theorem \ref{t:bubble_decomposition} is the $n-4$ content bound on the number of pieces to the decomposition.  It is worth noting that for such an estimate to hold, one must be quite careful about the construction of the annular regions.  Indeed, if one were to build annular regions which were much smaller than they need to be, it is possible the content estimate would fail.  Therefore, we must also analyze what it means to build {\it maximal} annular regions, a concept which will be made precise in the following Theorem:

\begin{theorem}[Existence of Annular Regions]\label{t:annular_existence}
Let $A$ be a stationary Yang-Mills connection with $\int_{B_4}|F_A|^2\leq \Lambda$.  For each $\epsilon>0$ and $0<\delta<\delta(n,k,\Lambda,\epsilon)$ there exists $\delta'(n,k,K,\Lambda,\delta)$ such that if 
\begin{enumerate}
\item $\int_{B_8} |F[\cL]|^2<\delta'$ ,
\item For each $4^{-1}\delta\leq r\leq 4$ we have that $B_r(p)$ is $\delta$-weakly flat wrt $\cL$,
\end{enumerate}
then $\exists$ a $\delta$-annular region $\cA=B_2\setminus \overline B_{r_x}(\cC)$ wrt $\cL$ such that if we consider the set
\begin{align}
	\cC^c\equiv \begin{cases}
 	x\in \cC:& B_{r}(x) \text{ is }\delta\text{-weakly flat for }\delta^4 r_x\leq r\leq r_x\, ,\text{ or }\\
 	x\in\cC :& \exists\, q\in\cL \text{ with }\cL^\perp_q\cap \bar B_{r_x}(x)\neq\emptyset \text{ and }\int_{\cL^\perp_q\cap \cA}|F_A|^2>\epsilon\, ,
 \end{cases}
\end{align}
then we have the estimate $\mu(\cC^c\cap B_1)<\epsilon$.
\end{theorem}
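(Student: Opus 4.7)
The plan is to construct $\cA$ by a dyadic, tree-like scale-by-scale refinement, then decompose $\cC^c$ into two pieces controlled by distinct mechanisms. Fix dyadic scales $s_k = \tau^{k+1}$ and begin the tree at the root $p$ at scale $s_{-1}=2$. Inductively at scale $s_k$, for each active center $x$ (certified by $\delta$-weak flatness of $B_{s_{k-1}}(x)$ with respect to $\cL_x:=x+\cL$), test whether $B_{s_k}(x)$ is still $\delta$-weakly flat with respect to $\cL_x$. If yes, cover $\cL_x\cap B_{s_k}(x)$ by a Vitali family at scale $\tau s_k$ centered on $\cL_x$ and promote those centers to the next scale; if no, freeze $x$ with $r_x=s_k$. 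Branches that never terminate contribute to $\cC_0$ with $r_x=0$. Since every center lies on the affine subspace $\cL_p$, all the $\cL_x$ coincide with $\cL_p$, and (a3) is built in by the Vitali step; (a1),(a2) are immediate. A final standard smoothing step, enlarging each $r_x$ by a bounded factor so that neighbouring frozen radii differ by at most a factor close to $1$, delivers (a4) without damaging the other axioms.

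The remaining estimate $\mu(\cC^c\cap B_1)<\epsilon$ is proved by splitting $\cC^c=\cC^c_{\mathrm{flat}}\cup\cC^c_{\mathrm{slice}}$ according to the two clauses. For $x\in\cC^c_{\mathrm{flat}}$ the freezing rule forces $B_{r_x/\tau}(x)$ to fail $\delta$-weak flatness, while the clause asserts $B_r(x)$ is $\delta$-weakly flat for all $r\in[\delta^4 r_x,r_x]$. The pinching Theorem \ref{t:weakly_flat_structure} would then self-improve this to $\kappa\delta$-weak flatness at scale $r_x$, contradicting the failure at the parent scale, unless the scale-invariant $\cL$-energy is not small, that is
\begin{equation*}
r_x^{4-n}\int_{B_{c^{-1}r_x}(x)}|F[\cL_x]|^2 \;\geq\; \eta(n,k,\delta)>0.
\end{equation*}
Summing over the quasi-disjoint collection $\{B_{\tau^2 r_x}(x)\}$ and invoking hypothesis (1) yields $\mu(\cC^c_{\mathrm{flat}}\cap B_1)\leq C(n,k,\delta)\,\delta' < \epsilon/2$ once $\delta'<\delta'(\delta,\epsilon)$. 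For $x\in\cC^c_{\mathrm{slice}}$, there is a $q\in\cL$ within distance $\leq r_x$ of $x$ such that $\int_{\cL_q^{\perp}\cap\cA}|F_A|^2>\epsilon$; a Fubini argument along $\cL$-translates bounds the $\lambda^{n-4}$-measure of such $q$'s by $\epsilon^{-1}\Lambda$, and (a3) together with the Vitali packing limits how much $\mu$-mass near each such $q$ lives at each dyadic scale, giving $\mu(\cC^c_{\mathrm{slice}}\cap B_1)<\epsilon/2$ for $\delta$ sufficiently small.

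The most delicate step is the $\cC^c_{\mathrm{flat}}$ bound: Theorem \ref{t:weakly_flat_structure} only triggers when the $\cL$-directed energy at the larger scale is itself small, so converting "weak flatness on $[\delta^4 r_x,r_x]$ plus failure at $r_x/\tau$" into a genuine lower bound on $r_x^{4-n}\int|F[\cL_x]|^2$ requires an auxiliary compactness/contradiction step built from Theorem \ref{t:defect_measure_symmetries} and the weakly-flat structure theorem of Section \ref{s:weakly_flat}. Equally subtle is the ordering of smallness parameters, $\epsilon\to\delta\to\delta'$: one must choose $\delta$ small in terms of $\epsilon$ to run both the slice Fubini and the pinching contradiction, and finally $\delta'$ small in terms of both to absorb the residual $\cL$-energy contribution, without creating circular dependence with the construction itself. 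The verification that the Lipschitz smoothing of $r$ does not re-introduce type-$\mathrm{flat}$ centers will also require some care.
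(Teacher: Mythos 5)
Your overall architecture is aligned with the paper's: build a scale-by-scale Vitali construction stopping where weak flatness breaks, then split $\cC^c$ into a ``flatness'' piece and a ``slice'' piece controlled by different mechanisms. But both of your bounds contain genuine gaps, and in each case the missing ingredient is something the paper handles with a specific additional input.

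For $\cC^c_{\mathrm{slice}}$ your Fubini argument is on the right track, but the bound you derive is $m\leq \epsilon^{-1}\Lambda$, which is not small for fixed $\epsilon$ and cannot be repaired by letting $\delta$ shrink unless one invokes the annular curvature estimate. The paper instead applies the structure theorem estimate $\int_{\cA\cap B_1}|F_A|^2<\epsilon'$ (Theorem \ref{t:annular_region}.3, proved independently via the $\epsilon$-gauge theory and the superconvexity argument), which for $\delta<\delta(n,k,\Lambda,\epsilon')$ reduces the Fubini bound to $\epsilon'/\epsilon$, and then chooses $\epsilon'\ll\epsilon^2$. The paper also needs one further step you omit: a pointwise gradient estimate on $F_A$ in the weakly flat annulus (giving $|\nabla F_A|\lesssim\delta r^{-3}$) to pass from an average of slice energies over $B_{r_x}(\pi(x))$ to a maximum over that ball, before invoking the Vitali/Ahlfors packing. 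Without these two inputs the slice estimate does not close.

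For $\cC^c_{\mathrm{flat}}$ the core issue is precisely the one you flag at the end but do not resolve: the Lipschitz smoothing of $r_x$ enlarges some radii substantially, and for those centers the failing scale $\sim\tau r_x^{\mathrm{old}}$ can drop below $\delta^4 r_x^{\mathrm{new}}$, re-introducing type-flat centers that your raw construction excluded. Your proposed pinching-contradiction to show such centers force a lower bound on $\cL$-energy cannot work as stated, because Theorem \ref{t:weakly_flat_structure} has the smallness of $\fint|F[\cL]|^2$ as a \emph{hypothesis}, not a conclusion; negating its conclusion does not produce the quantitative $\cL$-energy lower bound you need, and the promised ``auxiliary compactness step'' is exactly where the hard work lives. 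The paper sidesteps this by building the $\cL$-energy criterion directly into the stopping rule: it defines $s_x$ (first scale where $\cL$-energy exceeds $\delta''$), $s'_x$ (first scale where weak flatness fails), sets $t_x=\max\{s_x,s'_x\}$, and defines $r_x$ in terms of $\delta\,d(x,\{y_i\})$ so that Lipschitz control is automatic. It then sorts the Vitali ball centers into those where $s'$ dominates (the $t'_i$ balls) and those where $s$ dominates (the $\tilde t_i$ balls); the $\tilde t_i$ balls have content $\leq\delta^2$ by a maximal-function argument against hypothesis (1), and the Claim (using the pinching theorem with the two-sided comparability $c(k)t'_i<t_y<10^4 t'_i$) shows centers in $t'_i$ balls cannot be in $\cC^{c,1}$, so $\cC^{c,1}\subseteq\bigcup B_{\tilde t_j}(\tilde y_j)$. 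To make your tree construction rigorous you would need to run this same two-criterion stopping rule and the same sorting; simply stopping on weak-flatness failure and then smoothing afterwards leaves the $\cC^c_{\mathrm{flat}}$ bound unproven.

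A smaller remark: you place all centers exactly on $\cL_p$, whereas the paper lets $x_q$ be an argmin of $r_x$ over the slice $\cL_q^\perp$ and only requires $\cC\subseteq B_{\delta r}(\cL_x)$. Putting centers on $\cL$ is a legitimate simplification for satisfying (a3), but choosing the argmin is part of what makes the paper's stopping radii truly minimal at each slice, which feeds into the maximality statement; it is worth checking that your on-$\cL$ version does not lose this.
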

\vspace{.5cm}

\subsection{Proof of Ahlfor's Regularity of Annular Region}\label{ss:annular_ahlfors_reg}

In this subsection we prove the Ahlfor's regularity of Theorem \ref{t:annular_region}.1 .  Thus throughout this section $\cA=B_2\setminus \overline B_{r_x}(x)$ is a $\delta$-annular region and $\mu$ is its packing measure.  To prove the result let us begin with the following:\\

{\bf Claim: } The projection mapping $\pi:\cC\to \cL$, from the center points to the annular best plane $\cL$, is a $1+\epsilon_n$-bilipschitz map where $\epsilon_n<100^{-1}$.  That is, for $x,y\in \cC$ we have that $(1-\epsilon_n)|\pi(x)-\pi(y)| \leq d(x,y)\leq (1+\epsilon_n)|\pi(x)-\pi(y)|$.\\

To prove the claim let us consider $x,y\in\cC$ and let $r\equiv d(x,y)$.  Then by condition $(a3)$ we have for $\cL_x\equiv \cL+x$ that $\cL_x\cap \overline B_r\subseteq B_{\tau_n r}(\cC)$ and $\cC\cap \overline B_r\subseteq B_{\tau_n r}(\cL)$.  In particular, this gives us that
\begin{align}
|\pi(x)-\pi(y)|	-\tau_n r \leq d(x,y) \leq |\pi(x)-\pi(y)| +\tau_n r\, .
\end{align}
However, we have chosen our scale so that $r\equiv d(x,y)$, and therefore by rearranging we have
\begin{align}
	\frac{1}{1+\tau_n}|\pi(x)-\pi(y)|\leq d(x,y)\leq \frac{1}{1-\tau_n}|\pi(x)-\pi(y)|\, ,
\end{align}
which finishes the proof of the claim. $\square$\\

Now to finish the proof let us pick $x\in \cC$ with $r_x\leq r<1$ such that $B_{2r}(x)\subseteq B_2$.  Let us first prove the upper bound on $\mu(B_r(x))$.  First note that by the bilipschitz condition we have that
\begin{align}
\pi(\cC\cap B_r(x))\subseteq B_{2r}(\pi(x))\, .	
\end{align}
Further, since the collection of balls $\{B_{\tau^2 r_y}(y)\}$ are all disjoint, if we again use the bilipschitz condition we must have that the image balls $\{B_{\tau^3 r_y}(\pi(y)\}$ are disjoint in $\cL$.  But then this give us
\begin{align}
\mu(B_r(x)) &\equiv \sum_{y\in\cC_+\cap B_r(x)} r_y^{n-4} +\lambda^{n-4}\big(\cC_0\cap B_r(x)\big) \leq \tau^{-3(n-4)}_n\Big(\sum_{y\in\cC_+\cap B_r(x)} (\tau_n^3\,r_y)^{n-4}\Big)+\lambda^{n-4}\big(\cC_0\cap B_r(x)\big)\notag\\
&\leq C(n)\Big(\sum_{x\in\cC_+\cap B_r}\Vol(B_{\tau^3 r_y}(\pi(y)))+Vol(\pi(\cC_0\cap B_r))\Big)\leq C(n)\Vol(B_{2r}(\pi(x)))\leq A(n)r^{n-4}\, ,
\end{align}
where we have used the bilipschitz condition on $\pi$ multiple times and that $\pi(B_r(x))\subseteq B_{2r}(\pi(x))$.\\

In order to prove the lower bound we start with the following claim:\\

{\bf Claim: } For each $x\in \cC$ and $r_x\leq r<1$ with $B_{2r}(x)\subseteq B_2$ we have that $B_{r_y}\big(\pi(\cC\cap B_r(x))\big)=\bigcup_{y\in \pi(\cC\cap B_r(x))} B_{r_y}(y)\supseteq B_{r/2}(\pi(x))$.\\

To prove the result let us assume it is false and let $y\in B_{r/2}(\pi(x))\setminus B_{r_y}\big(\pi(\cC\cap B_r(x))\big)$.  Let us choose $z\in \pi(\cC\cap B_r(x))$ such that
\begin{align}
z\in \text{argmin}\{ d(z,y):z\in \pi(\cC\cap B_r(x))\}	\, .
\end{align}

Let $s\equiv d(y,z)$, and note that by our assumption on $y$ we have that $s\geq r_z$.  But if we apply condition $(a3)$ to $B_{2s}(z)$, then we know that $\pi(\cC\cap B_r(x))$ is $2\tau_n s$ dense in $B_{2s}(\pi(z))\subseteq \cL$, which contradicts that $z$ is the closest point of $\pi(\cC)$ to $y$ and satisfies $d(y,z)=s$.  This proves the claim. $\square$\\

With the claim in hand we can now easily prove the lower volume bound.  Indeed, we have:
\begin{align}
\mu(B_r(x)) = \sum_{y\in \cC\cap B_r} r_y^{n-4} \geq C(n)^{-1}\sum_{y: \pi(y)\in B_{r/2}(\pi(x))} \Vol(B_{r_y}(\pi(y)))\geq C(n)^{-1}\Vol(B_{r/2}(\pi(x)))\geq A(n)^{-1} r^{n-4}\, ,
\end{align}
which finishes the proof of the lower bound. $\square$ \\

\subsection{Existence of Annular Regions}

In this section we deal with the issue of proving the existence of a $\delta$-annular region which is maximal in a suitable sense.  Let us pick some $\delta''>0$ which will be fixed later, and now we begin by defining the following for each $x\in B_2$:
\begin{align}
&s_x\equiv \inf_{s\leq 1}\Big\{\forall s\leq r\leq 1 \text{ } r^{4-n}\int_{B_r(x)}|F[\cL]|^2<\delta''\Big\}\, ,	\notag\\
&s'_x\equiv \inf_{s\leq 1}\Big\{\forall \delta^{3} s\leq r\leq 1 \text{ }B_r(x) \text{ is $\delta$-weakly flat}\Big\}\, ,\notag\\
&t_x\equiv \max\big\{s_x,s'_x\big\}\, .
\end{align}

Now for each $q\in \cL\cap B_1$ let $y_q\in \cL_q^\perp$ and $t_q\geq 0$ be defined by
\begin{align}
&t_q\equiv \min_{y\in \cL_q^\perp} t_y\, ,\	\notag\\
&y_q\in \arg\min_{y\in \cL_q^\perp} t_y\, .
\end{align}

Now let us define $\{B_{t_i}(y_i)\}$ be a maximal subset of $\{B_{t_q}(y_q)\}$ such that $\{B_{10^{-1}t_i}(y_i)\}$ are disjoint.  Let us decompose this collection into two subcollections:
\begin{align}
	\{B_{t_i}(y_i)\} = \{B_{t'_i}(y'_i)\}\cup \{B_{\tilde t_i}(\tilde y_i)\}\, ,
\end{align}
where $\{B_{t'_i}(y'_i)\}$ is the subcollection such that 
\begin{align}
t'_i	= \max\{s_{y'_i}, s'_{y'_i}\} = s'_{y'_i}\geq 10 \sup_{y\in B_{t'_i}(y'_i)}s_{y}\, ,
\end{align}
and $\{B_{\tilde t_i}(\tilde y_i)\}$ are the remaining balls.  Notice that if $\delta'<\delta'(n,\delta, \delta'')$ then by a standard maximal function argument we have the estimate
\begin{align}\label{e:annular_existence:1}
	\sum \tilde t_i^{\,\,n-4} \leq \delta^2\, .
\end{align}

Let us now consider the following claim:\\

{\bf Claim: }If $\delta''<\delta''(n,k,\Lambda,\delta)$ and $y\in B_{1000 t'_{i}}(y'_i)$, then $c(k)t'_i< s'_y\leq t_y$.  Further, if $y\in \{y_j\}$ is a ball center then we have the two sided estimate $c(k)t'_i< t_{y}< 10^4 t'_i$. \\

The upper bound $t_{y}< 10^4 t'_i$ in the case $y$ is a center point follows from the Vitali condition, therefore we will focus on the lower bound $c(k)t'_i< s'_y$ for general $y$, which will itself follow from Theorem \ref{t:weakly_flat_structure}.  Note by the definition of $t'_i$ we have that $B_{\delta t'_i}(y'_i)$ is $\delta$-weakly flat, but $B_r(y'_i)$ is not $\delta$-weakly flat for some $\frac{1}{2}\delta^{3} t'_i <r<\delta^{3} t'_i$.

Now let us look at the ball $B_{t'_i}(y)$.  If we assume $s'_y\leq c(k) t'_i$, then we have for all $c(k)\delta^{3} t'_i\leq r\leq c(k)^{-1}\delta^{3} t'_i$ that $B_{r}(y)$ is $\delta$-weakly flat.  Using Theorem \ref{t:weakly_flat_structure} with $\kappa=10^{-1}$ we have for $c(k)$ sufficiently small that $B_{r}(y)$ is $10^{-1}\delta$-weakly flat for all $10^{-1}\delta^{3} t'_i\leq r\leq 10\delta^{3} t'_i$.  In particular, for $\delta''\leq\delta''(n,k,\Lambda,\delta)$ we have that there must exist a point $z\in \cL^\perp_{y'_i}\cap B_{t'_i}$ such that $s'_z\leq 2^{-1}t'_{i}$.  However, by the definition of $B_{t'_i}(y'_i)$ we also have that $s_z\leq 10^{-1}t'_{i}$, and therefore $t_z\leq 2^{-1}t'_{i}$.  This contradicts that $t_{y'_i} = \min_{u\in\cL^\perp_{y'_i}}t_u$, and thus proves the Claim. $\square$\\

To continue with the proof of Theorem \ref{t:annular_existence} let us define the radii function
\begin{align}
r_x \equiv \begin{cases}
  \delta^2 t_i &\text{ if }x\in B_{\delta t_i}(y_i)	\, ,\notag\\
  \delta d(x,\{y_i\}) &\text{ if }x\notin \bigcup B_{\delta t_i}(y_i)\, .
 \end{cases}
\end{align}
Note that $|\Lip\, r_x|<\delta$.  Now we play a similar game as in the previous covering and define for each $q\in \cL\cap B_1$ the point $x_q\in \cL_q^\perp$ and $r_q\geq 0$ by
\begin{align}
&r_q\equiv \min_{x\in \cL_q^\perp} r_x\, ,	\notag\\
&x_q\in \arg\min_{x\in \cL_q^\perp} r_x\, .
\end{align}

Now for some $x_q$ let $y_i$ be the center point such that either $x_q\in B_{\delta t_i}(y_i)$ or $r_q = \delta d(x,y_i)$, so that in either case we have that $r_q\geq \delta^2 t_i$.  Since $B_r(y_i)$ is $\delta$-weakly flat for $\delta^3 t_i\leq r\leq c(k)^{-1}$ we have by Theorem \ref{t:weakly_flat_structure} the improved estimate that $B_{r}(y_i)$ is $10^{-1}\delta$-weakly flat for $c(k)\delta t_i\leq r\leq 1$.  In particular, we have that $B_{r}(y_i)$ is $10^{-1}\delta$-weakly flat for $r_q\leq r\leq 1$.  If $\delta''\leq \delta''(n,k,\Lambda,\delta)$ is sufficiently small we then conclude that $B_{r}(x_q)$ is itself $\delta$-weakly flat for $r_q\leq r\leq 1$.  

We now define our annular region by $\cC\subseteq \{x_q\}$ so that $\{B_{r_x}(x)\}_{x\in\cC}$ is a maximal subset of $\{B_{r_q}(x_q)\}_{q\in\cL}$ such that $\{B_{\tau^2 r_x}(r_x)\}$ are disjoint.  It is a straightforward, if somewhat tedious, exercise using the remarks of the previous paragraph to check for $\delta''\leq \delta''(n,k,\Lambda,\delta)$ that this defines a $\delta$-annular region.  We will focus then on the nontriviality of this annular region.  That is, if we consider the sets
\begin{align}
	&\cC^{c,1}\equiv \big\{
 	x\in \cC: B_{r}(x) \text{ is }\delta\text{-weakly flat for }\delta^4 r_x\leq r\leq r_x\, \big\}\, ,\notag\\
 	&\cC^{c,2}\equiv \big\{x\in\cC : \exists\, q\in\cL \text{ with }\cL^\perp_q\cap \bar B_{r_x}(x)\neq\emptyset \text{ and }\int_{\cL^\perp_q}|F_A|^2>\epsilon\big\}\, ,
\end{align}
then we want to see for $\delta'\leq \delta'(n,k,\Lambda,\delta)$ that we have the estimates $\mu\big(\cC^{c,1}\big)$, $\mu\big(\cC^{c,2}\big)<\frac{1}{2}\epsilon$.  We begin by estimating $\cC^{c,1}$.  To accomplish this let us consider $x\in \cC\cap B_{t'_i}(y'_i)$, then by using the two sided bound of the previous Claim, and particular that $c(k) t'_i\leq t_j\leq c(k)^{-1}t'_i$ for any other ball center in $B_{2t'_i}(y'_i)$, we have the estimate 
$$c(k)\delta\, t'_i\leq r_x\leq c(k)^{-1}\delta\, t'_i\, .$$

Additionally, we know by the Claim that $s_x\geq c(k)t'_i$, which is to say for some radius $r\geq \frac{1}{2}c(k)\delta^3 t'_i$ we must have that $B_r(x)$ is not $\delta$-weakly flat.  Combining this with the above estimate we see that for some radius $r\geq c(k)\delta^2 r_x$ that $B_r(x)$ is not $\delta$-weakly flat, which for $\delta<\delta(k)$ implies that $x\not\in \cC^{c,1}$, and in particular gives us the inclusion
\begin{align}
\cC^{c,1}\subseteq \bigcup B_{\tilde t_j}(\tilde y_j)\, .	
\end{align}
Finally, using \eqref{e:annular_existence:1} and the Ahlfor's regularity on $\mu$ proved in Section \ref{ss:annular_ahlfors_reg} we have the estimate
\begin{align}
\mu\big(\cC^{c,1}\big)\leq \sum \mu\big(B_{\tilde t_j}(\tilde y_j)\big)	\leq C(n)\sum \tilde t^{\, n-4}_j \leq C(n)\delta^2<\delta\, .
\end{align}
Now we focus on estimating $\cC^{c,2}$, which will itself depend on the curvature estimate of Theorem \ref{t:annular_region}.3.  So let us choose $\delta<\delta(n,k,\Lambda,\epsilon')$ such that Theorem \ref{t:annular_region} holds with $\epsilon'>0$.  Note then that we have
\begin{align}
\int_{B_{1}(0^\cL)}\int_{\cL^\perp_q\cap \cA}|F_A|^2 = \int_{\cA\cap B_1} |F_A|^2 <\epsilon'	\, .
\end{align}
Using the first claim of subsection \ref{ss:annular_ahlfors_reg}, where it is shown that the projection $\pi_\cL$ is uniformly bilipschitz on $\cC$, this implies that
\begin{align}
\int_{B_1}\Big(\fint_{B_{r_x}(\pi(x))}\int_{\cL^\perp_q\cap \cA} |F_A|^2 \Big)\, d\mu[x] < C(n)\epsilon'\, .	
\end{align}
In particular, if $\epsilon'\leq c(n)\epsilon^2$ then away from a set $\cC'\subseteq \cC\cap B_1$ with $\mu(\cC')<\frac{1}{2}\epsilon$ we have that $\fint_{B_{r_x}(\pi(x))}\int_{\cL^\perp_q\cap B_1} |F_A|^2<10^{-1}\epsilon$.  Now let us use that $B_r(x)$ is $\delta$-weakly flat for each $r>r_x$ combined with standard elliptic estimates to see that $|\nabla F_A|<C(n)\delta r^{-3}$ on $A_{r,r/2}(\cL_x)$.  This in particular gives us for each $x\in \cC\cap B_1$ that
\begin{align}
	\Big|\max_{q\in B_{r_x}(\pi(x))}\int_{\cL^\perp_q\cap \cA}|F_A|^2 - \fint_{B_{r_x}(\pi(x))}\int_{\cL^\perp_q\cap \cA}|F_A|^2\Big| <C(n)\delta\,r_x\, .
\end{align}
For $x\not\in \cC'$ this gives us that
\begin{align}
	\max_{q\in B_{r_x}(\pi(x))}\int_{\cL^\perp_q\cap \cA}|F_A|^2\leq  \fint_{B_{r_x}(\pi(x))}\int_{\cL^\perp_q\cap \cA}|F_A|^2+ C(n)\delta\,r_x<10^{-1}\epsilon+C(n)\delta^2<\epsilon\, .
\end{align}
Therefore we have that $\cC^{c,2}\subseteq \cC'$ and thus $\mu(\cC^{c,2})<\frac{1}{2}\epsilon$, which finishes the proof of the Theorem. $\square$
\vspace{.5cm}

\section{\texorpdfstring{Harmonic $\epsilon$-Gauge}{Harmonic epsilon-Gauge}}\label{s:eps_gauge}

Standard in any gauge problem is the need to choose a good coordinate system in order to study the equations.  In the context of Yang Mills the standard gauge condition one prefers is the Coulomb gauge.  Unfortunately, such a gauge will only exist locally and in general only when the underlying geometry is quite simple (e.g. when the curvature of the connection is small).

A key object of study in this paper are annular regions.  Annular regions $\cA\subseteq B_2$ are regions for which the connection looks very flat in a weak sense, but there is some curvature concentration on $B_2$ which is only visible on small scales inside the singular balls.  A Coulomb gauge will in general certainly not exist on the whole ball.

Instead in this section we will introduce a form of linearized Coulomb gauge associated to the induced vector bundle $E\to M$ coming from the orthogonal representation of $G\subseteq \SO(k)$.  This gauge will exist and solve an equation on the whole ball $B_2$, a point which will be important and useful in the analysis.  On the other hand, this linearized gauge will only form a a legitimate (vector bundle) gauge on part of the ball.  Recall that if $A$ is a Yang-Mills connection on $P$ then $E$ is equipped with a metric connection $\nabla_A$.  In particular, we have the associated Laplace operator $\Delta_A:\Gamma(E)\to\Gamma(E)$.  Let us begin by defining our notion of an $\epsilon$-gauge on $E$:\\

\begin{definition}\label{d:eps_gauge}
We say that sections $V^1,\ldots,V^k\in\Gamma(B_r,E)$ form a harmonic $\epsilon$-gauge on $B_r(x)$ if the following hold:
\begin{enumerate}
\item $\Delta V^a = 0$.
\item $|V^a|\leq 1+\epsilon$	.
\item $\fint_{B_r}|\langle V^a,V^b\rangle - \delta^{ab}|<\epsilon$.
\item $r^2\fint_{B_r}|\nabla V|^2<\epsilon^2$.
\end{enumerate}
\end{definition}

The goal of this section is to prove the existence of $\epsilon$-gauges on balls $B_2$ which admit annular regions $\cA\subseteq B_2$, and to prove that $V^a$ forms an actual vector bundle gauge over the whole annular region $\cA$.  The difficulty of this second statement is that apriori the sections $V^a$, which have bounded norm, may have norm tending to zero near the singular balls or may be becoming linearly dependent.  While one cannot say this doesn't happen, in fact in the whole ball it must happen, we will see that it cannot happen faster than at a small polynomial rate, and thus the sections remain a basis.  In fact, we will show something much stronger, we will see that for every $x\in\cC$ and $r\geq r_x$ that there exists a $k\times k$ matrix $T$ such that $T\circ V$ is an $\epsilon$-gauge on $B_r(x)$.  The idea for this is related to the ideas of \cite{ChNa_Codim4}.  We will also discuss some applications of these estimates which will be useful later in the paper.

In the next section we will tackle the more refined estimates on $\epsilon$-gauge's, which will tell us for {\it most} points $x\in\cC$ that for any ball $B_r(x)$ with $r\geq r_x$, the $V^a$ remain an $\epsilon$-gauge, even without transformation.  This result will be crucial in the proof of the energy identity and $L^1$ hessian estimate.  However, our first main result of this section is the following, which begins by showing the existence of harmonic $\epsilon$-gauge's on sufficiently symmetric balls:

\begin{theorem}\label{t:eps_gauge_existence}
Let $A$ be a stationary Yang-Mills connection with $\fint_{B_4} |F_A|^2 \leq \Lambda$ and $B_4(p)$ a $\delta$-weakly flat ball.  For each $\epsilon>0$ if $\delta<\delta(n,k,\Lambda,\epsilon)$, then there exists a harmonic $\epsilon$-gauge $V^a\in \Gamma(B_2,E)$.
\end{theorem}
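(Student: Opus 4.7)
The plan is a compactness/contradiction argument based on Theorem~\ref{t:defect_measure_symmetries}. Fix $\epsilon>0$ and suppose no such $\delta(n,k,\Lambda,\epsilon)>0$ exists. Then we obtain a sequence of stationary Yang--Mills connections $A_i$ on $G$-bundles $P_i\to B_4$ with $\fint_{B_4}|F_{A_i}|^2\leq \Lambda$, each $B_4$ being $\delta_i$-weakly flat with respect to some $\cL_i^{n-4}$ and $\delta_i\downarrow 0$, yet admitting no harmonic $\epsilon$-gauge on $B_2$. After passing to a subsequence and rotating we may assume $\cL_i\to\cL$. Theorem~\ref{t:defect_measure_symmetries}(1) then gives $A_i\to A$ modulo gauge with $|F_{A_i}|^2\,dv_{g_i}\to |F_A|^2\,dv_g + \nu$, where $A$ is smooth and $\cL$-invariant and $\nu=c\,\lambda^{n-4}_{\cL}$. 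The pointwise bound (2) of Definition~\ref{d:weak_flat} forces $|F_{A_i}|(y)\to 0$ for every $y\notin\cL$, so by smoothness $F_A\equiv 0$ on all of $B_4$. Hence $A$ is a flat connection on the simply connected ball $B_4$ and admits a global parallel orthonormal frame $e_1,\dots,e_k\in\Gamma(B_4,E)$, itself a trivial harmonic $0$-gauge.

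Next I would transfer the frame $e_a$ to the pre-limit side. Fix a small parameter $\rho>0$ and work on the safe region $\Omega_\rho\equiv B_3\setminus B_\rho(\cL)$, where weak-flatness gives the pointwise bound $|F_{A_i}|\leq \delta_i \rho^{-2}$. Uhlenbeck's theorem provides smooth Coulomb gauges on $\Omega_\rho$ in which $A_i\to A$ in $C^k_{\mathrm{loc}}$; parallel transport in these gauges from a fixed basepoint then produces orthonormal frames $W^a_i\in\Gamma(\Omega_\rho,E_i)$ converging smoothly to $e_a$. I would then define $V^a_i$ as the unique weak solution of the Dirichlet problem
\[
\Delta_{A_i} V^a_i = 0 \text{ on } B_{5/2}, \qquad V^a_i\big|_{\partial B_{5/2}} = \widetilde W^a_i,
\]
where $\widetilde W^a_i$ equals $W^a_i$ on $\partial B_{5/2}\cap\Omega_\rho$ and is any bounded smooth extension across the small bad part $\partial B_{5/2}\cap B_\rho(\cL)$, which has $(n-1)$-volume $O(\rho^4)$. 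Condition~(1) of Definition~\ref{d:eps_gauge} holds by construction; condition~(2) follows from the subharmonicity $\Delta_{A_i}|V^a_i|^2 = 2|\nabla_{A_i} V^a_i|^2\geq 0$, giving $|V^a_i|\leq \sup_{\partial B_{5/2}}|\widetilde W^a_i|\leq 1+o(1)$ by the maximum principle. Energy condition~(4) comes from the Dirichlet-minimizing property: comparing against $\widetilde W^a_i$ itself yields $\int_{B_{5/2}}|\nabla_{A_i} V^a_i|^2 \leq o(1)+O(\rho^4)$, which is $<\epsilon^2$ after first sending $i\to\infty$ and then $\rho\to 0$.

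Condition~(3) I would handle by a second compactness step: extract a weak $W^{1,2}$-limit $V^a_i\wto V^a_\infty$ using the energy bound just obtained, identify $V^a_\infty$ as a weakly harmonic section for the flat limit $A$ with boundary values $e_a$, and invoke uniqueness of the Dirichlet problem to conclude $V^a_\infty\equiv e_a$; hence $\langle V^a_i,V^b_i\rangle\to\delta^{ab}$ in $L^1(B_2)$, giving (3). The main obstacle is precisely this identification of $V^a_\infty$ as harmonic for $A$ across the singular locus $\cL$, where the connections $A_i$ degenerate and the defect measure $\nu$ accumulates. Passing the equation $\Delta_{A_i} V^a_i=0$ to the limit across $\cL$ relies on the fact that $\cL$, being $(n-4)$-dimensional with $n-4<n-2$, has zero $W^{1,2}$-capacity, so no distributional anomalies supported on $\cL$ can appear in the weak limit. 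Combined with the $C^\infty_{\mathrm{loc}}$-convergence of $A_i\to A$ off $\cL$, this closes the argument and delivers the contradiction.
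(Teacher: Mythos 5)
Your argument follows the same route as the paper: a compactness/contradiction scheme in which $V^a_i$ is defined by a Dirichlet problem whose boundary data approximates the parallel frame of the flat limit bundle, with the $L^\infty$ bound coming from the subharmonicity of $|V^a_i|^2$ and the maximum principle, and the identification of the weak limit as the flat frame via the zero $W^{1,2}$-capacity of $\cL$. The only substantive variation is the energy estimate: you use Dirichlet minimization against an interior extension of the boundary data (which incidentally costs $O(\rho^2)$ rather than $O(\rho^4)$, since cutting off across the tube $B_\rho(\cL)$ forces a gradient of order $\rho^{-1}$ on a set of volume $\sim\rho^4$, but this still vanishes as $\rho\to 0$), whereas the paper instead derives $\fint_{B_2}|\nabla V^a_i|^2\to 0$ from the identity $\int\phi\,\Delta|V^a_i|^2 = \int(\Delta\phi)\big(|V^a_i|^2-1\big)$ combined with the $L^1$ almost-orthogonality already in hand, which avoids constructing a competitor across the tube altogether.
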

\vspace{.5cm}

With the existence of an $\epsilon$-gauge established, we would like to understand how close to being a gauge the $V^a$ actually are.  The $L^2$ condition only concludes that the the $V^a$ form a gauge on a set of almost full measure.  This is actually pretty weak, and we would like to understand that the $V^a$ form a gauge on all of $\cA$.  In fact, it will be important for us to understand much more than this and have some effective control on the behavior of the $V^a$ at small scales.  Motivated by the transformation theorem which is to come, let us make the following definition:

\begin{definition}\label{d:transformation}
	Let $A$ be a stationary Yang-Mills connection on a $\delta$-annular region $\cA\equiv B_2\setminus \overline B_{r_x}(\cC)$, and let $V^a\in \Gamma\big(B_4,E\big)$ be an $\epsilon$-gauge.  Given $x\in \cC$ with $r_x\leq r\leq 2$ and $\theta\equiv 10^{-2}$ we define the $k\times k$ symmetric transformation matrix:
	\begin{align}
		T=T(x,r) = \Bigg(\fint_{W_r^\theta(x)} \langle V^a, V^b\rangle\Bigg)^{-1/2}\, .
	\end{align}
\end{definition}
\begin{remark}
Recall that the wedge regions are defined by $\cW^\theta(x)\equiv \big\{y\in B_2: d(y,\cL_x)\geq \cos(\theta)\, d(y,x)\big\}$ and $\cW^\theta_r(x) \equiv \cW^\theta(x)\cap A_{\cos\theta\, r,r/\cos\theta}(x)$.
\end{remark}
\begin{remark}
Apriori $T$ may have infinite eigenvalues as defined, however we will see in Theorem \ref{t:eps_gauge_transformation} below that this is not the case.
\end{remark}

\begin{remark}
For the sake of the theorems of this section one could have made the slightly simpler definition 	$T= \big(\fint_{B_{r}(x)} \langle V^a, V^b\rangle\big)^{-1/2}$, where one averages over a ball instead of a small portion of it.  Taking the average away from the singular set becomes important in the next section, when we try and control in a more refined manner the behavior of $T$.  In this case, if one were to average over all of $B_r$, then this adds small errors at every scale which may potentially pile up.
\end{remark}
\vspace{.25cm}

The following sums up the the use of the transformation matrices:

\begin{theorem}\label{t:eps_gauge_transformation}
Let $A$ be a stationary Yang-Mills connection on a $\delta$-annular region $\cA\subseteq B_2(p)$ satisfying \eqref{e:YM_assumptions2} and $\fint_{B_4} |F_A|^2 \leq \Lambda$, and let $V^a\in \Gamma(B_4,E)$ be a $\delta$-gauge.  For each $\epsilon>0$, if $\delta<\delta(n,\Lambda,\epsilon)$, then for all $B_{2r}(x)\subseteq B_2$ with $x\in \cC$ we have that
\begin{enumerate}
	\item $T(x,r)$ is nondegenerate, in fact $ 1-\epsilon\leq T\leq (1+\epsilon)r^{-\epsilon}$.
	\item $\tilde V^a \equiv (T\circ V)^a=T^a_b V^b$ is an $\epsilon$-gauge on $B_r(x)$.
\end{enumerate}
\end{theorem}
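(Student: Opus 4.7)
The plan is to establish both (1) and (2) simultaneously by a telescoping argument across dyadic scales, descending from $r_0 = 2$ down to the target scale $r \geq r_x$. Fix a scale ratio $\lambda = \lambda(n) \in (0,1/2]$ and set $r_j = \lambda^j r_0$, $M_j = \fint_{\cW^\theta_{r_j}(x)} \langle V^a, V^b\rangle$, $T_j = M_j^{-1/2} = T(x, r_j)$, and $\tilde V_j = T_j V$. The base case $j=0$ is immediate: the wedge $\cW^\theta_{2}(x)$ has definite volume (using (a3) of the annular region), so the $\delta$-gauge condition $\fint_{B_4}|\langle V^a, V^b\rangle - \delta^{ab}| < \delta$ transfers to the wedge and yields $\| M_0 - I\| \leq C(n)\delta$, hence $T_0 = I + O(\delta)$.

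The heart of the proof is the scale-descent step. Assume inductively that $\tilde V_j$ is an $\epsilon_j$-gauge on $B_{r_j}(x)$ with $\epsilon_j \to 0$ as $\delta \to 0$. Since $B_{r_{j+1}}(x)$ is $\delta$-weakly flat by (a2), Theorem \ref{t:eps_gauge_existence} produces a fresh harmonic $\delta'$-gauge $W^a$ on $B_{r_{j+1}}(x)$, with $\delta'(n,k,\Lambda,\delta) \to 0$ as $\delta \to 0$. Both $\tilde V_j$ and $W$ are harmonic on this ball and approximately orthonormal in $L^2$ average. Restricting attention to the wedge $\cW^\theta_{r_{j+1}}(x)$, which lies in the regular part where $d_y^2|F_A| < \delta$ pointwise, a linear-algebra comparison produces a constant matrix $S$ with $S\tilde V_j \approx W$ in $L^2$ on the wedge and $S$ itself within $C\sqrt\delta$ of an orthogonal matrix. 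Reading off the wedge Gram matrices at scales $r_{j+1}$ and $r_j$ gives $T_{j+1} = S \cdot T_j \cdot (I + O(\delta))$, whence the singular values of $T_{j+1} T_j^{-1}$ lie in $[1 - C\sqrt\delta,\, 1 + C\sqrt\delta]$.

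Iterating $J \sim |\log r|/|\log \lambda|$ times and taking logarithms of singular values yields $1-\epsilon \leq T_J \leq (1+\epsilon)\, r^{-\epsilon}$ once $\delta < \delta(n,k,\epsilon)$, which is statement (1); the intermediate scale bounds follow verbatim. For (2), the transformed sections $\tilde V = T \circ V$ are harmonic (constant transformation of harmonic sections), their wedge Gram is the identity by construction of $T$, and the $L^1$-orthogonality $\fint_{B_r} |\langle \tilde V^a, \tilde V^b\rangle - \delta^{ab}| < \epsilon$ follows by transferring from the wedge using the weakly flat curvature control and standard harmonic regularity. The scale-invariant gradient bound $r^2 \fint_{B_r}|\nabla \tilde V|^2 < \epsilon^2$ is Caccioppoli against the $L^\infty$ control. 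The pointwise bound $|\tilde V^a| \leq 1 + \epsilon$ uses that $|\tilde V^a|^2$ is subharmonic ($\Delta |\tilde V^a|^2 = 2|\nabla \tilde V^a|^2 \geq 0$) and therefore satisfies the maximum principle, with the boundary values controlled by the parent scale $r_{j-1}$ in the induction.

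The main obstacle is the one-step matrix comparison in the inductive step: two harmonic frames on a weakly flat ball can degenerate or become near-dependent along the singular core $\cL_x$, so the comparison must be anchored on the wedge region $\cW^\theta_{r_{j+1}}(x)$ and then propagated back by harmonicity. The delicate point is obtaining the $(1 + C\sqrt\delta)$ per-step rate rather than merely $O(1)$ per step; this is what lets the cumulative multiplicative growth over $|\log r|$ scales remain subpolynomial, of the form $r^{-\epsilon}$ with $\epsilon \to 0$ as $\delta \to 0$, matching precisely what (1) asserts. The other subtlety is that the definition of $T$ uses an average over the wedge rather than over the full ball, which was chosen precisely so that each scale's renormalization matrix depends only on data in the regular region, keeping the per-step errors uniform.
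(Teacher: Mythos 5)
Your proposal is a direct telescoping argument across dyadic scales, which is a genuinely different route from the paper's proof (the paper argues by contradiction, rescales to the first scale at which the claim barely fails, passes to a blow-up limit, and invokes Liouville's theorem for harmonic functions of slow polynomial growth to force the limiting Gram matrix to be exactly constant and equal to $\delta^{ab}$). Unfortunately, as written your telescoping has a real gap, and I do not see how to close it without importing the compactness step that the paper uses.

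The per-step estimate $\|T_{j+1}T_j^{-1}-I\|\lesssim\sqrt\delta$ is asserted but not justified, and I believe it is false at this level of generality. The quantity that actually controls the drift of the wedge Gram matrix from scale $r_j$ to $r_{j+1}$ is the gradient of the \emph{current} renormalized frame $\tilde V_j$ on the wedge annulus, namely $r_j\fint_{\cW^{\pi/4}_{r_j}(x)}|\nabla \tilde V_j|$; this is exactly what Lemma \ref{l:transformation_estimate} says. The curvature bound $d^2|F|<\delta$ on the wedge, and the $\delta'$-closeness to orthonormality of the freshly constructed gauge $W$, do not by themselves force the gradient of $\tilde V_j$ to be $O(\sqrt\delta)$ — that gradient is part (4) of the $\epsilon_j$-gauge condition, so it is of order $\epsilon_j$, not of order $\sqrt\delta$. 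Consequently your per-step rate should be $1+C\epsilon_j$, and the whole argument rests on showing that $\epsilon_j$ stays bounded by a fixed small number across $J\sim|\log r|/|\log\lambda|$ scales. You state this as an inductive hypothesis ("$\epsilon_j\to 0$ as $\delta\to 0$"), but the uniform-in-$j$ bootstrap is precisely what needs to be proved, and it does not follow from a naive iteration: for instance, the pointwise bound $|\tilde V^a_{j+1}|\le 1+\epsilon$ requires $\sup_{B_{r_{j+1}}}|T_{j+1}T_j^{-1}\,\tilde V_j|\le 1+\epsilon$, and since $\|T_{j+1}T_j^{-1}\|\ge 1$ in general, you cannot simply propagate the $L^\infty$ bound from scale $r_j$ to scale $r_{j+1}$ without losing a multiplicative factor $(1+O(\epsilon_j))$ at every step. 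Summed over $J$ steps this is exactly the polynomial growth $r^{-O(\epsilon)}$ that appears in part (1) of the theorem — but then the frames $\tilde V_j$ cease to satisfy the $\epsilon$-gauge $L^\infty$ bound with a \emph{fixed} $\epsilon$, and the induction breaks.

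The paper's compactness argument is designed precisely to avoid this circularity: one assumes the theorem fails at some first scale $r_i$, rescales so that scale becomes unit size, uses the inductive validity at all larger scales to get the slow polynomial growth bounds $\sup_{B_r}|\tilde V_i|\le C r^\epsilon$ and $r^2\fint_{B_r}|\nabla\tilde V_i|^2\le C r^{2\epsilon}$, and then lets $i\to\infty$. The limit is an entire harmonic function of sub-linear growth, hence (by Liouville) has identically vanishing gradient, so the limiting Gram is the constant $\delta^{ab}$ — a $0$-gauge. This contradicts the assumed failure, and the per-step bookkeeping never has to be made quantitative. If you want a direct quantitative proof, you would essentially have to reprove the Liouville-type rigidity effectively, which would amount to replacing the crude per-step estimate $1+C\epsilon_j$ by a Dini-summable one; but that summability is Corollary \ref{c:dini_estimate}, proved later in the paper, and it itself relies on Theorem \ref{t:eps_gauge_transformation} being already known.
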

\vspace{.25cm}

The first immediate corollary is that $V^a$ is everywhere nondegenerate on $\cA$ and thus defines a vector bundle gauge.	 The second immediate corollary, which is itself not otherwise obvious, is that by using the uniform lower bound on $T(x,r)$ and standard elliptic estimates we have for each $x\in \cA_{10^{-6}}$ the pointwise estimates
\begin{align}\label{e:eps_gauge_pointwise_estimates}
&d(x,\cC)|\nabla V|(x) < C(n)\epsilon\, ,\notag\\
&d(x,\cC)^2|\nabla^2 V|(x) < C(n)\epsilon\, .	
\end{align}

Indeed, the fact that $|\nabla V|$ and $|\nabla^2 V|$ are scale invariantly bounded follows from the global $L^\infty$ estimate on $|V|$. However, the smallness of this bound is more subtle to prove, and will be directly used in Section \ref{s:eps_gauge_annular_estimates}.

\vspace{.25cm}

\subsection{Computation Properties and Basic Estimates of Harmonic Sections}\label{ss:computation_harmonic}

In this subsection we record some basic computation properties of harmonic sections, as well as some basic estimates over $\epsilon$-regularity regions.  Let us begin with the following computations for harmonic sections over a vector bundle $E$ equipped with a Yang Mills connection:

\begin{lemma}\label{l:harmonic_computation}
 Let $V\in\Gamma(B_r,E)$ satisfy $\Delta V=0$, where $E$ is equipped with a Yang Mills metric connection $\nabla_A$.  Then the following hold:
 \begin{enumerate}
 \item $\Delta |V|^2 = 2|\nabla V|^2 \geq 0$.
 \item $\Delta |\nabla V|^2 = 2|\nabla^2 V|^2 + 2F(\nabla V,\nabla V)+2Rc(\nabla V,\nabla V)$.	
 \end{enumerate}
\end{lemma}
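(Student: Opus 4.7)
The plan is to carry out both identities by a pointwise computation at an arbitrary $p\in B_r$, working in a local orthonormal frame $\{e_i\}$ in normal coordinates centered at $p$ so that $\nabla_{e_i}e_j|_p = 0$. With this choice, tensorial covariant derivatives at $p$ agree with iterated bundle-covariant derivatives along the $e_i$, which is what makes the computation tractable.

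For (1) I use metric compatibility of $\nabla_A$ on $E$: $\partial_i\langle V,V\rangle = 2\langle \nabla_i V, V\rangle$. Differentiating again and contracting at $p$,
\begin{align*}
  \Delta|V|^2 \;=\; 2\sum_i\langle \nabla_i\nabla_i V, V\rangle + 2\sum_i|\nabla_i V|^2 \;=\; 2\langle\Delta V, V\rangle + 2|\nabla V|^2 \;=\; 2|\nabla V|^2,
\end{align*}
where $\Delta V = 0$ is used in the last step, and the nonnegativity is then automatic.

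For (2) the same compatibility applied to $|\nabla V|^2 = \sum_j|\nabla_j V|^2$ yields
\begin{align*}
  \Delta|\nabla V|^2 \;=\; 2|\nabla^2 V|^2 + 2\sum_{j,k}\langle \nabla_k\nabla_k\nabla_j V,\, \nabla_j V\rangle,
\end{align*}
and the real work is to evaluate the triple-derivative sum. I will commute the three covariant derivatives via the Ricci identity in two stages. The inner commutator on the $E$-section gives $\nabla_k\nabla_j V = \nabla_j\nabla_k V + F(e_k,e_j)V$. Substituting and then applying the Ricci identity a second time, this time to the $T^*M\otimes E$-section $\nabla V$, the outer commutator produces both the Riemann curvature of $M$ acting on the 1-form index and a second $F$-term acting on the fiber index. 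Contracting the Riemann contribution against $\sum_k$ produces the Ricci tensor of $M$, while a further term involving $\sum_k \nabla_k F(e_k,e_j)$ appears and vanishes identically by the Yang-Mills equation $d_A^*F = 0$; the leading term $\nabla_j(\Delta V)$ vanishes by the harmonic hypothesis. Pairing the remaining curvature terms against $\nabla_j V$, summing over $j$, and collecting the two $F$-contributions produces exactly the $Rc(\nabla V,\nabla V)$ and $F(\nabla V,\nabla V)$ pairings in the claimed identity.

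The main technical point, and essentially the only step requiring care, is the second commutator: $\nabla V$ must be treated as a tensor on $M$ valued in $E$ rather than merely as a family of $E$-sections indexed by $j$, so that the curvature of the induced connection on $T^*M\otimes E$ correctly contributes both the Ricci term from the base and the curvature term from the fiber. Once this splitting is set up, the Yang-Mills equation absorbs the divergence-of-curvature contribution and the harmonic hypothesis absorbs $\nabla_j(\Delta V)$, leaving precisely the stated Bochner--Weitzenb\"ock identity.
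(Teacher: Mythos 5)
Your sketch is correct and is exactly what the paper has in mind: the paper gives no computation, only the remark that the Yang--Mills condition is needed in (2) because a $\text{div}\, F$ term would otherwise survive, and your proposal identifies precisely this point (the $\sum_k\nabla_k F_{kj}$ contribution from the first commutator being killed by $d_A^*F=0$), together with the correct treatment of $\nabla V$ as a $T^*M\otimes E$-valued section so that the second commutator contributes both $\mathrm{Rm}$ (contracting to $\mathrm{Ric}$) and $F$. Part (1) via metric compatibility and harmonicity is standard and also correct.
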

\begin{remark}
We leave both as an easy exercise, but let us point out that the Yang-Mills condition plays a role in the second equation, as typically there is a $\text{div} F$ term which appears in this computation. 	
\end{remark}

\vspace{.25cm}

Let us now also record a basic estimate of $\epsilon$-gauges inside $\epsilon$-regularity regions:

\begin{lemma}\label{l:harmonic_reg_eps_reg}
 Let $V\in\Gamma(B_{2r},E)$ satisfy $\Delta V=0$, where $E$ is equipped with a Yang Mills metric connection $\nabla_A$.  Assume that we have the regularity scale estimate $r_A\geq 4r$.  Then for $k\geq 1$ we have the estimate $r^k\sup_{B_r}|\nabla^k V|\leq C(n,k)\,r\fint_{B_{2r}}|\nabla V|$.
\end{lemma}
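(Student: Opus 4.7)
By scale invariance of both sides it suffices to take $r=1$ and prove
\begin{equation*}
\sup_{B_1}|\nabla^k V| \leq C(n,k)\fint_{B_2}|\nabla V|.
\end{equation*}
The hypothesis $r_A \geq 4$ gives the pointwise bound $|F_A| \leq 1/16$ on $B_4$. A preliminary step used throughout is the standard elliptic bootstrap for Yang--Mills: in a small Uhlenbeck gauge on, say, $B_{7/2}$ the connection form has small Sobolev norm, so iterating elliptic regularity against $d_A^*F_A = 0$ yields scale-invariant bounds $\sup_{B_3}|\nabla^j F_A| \leq C(n,j)$ for $0\leq j\leq k-1$; the metric enters only through \eqref{e:YM_assumptions2} and contributes bounded curvature terms.

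For $k=1$ I invoke the Bochner identity of Lemma~\ref{l:harmonic_computation}(2):
\begin{equation*}
\Delta |\nabla V|^2 = 2|\nabla^2 V|^2 + 2F(\nabla V,\nabla V) + 2\Ric(\nabla V,\nabla V) \geq -C(n)|\nabla V|^2.
\end{equation*}
Since the right hand side is a nonnegative-order zeroth-order perturbation, Kato's inequality transfers this to the distributional subsolution inequality $\Delta |\nabla V| \geq -C(n)|\nabla V|$. Moser's mean value inequality on concentric balls then yields
\begin{equation*}
\sup_{B_{3/2}}|\nabla V| \leq C(n)\fint_{B_2}|\nabla V|,
\end{equation*}
which is the $k=1$ case (after a trivial shrinkage of the outer ball).

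For $k\geq 2$ I would proceed by induction. Commuting covariant derivatives through $\Delta V = 0$ using the Weitzenbock/Ricci identity on $E$ (whose curvature is $F_A$) produces a schematic equation
\begin{equation*}
\Delta \nabla^{k-1} V = \sum_{i+j \leq k-1} (\nabla^i F_A + \nabla^i \Rm) \ast \nabla^j V,
\end{equation*}
in which every curvature factor is pointwise controlled by the preliminary step and every lower-order $\nabla^j V$ with $j < k-1$ is controlled in $L^\infty$ on a slightly larger ball by $C\fint_{B_2}|\nabla V|$ via the inductive hypothesis. Thus $\nabla^{k-1}V$ solves an elliptic equation whose inhomogeneity is bounded in $L^\infty$ by $C(n,k)\fint_{B_2}|\nabla V|$, and a standard Bochner/Moser estimate (or equivalently interior Schauder estimates carried out in the Uhlenbeck gauge above) gives $\sup_{B_1}|\nabla^k V|\leq C(n,k)\fint_{B_2}|\nabla V|$, closing the induction.

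The only nontrivial technical point is the preliminary step, which upgrades the single scalar hypothesis $r_A\geq 4r$ to the $C^{k-1}$ control on $F_A$ that enters the commutator terms for $k\geq 2$; this is routine but must be invoked. Everything else reduces to standard subsolution/mean-value arguments.
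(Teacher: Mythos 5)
Your proposal is correct and follows essentially the same route as the paper: scale to $r=1$, use the Bochner/commutation identity plus $|F_A|\leq C$ to get a subsolution inequality for $|\nabla V|$, apply Moser to obtain the $k=1$ bound from the $L^1$ average, and then bootstrap via elliptic estimates on the commuted equation together with the curvature control coming from $r_A\geq 4r$. The only point worth emphasizing, which the paper highlights but your schematic leaves implicit, is that the Yang--Mills condition $d_A^*F_A=0$ is used to cancel the $\nabla F_A\ast V$ term when commuting $\Delta$ past $\nabla$, so that $\Delta\nabla V = F(\nabla V)$ is genuinely zeroth order in $F$; your preliminary bootstrap step makes this unnecessary but obscures the structural reason the estimate is clean.
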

\begin{proof}
	Let us briefly outline this because it uses the Yang-Mills conditions in two ways.  Since this is a scale invariant estimate we can assume $r=1$ without any loss.  First, as in Lemma \ref{l:harmonic_computation} we may compute $\Delta \nabla V = F(\nabla V)$ using the Yang-Mills condition, and thus on $B_{3}(x)$ we have the inequality
	\begin{align}
	\Delta|\nabla V|\geq -C(n)|\nabla V|\, ,	
	\end{align}
	so that $|\nabla V|$ satisfies a subharmonic inequality.  We may therefore use the mean value theorem for nonnegative functions satisfying the above in order to conclude that $\sup_{B_{5/2}}|\nabla V|\leq C(n)\fint_{B_{2}}|\nabla V|$.  Now we may use standard elliptic estimates on the equation $\Delta \nabla V = F(\nabla V)$ in order to conclude the result.
\end{proof}

\vspace{.25cm}

\subsection{Proof of \texorpdfstring{$\epsilon$}{epsilon}-Gauge Existence of Theorem \ref{t:eps_gauge_existence}}

We will prove the result by contradiction.  Indeed, let us assume for some $\epsilon>0$ that no such $\delta(n,k,\Lambda,\epsilon)$ exists.  Then we can find a sequence of Yang-Mills connections $A_i$ with $B_4(p_i)$ $\delta_i$-weakly flat balls with $\delta_i\to 0$ for which there does exist an $\epsilon$-gauge $V^a$ on $B_2$.\\

After possibly composing with a rotation, there is no harm in assuming each weakly flat ball is with respect to the $n-4$ plane of symmetry $\cL_{i}\equiv \cL\equiv \dR^{n-4}\times\{0\}$.  Note that since $\delta_i\to 0$, we have that the underlying manifolds are converging
\begin{align}
	B_{4}(p_i)\stackrel{C^{1,\alpha}}{\longrightarrow} B_4(0)\subseteq\dR^n\, .
\end{align}
Additionally, we have on $B_{4}\setminus B_{\delta_i}(\cL)$ that the curvature satisfies $|F_i|\to 0$.  Thus if we restrict the bundles $E_i\to B_{\delta_i^{-1}}\setminus B_{\delta_i}(\cL)$ then after passing to a subsequence we can limit
\begin{align}
&E_i\to E\, ,\notag\\
&A_i\to A\, ,	
\end{align}
where $E\to \dR^n\setminus \cL$ and $A$ is a flat connection on $E$.  In particular, we can pick global parallel sections $E^1,\ldots, E^k$ on $E$, and thus canonically extend $E$ to the trivial bundle $\dR^k\times \dR^n\to \dR^n$.  

Let us consider the convergence $B_{\delta^{-1}_i}\to \dR^n$ and $E_i\to E$ in slightly more detail, as it will be useful in the coming analysis.  Specifically, our convergence tells us that for all $i$ sufficiently large we may find a $C^{2,\alpha}$ diffeomorphism and $G$-bundle maps such that the following diagram commutes: 
\begin{align}
\xymatrix{
E_i\ar[rr]^{\varphi_E}\ar[d]_{\pi_i} &   & E \ar [d]^{\pi}\\
 B_{\delta_i^{-1}}(p_i)\setminus B_{\delta_i}(\cL)\ar[rr]^{\varphi} && \dR^n\setminus B_{\delta_i}(\cL)}	
\end{align}

Let us now define the sections $V^1_i,\ldots, V^k_i\in \Gamma(E_i,B_3)$ as the unique harmonic sections $\Delta V^a_i = 0$ in $B_3(p_i)	$ which satisfy the Dirichlet boundary values

\begin{align}
V^a_i(x) = \begin{cases}
 	\varphi_E^*E^a & \text{ if }x\in  \partial B_{3}(p_i)\setminus B_{\delta_i}(\cL)\, ,\notag\\
 	0 & \text{ if }x\in  \partial B_{3}(p_i)\cap B_{\delta_i}(\cL)\, .
 \end{cases}
\end{align}

Now let us see that for $i$ sufficiently large this defines our $\epsilon$-gauge.  Let us begin by showing the pointwise estimate $|V^a_i|\leq 1$.  Indeed, we have on $\partial B_3$ that $|V^a_i| \leq |\varphi_E^*E^a|\equiv 1$.  On the other hand, by Lemma \ref{l:harmonic_computation} we have that $|V|$ is a subharmonic function.  Therefore by a maximum principle we have the estimate $|V^a_i|\leq 1$ on all of $B_3$.\\

We will want to show the gradient estimate $\fint_{B_4} |\nabla V^a_i|^2<\epsilon^2$.  Let us begin by proving the related, but apriori weaker, estimate that if $B_{8r}(x)\subseteq B_3$ then $r^2\fint_{B_r} |\nabla V^a_i|^2<C(n)$.  Indeed, let $\phi:B_{8r}(x)\to \dR$ be a smooth cutoff function with 
\begin{align}\label{e:harmonic_existence:cutoff}
&\phi\equiv 1 \text{ on }B_{4r}\, ,\;\;\;\;	\phi\equiv 0 \text{ outside }B_{7r}\, ,\notag\\
&r\,|\nabla \phi|\, , r^2\,|\nabla^2 \phi|<C(n)\, .
\end{align}
Thus we can compute
\begin{align}\label{e:harmonic_existence:1}
\fint_{B_{4r}}|\nabla V^a_i|^2  &\leq C(n)r^{-n}\int \phi |\nabla V^a_i|^2 = C(n)r^{-n}\int \phi \Delta |V^a_i|^2 \notag\\
&= C(n)r^{-n}\int \Delta\phi |V^a_i|^2\leq   C(n)r^{-2}\sup_{B_{7r}}|V^a_i|^2\leq C(n)r^{-2}\, ,
\end{align}
where in the middle equality we have used Lemma \ref{l:harmonic_computation}.1.

The global $L^\infty$ bound on $V^a_i$ combined with the curvature bound $|F_i|\to 0$ on $B_{4}(p_i)\setminus B_{\delta_i}(\cL)$ tells us that after passing to a subsequence we can limit
\begin{align}
V^a_i\to V^a\in \Gamma(E,B_3(0^n)\setminus \cL)\, ,	
\end{align}
where the convergence is in $C^{1,\alpha}$ on compact subsets of $\overline{B_3}(0^n)\setminus \cL$.  Using $|V^a_i|\leq 1$, $\Delta V^a_i=0$ and \eqref{e:harmonic_existence:1} we therefore obtain
\begin{align}
	&|V^a|\leq 1 \, ,\notag\\
	& V^a\in H^1_{\text{loc}}(B_3)\, ,\notag\\
	&\Delta V^a = 0 \text{ on }B_3\setminus \cL\, .
\end{align}
Note that we have already identified $E$ as the trivial $\dR^k$ bundle over $\dR^n$, and thus we may view $V^a:B_3\setminus \cL\to \dR^k$ canonically.\\

Using the $H^1_{\text{loc}}$ estimate above and that the set $\cL$ has capacity zero we can therefore conclude that $V^a$ extends a smooth solution of $\Delta V^a=0$ over all $B_3$.  On the other hand, since the convergence is smooth on compact subsets of $\overline{B_3}\setminus \cL$ we know that $V^a = E^a$ on $\partial B_3$.  The $E^a$ are themselves harmonic, so by uniqueness if $V^a=E^a$ on $\partial B_3$ then $V^a=E^a$ on all of $B_3$. Thus we have concluded that
\begin{align}
V^a_i \to E^a\, ,	
\end{align}
where the convergence is $C^{1,\alpha}$ on compact subsets of $\overline{B_3}\setminus \cL$.  In particular, we have that
\begin{align}
	\langle V^a_i,V^b_i\rangle \to \delta^{ab} \text{ pointwise on } B_3\setminus \cL\, .
\end{align}
Using the pointwise bound $|V^a_i|\leq 1$ and that $\Vol(B_3\cap B_r(\cL))\to 0$ as $r\to 0$ one therefore easily concludes the $L^2$ almost orthogonality
\begin{align}
	\fint_{B_3} \big|\langle V^a_i,V^b_i\rangle - \delta^{ab} \big| \to 0\, .
\end{align}

Finally, if we can show that $\fint_{B_2}|\nabla V^a_i|^2 \to 0$, then we will proved that for all $i$ sufficiently large $V^a_i$ forms an $\epsilon$-gauge, which is our desired contradiction.  Thus, for any $x\in B_2$ let us consider the ball $B_{r}(x)=B_{1/16}(x)$ with $B_{8r}\subseteq B_3$, and let us consider the cutoff function $\phi$ from \eqref{e:harmonic_existence:cutoff}.  Then we can make a slightly more refined version of the previous computation in order to conclude
\begin{align}
	\fint_{B_r(x)}|\nabla V^a_i|^2  &\leq C(n)\int \phi |\nabla V^a_i|^2 = C(n)\int \phi \Delta |V^a_i|^2 \notag\\
	&=C(n)\int \phi \Delta \Big(|V^a_i|^2-1\Big)=C(n)\int \Delta \phi \Big(|V^a_i|^2-1\Big)\notag\\
	&\leq C(n)\fint_{B_3}\big| |V^a_i|^2-1\big| \to 0\, .
\end{align}
Since this hold for any ball $B_{1/16}(x)$ with $x\in B_2$ we have $\fint_{B_2}|\nabla V^a_i|^2\to 0$ as claimed, which proves our desired contradiction and thus proves the theorem. $\square$
\vspace{.5cm}

\subsection{Proof of Transformation Theorem \ref{t:eps_gauge_transformation}}

The proof will be by contradiction.  Therefore let us assume for some $\epsilon>0$ the result fails, and thus we can find a sequence of $\delta_i$-annular regions $\cA_i\subseteq B_2(p_i)$ with $\delta_i$-gauges $V_i^1,\ldots, V^k_i\in \Gamma(B_4(p_i),E_i)$ such that the result fails for each $i$ with $\delta_i\to 0$.  Let us choose $x_i\in \cC_i$ to be one of the points for which the result fails, and define
\begin{align}
r_i\equiv \min\{r_{x_i}\leq r<2: \forall\; r\leq s\leq 2 \text{ we have that $(1)$ and $(2)$ hold on } B_s(x_i)\}\, .	
\end{align}
We have by assumption that $r_i>r_{x_i}$, and therefore either $(1)$ or $(2)$ must fail for some radius $r>\frac{1}{2}r_i$.  Notice that $r_i\to 0$ since $\delta_i\to 0$.  Let $T_i\equiv T(x_i,r_i)$ and let us denote $\tilde V^a_i \equiv T_i\circ V_i$ to be the transformed sections.  Since the result holds for $r_i$ we have that $\tilde V^a_i$ is an $\epsilon$-gauge on $B_{r_i}(x_i)$, but note that we cannot have that $\tilde V^a_i$ is a $10^{-2n}\epsilon$-gauge on $B_{r_i}(x_i)$.  If this were to hold, then it is clear that $(1)$ and $(2)$ must still be satisfied for all $r\geq \frac{1}{2} r_i$, which is not the case.

Therefore let us rescale the geometry so that $B_{r_i}(x_i)\to B_2(\tilde x_i)$, so that $\tilde V^a_i\in \Gamma(B_{r_i^{-1}}(\tilde x_i),E_i)$ is an $\epsilon$-gauge on $B_2(\tilde x_i)$.  After rotation we may assume the best plane $\cL_i\equiv \cL$ for each annular region $\cA_i$ is a constant.  Let us begin with the following claim:\\

{\bf Claim: }For each $1\leq r\leq r_i^{-1}$ and $0<s<1$ we have the estimates
\begin{enumerate}
\item $\sup_{B_r} |\tilde V_i|\leq C(n) r^\epsilon$.
\item $r^2\fint_{B_r(\tilde x_i)} |\nabla \tilde V^a_i|^2 \leq C(n) r^{2\epsilon}$.	
\item $\int_{B_r\cap B_s(\cL)} |\nabla \tilde V^a_i|^2 \leq C(n) r^{n-4+2\epsilon} s^2$.
\end{enumerate}

To prove the claim let us observe that for every $1\leq r\leq r_i^{-1}$ we have by the definition of $r_i$ that if $\tilde T \equiv \Big(\fint_{W^\theta_{r}(\tilde x_i)}\langle \tilde V^a_i,\tilde V^b_i\rangle\Big)^{-1/2}$, then by condition $(2)$ we have that $\tilde T\circ \tilde V^a_i$ is an $\epsilon$-gauge on $B_r(x_i)$.  In particular,
\begin{align}\label{e:transformation:1}
	\sup_{B_r}|\tilde T\circ \tilde V^a_i|\leq 1+\epsilon\, .
\end{align}
However, by condition $(1)$, rescaled since our original ball $B_{r_i}$ now has radius $2$, we have the estimate
\begin{align}
	\frac{1}{2} r^{-\epsilon}\leq \tilde T\leq 2 r^{-\epsilon}\, .
\end{align}
Plugging this into \eqref{e:transformation:1} we arrive at the first estimate of the claim.  
The second and third estimates are proved by a verbatim argument, so let us focus on the second.  Choose a cutoff function $\phi$ so that $\phi\equiv 1$ on $B_r(\tilde x_i)$, $\phi\equiv 0$ outside of $B_{2r}(\tilde x_i)$ and $r\,|\nabla\phi|$, $r^2\,|\nabla^2\phi|\leq C(n)$.  Multiplying both sides of Lemma \ref{l:harmonic_computation}.1 by $\phi$ and integrating we arrive at
\begin{align}
	\fint_{B_r}|\nabla \tilde V^a_i|^2  &\leq C(n)r^{-n}\int \phi |\nabla \tilde V^a_i|^2 = C(n)r^{-n}\int \phi \Delta |\tilde V^a_i|^2 \notag\\
	&=C(n) r^{-n}\int \Delta \phi |\tilde V^a_i|^2\notag\\
	&\leq C(n)r^{-2}\sup_{B_{2r}}|\tilde V^a_i|^2 \leq C(n) r^{-2}r^{2\epsilon}\, ,
\end{align}
which finishes the proof of the claim. $\square$\\

Now as in the proof of Theorem \ref{t:eps_gauge_existence} using that $\delta_i,r_i\to 0$ we may pass to a subsequence in order to limit our spaces 
\begin{align}
	&B_{r_i^{-1}}(\tilde x_i)\to \dR^n\, ,\notag\\
	&E_i\to E\equiv \dR^k\times \dR^n\, ,
\end{align}
where $E$ is apriori a flat bundle over $\dR^n\setminus \cL$ which may be canonically extended to the trivial bundle $\dR^k\times \dR^n$.  Using the estimates of the previous claim we may also pass to a subsequence to also limit
\begin{align}
\tilde V^a_i \to \tilde V^a \in \Gamma\big(E,\dR^n\big)\, ,
\end{align}
where the convergence is smooth on $\dR^n\setminus \cL$.  By using $(3)$ of the claim and that $\tilde V^a_i\to \tilde V^a$ smoothly on $\dR^n\setminus \cL$ we also see that
\begin{align}\label{e:transformation:2}
\fint_{B_r(x)} |\nabla \tilde V^a_i|^2 \to \fint_{B_r(x)} |\nabla \tilde V^a|^2\, ,
\end{align}
so that the $H^1$ norms converge.

Now using that $E$ is flat and trivial we may view $\tilde V^a:\dR^n\to \dR^k$, and by the estimates of the previous Claim we have
\begin{enumerate}
\item $\sup_{B_r} |\tilde V|\leq C(n) r^\epsilon$.
\item $r^2\fint_{B_r(x_i)} |\nabla \tilde V|^2 \leq C(n) r^\epsilon$.	
\end{enumerate}

Since the convergence of $\tilde V^a_i\to \tilde V$ is smooth on $\dR^n\setminus \cL$ we have that $\tilde V$ is harmonic on $\dR^n\setminus \cL$.  However, since $\tilde V\in H^1_{\text{loc}}$ by $(2)$ above and since $\cL$ is a set with zero capacity, we have that $\tilde V^a$ extends to a smooth harmonic function on all of $\dR^n$.

However, since $|\tilde V|$ is growing at most at a small polynomial rate, we have by Liouville's theorem that
\begin{align}
|\nabla \tilde V^a|\equiv 0\, .	
\end{align}
In particular, we have that $\langle \tilde V^a,\tilde V^b\rangle = constant$ for each $a,b$ .  However, by construction we also have that
\begin{align}
	\fint_{W^\theta_2(0)} \langle \tilde V^a,\tilde V^b\rangle = \delta^{ab}\, .
\end{align}
Combining these points we get that
\begin{align}
	\langle \tilde V^a,\tilde V^b\rangle = \delta^{ab}\, ,
\end{align}
on all of $\dR^n$.  In particular, $\tilde V^a$ is a $0$-gauge.  Recall now that while $\tilde V^a_i$ is an $\epsilon$-gauge on $B_2$, by the construction of $r_i$ it is not a $10^{-2n}\epsilon$-gauge.  However, using \eqref{e:transformation:2} and that $\tilde V^a$ is a $0$-gauge we  see that for $i$ sufficiently large this is our desired contradiction, and thus we have proved the Theorem. $\square$\\

\section{\texorpdfstring{$\epsilon$}{epsilon}-Gauge's on Annulus Regions}\label{s:eps_gauge_annular_estimates}

In the previous section we showed the existence of $\epsilon$-gauge's on $\delta$-annular regions and proved some basic estimates.  In this section we study more carefully the properties of such $\epsilon$-gauge's and prove our main analytic estimates.  There are two main results we wish to prove and discuss in this section.  The first is that we will see that on most of the $\delta$-annular region our $\epsilon$-gauge is a legitimate vector Coulomb gauge which is $\epsilon$-orthonormal.  Precisely:\\

\begin{theorem}\label{t:eps_gauge_on}
	Let $A$ be a stationary Yang-Mills connection on a $\delta$-annular region $\cA = B_2\setminus \overline B_{r_x}(\cC)$ and $\fint_{B_2} |F_A|^2 \leq \Lambda$, and let $V$ be a $\delta$-gauge on $B_4$.  For each $\epsilon>0$ if $\delta<\delta(n,k,\Lambda,\epsilon)$ then there exists a subset $\cC_\epsilon\subseteq \cC\cap B_1$ such that
	\begin{enumerate}
	\item $\mu\big(\cC_\epsilon\big)\geq (1-\epsilon)\mu\big(\cC\cap B_1\big)$.	
	\item For each $x\in \cC_\epsilon$ and $r_x\leq r\leq 1$ we have that $V$ is an $\epsilon$-gauge on $B_r(x)$.
	\end{enumerate}
\end{theorem}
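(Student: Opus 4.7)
The plan is to leverage the Transformation Theorem \ref{t:eps_gauge_transformation} together with the Dini/superconvexity integral estimates on $|\nabla V|$ described in the outline. The idea is to show that for most $x\in \cC\cap B_1$ (measured by $\mu$), the transformation matrix $T(x,r)$ from Definition \ref{d:transformation} remains close to the identity for every $r_x\leq r\leq 1$. Once this is established, the $\epsilon$-gauge property of $\tilde V = T\circ V$ transfers directly to $V$ itself.

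To start, note that conditions (1) and (2) of Definition \ref{d:eps_gauge} are automatic for $V$ restricted to any sub-ball $B_r(x)\subseteq B_4$: harmonicity is preserved, and the pointwise bound $|V^a|\leq 1+\delta$ from the $\delta$-gauge hypothesis on $B_4$ (together with subharmonicity of $|V^a|^2$) gives $|V^a|\leq 1+\epsilon$ whenever $\delta\leq\epsilon$. Thus only the almost-orthonormality (3) and the scale-invariant gradient bound (4) must be verified for the points of $\cC_\epsilon$ at all scales $r\geq r_x$.

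The key analytic ingredient is the global integral estimate
\begin{align}
\int_{\cA\cap B_1} d(x,\cC)^{-3}|\nabla V|\, dv_g < \eta(\delta)\, ,
\end{align}
with $\eta(\delta)\to 0$ as $\delta\to 0$, which is a consequence of the superconvexity mechanism outlined in Section \ref{ss:outline_proof} and proved in Section \ref{s:eps_gauge_annular_estimates}. Combining this bound with the Ahlfor's regularity of $\mu$ (Theorem \ref{t:annular_region}(1)) and a Fubini-type decomposition of the volume integral into radial integrals over spherical shells around points of $\cC$, we obtain
\begin{align}
\int_{\cC\cap B_1}\left(\int_{r_x}^{1} \fint_{\partial B_r(x)\cap \cA}|\nabla V|\, dr\right) d\mu(x) < C(n)\eta(\delta)\, .
\end{align}
A Chebyshev argument then produces a subset $\cC_\epsilon\subseteq \cC\cap B_1$ with $\mu((\cC\cap B_1)\setminus \cC_\epsilon) < \epsilon\,\mu(\cC\cap B_1)$ on which the one-variable Dini integral $D(x)\equiv \int_{r_x}^1 \fint_{\partial B_r(x)\cap \cA}|\nabla V|\, dr$ is bounded by $\epsilon^2$.

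The final step is a telescoping argument. For $x\in\cC_\epsilon$ and $r_x\leq r_2<r_1\leq 1$, differentiating the matrix-valued quantity $M^{ab}(r)\equiv \fint_{W_r^\theta(x)}\langle V^a,V^b\rangle$ in $r$ and using the $L^\infty$ bound $|V^a|\leq 1+\delta$ yields
\begin{align}
|M^{ab}(r_1) - M^{ab}(r_2)| \leq C(n,\theta)\int_{r_2}^{r_1}\fint_{\partial B_r(x)\cap \cA}|\nabla V|\, dr \leq C(n)D(x) < C(n)\epsilon^2\, .
\end{align}
Since the $\delta$-gauge hypothesis on $B_4$ already forces $M(2)$ to be $\delta$-close to $\delta^{ab}$, we conclude $\|M(r)-I\|<C\epsilon$ and hence $\|T(x,r)-I\|<C\epsilon$ uniformly on $r_x\leq r\leq 1$. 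Applying Theorem \ref{t:eps_gauge_transformation} and using that $\langle \tilde V,\tilde V\rangle = T\langle V,V\rangle T^T$ differs from $\langle V,V\rangle$ by $O(\epsilon)$, together with $\nabla \tilde V = T\nabla V$ and the bounded size of $T$, the $\epsilon$-gauge bounds on $\tilde V$ transfer to $V$ on every such $B_r(x)$. Relabeling $\epsilon$ concludes the argument.

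The main obstacle is the Fubini/Chebyshev step: one has to decompose the volume integral over $\cA$ into radial integrals around varying centers in $\cC$ with the correct Jacobian, accounting for the local multiplicity imposed by the Vitali condition (a1) and the Lipschitz radius function (a4). The Ahlfor's regularity of $\mu$ is precisely what provides the correct $r^{n-4}$ scaling on both sides of this decomposition and guarantees that the Chebyshev extraction yields a set $\cC_\epsilon$ with the desired density $\mu(\cC_\epsilon)\geq (1-\epsilon)\mu(\cC\cap B_1)$, uniformly in scale.
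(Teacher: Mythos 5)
Your proof follows the same route as the paper's: the Dini estimate $\int_{\cA}d(\cdot,\cC)^{-3}|\nabla V|<\eta$ from Theorem~\ref{t:eps_gauge_gradient}, a Fubini/rearrangement over $\mu$ exploiting Ahlfors regularity (this is precisely the paper's inequality~\eqref{e:eps_gauge_on:fatou}), a Chebyshev selection of $\cC_\epsilon$, and then control of the transformation matrix $T(x,r)$ for $x\in\cC_\epsilon$, after which Theorem~\ref{t:eps_gauge_transformation} transfers the gauge property from $\tilde V=T\circ V$ back to $V$.

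The one place you genuinely deviate is the last step. The paper's Proposition~\ref{p:transformation_estimate} controls $T(x,r)$ by a dyadic iteration of Lemma~\ref{l:transformation_estimate}: the Poincar\'e inequality there is applied to $\langle\tilde V^a,\tilde V^b\rangle$ with $\tilde V=T(x,s_\alpha)\circ V$, and since one only has $|\nabla\tilde V|\leq|T_\alpha||\nabla V|$, a bootstrap is needed to keep $|T_\alpha|\leq 2$ throughout the sweep of scales. You instead telescope $M(r)=\fint_{W_r^\theta(x)}\langle V^a,V^b\rangle$ directly, bounding $|M'(r)|$ by $\fint_{W_r^\theta}|\nabla\langle V,V\rangle|\lesssim\fint_{W_r^\theta}|\nabla V|$ using the \emph{global} $L^\infty$ bound $|V^a|\leq 1+\delta$ from the $\delta$-gauge hypothesis on $B_4$. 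This sidesteps the bootstrap entirely and is a valid simplification for the purposes of this theorem (the paper proves the more robust Proposition~\ref{p:transformation_estimate} because it is also used elsewhere, e.g.\ for the pointwise estimates~\eqref{e:eps_gauge_pointwise_estimates} entering the superconvexity computation). Two small points to tighten: the radial derivative of the moving wedge average is controlled by the wedge average $\fint_{W_r^\theta(x)}|\nabla V|$, not the spherical shell average $\fint_{\partial B_r(x)\cap\cA}|\nabla V|$; after the Fubini over $r$ these produce equivalent Dini quantities, but the chain of inequalities as written conflates the two. Second, the claim that $M(2)$ (better: $M(1)$, since only $r\leq 1$ is needed) is $\delta$-close to $\delta^{ab}$ requires the observation that $W_1^\theta(x)$ occupies a definite volume fraction of $B_4(p)$ for $x\in\cC\cap B_1$, so that the $L^1$ almost-orthonormality over $B_4$ passes to the wedge average; this is elementary but worth stating, since it is the base case of your telescoping.
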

\vspace{.5cm}

In fact the above result will eventually follow from the scale invariant gradient estimate discussed in the next theorem, which is where most of the work of this section focuses.  Precisely, we have the following:\\

\begin{theorem}\label{t:eps_gauge_gradient}
	Let $A$ be a stationary Yang-Mills connection on a $\delta$-annular region $\cA = B_2\setminus \overline B_{r_x}(\cC)$ and $\fint_{B_2} |F_A|^2 \leq \Lambda$, and let $V$ be a $\delta$-gauge on $B_4$.  For each $\epsilon>0$ if $\delta<\delta(n,k,\Lambda,\epsilon)$ then we have the following estimates:
	\begin{align}
		&\int_{\cA_{10^{-4}}\cap B_{3/2}} r_A^{-3}|\nabla V|\,,\,\,\, \int_{\cA_{10^{-4}}\cap B_{3/2}} r_A^{-2}|\nabla^2 V| \, ,\,\,\, \int_{\cA_{10^{-4}}\cap B_{3/2}} |\nabla^4 V| < \epsilon\, .\notag\\
	\end{align}
\end{theorem}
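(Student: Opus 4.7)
The plan follows the outline in Section~\ref{ss:outline_proof}: reduce the three integral estimates to a single Dini decay bound for a spherical gradient quantity, then prove the decay via a superconvexity argument that crucially uses the Ahlfors regularity of the packing measure established in Theorem~\ref{t:annular_region}.1. First I would construct a smoothed distance function by solving $-\Delta G_\mu = \mu$ and setting $b := G_\mu^{-1/2}$. Standard Green's function estimates for Ahlfors-regular measures concentrated on $(n-4)$-dimensional sets yield $c\,d(x,\cC)\le b(x)\le C\,d(x,\cC)$ and $c\le |\nabla b|\le C$ on $\cA_{10^{-4}}$, and the pointwise bounds \eqref{e:eps_gauge_pointwise_estimates} coming from Theorem~\ref{t:eps_gauge_transformation} ensure $r_A\sim b$ on this region. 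With a cutoff $\phi$ concentrated on $\cA_{10^{-4}}\cap B_{3/2}$, define
\[
S(r) := r\cdot r^{-3}\int_{\{b=r\}}|\nabla V|\,\phi\,|\nabla b|.
\]
By the coarea formula and the equivalence $r_A\sim b$, the first estimate of the theorem is equivalent to the Dini bound $\int_0^\infty \frac{S(r)}{r}\,dr < \epsilon$.

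The core of the argument is the superconvexity
\[
r\frac{d}{dr}\Bigl(r\frac{d}{dr}S\Bigr) \ge (1-\epsilon)^2\, S(r) - e(r),
\]
with error $e(r)\le \epsilon\cdot \mu\bigl(\{x\in \cC : c r_x<r<C r_x\}\bigr)$. Differentiating $S$ twice in $\log r$ produces a bulk integral involving $\Delta(|\nabla V|\phi)$ and boundary terms on $\{b=r\}$; using $\Delta V=0$ together with the Bochner-type formula of Lemma~\ref{l:harmonic_computation}.2 and the smallness of the curvature on $\cA$ (from $\delta$-weak flatness, Definition~\ref{d:weak_flat}, and $r_A\sim b$), the curvature contributions are absorbed and one extracts a coercive main term proportional to $S(r)$. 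The exact factor $(1-\epsilon)^2$ arises from the translation-invariant blow-down: harmonic sections on $\dR^n$ invariant along $\cL^{n-4}$ are effectively four-dimensional, and the spherical-$L^1$ quantity $S$ satisfies the ODE $r(rS')'=S$ exactly for the leading modes $b^{\pm 1}$. The error $e(r)$ collects cutoff terms and contributions at scales $r\sim r_x$ where the level set $\{b=r\}$ interacts with the removed balls.

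Given the superconvexity, a direct ODE comparison (as described after \eqref{e:outline:superconvex}) yields
\[
\int_0^\infty \frac{S(r)}{r}\,dr \le C\int_0^\infty \frac{e(r)}{r}\,dr \le \epsilon\, \mu(\cC\cap B_2) \le C(n)\,\epsilon,
\]
using Ahlfors regularity of $\mu$ in the last step; after relabeling $\epsilon$ this proves the first integral estimate. The remaining estimates on $\nabla^2 V$ and $\nabla^4 V$ follow from the first by a Whitney decomposition: cover $\cA_{10^{-4}}\cap B_{3/2}$ by balls $B_{c\,d(x,\cC)}(x)$ on which the curvature is scale-invariantly small, apply Lemma~\ref{l:harmonic_reg_eps_reg} at each scale to upgrade averaged $L^1$ bounds on $|\nabla V|$ to averaged $L^1$ bounds on $|\nabla^2 V|$ and $|\nabla^4 V|$, and sum with the scale-invariant weights $r_A^{-2}$ and $1$.

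The main obstacle is the superconvexity estimate itself. The computation must simultaneously handle (i) the Yang-Mills curvature terms appearing in the Bochner identity, (ii) boundary contributions on the level sets of $b$, which is only a smoothed proxy for $d(\cdot,\cC)$ rather than the true distance, and (iii) cutoff errors at the interface with the subtracted balls. A further subtlety is that $V$ is only a $\delta$-gauge in an averaged sense, so individual sections can degenerate near $\cC$; this is handled by applying the transformation Theorem~\ref{t:eps_gauge_transformation} at each dyadic scale and working with the rescaled gauge, so that pointwise control \eqref{e:eps_gauge_pointwise_estimates} is available in the computation of $rS'$ and $r(rS')'$.
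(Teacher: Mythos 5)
Your proposal follows essentially the same approach as the paper: Green's distance $b$ with uniform bounds (Lemma \ref{l:annular_distance}), coarea reduction to the Dini integral $\int_0^R S(r)/r$, the superconvexity differential inequality (Proposition \ref{p:super_convexity}), ODE comparison (Proposition \ref{p:dini_ode}, packaged as Corollary \ref{c:dini_estimate}), and upgrading to $|\nabla^2 V|$, $|\nabla^4 V|$ via Lemma \ref{l:harmonic_reg_eps_reg} on a Whitney covering. One small clarification: the coefficient in the homogeneous superconvexity equation is the exact algebraic consequence of normalizing $G_\mu = b^{-2}$ (so $\Delta b = 3|\nabla b|^2/b$, giving Lemma \ref{l:Green_annular_computation}) rather than a blow-down heuristic, and the comparability $r_A \sim b \sim d(\cdot,\cC)$ on $\cA_{10^{-4}}$ comes from the weak flatness of the annular region plus Lemma \ref{l:annular_distance}, not from the gradient bound \eqref{e:eps_gauge_pointwise_estimates} -- but these are expository, not substantive, differences.
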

\vspace{.25cm}

\subsection{Annular Green's Function}\label{ss:annulus_greens}

In this subsection we introduce and study the Green's function $G_\cA$ associated to an annular region.  Recall that $G_x(y) \sim \alpha_n |x-y|^{2-n}$ is the standard Green's function, which is the solution of $-\Delta G_x = \delta_x$. Since we work under the assumption that $K\leq \delta$, by \eqref{e:YM_assumptions} and standard estimates, we have that there exists a constant $C$ for which $C^{-1} d(x,y)^{2-n} \leq G_x(y)\leq C d(x,y)^{2-n}$, and also $C^{-1}d(x,y)^{1-n}\leq \abs{\nabla G_x(y)}\leq C d(x,y)^{1-n}$. The annular version of the Green's function $G_\cA$ satisfies the following:\\

\begin{definition}
	Let $\cA = B_2\setminus \overline B_{r_x}(\cC)$ be a $\delta$-annular region with packing measure $\mu$.  Then we define:
\begin{enumerate}
\item The annular Green's function $G_\cA(y) \equiv \int G_x(y)\,d\mu[x]$, which is the global solution of $-\Delta G_\cA = \mu$.	
\item The annular distance function $b(y)=b_\cA(y)$ which is defined by the formula $G_\cA \equiv  b^{-2}$.
\end{enumerate}
\end{definition}
\vspace{.25cm}

Notice in the above that if one viewed $\cA$ as a perfect annulus $\cA\equiv B_2(0^n)\setminus \dR^{n-4}$, then $b_\cA(y) \propto d(y,\dR^{n-4})$ would be the distance to the singular set.  Therefore $b$ is our smooth approximation to such a distance.  Let us see that this is a fair interpretation in the general case:\\

\begin{lemma}\label{l:annular_distance}
Let $A$ be a stationary Yang-Mills connection on a $\delta$-annular region $\cA = B_2\setminus \overline B_{r_x}(\cC)$ satisfying $\fint_{B_2} |F_A|^2 \leq \Lambda$, and let $b(y)$ be the annular distance function.  Then if $\delta<\delta(n,k,\Lambda)$ then there exists $C(n)$ such that the following hold:
\begin{enumerate}
\item $C^{-1}\,d(y,\cC)<b(y)<C\,d(y,\cC)$ for all $y\in \cA_{10^{-6}}$.
\item $C^{-1}<|\nabla b|< C$ on $\cA_{10^{-6}}$.
\end{enumerate}
\end{lemma}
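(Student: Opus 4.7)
The plan is to use the integral representation $G_\cA(y) = \int G_x(y)\,d\mu[x]$ together with Ahlfors regularity of $\mu$ (Theorem \ref{t:annular_region}.1) and the standard pointwise bounds $c|x-y|^{2-n} \leq G_x(y) \leq C|x-y|^{2-n}$, $|\nabla_y G_x(y)| \leq C|x-y|^{1-n}$ (valid since $K$ is small on $B_2$). Throughout, fix $y \in \cA_{10^{-6}}$, let $x_0 \in \cC$ realize $d := d(y,\cC)$, and observe that the hypothesis $y \in \cA_{10^{-6}}$ yields $d \geq 10^{-6}r_{x_0}$, so $r := \max(d,r_{x_0}) \leq 10^6 d$. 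Decompose $\cC$ into dyadic shells $A_k = \{x \in \cC : 2^k d \leq |x-y| < 2^{k+1}d\}$ about $y$; since $\cC \subseteq B_2$ only finitely many are nonempty.

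For part (1), the upper bound follows from Ahlfors regularity: $A_k \subseteq B_{2^{k+2}d}(x_0)$ gives $\mu(A_k) \leq C(2^k d)^{n-4}$ (handling the small-$k$ regime via the point-mass bound $\mu(\{x_0\}) = r_{x_0}^{n-4} \leq Cd^{n-4}$), so $G_\cA(y) \leq C\sum_k (2^k d)^{2-n}(2^k d)^{n-4} = C'd^{-2}\sum_k 2^{-2k} \leq C''d^{-2}$. The lower bound uses just the innermost ball: $G_\cA(y) \geq \int_{B_r(x_0)} G_x(y)\,d\mu[x] \geq c(2r)^{2-n} \cdot c\,r^{n-4} \geq c'd^{-2}$. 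Since $b = G_\cA^{-1/2}$, this yields $cd \leq b \leq Cd$. For the upper bound in (2), the same dyadic summation applied to $|\nabla_y G_x(y)| \leq C|x-y|^{1-n}$ gives $|\nabla G_\cA(y)| \leq Cd^{-3}$, and then $|\nabla b| = \tfrac{1}{2}G_\cA^{-3/2}|\nabla G_\cA| \leq C$.

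The main obstacle is the lower bound $|\nabla b| \geq c > 0$, equivalently $|\nabla G_\cA(y)| \geq cd^{-3}$; naive absolute-value estimation supplies no sign information, so one must isolate a component of $\nabla G_\cA$ in a fixed direction. My plan is to take $\nu := (y - y')/|y-y'|$ where $y' = \pi_{\cL_{x_0}}(y)$; this is well-defined because (a2)--(a3) force $d(y,\cL_{x_0})$ to be comparable to $d$ (the density half of (a3) populates $\cL_{x_0}\cap B_{O(d)}(x_0)$ by points of $\cC$ up to error $\tau_n d$, so $d \leq d(y,\cL_{x_0}) + O(\tau_n) d$). For every $x \in \cC \cap B_R(x_0)$ with $R \geq r_{x_0}$, condition (a3) gives $|\langle x - y',\nu\rangle| \leq \delta R$, so $\langle y-x,\nu\rangle \geq (1 - O(\tau_n))d - \delta R$. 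Integrating the $\nu$-component of the representation $\nabla G_\cA(y) = -(n-2)\alpha_n\int|y-x|^{-n}(y-x)\,d\mu$, the near shell $|x-y| \in [d,3d]$ contributes at least $cd \cdot (3d)^{-n}\mu(B_{2d}(x_0)) \geq c'd^{-3}$ with definite positive sign, while the signed tail from scales $R > 3d$ has magnitude bounded by $\sum_k C(d + \delta\cdot 2^k d)(2^k d)^{-4}$, which is $O(d^{-3})$ with a constant that shrinks with $\delta$.

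The subtle point is that the tail has a $\delta$-independent piece from the "$d$" in $d + \delta R$, which must be dominated by the near-field positive term rather than by $\delta$-smallness alone; the argument thus hinges on the sharp form of Ahlfors regularity at scale $d$. Should this direct bookkeeping prove too delicate to execute cleanly, the fallback is a compactness/contradiction argument: failure produces $\delta_i \to 0$, $\delta_i$-annular regions $\cA_i$, and $y_i \in \cA_{i,10^{-6}}$ with $|\nabla b_i|(y_i) \to 0$; rescaling so $d(y_i,\cC_i) = 1$ and passing to a subsequential limit, Ahlfors regularity combined with (a2)--(a4) in the limit $\delta_i \to 0$ forces $\mu_\infty$ to be a constant multiple of $\lambda^{n-4}_{\cL_\infty}$ on a full $n-4$ plane, whence $b_\infty(y) = c\,d(y,\cL_\infty)$ has $|\nabla b_\infty| \equiv c > 0$, contradicting the assumed vanishing.
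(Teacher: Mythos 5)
Your strategy coincides with the paper's: single out a direction $\nu$ roughly perpendicular to $\cL_{x_0}$, use (a3) to show the near-scale contributions to $\nabla G_\cA$ have a definite $\nu$-sign, and bound the remaining scales only in magnitude. The two-sided bounds on $G_\cA$ and the upper bound on $|\nabla G_\cA|$ are handled exactly as in the paper. The only delicate step is the lower bound $|\nabla G_\cA(y)|\geq c\,d^{-3}$, and here your direct bookkeeping has a gap that you flag but do not resolve.

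You cut between a near shell $[d,3d]$ with a positive contribution $\geq c'd^{-3}$ and a far tail $R>3d$, whose magnitude you bound by $\sum_k C(d+\delta 2^kd)(2^kd)^{-4}$. The $d$-piece of this sum is $\delta$-independent: it equals a fixed constant times $d^{-3}$, and that constant scales like the \emph{upper} Ahlfors constant $A(n)$, while your near-field lower bound scales like the \emph{lower} Ahlfors constant $A(n)^{-1}$. Nothing forces the near field to win, and appealing to ``sharp'' Ahlfors regularity does not help; the factor $A(n)^2$ between the two sides is a genuine quantitative obstruction. The repair is already contained in your own inequality $\langle y-x,\nu\rangle\geq(1-O(\tau_n))d-C\delta R$: it yields a \emph{nonnegative} sign not just on $[d,3d]$ but on every shell $R\leq 2^\beta d$ once $\delta<\delta(\beta)$. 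Keep the sign out to $R_0=2^\beta d$ and estimate magnitude only beyond $R_0$; the tail is then $C\sum_{R>R_0}R^{-3}\leq C\,2^{-3\beta}d^{-3}$, which is beaten by the near field once you first fix $\beta=\beta(n)$ large and then take $\delta<\delta(\beta)$. This is exactly the paper's proof; the only change you need is to move the cut from scale $3d$ out to a $\beta$-dependent scale. (The compactness fallback is also workable, but your stated reason is off: the limit measure need not be a constant multiple of Hausdorff measure; only its support being a plane and the two-sided Ahlfors bounds are needed, since then all contributions to $\nabla G_{\mu_\infty}$ at $y_\infty$ point the same way. You would also still have to justify $\nabla G_{\mu_i}(y_i)\to\nabla G_{\mu_\infty}(y_\infty)$, which itself requires uniform scale-by-scale tail estimates close to those of the direct argument.)
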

\begin{remark}
Recall that $\cA_s\equiv B_2\setminus \overline B_{s\cdot r_x}(\cC)$ is the extended annulus.	
\end{remark}

\begin{proof}
Let $y\in \cA_{10^{-6}}$ with $x\in\cC$ the closest point of $\cC$ to $y$ and $r\equiv 2d(x,y)$.  Let us denote the sequence of scales $r_\alpha\equiv 2^\alpha r$, then we can write
\begin{align}
G_\mu(y) = \int G_{z}(y)\,d\mu[z] \sim \int_{B_{r}(x)} G_{z}(y)\,d\mu[z]+ \sum_{\alpha\geq 0} \int_{A_{r_{\alpha+1},r_\alpha}(x)} G_{z}(y)\,d\mu[z]	\, .
\end{align}
	Thus our upper and lower bounds are derived from the estimates
\begin{align}
&G_\mu(y)\leq C(n)\sum_\alpha r_\alpha^{2-n}\cdot r_\alpha^{n-4} = C(n)\sum 2^{-2\alpha} r^{-2}\leq C(n) r^{-2} = C(n)d(x,\cC)^{-2}\, ,\notag\\
&G_\mu(y)\geq \int_{B_{r}(x)} \alpha_n d^{2-n}(y,z)\,d\mu[z]\geq C(n)^{-1}r^{2-n}r^{n-4} = C(n)^{-1}r^{-2}\, ,
\end{align}
where we have used the Ahlfor's upper bounds proved in theorem \ref{t:annular_region}.

The upper bound on the gradient estimate is proved similarly with
\begin{align}
|\nabla G_\mu|(y) &\leq 	C(n)\int_{B_{r}(x)} \alpha_n d^{1-n}(y,z)\,d\mu[z]+ C(n)\sum_{\alpha\geq 0} \int_{A_{r_{\alpha+1},r_\alpha}(x)} \alpha_n d^{1-n}(y,z)\,d\mu[z]\notag\\
&\leq C(n)\sum r_\alpha^{1-n}\cdot r_{\alpha}^{n-4}\leq C(n) r^{-3} = C(n) d(x,\cC)^{-3}\, .
\end{align}

The lower bound on the gradient takes a little bit more work.  Let us consider the radial vector at $y$ given by $v=\nabla d(x,y)$.  Note that for every $z\in \cC\cap B_r(x)$ that $\nabla_v G_{z}(y)> C^{-1}d(z,y)^{1-n}$.  Further, by condition $(a3)$ we have that for every $\beta>0$ if $\delta<\delta(\beta)$ then for every $z\in B_{r_\beta}(x)\cap \cC$ we have that $\nabla_v G_{z}(y)>0$.  In particular, if this holds for a given $\beta$ then we can estimate
\begin{align}
|\nabla G_\mu|(y) \geq \nabla_v G_\mu(y)\geq &\int_{B_{r}(x)} \nabla_v G_{z}(y)\,d\mu[z]+ \sum_{0\leq \alpha\leq \beta} \int_{A_{r_{\alpha+1},r_\alpha}(x)} \nabla_v G_{z}(y)\,d\mu[z]\notag\\
 &+ \sum_{\beta+1\leq \alpha} \int_{A_{r_{\alpha+1},r_\alpha}(x)} \nabla_v G_{z}(y)\,d\mu[z]\notag\\
 &>2C(n)^{-1} r^{1-n}r^{n-4} - C(n)\sum_{\alpha\geq \beta+1} r_\beta^{1-n}r_\beta^{n-4}\, ,\notag\\
 &\geq \big(2C(n)^{-1} - C(n) 2^{-\beta}\big)r^{-3}\, .	
\end{align}
Thus if $\beta=\beta(n)$ then we obtain the estimate
\begin{align}
	|\nabla G_\mu|(y) \geq C(n)^{-1} r^{-3}\, ,
\end{align}
which completes the proof of the Lemma.
\end{proof}

\vspace{.25cm}

The following straightforward but useful computations are at the heart of what we will use the annular distance functions for:\\

\begin{lemma}\label{l:Green_annular_computation}
	Let $\cA = B_2\setminus \overline B_{r_x}(\cC)$ be a $\delta$-annular region with packing measure $\mu$ and annular distance function $b(y)$.  If $f$ is a smooth function let us define $S(r)=r\cdot r^{-3}\int_{b=r} f |\nabla b|$.  Then we have
\begin{align}
		r\frac{d}{dr}\Big(r\frac{d}{dr}S\Big) =S + \int_{b=r} \Delta f \,|\nabla b|^{-1}\, .
\end{align}
\end{lemma}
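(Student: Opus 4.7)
The plan is to derive the identity from Green's second identity applied on the sublevel set $\Omega_r := \{b \leq r\}$, using the tailor-made test function $u := b^{-2} - r^{-2} = G_\mu - r^{-2}$. This $u$ vanishes on $\partial \Omega_r = \{b=r\}$ and, by the very definition of $b$ as $G_\mu^{-1/2}$, satisfies $\Delta u = -\mu$ in the distributional sense. Pairing $u$ against $f$ converts the boundary expression defining $S(r)$ into an interior integral whose $r$-dependence lives only in the domain of integration, which can then be differentiated twice using the coarea formula.

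First I would record two preliminary facts: (i) the coarea identity $\frac{d}{dr}\int_{\Omega_r} g\,dx = \int_{b=r} g\,|\nabla b|^{-1}\,d\sigma$, valid for regular values of $b$ (full measure by Sard); and (ii) $b \equiv 0$ on $\cC = \supp{\mu}$, since the Green's function $G_\mu$ diverges at the poles of $\mu$, so $\int_{\Omega_r} f\,d\mu = \int f\,d\mu$ is independent of $r$. Then apply Green's identity on $\Omega_r$: since $\partial \Omega_r$ is a level set of $b$, the outward unit normal is $\nu = \nabla b /|\nabla b|$ and $\partial_\nu u = -2r^{-3}|\nabla b|$; the $u\,\partial_\nu f$ contribution vanishes since $u = 0$ on $\partial \Omega_r$. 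Combined with $\Delta u = -\mu$ and $S(r) = r^{-2}\int_{b=r} f\,|\nabla b|$, this produces the first-order identity
\[
2r^{-1}S(r) = \int f\,d\mu + \int_{\Omega_r}(b^{-2} - r^{-2})\Delta f\,dx.
\]

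Finally, differentiate in $r$. A pleasant cancellation occurs: the boundary contributions coming from the $b^{-2}\Delta f$ and $r^{-2}\Delta f$ terms agree (since $b^{-2} = r^{-2}$ on $\partial \Omega_r$) and cancel exactly, leaving the cleaner first-order identity $r^2 S' - rS = \int_{\Omega_r}\Delta f\,dx$. A second differentiation, together with the algebraic identity $r\frac{d}{dr}(r\frac{dS}{dr}) = rS' + r^2 S''$, yields $r\frac{d}{dr}(r\frac{dS}{dr}) - S = \int_{b=r}\Delta f\,|\nabla b|^{-1}\,d\sigma$, which is precisely the claim.

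The main obstacle is technical rather than conceptual: $b$ is singular along $\cC$ and the level sets $\{b=r\}$ need not be globally smooth hypersurfaces. I would address this by first running the computation on the truncated region $\{\epsilon \leq b \leq r\}$ for regular values of $\epsilon$ and $r$ (Sard again), and then letting $\epsilon \to 0$. Convergence of the boundary integral on $\{b = \epsilon\}$ and of the interior integrands near $\cC$ is controlled by the Ahlfors regularity of $\mu$ from Theorem \ref{t:annular_region}.1, combined with the comparison $b \sim d(\cdot,\cC)$ and the two-sided gradient bound $c \leq |\nabla b| \leq C$ provided by Lemma \ref{l:annular_distance}.
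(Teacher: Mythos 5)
Your argument is correct, and it takes a genuinely different route from the paper. The paper differentiates the level-set integral $I(r)=\int_{b=r}f|\nabla b|$ directly by first variation, for which it needs the pointwise identities $\Delta b = 3b^{-1}|\nabla b|^2$ (coming from $\Delta G_\mu=0$ off $\cC$) and $H_{b=r}=\divergence(\nabla b/|\nabla b|)=3|\nabla b|/b-\langle\nabla b,\nabla|\nabla b|\rangle/|\nabla b|^2$; the $\nabla|\nabla b|$ terms cancel in the first-variation formula and one lands on $r\frac{d}{dr}S=S+r^{-1}\int_{b\le r}\Delta f$, which is differentiated once more. Your route packages the same input ($\Delta G_\mu=-\mu$) into Green's second identity on $\Omega_r$ with the tailor-made test function $u=G_\mu-r^{-2}$, which vanishes on $\partial\Omega_r$ and has $\partial_\nu u=-2r^{-3}|\nabla b|$ there; this produces the integral identity $2r^{-1}S(r)=\int f\,d\mu+\int_{\Omega_r}(b^{-2}-r^{-2})\Delta f$ in one shot, and the two subsequent coarea differentiations (with the observed boundary-term cancellation at the first step) reproduce exactly the paper's intermediate first-order identity and then the claim. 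The trade-off: the paper's method is local and pointwise, exposing the geometric structure of $b$ (the $\Delta b$ ODE, the mean curvature), which is reused implicitly elsewhere; your Green's-identity approach is a single global integration by parts that avoids the mean-curvature computation and its cancellation entirely, at the mild cost of having to justify the Green's identity near the singular set $\cC$ — which you correctly propose to do via truncation $\{\epsilon\le b\le r\}$ and the $b\sim d(\cdot,\cC)$, $|\nabla b|\asymp 1$ bounds of Lemma \ref{l:annular_distance} together with the Ahlfors regularity of $\mu$. Note that the Ahlfors bound is derived purely from the combinatorial conditions (a1)--(a4) on the covering, not from the Yang--Mills equation, so invoking it here for a lemma stated for a bare annular region is legitimate.
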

\begin{proof}
Let us note that $\Delta b = \frac{3}{b}|\nabla b|^2$ and that the mean curvature of the level set $b=r$ is given by 
\begin{align}
H_{b=r} = \text{div}\Big(\frac{\nabla b}{|\nabla b|}\Big) = 3\frac{|\nabla b|}{b}-\frac{\langle \nabla b,\nabla|\nabla b|\rangle}{|\nabla b|^2}\, .
\end{align}
One can then compute
\begin{align}
r\frac{d}{dr} S = S+r^{-1}\int_{b\leq r} \Delta f\, .	
\end{align}
Applying $r\frac{d}{dr}$ again leads to the result.
\end{proof}

\vspace{.25cm}

\subsection{Super Convexity for Scale Invariant \texorpdfstring{$L^1$}{L1} Gradient}

In this subsection we derive a superconvexity estimate for the gradient $|\nabla V|$ of an $\epsilon$-gauge on an annular region.  This estimate will turn out to be the key technical tool in the proofs of Theorem \ref{t:eps_gauge_gradient} and Theorem \ref{t:eps_gauge_on}.  

We will begin by defining a convenient cutoff function associated to an annular region.  For this let $0\leq \phi(s)\leq 1$ be a fixed smooth cutoff with $\phi\equiv 0$ for $|s|\leq \frac{8}{10}$, $\phi\equiv 1$ for $|s|\geq \frac{9}{10}$ and with the estimates $|\frac{d^k}{ds^k}\phi|\leq C(k)$.  For each $x\in B_2$ and $0<r\leq 10$ we can then define $\phi_{x,r}(y)\equiv \phi\big(r^{-2}d^2(x,y)\big)$.  Associated to an annular region $\cA\subseteq B_2(p)$ we then define the cutoff
\begin{align}\label{e:superconvexity_cutoff}
	\phi_\cA(y)\equiv (1-\phi_{p,2})(y)\cdot\prod_{x\in\cC} \phi_{x,10^{-5}r_x}(y) = (1-\phi_{p,2}(y))\cdot \tilde \phi_\cA(y)\, .
\end{align}
Using $(a1)\to (a4)$ and Remark \ref{r:annular_intersection} it is easy to check the following properties of the cutoff
\begin{align}\label{e:superconvexity_cutoff:estimates}
    &\phi_\cA \equiv 1 \text{ in } \cA_{10^{-5}}\, ,\notag\\
	&\text{supp}\,|\nabla\tilde \phi_\cA| \subseteq B_{10^{-5}r_x}(\cC)\setminus B_{10^{-6}r_x}(\cC)\subseteq \cA_{10^{-6}}\, ,\notag\\
	&|\nabla^{(k)} \phi_\cA|\leq C(n,k)r_x^{-k} \text{ in each }B_{r_x}(x)\, .  
\end{align}

Let us begin with the main computation of this subsection:\\

\begin{proposition}\label{p:super_convexity}
Let $A$ be a stationary Yang-Mills connection on a $\delta$-annular region $\cA = B_2\setminus \overline B_{r_x}(\cC)$ satisfying $\fint_{B_2} |F_A|^2 \leq \Lambda$, and let $V$ be a $\delta$-gauge on $B_4$. Let us define the scale invariant quantity $S(r) = r\cdot r^{-3} \int_{b=r} |\nabla V|\,\varphi_\cA\, |\nabla b|$, then for each $\epsilon>0$ if $\delta<\delta(n,k,\epsilon,\Lambda)$ then:
\begin{align}
r\frac{d}{dr}\Big(r\frac{d}{dr}S\Big) \geq (1-\epsilon)^2S - e(r)\, ,
\end{align}
where $|e(r)| \leq  \epsilon\,\mu\big(\{x: C^{-1}r\leq r_x\leq Cr\}\big) + C r\ $ for $C=C(n)$.
\end{proposition}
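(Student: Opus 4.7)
The starting point is Lemma \ref{l:Green_annular_computation} applied to $f = |\nabla V|\,\varphi_\cA$, which rewrites the goal as the distributional Laplacian estimate
\[
\int_{b=r}\Delta f\,|\nabla b|^{-1} \;\ge\; -(2\epsilon-\epsilon^2)\,S(r) - e(r),
\]
since the lemma gives $r\frac{d}{dr}\bigl(r\frac{d}{dr}S\bigr) = S + \int_{b=r}\Delta f\,|\nabla b|^{-1}$. Expanding via Leibniz, $\Delta f = \varphi_\cA\,\Delta|\nabla V| + |\nabla V|\,\Delta\varphi_\cA + 2\langle\nabla|\nabla V|,\nabla\varphi_\cA\rangle$, and I would treat the bulk term $\varphi_\cA\,\Delta|\nabla V|$ and the two cutoff terms separately.

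For the bulk, I would combine the Bochner identity of Lemma \ref{l:harmonic_computation}(2) with the Kato inequality $|\nabla^2 V|^2\ge|\nabla|\nabla V||^2$ to obtain at regular points the pointwise lower bound $\Delta|\nabla V|\ge -(|F_A|+CK^2)|\nabla V|$. The crucial input is then a scale-invariant pointwise curvature bound $|F_A|(y)\,b(y)^2 \le C\delta$ on $\mathrm{supp}(\varphi_\cA)$, which I would derive from the pointwise gauge estimates \eqref{e:eps_gauge_pointwise_estimates} ($b^2|\nabla^2 V|\le C\epsilon$), the commutator identity $|F_A\cdot V|\le 2|\nabla^2 V|$, and a pointwise lower bound on $|V|$ extracted from the transformation Theorem \ref{t:eps_gauge_transformation}. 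Integrating against $|\nabla b|^{-1}$ over $\{b=r\}$ and using the two-sided control of $|\nabla b|$ from Lemma \ref{l:annular_distance} then gives $\int_{b=r}\varphi_\cA\Delta|\nabla V|\,|\nabla b|^{-1}\ge -C(\delta+K^2)\,S(r)$, which is absorbed into $-(2\epsilon-\epsilon^2)\,S(r)$ once $\delta<\delta(n,k,\Lambda,\epsilon)$ is chosen small.

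For the cutoff terms, \eqref{e:superconvexity_cutoff:estimates} localizes $|\nabla\varphi_\cA|$ and $|\Delta\varphi_\cA|$ to the thin annuli $B_{10^{-5}r_x}(x)\setminus B_{10^{-6}r_x}(x)$ around each center $x\in\cC$, together with a neighborhood of $\partial B_2$. Since $b\sim d(\cdot,\cC)$ on $\cA_{10^{-6}}$ by Lemma \ref{l:annular_distance}, the level set $\{b=r\}$ meets one of these annuli only when $r_x$ is comparable to $r$. Using the cutoff bounds $|\nabla^{(j)}\varphi_\cA|\le Cr_x^{-j}$, the pointwise gauge estimates $|\nabla V|\le C\epsilon b^{-1}$ and $|\nabla^2 V|\le C\epsilon b^{-2}$, and the surface-area estimate $\mathcal{H}^{n-1}\bigl(\{b=r\}\cap B_{r_x}(x)\bigr)\le C r^{n-1}$, each contributing center produces at most $C\epsilon\,r_x^{n-4}$; summing yields $\le C\epsilon\,\mu(\{x: C^{-1}r\le r_x\le Cr\})$, while the cutoff near $\partial B_2$ contributes the residual $Cr$ term.

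The main obstacle is the pointwise curvature bound $|F_A|\,b^2\le C\delta$ on $\mathrm{supp}(\varphi_\cA)$. This support extends into the removed balls (down to radius $10^{-6}r_x$ around each center), and since points in $\cA_{10^{-5}}\setminus\cA$ may lie close to the plane $\cL_x$, the $\delta$-weak flatness of $B_{r_x}(x)$ alone does not directly control $|F_A|$ there. Extracting the required control amounts to combining the pointwise gauge estimates on $\nabla^j V$ with a pointwise-in-scale lower bound on $|V|$, the latter derived from Theorem \ref{t:eps_gauge_transformation} by a telescoping argument across dyadic scales between $r_x$ and $1$. Once this bound is secured, the remaining ingredients are the Bochner/Kato calculation, the two-sided control on $|\nabla b|$, and the careful localization of the cutoff errors described above.
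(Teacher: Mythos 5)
Your overall architecture matches the paper's: apply Lemma \ref{l:Green_annular_computation}, expand $\Delta(|\nabla V|\varphi_\cA)$ via Leibniz, treat the Bochner/Kato bulk term and the cutoff-supported errors separately, and localize the cutoff errors to the scale $r_x\sim r$ using the support structure of $\varphi_\cA$. Your accounting of the cutoff errors is essentially correct, including the residual $Cr$ from the outer cutoff near $\partial B_2$.

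The gap is in how you propose to obtain the pointwise bound $|F_A|(y)\,b(y)^2\le C\delta$ on $\mathrm{supp}(\varphi_\cA)$. You flag this as the main obstacle and route it through the gauge estimates \eqref{e:eps_gauge_pointwise_estimates}, the commutator identity, and a lower bound on $|V|$ extracted from Theorem \ref{t:eps_gauge_transformation}. This does not work: the only a priori lower bound available on $|V|$ at scale $r$ comes from $T(x,r)\le (1+\epsilon)r^{-\epsilon}$ in Theorem \ref{t:eps_gauge_transformation}, which gives $|V|\gtrsim r^{\epsilon}$ and hence only $|F_A|\lesssim \epsilon\, b^{-2-\epsilon}$; integrating against $|\nabla b|^{-1}$ then produces a bulk error of size $\epsilon\, r^{-\epsilon} S(r)$, which blows up as $r\to 0$ and destroys the superconvexity constant $(1-\epsilon)^2$. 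A telescoping argument cannot remove this polynomial loss without using the Dini estimate on $S$ itself, which would be circular. In fact the needed curvature bound is more elementary and makes no use of $V$ at all: condition $(a2)$ of Definition \ref{d:annulus} makes the removed balls $B_{r_x}(x)$ themselves $\delta$-weakly flat, and condition $(a3)$ forces $\cL_x\cap B_{2r_x}(x)\subseteq B_{2\tau_n r_x}(\cC)$, so any $y\in\cA_{10^{-6}}\cap B_{r_x}(x)$ satisfies $d(y,\cL_x)\ge (10^{-6}-2\tau_n)r_x\gtrsim r_x$, well above the scale-invariant exclusion zone $\delta r_x$ in Definition \ref{d:weak_flat}.2. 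Thus the pointwise weak-flatness estimate applies directly and gives $|F_A|(y)\le C(n)\,\delta\, d(y,\cC)^{-2}$ with the $\delta$-smallness needed for the $-\epsilon S$ absorption. Your worry that points of $\cA_{10^{-5}}\setminus\cA$ might lie close to $\cL_x$ is precisely what $(a3)$ rules out.
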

\vspace{.25cm}
\begin{proof}
Let us consider $\epsilon'>0$, which will eventually be chosen by $\epsilon'=\epsilon'(n, \epsilon)$.  
We can use Lemma \ref{l:Green_annular_computation} in order to compute
\begin{align}
r\frac{d}{dr}\Big(r\frac{d}{dr}S\Big)  = S+\int_{b=r} \Big(\Delta |\nabla V|\, \phi + 2\langle\nabla|\nabla V|,\nabla\phi\rangle +|\nabla V|\Delta \phi\Big)\,|\nabla b|^{-1}\, .
\end{align}
	Using Lemma \ref{l:harmonic_computation} and that our cutoff satisfies \eqref{e:superconvexity_cutoff} $\text{supp}\,\phi_\cA\subseteq \cA_{10^{-6}}$ we have that
\begin{align}
	\Delta |\nabla V|\, \phi \geq -C(n)\delta\, d(x,\cC)^{-2} |\nabla V|\phi\, .
\end{align}
If we combine this with Lemma \ref{l:annular_distance} and integrate we arrive at
\begin{align}
\int_{b=r} \Delta |\nabla V|\, \phi \geq -C(n)\delta\, b^{-2}\int_{b=r}|\nabla V|\,\phi \geq -\epsilon\, S\, .
\end{align}

In order to estimate the other error terms let us recall that we can write $\phi_\cA = \phi_{p,2}\cdot \tilde \phi_\cA$ as in \eqref{e:superconvexity_cutoff}.  Using Theorem \ref{t:eps_gauge_transformation} with $\epsilon'>0$ together with \eqref{e:eps_gauge_pointwise_estimates} and \eqref{e:superconvexity_cutoff:estimates} we can therefore write
\begin{align}
2|\langle\nabla|\nabla V|,\nabla\phi_\cA\rangle|(y)\leq C(n)\epsilon' b^{-2}+C(n)\epsilon' \sum_{x\in \cC\cap B_{18/10}} r_x^{-3}\chi[B_{C(n) r_x}(x)\cap \{C(n)^{-1}r_x<b(y)<C(n) r_x\}]\, ,\notag\\
|\nabla V|\,|\Delta\phi|(y) \leq C(n)\epsilon'b^{-1}+C(n)\epsilon' \sum_{x\in \cC\cap B_{18/10}} r_x^{-3}\chi[B_{C(n) r_x}(x)\cap \{C(n)^{-1}r_x<b(y)<C(n) r_x\}]\, .
\end{align}
Integrating these last two terms and using Lemma \ref{l:annular_distance} we arrive at
\begin{align}
&\int_{b=r}2|\langle\nabla|\nabla V|,\nabla\phi\rangle| \leq C(n)\epsilon'b+ C(n)\epsilon'\sum_{x\in B_{18/10}\cap \{r_x\in [C^{-1}r,Cr]\}} r^{n-4} \leq  C(n)\epsilon'\mu\big(\{C^{-1}r< r_x< C r\}\big)\, ,\notag\\
&\int_{b=r} |\nabla V|\,|\Delta\phi|\leq C(n)\epsilon'b^2+ C(n)\epsilon'\mu\big(\{C^{-1}r< r_x< C r\}\big)\, .
\end{align}
Choosing $\epsilon' = C(n)^{-1}\epsilon$ we have arrived at our conclusion.
\end{proof}

\subsection{Dini Estimates and Superconvexity}

In order to exploit Proposition \ref{p:super_convexity} we will apply a maximum principle and study solutions of the underlying superconvex equation.  The following tells us how to estimate the Dini integral of solutions:

\begin{proposition}\label{p:dini_ode}
For each $R>0$ and $e(r)$ the solution of $r\frac{d}{dr}\Big(r\frac{d}{dr}\bar S\Big)= (1-\epsilon)^2 \bar S - e(r)$ with $\bar S(0)=\bar S(R)=0$ satisfies the Dini estimate
\begin{align}
	\int_0^R \frac{\bar S(r)}{r} \leq \frac{1}{(1-\epsilon)^2}\int_0^R \frac{e}{s} + \frac{R^{-1+\epsilon}}{(1-\epsilon)^2}\int_0^R s^{-\epsilon}\, e\, .
\end{align}
\end{proposition}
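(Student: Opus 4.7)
The plan is to recognize this as a linear Euler-type ODE admitting an explicit representation, which under the change of variables $t = \log r$ becomes a constant-coefficient equation. Setting $\alpha = 1-\epsilon$, $T = \log R$, $u(t) = \bar S(e^t)$, and $\tilde e(t) = e(e^t)$, the equation reduces to $u''(t) - \alpha^2 u(t) = -\tilde e(t)$ on $(-\infty, T)$ with $u(-\infty) = u(T) = 0$. The fundamental solutions are $e^{\pm \alpha t}$, i.e.\ $r^{\pm \alpha}$ in the original variable, and since only $r^{\alpha}$ is admissible at the singular endpoint $r = 0$, I expect $\bar S(r) \sim C r^{\alpha}$ as $r \to 0^{+}$.

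Rather than assembling the full Green's function, the cleanest route is to derive two integration-by-parts identities directly from the ODE. First, rewriting $r(r\bar S')' - \alpha^{2}\bar S = -e$ and dividing by $r$, I would integrate over $(0, R)$; the boundary conditions $\bar S(0) = \bar S(R) = 0$ together with the $r^{\alpha}$ asymptotic at $0$ kill all boundary terms but $R\bar S'(R)$, producing
\[ R\bar S'(R) \;=\; \alpha^{2} \int_{0}^{R} \frac{\bar S(r)}{r}\,dr \;-\; \int_{0}^{R} \frac{e(r)}{r}\,dr. \]
Second, I would multiply the ODE by the weight $r^{\alpha-1}$ and regroup into the divergence form $(r^{\alpha+1}\bar S')' - \alpha(r^{\alpha}\bar S)' = -r^{\alpha-1}e$; integrating and using $\bar S(R) = 0$ along with the $r^{\alpha}$ asymptotic at the origin, both boundary terms at $r = 0$ vanish like $r^{2\alpha}$, leaving
\[ R^{\alpha+1}\bar S'(R) \;=\; -\int_{0}^{R} r^{\alpha-1} e(r)\,dr \;=\; -\int_{0}^{R} r^{-\epsilon} e(r)\,dr, \]
using $\alpha - 1 = -\epsilon$.

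Eliminating $R\bar S'(R)$ between the two identities and recalling $R^{-\alpha} = R^{-1+\epsilon}$ yields the exact equality
\[ \int_{0}^{R} \frac{\bar S(r)}{r}\,dr \;=\; \frac{1}{\alpha^{2}}\int_{0}^{R} \frac{e(s)}{s}\,ds \;-\; \frac{R^{-1+\epsilon}}{\alpha^{2}}\int_{0}^{R} s^{-\epsilon}e(s)\,ds. \]
The stated inequality then follows upon dominating the subtracted term by its absolute value, which is symmetric with the positive second term of the bound; in fact when $e \geq 0$ the identity is strictly sharper than what is claimed. The argument faces no fundamental obstacle --- it is essentially an explicit ODE computation --- and the only technical care required is in justifying the vanishing of boundary terms at $r = 0$, which follows from the Fuchsian structure of the equation at the regular singular point and the $r^{\alpha}$ branch selected by $\bar S(0) = 0$.
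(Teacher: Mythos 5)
Your proposal is correct but takes a genuinely different route from the paper's. The paper writes down the explicit variation-of-parameters solution for $\bar S$ in terms of the fundamental solutions $r^{1-\epsilon}$ and $r^{-1+\epsilon}$, and then computes $\int_0^R \bar S/r$ directly by interchanging the order of integration in each of the three resulting double integrals. You bypass the Green's function entirely: you integrate the ODE once after dividing by $r$, integrate it a second time after putting it in the divergence form $(r^{\alpha+1}\bar S')' - \alpha(r^{\alpha}\bar S)' = -r^{\alpha-1}e$, and then eliminate $R\bar S'(R)$ between the two identities. Both routes in fact yield the same exact identity
\begin{align*}
\int_0^R \frac{\bar S}{r} = \frac{1}{(1-\epsilon)^2}\int_0^R \frac{e}{s} - \frac{R^{-1+\epsilon}}{(1-\epsilon)^2}\int_0^R s^{-\epsilon}e\, ,
\end{align*}
with a minus sign on the second term, which is strictly sharper than the stated inequality (the stated $\leq$ with $+$ follows trivially). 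Your approach is a bit slicker since it avoids the iterated integrals; the paper's explicit formula has the compensating advantage that the $r^{1-\epsilon}$ asymptotic at the origin is visible by inspection, whereas you must argue the Fuchsian behavior separately in order to kill the boundary terms at $r=0$ (you do, but only in a sentence, and it does implicitly require that $e$ decays appropriately near $0$, which holds in the paper's application since the cutoff $\phi_\cA$ vanishes on small scales).
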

\begin{proof}
	Observe that $r^{1-\epsilon}$ and $r^{-1+\epsilon}$ are solutions to the homogeneous equation.  With this one can check that an explicit solution to $r\frac{d}{dr}\Big(r\frac{d}{dr}\bar S\Big)= (1-\epsilon)^2 \bar S - e(r)$ under the conditions $\bar S(0)=\bar S(R)=0$ is given by
\begin{align}
	\bar S(r) = \frac{1}{2(1-\epsilon)}\Bigg(r^{-1+\epsilon}\int_0^r s^{-\epsilon}e + r^{1-\epsilon}\int_r^R s^{-2+\epsilon}e-\Big(\frac{r}{R}\Big)^{1-\epsilon}R^{-1+\epsilon}\int_0^R s^{-\epsilon}e\Bigg)\, .
\end{align}

From this we have the explicit computation
\begin{align}
\int_0^R \frac{\bar S(r)}{r} = 	\frac{1}{2(1-\epsilon)}\Bigg(\int_0^R r^{-2+\epsilon}\int_0^r s^{-\epsilon}e + \int_0^R r^{-\epsilon}\int_r^R s^{-2+\epsilon}e-\int_0^R r^{-\epsilon}R^{-2+2\epsilon}\int_0^R s^{-\epsilon}e\Bigg)\, .
\end{align}

Estimating each of these terms is similar, so let us just focus on the first.  Indeed, by changing the order of integration we arrive at
\begin{align}
	\int_0^R\int_0^r r^{-2+\epsilon} s^{-\epsilon}e &= \int_0^R\int_s^R r^{-2+\epsilon} s^{-\epsilon}e = \frac{-1}{1-\epsilon}\int_0^R \big(R^{-1+\epsilon}-s^{-1+\epsilon}\big)s^{-\epsilon} e \notag\\
	&= \frac{1}{1-\epsilon}\Big(\int_0^R s^{-1} e - R^{-1+\epsilon}\int_0^R s^{-\epsilon} e\Big)\, .
\end{align}
 Arguing in a verbatim manner with the other terms leads to the conclusion of the lemma.

\end{proof}

Our main corollary of the above is the following, which gives a Dini estimate for our $L^1$ Hessian:

\begin{corollary}\label{c:dini_estimate}
	Let $A$ be a stationary Yang-Mills connection on a $\delta$-annular region $\cA = B_2\setminus \overline B_{r_x}(\cC)$ satisfying $\fint_{B_2} |F_A|^2 \leq \Lambda$, and let $V$ be a $\delta$-gauge on $B_4$. Let us define the scale invariant quantity $S(r) = r\cdot r^{-3}\int_{b=r} |\nabla V|\,\varphi_\cA\, |\nabla b|$.  Then for each $\epsilon>0$ if $\delta<\delta(n,k,\Lambda,\epsilon)$ we have the estimate
\begin{align}
	\int_0^\infty \frac{S(r)}{r} < \epsilon\, .		
\end{align}
\end{corollary}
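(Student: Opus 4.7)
The plan is to combine the superconvexity inequality of Proposition \ref{p:super_convexity} with the explicit ODE Dini bound of Proposition \ref{p:dini_ode} via a comparison principle, and then evaluate the resulting integral of the error term using the Ahlfors regularity of Theorem \ref{t:annular_region}.1. First I would fix $\epsilon' = c(n,\Lambda)\,\epsilon$ (to be pinned down at the end) and take $\delta < \delta(n,k,\Lambda,\epsilon')$ so that Proposition \ref{p:super_convexity} yields
\[
r \frac{d}{dr}\Big(r \frac{d}{dr} S\Big) \geq (1-\epsilon')^2 S - e(r), \qquad |e(r)| \leq \epsilon' \mu\bigl(\{x : C^{-1} r \leq r_x \leq C r\}\bigr) + C r.
\]
Since $\operatorname{supp}\phi_\cA \subseteq \cA \subseteq B_2$ and $b \sim d(\cdot,\cC)$ by Lemma \ref{l:annular_distance}, there is a dimensional $R_0 = R_0(n)$ with $S(r)\equiv 0$ for $r \geq R_0$; moreover the pointwise estimate $d(y,\cC)\,|\nabla V|(y) \leq C\epsilon'$ from \eqref{e:eps_gauge_pointwise_estimates} (via Theorem \ref{t:eps_gauge_transformation}), together with the elementary area bound $\lambda^{n-1}(\{b = r\}) \leq C\, r^{3}\,\mu(\cC)$ coming from Ahlfors regularity, gives the uniform $S(r) \leq C(n)\,\epsilon'$ on $(0, R_0]$.

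Second, for any small $r_* > 0$, I would apply a maximum principle on $[r_*, R_0]$ to compare $S$ to $\bar S_{r_*} + S(r_*)\,(r/r_*)^{-1+\epsilon'}$, where $\bar S_{r_*}$ solves the ODE with equality on $[r_*, R_0]$ and boundary data $\bar S_{r_*}(r_*) = \bar S_{r_*}(R_0) = 0$, and $r^{-1+\epsilon'}$ is the homogeneous solution correcting for the nonzero inner boundary value. Plugging into Proposition \ref{p:dini_ode} and then applying Fubini,
\[
\int_0^{\infty} \frac{\mu\bigl(\{C^{-1} s \leq r_x \leq C s\}\bigr)}{s}\, ds = \sum_{x \in \cC_+} r_x^{n-4} \int_{r_x/C}^{C r_x} \frac{ds}{s} = 2 \log C \cdot \mu(\cC_+ \cap B_2) \leq C(n),
\]
using Theorem \ref{t:annular_region}.1 (the $\cC_0$ part of $\mu$ contributes nothing since $r_x = 0$ excludes it from the indicator set), and the weighted variant $\int s^{-\epsilon'} \mu(\{\dots\})\, ds$ is handled identically. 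The homogeneous correction contributes $S(r_*)/(1-\epsilon') \leq C\epsilon'$ to the Dini integral, a bound independent of $r_*$, and the lower-order $Cr$ error term, whose effective prefactor one verifies carries an $\epsilon'$ factor by tracing the $C(n)\epsilon'\, b$ surface contribution in the proof of Proposition \ref{p:super_convexity}, contributes $\leq C R_0\,\epsilon'$. Summing and sending $r_* \to 0$ gives $\int_0^{\infty} S(r)/r\, dr \leq C(n,\Lambda)\,\epsilon'$, and choosing $\epsilon' = \epsilon/(2\,C(n,\Lambda))$ at the outset yields the claimed bound.

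The main technical obstacle is the comparison step: one must carefully account for the fact that $S$ does not automatically vanish as $r \to 0^+$ when $\cC_0 \neq \emptyset$ or when the $r_x$'s accumulate to zero, which forces the introduction of the homogeneous correction $S(r_*)\,(r/r_*)^{-1+\epsilon'}$ together with a verification that its contribution remains $O(\epsilon')$ uniformly in $r_*$. A secondary subtlety is confirming that the $Cr$ term in the error bound of Proposition \ref{p:super_convexity} genuinely inherits an $\epsilon'$ prefactor from the cutoff estimates in its proof; without this prefactor, one would obtain only a dimensional constant bound rather than the required $<\epsilon$.
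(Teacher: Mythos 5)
Your proof is correct and follows essentially the same route as the paper's: Proposition \ref{p:super_convexity}, a maximum-principle comparison with the equality-case ODE solution, Proposition \ref{p:dini_ode}, and Ahlfors regularity (Theorem \ref{t:annular_region}.1) to bound $\int_0^R e/s$. Two details worth noting: the paper simply asserts $S(0)=0$ from the construction of the cutoff, which is justified in the smooth case since $\cC$ is compact with $r_x>0$ throughout, so $\phi_\cA\equiv 0$ on $\{b\lesssim \min_\cC r_x\}$ and your homogeneous correction at $r_*$ is an extra safeguard needed only in the admissible setting with $\cC_0\neq\emptyset$; and you are right that the $Cr$ error term in the statement of Proposition \ref{p:super_convexity} in fact carries an $\epsilon'$ prefactor once one traces the $C(n)\epsilon' b$ surface contribution through its proof, which is indeed required to conclude $\int S/r<\epsilon$ rather than merely $\leq C(n)$.
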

\begin{proof}
	Recall from Proposition \ref{p:super_convexity} that $S(r)$ solves the differential inequality
\begin{align}
r\frac{d}{dr}\Big(r\frac{d}{dr} S\Big)\geq (1-\epsilon)^2 S - e(r)\, ,	
\end{align}
where we have that
\begin{align}
e(r)\leq C(n)\,\mu\Big(\{x\in\cC\cap B_{19/10}: C(n)^{-1}r\leq r_x\leq C(n)r\}\Big) + C(n) r\, .	
\end{align}
Note also that by the construction of the cutoff $\phi_\cA$ that $S(0)=S(R)=0$ where $R\leq R(n)$.  Let us then consider the solution of
\begin{align}
	&r\frac{d}{dr}\Big(r\frac{d}{dr} \bar S\Big)= (1-\epsilon)^2 \bar S - e(r)\, ,\notag\\
	&\bar S(0)=\bar S(R)=0\, .
\end{align}
Note by a maximum principle applied to $\bar S-S$ we immediately yield the inequality $S(r)\leq \bar S(r)$.  On the other hand, let us note that
\begin{align}
\int_0^R \frac{e}{s}	 \leq C(n)\mu\{B_{18/10}\} + C(n) \leq C(n)\, ,
\end{align}
where in the last inequality we have used Theorem \ref{t:annular_region}.  Thus by applying Proposition \ref{p:dini_ode} with $\epsilon'>0$ we have the estimate
\begin{align}
\int_0^\infty \frac{S}{r} \leq \int_0^R \frac{\bar S}{r} \leq C(n)\epsilon'<\epsilon\, ,	
\end{align}
where in the last line we have chosen $\epsilon'\equiv C(n)^{-1}\epsilon$.  This completes the proof of the corollary.
\end{proof}

\subsection{\texorpdfstring{Proof of the $L^1$ gradient estimate of Theorem \ref{t:eps_gauge_gradient}}{Proof of the L1 gradient estimate of Theorem \ref{t:eps_gauge_gradient}}}

Using the coarea formula we may write
\begin{align}
	\int_{\cA\cap B_{3/2}} d_\cC^{-3}|\nabla V| \leq \int d_\cC^{-3}|\nabla V| \phi_\cA = \int_0^R\int_{b=r} d_\cC^{-3}|\nabla V| \phi_\cA\,|\nabla b|^{-1}\, ,
\end{align}
where $b$ is the $\mu$-Green's distance associated to $\cA$ and $R\leq R(n)$.  Using Lemma \ref{l:annular_distance} we can estimate this by
\begin{align}
\int_{\cA\cap B_{3/2}} d_\cC^{-3}|\nabla V| \leq C(n)  \int_0^R r^{-3}\int_{b=r} |\nabla V| \phi_\cA\,|\nabla b|	= C(n)\int_0^R \frac{S(r)}{r}\, .
\end{align}
Using Corollary \ref{c:dini_estimate} with $\epsilon'\equiv C(n)^{-1}\epsilon$ we then arrive at the desired estimate 
\begin{align}
\int_{\cA\cap B_{3/2}} d_\cC^{-3}|\nabla V| \leq C(n)\int_0^R \frac{S(r)}{r} < \epsilon\, ,
\end{align}
as claimed.  The other estimates then follow by combining this with Lemma  \ref{l:harmonic_reg_eps_reg}. $\square$

\subsection{Transformation Estimates}

Recall from Theorem \ref{t:eps_gauge_transformation} that if $x\in \cC$ then for every $r_x\leq r <1$ there exists a matrix $T=T(x,r)$, given explicitly in Definition \ref{d:transformation}, such that $T\circ V$ is still an $\epsilon$-gauge.  The key to Theorem \ref{t:eps_gauge_on} is to see that for most $x\in \cC$ and for all $r_x<r\leq 1$, this matrix is in fact close to the identity.  In this subsection we see how to use the gradient estimate of Theorem \ref{t:eps_gauge_gradient} in order to control the transformation matrix $T$, which will be used in the next section is order finish the proof of Theorem \ref{t:eps_gauge_on}.\\

The main technical result of this subsection is the following:

\begin{proposition}\label{p:transformation_estimate}
Given a $\delta$-annular region $\cA=B_2\setminus \overline B_{r_x}(\cC)$ and a $\delta$-gauge $V^a\in \Gamma(B_4,E)$, then for $x\in \cC$ and $r_x\leq r<1$ with $\int_{W^{\pi/4}(x)} r_A^{1-n} \,|\nabla V|<\epsilon_n$ we have for every $\epsilon>0$ that if $\delta<\delta(n,k,\Lambda,\epsilon)$ then
\begin{align}
  \big|Id- T(x,r)\big| < \epsilon+C(n) \int_{W^{\pi/4}(x)} r_A^{1-n} \,|\nabla V|\, .
\end{align}
\end{proposition}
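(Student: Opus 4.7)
The plan is to work with the Gram matrix $M^{ab}(r)\equiv \fint_{W^\theta_r(x)}\langle V^a,V^b\rangle$, so that $T(x,r)=M(r)^{-1/2}$. Since the hypothesis $\int_{W^{\pi/4}(x)}r_A^{1-n}|\nabla V|<\epsilon_n$ will force $M(r)$ to lie in a small neighborhood of $Id$, the map $A\mapsto A^{-1/2}$ is smooth there with Lipschitz constant close to $\tfrac12$, so it will be enough to prove
\begin{align*}
|M(r)-Id|\;<\;\tfrac{\epsilon}{2}+C(n)\int_{W^{\pi/4}(x)}r_A^{1-n}|\nabla V|,
\end{align*}
and then feed this into Taylor's formula to conclude the proposition.

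To bound $|M(r)-Id|$ I would telescope across a dyadic chain of scales $r_k=2^kr$ for $k=0,\dots,K$ with $r_K\in[1/2,1]$, writing
\begin{align*}
M(r)-Id=\bigl(M(r_K)-Id\bigr)+\sum_{k=0}^{K-1}\bigl(M(r_k)-M(r_{k+1})\bigr).
\end{align*}
For the top scale, the $\delta$-gauge condition $\fint_{B_4}|\langle V^a,V^b\rangle-\delta^{ab}|<\delta$ together with the fact that $W^\theta_{r_K}(x)\subseteq B_4$ has volume bounded below by a dimensional constant $c(n,\theta)$ (independent of $x$ and $r_K$, since $\theta$ is fixed) gives by Chebyshev $|M(r_K)-Id|\le C(n,\theta)\delta$, which is $<\epsilon/2$ once $\delta<\delta(n,k,\Lambda,\epsilon)$.

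For each telescoping increment, I would apply a Poincar\'e-type inequality on a connected Lipschitz region $E_k\subseteq W^\theta(x)$ containing both $W^\theta_{r_k}(x)$ and $W^\theta_{r_{k+1}}(x)$, such as $W^\theta(x)\cap A_{r_k\cos\theta,\,r_{k+1}/\cos\theta}(x)$, handling each of the two lobes of the wedge separately. Since $|\nabla\langle V^a,V^b\rangle|\le 2|V||\nabla V|\le C|\nabla V|$ by the $L^\infty$ bound, and $E_k$ has diameter $\sim r_{k+1}$ and volume $\sim c(n,\theta)\,r_{k+1}^n$, the standard Poincar\'e inequality yields
\begin{align*}
\bigl|M(r_k)-M(r_{k+1})\bigr|\;\le\; C(n,\theta)\,r_{k+1}^{1-n}\int_{E_k}|\nabla V|.
\end{align*}
Now the crucial geometric observation is that for $y\in W^\theta(x)$ with $d(y,x)=\rho$, one has $d(y,\cL_x)\ge\cos(\theta)\rho$, while the annular axiom (a3) forces $\cC\cap B_{2\rho}(y)\subseteq B_{\delta\rho}(\cL_x)$; consequently $d(y,\cC)\sim\rho$ and the $\epsilon$-regularity control on weakly flat balls gives $r_A(y)\sim\rho$. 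Hence $r_A^{1-n}\sim r_{k+1}^{1-n}$ on $E_k$, and each telescoping term is controlled by $C(n)\int_{E_k}r_A^{1-n}|\nabla V|$. Since the $E_k$ have bounded overlap and $W^\theta\subseteq W^{\pi/4}$, summing gives $|M(r)-Id|<\epsilon/2+C(n)\int_{W^{\pi/4}(x)}r_A^{1-n}|\nabla V|$, which completes the proof after the Taylor step.

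I expect the main obstacle to be the Poincar\'e chaining on the wedge. The wedge $W^\theta$ consists of two disconnected lobes and one must set up the chain of connecting regions $E_k$ so that each is a John (or at least a uniformly nontapering) domain with constants depending only on $n$ and $\theta$, so that the Poincar\'e constants do not blow up as $r\to r_x$. Everything else amounts to standard elliptic and averaging manipulations using results already proved in Section~\ref{s:eps_gauge}.
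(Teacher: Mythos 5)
Your proof is correct and takes a genuinely different route from the paper's, so let me compare them.  The paper proves the proposition via Lemma~\ref{l:transformation_estimate}, which compares the \emph{transformation matrices} $T(x,\gamma r)$ and $T(x,r)$ by a Poincar\'e inequality on $W^{\pi/4}_r(x)$ applied to the normalized sections $\tilde V = T(x,r)\circ V$.  It then iterates down in scale, multiplying the errors, but this requires an explicit bootstrap: the estimate $|T_\alpha - T_{\alpha+1}| \le C(n)s_\alpha\fint_{W_{s_\alpha}^{\pi/4}}|\nabla V|$ is only valid once one already knows $|T_\alpha|\le 2$, and the paper must argue inductively (via \eqref{e:p_transformation:1}) that this upper bound propagates from one scale to the next.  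Your approach instead telescopes the Gram matrices $M(r)=T(x,r)^{-2}$ additively, and this neatly sidesteps the bootstrap: $M$ is \emph{a priori} bounded above because $|V|\le 1+\delta$ pointwise, so there is no danger of the quantity you are iterating blowing up, and you need only track how far $M$ drifts from $Id$.  The price is a short Taylor/Lipschitz step at the end converting the bound on $|M-Id|$ into a bound on $|T-Id|$, but since the hypothesis $\int_{W^{\pi/4}(x)}r_A^{1-n}|\nabla V|<\epsilon_n$ (with $\epsilon_n$ dimensional) already forces $M$ into a small neighborhood of $Id$ where $A\mapsto A^{-1/2}$ is $\tfrac12$-Lipschitz, this is elementary.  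In both proofs the analytic engine is the same --- a Poincar\'e inequality on a chain of wedge annuli, followed by the pointwise identification $r_A\sim d(\cdot,\cC)\sim$ scale on the wedge (which uses (a3) and weak flatness) to convert the dyadic sum into $\int_{W^{\pi/4}(x)}r_A^{1-n}|\nabla V|$.  Two small remarks: your worry about the wedge having ``two disconnected lobes'' is unfounded, since $\cL^\perp$ is always $4$-dimensional and hence $\cW^\theta_r(x)$ is a connected annular tube around a $4$-plane; and the two-sided comparability $r_A\sim d(\cdot,\cC)$ you invoke (and which the paper also uses tacitly) only needs the lower bound $r_A\gtrsim d(\cdot,\cC)$ from weak flatness plus $d(\cdot,\cC)\sim\rho$ on the wedge --- it is worth being explicit that the dyadic weight one actually produces is $d(\cdot,\cC)^{1-n}$, which dominates $r_A^{1-n}$ by the weak-flatness lower bound on $r_A$, so the statement as written with $r_A^{1-n}$ is the weaker (hence safe) formulation.
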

\vspace{.5 cm}

In order to prove the above Proposition let us begin with the following Lemma, which is a more local version of the above:\\

\begin{lemma}\label{l:transformation_estimate}
Given a $\delta$-annular region $\cA=B_2\setminus \overline B_{r_x}(\cC)$ and a $\delta$-gauge $V^a\in \Gamma(B_4,E)$, then for $x\in \cC$ and $r_x\leq r<1$ we have for $\gamma\equiv \frac{4}{5}$ that
\begin{align}
  \big|Id- T(x,\gamma\cdot r)T(x,r)^{-1} \big| < C(n) r\fint_{W_r^{\pi/4}(x)} |\nabla \big(T_{x,r}\circ V\big)|
\end{align}
\end{lemma}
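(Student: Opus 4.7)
The plan is to work in the transformed basis $\tilde V := T(x,r) \circ V$, in which the Gram matrix at scale $r$ is normalized to the identity, and to control the discrepancy at the nearby scale $\gamma r$ by a Poincar\'e estimate on a common fat wedge, then transfer the resulting bound back to the original basis. The decomposition
\[
I - K \;=\; -\bigl(T(x,\gamma r) - T(x,r)\bigr)\, T(x,r)^{-1}, \qquad K := T(x,\gamma r)\,T(x,r)^{-1},
\]
reduces the task to bounding $|T(x,\gamma r) - T(x,r)|$ in operator norm, which will be extracted from the information that the Gram matrix at scale $\gamma r$ is close to that at scale $r$.

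First I would establish the key Poincar\'e estimate. For $\gamma = 4/5$ and $\theta = 10^{-2}$, both wedge annuli $W_r^\theta(x)$ and $W_{\gamma r}^\theta(x)$ sit as subregions of the fat wedge $W := W_r^{\pi/4}(x)$ with $n$-volumes each comparable to $|W|$ up to a purely dimensional constant. Setting $\tilde M_{\gamma r} := \fint_{W_{\gamma r}^\theta(x)}\langle \tilde V^a, \tilde V^b\rangle$ and using $\fint_{W_r^\theta(x)}\langle \tilde V^a, \tilde V^b\rangle = \delta^{ab}$ by construction of $T(x,r)$, an $L^1$-Poincar\'e inequality on $W$ applied to the matrix-valued function $\tilde f^{ab} := \langle \tilde V^a, \tilde V^b\rangle$, combined with the pointwise bound $|\nabla \tilde f^{ab}| \leq 2|\tilde V||\nabla \tilde V| \leq C|\nabla \tilde V|$ (using $|\tilde V| \leq 1+\epsilon$ from Theorem \ref{t:eps_gauge_transformation}), yields
\[
|\tilde M_{\gamma r} - I|\ \leq\ C(n)\, r \fint_{W_r^{\pi/4}(x)} |\nabla \tilde V|.
\]

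Second, I would transfer this estimate back to the $V$ basis. The identity $T(x,r)\,M_r\,T(x,r) = I$ gives $\tilde M_{\gamma r} - I = T(x,r)\,(M_{\gamma r} - M_r)\,T(x,r)$, and the lower bound $T(x,r) \geq 1-\epsilon$ from Theorem \ref{t:eps_gauge_transformation} together with $\|T(x,r)^{-1}\| \leq (1-\epsilon)^{-1}$ yields $|M_{\gamma r} - M_r| \leq C|\tilde M_{\gamma r} - I|$. The matrix Lipschitz estimate for $M \mapsto M^{-1/2}$ on the spectrum of $M_r, M_{\gamma r}$ then produces $|T(x,\gamma r) - T(x,r)| \leq C|M_{\gamma r} - M_r|$, and combining with $\|T(x,r)^{-1}\| \leq C$ gives the claimed bound on $|I-K|$.

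The main obstacle is the matrix-Lipschitz step. The upper bound $T(x,r) \leq (1+\epsilon)r^{-\epsilon}$ from Theorem \ref{t:eps_gauge_transformation} only guarantees that the spectrum of $M_r = T(x,r)^{-2}$ is bounded below by $(1+\epsilon)^{-2}r^{2\epsilon}$, so the naive Lipschitz constant of $M \mapsto M^{-1/2}$ carries an $r^{-3\epsilon}$ factor. Choosing $\epsilon$ sufficiently small (as the hypotheses of the Transformation Theorem permit), this conditioning factor is bounded and absorbed into the dimensional constant $C(n)$; this control is exactly what allows the telescoping iteration of Lemma \ref{l:transformation_estimate} across dyadic scales to produce the clean Dini-type estimate in Proposition \ref{p:transformation_estimate}.
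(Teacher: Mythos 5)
Your steps (1)–(3) — passing to the transformed sections $\tilde V = T(x,r)\circ V$, using the containment of $W_r^\theta(x)$ and $W_{\gamma r}^\theta(x)$ in the fat wedge $W_r^{\pi/4}(x)$, and running an $L^1$ Poincar\'e estimate to get
\[
\Big|\,Id - \fint_{W^\theta_{\gamma r}}\langle \tilde V^a,\tilde V^b\rangle\,\Big| \;=\; \Big|\fint_{W^\theta_{r}}\langle \tilde V^a,\tilde V^b\rangle - \fint_{W^\theta_{\gamma r}}\langle \tilde V^a,\tilde V^b\rangle\Big|\;\leq\; C(n)\,r\fint_{W^{\pi/4}_r(x)}|\nabla\tilde V|
\]
— are exactly what the paper does, and this is the real content of the lemma.

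Where your argument has a genuine gap is in steps (4)–(5), the "transfer back to the $V$ basis". You pass from $\tilde M_{\gamma r}-Id$ to $M_{\gamma r}-M_r$ and then invoke Lipschitz continuity of $M\mapsto M^{-1/2}$ to control $T(x,\gamma r)-T(x,r)$. You correctly identify the conditioning obstruction: the Lipschitz constant of $M\mapsto M^{-1/2}$ on the relevant set scales like $\lambda_{\min}^{-3/2}$, and $\lambda_{\min}(M_r)$ can be as small as $\sim r^{2\epsilon}$, producing an $r^{-3\epsilon}$ factor. But your proposed fix — "choosing $\epsilon$ sufficiently small, this conditioning factor is bounded" — does not work: for any \emph{fixed} $\epsilon>0$, the quantity $r^{-3\epsilon}$ is unbounded as $r\to 0$. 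The parameter $\epsilon$ in Theorem \ref{t:eps_gauge_transformation} cannot be taken to depend on $r$, so the factor survives and the constant in the estimate would not be dimensional; the subsequent telescoping in Proposition \ref{p:transformation_estimate} would then not close.

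The paper avoids the detour entirely: it reads the quantity $T(x,\gamma r)T(x,r)^{-1}$ as the relative transformation $\tilde T(x,\gamma r) = \big(\fint_{W^\theta_{\gamma r}}\langle\tilde V^a,\tilde V^b\rangle\big)^{-1/2}$ \emph{in the transformed basis}, where the reference Gram matrix $\fint_{W^\theta_r}\langle\tilde V^a,\tilde V^b\rangle$ is exactly $Id$ by construction. The Poincar\'e step therefore directly controls $|Id-\tilde M_{\gamma r}|$, and the only inverse-square-root that ever appears is of $\tilde M_{\gamma r}$, which is close to $Id$ whenever the right-hand side is small (and when the right-hand side is not small the inequality is easy). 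No spectral information about the possibly ill-conditioned $M_r$ or $M_{\gamma r}$ is needed. If you want to keep your route, the repair is to state the lemma (or use it) only in the regime $|T(x,r)|,|T(x,\gamma r)|\leq 2$ — which is in fact the regime in which Proposition \ref{p:transformation_estimate} applies the lemma — and allow the constant to depend on that bound; but as a fix for the lemma at all scales $r_x\leq r<1$, "take $\epsilon$ small" is incorrect.
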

\begin{proof}
Let us denote $\tilde V^a \equiv T_{x,r}\circ V^a$, and so we are trying to estimate $\big|Id- \tilde T(x,\gamma\, r) \big|$ where $\tilde T(x,\gamma\cdot r)\equiv \Bigg(\fint_{W_{\gamma\cdot r}^\theta(x)} \langle \tilde V^a, \tilde V^b\rangle\Bigg)^{-1/2}$ with $\theta\equiv 10^{-2}$.  Using the definition of $\tilde V^a$ it is equivalent for us to estimate 
\begin{align}\label{e:transformation_est:1}
\big| Id - \fint_{W_{\gamma\cdot r}^\theta(x)} \langle \tilde V^a, \tilde V^b\rangle\big| = \big| \fint_{W_{r}^\theta(x)} \langle \tilde V^a, \tilde V^b\rangle - \fint_{W_{\gamma\cdot r}^\theta(x)} \langle \tilde V^a, \tilde V^b\rangle\big| \, .	
\end{align}
To accomplish this let us observe the relations
\begin{align}
&B_{10^{-2} r}\Big(W^\theta_r(x)\Big), B_{10^{-2} r}\Big( W^\theta_{\gamma r}(x)\Big) \subseteq W^{\pi/4}_r(x)\, .
\end{align}
We will use a Poincar\'e to then conclude the result.  Indeed, let us estimate
\begin{align}\label{e:transformation_est:2}
	\Big| \fint_{W_{r}^{\theta}(x)} \langle \tilde V^a, \tilde V^b\rangle &- \fint_{W_{r}^{\pi/4}(x)} \langle \tilde V^a, \tilde V^b\rangle\Big|\leq \fint_{W_{r}^\theta(x)}\big| \langle \tilde V^a, \tilde V^b\rangle - \fint_{W_{r}^{\pi/4}(x)} \langle \tilde V^a, \tilde V^b\rangle\big|\, ,\notag\\
	&\leq \frac{\Vol(W_r^{\pi/4}(x))}{\Vol(W_r^{\theta}(x))}\fint_{W_{r}^{\pi/4}(x)}\big| \langle \tilde V^a, \tilde V^b\rangle - \fint_{W_{r}^{\pi/4}(x)} \langle \tilde V^a, \tilde V^b\rangle\big|\, ,\notag\\
	&\leq C(n)\fint_{W_{r}^{\pi/4}(x)}\big| \langle \tilde V^a, \tilde V^b\rangle - \fint_{W_{r}^{\pi/4}(x)} \langle \tilde V^a, \tilde V^b\rangle\big|\, ,\notag\\
	&\leq C(n) r \fint_{W_{r}^{\pi/4}(x)}\big| \nabla \langle\tilde V^a, \tilde V^b\rangle\big|\, ,\notag\\
	&\leq C(n) r \fint_{W_{r}^{\pi/4}(x)}\big| \nabla \tilde V \big|\, ,
\end{align}
where in the last line we have used the $L^\infty$ bounds on $\tilde V$.  A verbatim computation also gives 
\begin{align}\label{e:transformation_est:3}
	\Big| \fint_{W_{\gamma r}^{\theta}(x)} \langle \tilde V^a, \tilde V^b\rangle &- \fint_{W_{r}^{\pi/4}(x)} \langle \tilde V^a, \tilde V^b\rangle\Big|\leq C(n) r \fint_{W_{r}^{\pi/4}(x)}\big| \nabla \tilde V \big|\, .
\end{align}
Combining \eqref{e:transformation_est:2} with \eqref{e:transformation_est:3} we are able to estimate \eqref{e:transformation_est:1} by
\begin{align}
	\big| Id &- \fint_{W_{\gamma\cdot r}^\theta(x)} \langle \tilde V^a, \tilde V^b\rangle\big| = \big| \fint_{W_{r}^\theta(x)} \langle \tilde V^a, \tilde V^b\rangle - \fint_{W_{\gamma\cdot r}^\theta(x)} \langle \tilde V^a, \tilde V^b\rangle\big|\, ,\notag\\
	&\leq \Big| \fint_{W_{r}^{\theta}(x)} \langle \tilde V^a, \tilde V^b\rangle - \fint_{W_{r}^{\pi/4}(x)} \langle \tilde V^a, \tilde V^b\rangle\Big|+ \Big| \fint_{W_{\gamma r}^{\theta}(x)} \langle \tilde V^a, \tilde V^b\rangle - \fint_{W_{r}^{\pi/4}(x)} \langle \tilde V^a, \tilde V^b\rangle\Big|\, ,\notag\\
	&\leq C(n) r \fint_{W_{r}^{\pi/4}(x)}\big| \nabla \tilde V \big|\, ,
\end{align}
which finishes the proof of the Lemma.
\end{proof}

With the Lemma in hand we now finish the proof of Proposition \ref{p:transformation_estimate}:

\begin{proof}[Proof of Proposition \ref{p:transformation_estimate}]
	
Let $x\in \cC$ be such that $\int_{W^{\pi/4}(x)} r_A^{1-n} \,|\nabla V|<\epsilon_n$ holds, where $\epsilon_n$ will be chosen shortly.  Let us consider the sequence of scales $s_\alpha\equiv \gamma^\alpha$ together with the associated matrices $T_\alpha\equiv T(x,s_\alpha)$.  We will only prove the result for the $T_\alpha$ with $s_\alpha\geq r_x$.  Using that the $T_\alpha\circ V^a$ are $\epsilon$-gauges on $B_{s_\alpha}(x)$ the result easily extends to all radii $r\geq r_x$.  Note also that since $V^a$ is a $\delta$-gauge we have for $\delta<\delta(n,k,\epsilon)$ that $|Id-T_0|<\epsilon$.
	
Now let us remark on the following.  If $\alpha\geq 0$ is such that $\abs{T_\alpha}\leq 2$, then by applying Lemma \ref{l:transformation_estimate} we have the estimate
\begin{align}
\big|T_{\alpha} - T_{\alpha+1}\big| &\leq C(n) s_\alpha \fint_{W_{s_\alpha}^{\pi/4}(x)}\big| \nabla V \big|\notag\\
&\leq C(n)\int_{W^{\pi/4}(x)\cap A_{2^{-1}s_\alpha,2 s_\alpha}} r_A^{1-n}\big| \nabla V \big|\, .
\end{align}
Iterating on this we see that if $T_\beta\leq 2$ for all $\beta\leq \alpha$ then we get the estimate
\begin{align}\label{e:p_transformation:1}
\big|T_{\alpha+1} - T_{0}\big| &\leq C(n)\sum_{\beta\leq \alpha}\int_{W^{\pi/4}(x)\cap A_{2^{-1}s_\beta,2 s_\beta}} r_A^{1-n}\big| \nabla V \big|\, ,\notag\\
&\leq C(n)\int_{W^{\pi/4}(x)\cap A_{2^{-1}s_\alpha,2}} r_A^{1-n}\big| \nabla V \big|\, ,\notag\\
&\leq C(n)\epsilon_n < 10^{-2}\, ,
\end{align}
where in the last line we have chosen $\epsilon_n$ sufficiently small in a manner which depends only on dimension.  

Combining this with the estimate $|T_0-Id|<\epsilon_n$ we conclude that if $s_\alpha\geq r_x$ is such that $\abs{T_\beta}\leq 2$ for all $\beta\leq \alpha$, then in fact if $s_{\alpha+1}\geq r_x$ then $\abs{T_\beta}\leq 2$ for all $\beta\leq \alpha+1$.  Therefore we have the estimate
\begin{align}
\abs { T_\alpha } \leq 2 \text{ for all }s_\alpha\geq r_x\, .	
\end{align}
In particular, we then we get that \eqref{e:p_transformation:1} holds for all $s_\alpha\geq r_x$, from which we get the estimate
\begin{align}
\big|T_{\alpha} - Id\big| &\leq |T_0-Id| + C(n)\int_{W^{\pi/2}(x)} r_A^{1-n}\big| \nabla V \big|\, ,\notag\\
&\leq \epsilon + C(n)\int_{W^{\pi/2}(x)} r_A^{1-n}\big| \nabla V \big|\, ,
\end{align}
which finishes the proof of the Proposition.
\end{proof}
\vspace{.5cm}

\subsection{Proof of Theorem \ref{t:eps_gauge_on}}

We finish the proof of Theorem \ref{t:eps_gauge_on} in this subsection.  Let us begin by observing with $r_\alpha\equiv 2^{-\alpha}$ that if $\supp{f}\subseteq \cA_{10^{-6}}$ then we have the following:
\begin{align}\label{e:eps_gauge_on:fatou}
\int\Big(\int_{W^{\pi/4}_x} d(x,\cC)^{1-n}&|f|\Big)\,d\mu[x] \leq \int\Big(\sum_{r_\alpha\geq r_x}\int_{W^{\pi/4}_x\cap A_{\frac{1}{2}r_\alpha,2r_{\alpha}}(\cC)} d(x,\cC)^{1-n}|f|\Big)\,d\mu[x]	\notag\\
&\leq C(n)\int\Big(\sum_{r_\alpha\geq r_x} r_\alpha^{1-n}\int_{W^{\pi/4}_x\cap A_{\frac{1}{2}r_\alpha,2r_{\alpha}}(\cC)} |f|\Big)\,d\mu[x]\, ,\notag\\
&\leq C(n)\sum_{r_\alpha}r_\alpha^{1-n}\mu\big(\{x:\frac{1}{4}r_\alpha<r_x<4r_\alpha\}\big)\int_{A_{\frac{1}{2}r_\alpha,2r_{\alpha}}(\cC)}|f|\, ,\notag\\
&\leq C(n)\sum_{r_\alpha}r_\alpha^{-3}\int_{A_{\frac{1}{2}r_\alpha,2r_{\alpha}}(\cC)}|f|\, ,\notag\\
&\leq \int_{B_2} d(x,\cC)^{-3}|f|\, .
\end{align}

With this in hand can finish Theorem \ref{t:eps_gauge_on}.  Indeed, by applying \eqref{e:eps_gauge_on:fatou} to $f=|\nabla V|\chi_{\cA\cap B_{3/2}}$ and by using Theorem \ref{t:eps_gauge_gradient} with $(\epsilon')^2$ we obtain the estimate
\begin{align}
	\int_{B_1}\Big(\int_{W^{\pi/4}_x} d(x,\cC)^{1-n}|\nabla V|\Big)\,d\mu[x] \leq C(n)\int_{\cA\cap B_{3/2}} r_A^{-3}|\nabla V|\leq C(n)(\epsilon')^2\, .
\end{align}
In particular, let us consider the set $\cC'\equiv\{x\in B_1\cap \cC:\int_{W^{\pi/4}_x} d(x,\cC)^{1-n}|\nabla V|<\epsilon'\}$, then we see that $\mu(B_1\setminus \cC')\leq C(n)\epsilon'$.  On the other hand, by applying Proposition \ref{p:transformation_estimate} with $\epsilon'$ we see that for $\epsilon'<c(n,k)\epsilon$ that $\cC'\subseteq \cC_\epsilon$, and thus we have finished the proof of the Theorem. $\square$

\vspace{.5cm}

\section{Proof of Curvature Estimates of Theorem \ref{t:annular_region}}\label{s:annular_curv_proof}

In this section we finish the proof of the $L^1$ hessian and $L^2$ curvature estimates on annular regions.  The strategy of the proof is to use the scale invariant gradient estimates on $V$ from Theorem \ref{t:eps_gauge_gradient} to show the estimates on $\cA$ wherever $V$ remains close to being an orthogonal basis.  By Theorem \ref{t:eps_gauge_on} this is everything except a set of small $n-4$ content, and therefore we can recover the rest and start the estimate over in a manner for which the inductive errors give rise to a geometric series.  More slowly, let us start with the following, which is the main tool in our inductive construction:

\begin{lemma}\label{l:annular_region}
Let $A$ be a stationary Yang-Mills connection on a $\delta$-annular region $\cA = B_2\setminus \overline B_{r_x}(\cC)$ satisfying $\fint_{B_2} |F_A|^2 \leq \Lambda$, and let $V$ be a $\delta$-gauge on $B_4$.  For each $\epsilon>0$ if $\delta<\delta(n,k,\Lambda,\epsilon)$ then there exists a collection $\{B_{r_j}(x_j)\}$ with $x_j\in \cC$, $r_j>r_{x_j}$ and $\tilde\cA = \cA\setminus \bigcup B_{r_j}(x_j)$ such that
\begin{enumerate}
\item $B_{r_i/10}(x_i)\cap B_{r_j/10}(x_j)=\emptyset$.
\item $\sum r_j^{n-4}<\epsilon$.
\item $\int_{\tilde\cA\cap B_1} |\nabla^2 F|$,  $\int_{\tilde\cA\cap B_1} |F|^2<\epsilon$
\end{enumerate}
\end{lemma}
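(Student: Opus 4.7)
The plan is to use Theorem \ref{t:eps_gauge_on} to identify a small exceptional subset of $\cC$ where $V$ fails to be a good gauge at every scale, cover this subset by Vitali balls with small total $n{-}4$ content, and then on the complement derive the curvature bounds via the pointwise identities relating $F$ to $\nabla^2 V$ together with the scale-invariant gradient estimates of Theorem \ref{t:eps_gauge_gradient}.

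More concretely, fix $\eta=\eta(n,k,\Lambda,\epsilon)\ll\epsilon$ to be chosen, and pick $\delta<\delta(n,k,\Lambda,\eta)$ so that Theorems \ref{t:eps_gauge_on} and \ref{t:eps_gauge_gradient} apply with tolerance $\eta$. The former produces $\cC_\eta\subseteq\cC\cap B_1$ with $\mu\big((\cC\cap B_1)\setminus\cC_\eta\big)<\eta$ such that $V$ is an $\eta$-gauge on $B_r(x)$ for every $x\in\cC_\eta$ and $r_x\le r\le 1$; the latter yields
\begin{align*}
\int_{\cA_{10^{-4}}\cap B_{3/2}}\! \big(r_A^{-3}|\nabla V|+r_A^{-2}|\nabla^2 V|+|\nabla^4 V|\big)<\eta.
\end{align*}
I would Vitali-cover $\cC_{bad}:=(\cC\cap B_1)\setminus\cC_\eta$ by balls $\{B_{r_j}(x_j)\}$ centered at $x_j\in\cC_{bad}$ with $r_j=2r_{x_j}$ (or a suitable enlargement for the $\cC_0$ points), keeping $\{B_{r_j/10}(x_j)\}$ disjoint. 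The Ahlfors regularity of $\mu$ from Theorem \ref{t:annular_region}.1 then forces $\sum r_j^{n-4}\le C(n)\mu(\cC_{bad})<C(n)\eta<\epsilon$, giving $(1)$ and $(2)$.

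For $(3)$, set $\tilde\cA:=\cA\setminus\bigcup_j B_{r_j}(x_j)$. The key claim is that for every $y\in\tilde\cA\cap B_1$ there exists a good center $x'\in\cC_\eta$ with $r_{x'}\le d(y,x')\le C(n)d(y,\cC)$: this follows because condition (a3) makes $\cC$ densely sample the approximate plane $\cL_{x_y}$ around the closest $\cC$-point $x_y$, and since $\mu(\cC_{bad})<\eta$ is small and $\mu$ is Ahlfors regular, a positive fraction of these nearby points lie in $\cC_\eta$ once one enlarges the Vitali radii slightly to absorb the thin neighborhood of $\cC_{bad}$ inside $\cA$ (this adjustment only multiplies the total content by a dimensional constant). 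With such an $x'$ in hand, $V$ is an $\eta$-gauge on $B_{d(y,x')}(x')$, so combining $\fint|\langle V^a,V^b\rangle-\delta^{ab}|<\eta$ with the pointwise bound $d(z,\cC)|\nabla V|(z)<C\eta$ from \eqref{e:eps_gauge_pointwise_estimates} yields via a mean-oscillation comparison that $V$ is pointwise $C\eta$-orthonormal at $y$.

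Given pointwise orthonormality at $y\in\tilde\cA$, the identities $|F(V^a)|\le 2|\nabla^2 V^a|$ and $|\nabla^2 F(V^a)|\le 2|\nabla^4 V^a|+2|\nabla F||\nabla V^a|+|F||\nabla^2 V^a|$, together with the $\epsilon$-regularity bounds $|F|\le C d(y,\cC)^{-2}$ and $|\nabla F|\le C d(y,\cC)^{-3}$, produce the pointwise inequalities
\begin{align*}
|F|(y) &\le C\sum_a|\nabla^2 V^a|(y),\\
|\nabla^2 F|(y) &\le C\sum_a|\nabla^4 V^a|(y)+C\,r_A^{-3}|\nabla V|(y)+C\,r_A^{-2}|\nabla^2 V|(y).
\end{align*}
Estimating $|F|^2\le |F|\cdot C|\nabla^2 V|\le C\eta r_A^{-2}|\nabla^2 V|$ via the pointwise bound $d^2|\nabla^2 V|<C\eta$ and integrating with the Theorem \ref{t:eps_gauge_gradient} estimates yields $\int_{\tilde\cA\cap B_1}|F|^2<C\eta^2$ and $\int_{\tilde\cA\cap B_1}|\nabla^2 F|<C\eta$, both less than $\epsilon$ once $\eta<c(n,k)\epsilon^{1/2}$.

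The main obstacle is constructing the Vitali cover so that a nearby good center exists for every $y\in\tilde\cA$: covering $\cC_{bad}$ alone is not enough, because a bad point's Voronoi cell in $\cA$ can extend far in the four perpendicular directions. One must enlarge the Vitali balls just enough to absorb the Lebesgue-thin neighborhood of $\cC_{bad}$ inside $\cA$ where no good $\cC$-point is within comparable distance, while still keeping the total content below $\epsilon$ — a balance made possible by Ahlfors regularity of $\mu$ and by the smallness of $\mu(\cC_{bad})$.
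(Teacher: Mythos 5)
Your overall strategy matches the paper's: identify a small exceptional subset of $\cC$ via Theorem \ref{t:eps_gauge_on}, cover it by a Vitali family, and on the complement convert the gradient estimates of Theorem \ref{t:eps_gauge_gradient} into curvature bounds through the pointwise identities $|F(V)|\le 2|\nabla^2 V|$, etc. The back end of your argument (the curvature integrals once pointwise almost-orthonormality is secured, including the trick $|F|^2\lesssim \eta\, r_A^{-2}|\nabla^2 V|$) is essentially correct and agrees with the paper.

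The gap is in the construction of the Vitali radii, and you have correctly diagnosed but not resolved it. Taking $r_j = 2r_{x_j}$ is simply too small: the bad point $x_j$ may fail to give pointwise almost-orthonormality on its wedge out to a scale $\bar r_{x_j}$ that is much larger than $r_{x_j}$ (possibly of order $1$), and a point $y\in\cA$ at distance $\sim \bar r_{x_j}$ from $x_j$ need not have any good center of comparable distance. Your fix — "enlarge slightly" and invoke Ahlfors regularity of $\mu$ plus smallness of $\mu(\cC_{bad})$ — is a statement of what needs to be proved, not a proof. The enlargement must be all the way to $\bar r_{x_j}$, and once you do that you must explain why $\sum \bar r_j^{\,n-4}$ is still controlled by $\mu(\cC_{bad})$. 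This is exactly what the paper's key Claim establishes: if $\bar r_x \geq 2r_x$ (so some $y\in W^{\pi/4}(x)\cap \partial B_{\bar r_x}(x)$ has $|\langle V^a,V^b\rangle - \delta^{ab}|=\epsilon'$), then every $\cC$-point $z$ in a ball $B_{\bar r_x/100}(x')$ near the projection of $y$ still sees $y$ in its own wedge $W^{\pi/4}(z)$ at a scale $> r_z$, forcing $z\in\bar\cC$. Hence $\mu(\bar\cC\cap B_{\bar r_x/10}(x)) \geq c(n)\bar r_x^{\,n-4}$, and only then does the Vitali packing estimate $\sum \bar r_j^{\,n-4}\leq C(n)\mu(\bar\cC)<\epsilon$ close. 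Without this "bad-scale implies dense bad centers" observation, Ahlfors regularity alone cannot bound the enlarged radii: a single bad center could in principle require a huge enlargement that destroys the content estimate.

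A secondary soft spot: your route to pointwise almost-orthonormality (find a nearby $x'\in\cC_\eta$, apply the averaged $\eta$-gauge condition on $B_r(x')$, then compare via the pointwise gradient bound) is workable but needs the averaging to happen over the wedge $W^\theta_r(x')$, not the full ball $B_r(x')$ (where $\langle V^a,V^b\rangle - \delta^{ab}$ may concentrate near $\cC$). This is why the paper phrases everything through the transformation matrix $T(x',r)$ of Definition \ref{d:transformation}, which averages over the wedge, and through Proposition \ref{p:transformation_estimate}. The paper actually sidesteps the "find a nearby good center" step entirely by building pointwise wedge-orthonormality directly into the definition of $\bar r_x$; you would be better off adopting that formulation than patching the averaged condition.
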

\begin{proof}
	For each $x\in \cC$ and $\epsilon'>0$, which will be fixed later, let us define the radius 
\begin{align}
\bar r_x \equiv \min\{r_x\leq \bar r<2:|\langle V^a,V^b\rangle-\delta^{ab}|<\epsilon' \text{ in }W^{\pi/4}(x)\cap A_{\bar r,2}(x)\}\, .
\end{align}
With this let us consider the set
\begin{align}
\bar\cC\equiv\{x\in \cC\cap B_1:\bar r_x>r_x\}\, .	
\end{align}
Note from Theorem \ref{t:eps_gauge_on} that for $\delta<\delta(n,k,\Lambda,\epsilon')$ that $\mu(\bar \cC)<\epsilon'$.  Let us start with the following claim:\\

{\bf Claim: }We have the estimate $\mu(\bar \cC\cap B_{\bar r_x/10}(x))>c(n)\bar r_x ^{n-4}$.\\

Indeed, let us consider two cases.  If $\bar r_x<2 r_x$ then this is clear simply because $\mu(\bar \cC\cap B_{\bar r_x/10}(x))\geq \mu(x)>c(n) r_x^{n-4}\geq c(n) \bar r_x^{n-4}$.

On the other hand, if $\bar r_x\geq 2 r_x$, then let $y\in W^{\pi/4}(x)\cap \partial B_{\bar r_x}(x)$ be such that $|\langle V^a,V^b\rangle-\delta^{ab}|=\epsilon'$.  Let $x'\in \cC \cap \bar B_{\bar r_x/20}(x)$ be one of the points of $\cC \cap \bar B_{\bar r_x/20}(x)$ that is closest to $y$.  Note then for every $z\in B_{\bar r_x/100}(x')$ that $y\in \cW^{\pi/4}(z)$ and with $d(y,z)>r_z$.  This last inequality holds using $(a4)$ because $d(z,y)>\frac{3}{4}d(x,y)\geq \frac{3}{2}r_x>r_z$.  In particular, we see that $B_{\bar r_x/100}(x')\subseteq \bar \cC$ and thus $\mu(\bar \cC\cap B_{\bar r_x/10}(x))\geq \mu(B_{\bar r_x/100}(x'))>c(n) \bar r_x^{n-4}$ by the Ahlfor's regularity of Theorem \ref{t:annular_region}.  This finishes the proof of the Claim. $\square$ \\

Now let us choose a Vitali subcovering $\{B_{\bar r_j}(x_j)\}$ with $x_j\in \bar \cC$.  Then using the above claim we can estimate
\begin{align}
\sum \bar r_j^{n-4} \leq C(n)\sum \mu(\bar \cC\cap B_{\bar r_j/10}(x_j))\leq C(n)\mu(\bar\cC)\leq C(n)\epsilon'<\epsilon\, ,
\end{align}
where in the last inequality we have assumed $\epsilon'<C(n)^{-1}\epsilon$.  \\

We claim now that our desired curvature estimates hold on the set $\tilde\cA = \cA\setminus \bigcup B_{r_j}(x_j)$.  To prove this, note that, by Theorem \ref{t:eps_gauge_gradient}, we have the estimates
\begin{align}
\int_{\cA\cap B_1} |\nabla^4 V|+	\int_{\cA\cap B_1} |\nabla^2 V|^2+	\int_{\cA\cap B_1} r_A^{-3}|\nabla V|+	\int_{\cA\cap B_1} r_A^{-2}|\nabla^2 V| < \epsilon' \, .
\end{align}
Note that by the definition of curvature we have the estimates
\begin{align}
&|F(V)| \leq 2|\nabla^2 V|\, ,\notag\\
&|\nabla^2 F(V)|\leq 2|\nabla^4 V|+2|\nabla F|\,|\nabla V|+|F|\,|\nabla^2 V|\, .	
\end{align}
In particular, if we are at a point $y\in \cA$ such that $|\langle V^a,V^b\rangle-\delta^{ab}|<\epsilon'$ almost form an orthonormal basis, then we have the estimates
\begin{align}
&|F|(y) \leq 3\sum_a |\nabla^2 V_a|(y)\, ,\notag\\
&|\nabla^2 F|(y)\leq 3\sum_a \ton{|\nabla^4 V_a|(y)+r_A^{-3}|\nabla V_a|+ r_A^{-2}|\nabla^2 V_a|}\, ,	
\end{align}
where we have used the scale invariant estimates $r_A^2|F|$ ,$r_A^3|\nabla F| <<1$ in the above.  This estimate holds for each $y\in \bar \cA$, and therefore we have that
\begin{align}
&\int_{\bar\cA\cap B_1} |\nabla^2 F|\leq 3\sum_a\ton{\int_{\cA\cap B_1}|\nabla^4 V_a|+\int_{\cA\cap B_1}r_A^{-3}|\nabla V_a|+ \int_{\cA\cap B_1}r_A^{-2}|\nabla^2 V_a|} <5\epsilon'\, ,\notag\\	
&\int_{\bar\cA\cap B_1} |F|^2\leq 3\sum_a \ton{\int_{\cA\cap B_1}|\nabla^2 V_a|^2} <3\epsilon'\, ,
\end{align}
which for $\epsilon'<\frac{1}{10}\epsilon$ finishes the proof of the Lemma.
\end{proof}
\vspace{.25cm}

With the above in hand let us now finish the proof of Theorem \ref{t:annular_region}:

\begin{proof}[Proof of Theorem \ref{t:annular_region}]
	The proof of the curvature estimates of Theorem \ref{t:annular_region} is now just an inductive application of Lemma \ref{l:annular_region}.  Indeed, let us apply Lemma \ref{l:annular_region} with $\epsilon'=\epsilon'(n,\epsilon)$ which will be chosen later.  Then we can write
\begin{align}
	\cA\cap B_1 \subseteq \cA_0\cup \bigcup B_{r_{j_0}}(x_{j_0})\, ,		
\end{align}
 such that 
 \begin{align}
 	&\int_{\cA_0\cap B_1} |\nabla^2 F|\; ,\;\;\int_{\cA_0\cap B_1} |F|^2\, ,\;\sum r_{j_0}^{n-4}<\epsilon'\, .
 \end{align}
Now note that the restriction of $\cA$ to $B_{2r^0_j}(x^0_j)$ is a $\delta$-annular region for each $j$, and therefore we may again apply Lemma \ref{l:annular_region} to each of the balls $B_{2r^0_j}(x^0_j)$ to obtain
\begin{align}
	\cA\cap B_{r_{j_0}}(x_{j_0}) \subseteq \cA_{j_0,1}\cup \bigcup B_{r_{j_0,j_1}}(x_{j_0,j_1})\, ,		
\end{align}
such that
\begin{align}
 &\int_{\cA_{j_0,1}\cap B_{r_{j_0}}} |\nabla^2 F|\; ,\;\;\int_{\cA_{j_0,1}\cap B_{r_{j_0}}} |F|^2\, ,\;\sum_{j_1} r_{j_0,j_1}^{n-4}<\epsilon'\,r_{j_0}^{n-4}\, .	
\end{align}
Thus by defining 
\begin{align}
&\cA_1\equiv \cA_0\cup \bigcup_{j_0} \cA_{j_0,1}\, ,\notag\\
&\{B_{r_{j_1}}(x_{j_1})\} \equiv \bigcup_{j_0}\{B_{r_{j_0,j_1}}(x_{j_0,j_1})\, ,
\end{align}
we have the covering
\begin{align}
\cA\cap B_1 \subseteq \cA_1\cup \bigcup B_{r_{j_1}}(x_{j_1})
\end{align}
such that 
\begin{align}
	&\int_{\cA_1\cap B_1} |\nabla^2 F|\leq \epsilon'+\epsilon'\sum r_{j_0}^{n-4}\leq \epsilon'+(\epsilon')^2\, ,\notag\\
	&\int_{\cA_1\cap B_1} |F|^2\leq \epsilon'+(\epsilon')^2\, ,\notag\\
	&\sum r_{j_1}^{n-4}\leq \epsilon'\sum r_{j_0}^{n-4}\leq (\epsilon')^2\, .
\end{align}
Now we can observe that each $\cA\cap B_{2r_{j_1}}$ is a $\delta$-annular region and repeat this process again.  Again, if we repeat this process $I$ times then we arrive at a covering 
\begin{align}
\cA\cap B_1 \subseteq \cA_I\cup \bigcup B_{r_{j_I}}(x_{j_I})
\end{align}
such that 
\begin{align}
	&\int_{\cA_I\cap B_1} |\nabla^2 F|\; ,\;\;\int_{\cA_I\cap B_1} |F|^2\;\;\leq \sum_{j=1}^I(\epsilon')^j\; ,\;\;\sum r_{j_I}^{n-4}\leq (\epsilon')^I\, .
\end{align}
In particular, we clearly get the much weaker estimate that $r_{j_I}\to 0$ as $I\to \infty$, and thus we have that the $\cA_I$ form an exhaustion of $\cA$.  Passing to the limit we then get the estimate
\begin{align}
	&\int_{\cA\cap B_1} |\nabla^2 F|\; ,\;\;\int_{\cA\cap B_1} |F|^2\;\;\leq \sum_{j=1}^\infty(\epsilon')^j\leq 2\epsilon' \, .
\end{align}
Letting $\epsilon'<\frac{1}{2}\epsilon$ this concludes the proof of our Theorem.
\end{proof}
\vspace{.5cm}

\section{Quantitative Bubble Tree and Quantitative Energy Identity}\label{s:quant_bubbletree}

In this section we discuss our quantitative bubble tree and quantitative energy identity theorems.  As the names suggest, these are both quantitative and higher dimensional versions of the more classical notions from dimension four.  Both the classical bubble tree and energy identity's are traditionally discussed for limiting sequences of Yang Mills connections, however the quantitative versions hold on a fixed Yang Mills connection, and so from this point of view the results are new even for dimension four.  We will see that the quantitative nature of the estimates are crucial for the applications.\\

Let us begin with our quantitative bubble tree decomposition:

\begin{theorem}[Quantitative Bubble Tree]\label{t:quantitative_bubbletree}
Let $A$ be a stationary connection with $\fint_{B_4} |F_A|^2 \leq \Lambda$.  If $B_{4}(p)$ is $\delta'$-weakly flat wrt $\cL$ for $\delta'<\delta'(n,k,\Lambda,\delta,\epsilon)$, then we have
	\begin{align}
		&B_1(p)\subseteq \bigcup_a \big(\cA_a\cap B_{r_a}\big) \cup \bigcup_b \cB_b\cup \bigcup_c B_{r_c}(x_c)\, ,
	\end{align}
	such that
	\begin{enumerate}
		\item[(a)] $\cA_a\subseteq B_{2r_a}(x_a)$ are $\delta$-annular regions with respect to $\cL$.
		\item[(b)] $\cB_b\subseteq B_{\delta^{-1}r_b}(x_b)$ are $\delta$-bubble regions with respect to $\cL$.
		\item[(c)] $\sum_a r_a^{n-4} + \sum_b r_b^{n-4}\leq C(n,k,\Lambda)$ and $\sum_c r_c^{n-4}<\epsilon$.
	\end{enumerate}
\end{theorem}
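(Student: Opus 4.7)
\smallskip

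The plan is to build the decomposition by a hierarchical recursion, alternating between $\delta$-annular and $\delta$-bubble regions, with the recursion terminated by energy drop. Starting from $B_1(p)$ inside the $\delta'$-weakly flat ball $B_4(p)$, apply Theorem \ref{t:annular_existence} with parameters $(\delta,\epsilon_0)$ (where $\epsilon_0 \ll \epsilon$ will be chosen) to produce a $\delta$-annular region $\cA_0 \subseteq B_2(p)$ with respect to $\cL$ and with small bad-center set $\cC_0^c$ satisfying $\mu(\cC_0^c \cap B_1) < \epsilon_0$. The atoms of $\cC_0^c$ immediately supply balls $\{B_{r_x}(x) : x \in \cC_0^c\}$ whose $(n-4)$-content equals $\mu(\cC_0^c \cap B_1)$, so these join the small-remainder family $\{B_{r_c}(x_c)\}$.

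At each good center $x \in (\cC_0 \setminus \cC_0^c) \cap B_1$, properties (a2)-(a3) of the annular region give that $B_{\delta^{-1} r_x}(x)$ is $\delta$-weakly flat with respect to $\cL+x$, and the defining condition of $\cC_0 \setminus \cC_0^c$ says that $B_r(x)$ fails to be $\delta$-weakly flat at some small scale in the appropriate window. We are therefore in position to apply Theorem \ref{t:bubble_existence} to construct a nontrivial $\delta$-bubble region $\cB_x \subseteq B_{\delta^{-1} r_x}(x)$ with respect to $\cL$, whose cut-out balls $\{B_{r_b}(b)\}$ enjoy the definite energy drop $|\overline\theta_{2r_x} - \overline\theta_{r_b}|(b) > \epsilon(n,k)$. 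We record $\cA_0$ among the $\cA_a$'s and each $\cB_x$ among the $\cB_b$'s, and pass to the next level of recursion on each $B_{r_b}(b)$. Each such $B_{r_b}(b)$ is $\delta$-weakly flat by (b2), and by Theorem \ref{t:weakly_flat_structure} (choosing the top-level $\delta'$ small enough in a polynomially nested fashion) it is in fact $\delta'$-weakly flat at the scale required to re-apply Theorem \ref{t:annular_existence}. At the $k$-th level we use bad-set tolerance $\epsilon \cdot 2^{-k-1}$, so the total remainder content telescopes to less than $\epsilon$.

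The essential issue is showing that the global content $\sum_a r_a^{n-4}+\sum_b r_b^{n-4}$ is bounded by $C(n,k,\Lambda)$ uniformly in the recursion depth. Here the bubble-region nontriviality is crucial: each child ball $B_{r_b}(b)$ has $\overline\theta$-value strictly below its parent's by $\epsilon(n,k)$, and monotonicity combined with $\overline\theta \le \Lambda$ bounds the recursion depth by $D = \lceil \Lambda/\epsilon(n,k)\rceil$; any ball with $\overline\theta < \epsilon_{n,k}$ terminates the branch by $\epsilon$-regularity (Theorem \ref{t:eps_reg}). At a single step, the Ahlfors regularity of Theorem \ref{t:annular_region}.1 bounds the content of $\cA_a$ inside a parent ball of radius $r_x$ by $A(n) r_x^{n-4}$, and Theorem \ref{t:bubble_region}.1 bounds the total cut-out content of a bubble region by $N(n,k,\Lambda) r_x^{n-4}$. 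Iterating, the total content accumulated after $D$ levels is at most $(A(n) N(n,k,\Lambda))^{D}$, which is the desired dimensional constant $C(n,k,\Lambda)$.

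The main obstacle is the parameter bookkeeping between $\delta$ (the quality of the output regions) and $\delta'$ (the quality of the input weakly flat balls required at each recursive step). Since Theorem \ref{t:annular_existence} demands $\delta' \ll \delta$, and the cut-out balls of a bubble region are only guaranteed to be $\delta$-weakly flat, the pinching estimate of Theorem \ref{t:weakly_flat_structure} must be invoked to upgrade weak flatness across scales — but this costs some budget at every level. Choosing the initial $\delta'$ according to a downward recursion $\delta'_0 \ll \delta'_1 \ll \cdots \ll \delta'_D = \delta$ (with $D$ fixed by the energy bound) resolves this, provided the top-level $\delta'$ in the hypothesis is taken small enough as a function of $(n,k,\Lambda,\delta,\epsilon)$; this is exactly the quantitative dependence asserted in the theorem.
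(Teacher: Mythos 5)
Your overall strategy — recurse between annular and bubble regions, with termination forced by the definite energy drop at bubble cut-outs and depth bounded by $\Lambda/\epsilon_{n,k}$ — is the same as the paper's. However there is a genuine gap, and a secondary normalization issue.

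The main gap is the missing preprocessing step that controls the $\cL$-directional energy at \emph{all} scales. Both Theorem \ref{t:annular_existence} and Theorem \ref{t:bubble_existence} have, in addition to their weak-flatness hypotheses, an explicit $\cL$-energy hypothesis ($\int_{B_8}|F[\cL]|^2<\delta'$, respectively $\int_{B_{2\delta^{-1}}}|F[\cL]|^2<\delta'$), and so does the pinching Theorem \ref{t:weakly_flat_structure} which you invoke to upgrade weak flatness. When you recurse onto a bubble cut-out ball $B_{r_b}(b)$, the weak flatness of the chain of balls above it only controls $r^{4-n}\int_{B_r}|F[\cL]|^2$ down to scale $\sim\delta r_b$, and below that the $\cL$-energy density can spike arbitrarily — that is precisely what obstructs the recursion, and no downward recursion of $\delta'$-parameters addresses it, because $\delta'$ tunes the quality of weak flatness at the current scale, not the $\cL$-energy at all deeper scales. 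The paper resolves this by first defining $m_x=\inf\{r: r^{4-n}\int_{B_r(x)}|F[\cL]|^2\leq\delta''\}$, covering $\cM=\{m_x>0\}$ with a Vitali family of total content $\lesssim(\delta'')^{-1}\delta'<\epsilon$ (possible since $\int_{B_4}|F[\cL]|^2\leq\delta'$), and then running the entire recursion on $B_1\setminus\bigcup_s B_{r_s}(x_s)$; off this exceptional set the $\cL$-energy density is $<\delta''$ at every center and every scale, so the hypotheses of the existence theorems are available for free at every level. These $s$-balls are ultimately absorbed into the small-content $c$-ball family. Without this step your inductive claim cannot even begin.

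Secondarily, your normalization $\cB_x\subseteq B_{\delta^{-1}r_x}(x)$ with $r_x$ the annular cut-out radius makes the bubble ball $\delta^{-1}$ times larger than the cut-out hole. In the covering the paper uses, the $b'$-ball $B_{r_{b'}}(x_{b'})$ (a cut-out ball of the annular region) is itself the ball playing the role of $B_{\delta^{-1}}(p)$ in Theorem \ref{t:bubble_existence}, so one declares $r_b\equiv\delta r_{b'}$ and then $\cB_b\subseteq B_{\delta^{-1}r_b}(x_b)=B_{r_{b'}}(x_{b'})$ fits inside the hole. With your choice, Theorem \ref{t:bubble_region}.1 (applied at $r=\delta^{-1}$ in normalized coordinates) bounds the cut-out content by $N(\delta^{-1}r_x)^{n-4}$, not by $Nr_x^{n-4}$ as you assert, so your accumulated content $(A(n)N)^D$ acquires hidden $\delta^{-(n-4)D}$ factors and the final estimate $\sum r_a^{n-4}+\sum r_b^{n-4}\leq C(n,k,\Lambda)$ (independent of $\delta$) does not follow. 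The paper's normalization is what keeps the iterated content estimate clean.
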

\vspace{.5cm}

To state the quantitative energy identity will pick points $q\in \cL$ on the plane of symmetry and consider the slice $\cL^\perp_q$.  One can view the first part of the next result as a sliced version of the quantitative bubble tree theorem.  The final part of the next theorem contains the real content of the quantitative energy identity, and tells one how to compute the energy at a point by summing energies of bubbles.\\

\begin{theorem}[Quantitative Energy Identity]\label{t:quantitative_eq}
Under the conditions and decomposition of Theorem \ref{t:quantitative_bubbletree} if $\delta<\delta(n,k,\Lambda,\epsilon)$ and $\delta'<\delta'(n,k,\Lambda,\delta)$ then $\exists$ $\cG_\epsilon\subseteq \cL\cap B_1$ with $\Vol\big(B_1\setminus\cG_\epsilon\big)<\epsilon$ such that for each $q\in\cG_\epsilon$ the covering
\begin{align}
		\cL^\perp_q\cap B_1(q)&\subseteq \cA_q\cup\cB_q = \bigcup_a \cA_{q,a} \cup \bigcup_b \cB_{q,b} \notag\\
		&=\bigcup_a \Big(\cL^\perp_q \cap \cA_a\cap B_{r_a}\Big)\cup \bigcup_b \Big(\cL^\perp_q \cap \cB_b\cap B_{r_b}\Big)\, ,
\end{align}
satisfies
\begin{enumerate}
\item[(a)] $\int_{\cA_{q}}|F_A|^2<\epsilon$.
\item[(b)] $\big|\int_{B_1(q)} |F_A|^2 - \omega_{n-4}\int_{\cB_q} |F_A|^2\big|<\epsilon$.
\item[(s)] $\#\,\big\{\cA_{q,a}\big\}$, $\;\#\,\big\{\cB_{q,b}\big\}<N(n,k,\Lambda)$.
\end{enumerate}
\end{theorem}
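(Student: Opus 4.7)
The plan is to construct $\cG_\epsilon$ by excluding slices that either intersect one of the small-content balls $B_{r_c}(x_c)$, accumulate excessive energy from the annular regions, or violate the bubble energy identity of Theorem \ref{t:bubble_region}.2 on one of the $\cB_b$. Once $\cG_\epsilon$ is defined, parts (a), (b), (s) are verified on the remaining good slices via the annular curvature estimate, the per-bubble energy identity, and an energy-drop counting argument along the tree.

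Concretely, define
\begin{align*}
G_1 &= \{q\in \cL\cap B_1 : \cL^\perp_q \cap \textstyle\bigcup_c B_{r_c}(x_c)\neq \emptyset\}\,,\\
G_2 &= \{q\in \cL\cap B_1 : \textstyle\sum_a \int_{\cL^\perp_q \cap \cA_a \cap B_{r_a}}|F_A|^2 > \sqrt{\epsilon}\}\,,\\
G_3 &= \{q\in \cL\cap B_1 : \text{Theorem \ref{t:bubble_region}.2 fails with error } \sqrt{\epsilon} \text{ on some } \cB_b\}\,.
\end{align*}
Since $\pi_\cL(B_{r_c}(x_c))$ lies in an $(n-4)$-ball of volume $\omega_{n-4}r_c^{n-4}$, the small-content bound $\sum_c r_c^{n-4}<\epsilon$ gives $\Vol(G_1)<C\epsilon$. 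For $G_2$ I would combine the coarea formula along $\pi_\cL$ (which is approximately isometric on the decomposition thanks to the shared best plane $\cL$ and $\delta'$-weak flatness of $B_4$) with the scale-invariant annular curvature estimate $\int_{\cA_a\cap B_{r_a}}|F_A|^2\leq \epsilon' r_a^{n-4}$ from Theorem \ref{t:annular_region}.3, to obtain
\[\int_{\cL\cap B_1} \sum_a\int_{\cL^\perp_q\cap \cA_a\cap B_{r_a}}|F_A|^2\,dq\leq \epsilon'\sum_a r_a^{n-4}\leq \epsilon' C(n,k,\Lambda),\]
and Chebyshev yields $\Vol(G_2)<C\sqrt{\epsilon'}$. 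The bound on $\Vol(G_3)$ is analogous: apply Theorem \ref{t:bubble_region}.2 on each $\cB_b$ and Fubini/Chebyshev to see that its exceptional slice set has $(n-4)$-measure at most $\sqrt{\epsilon}\,r_b^{n-4}$, then sum using $\sum_b r_b^{n-4}\leq C$. Taking $\epsilon'$ polynomially small in $\epsilon$ and setting $\cG_\epsilon = (\cL\cap B_1)\setminus (G_1\cup G_2\cup G_3)$ yields $\Vol(B_1\setminus\cG_\epsilon)<\epsilon$.

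Part (a) is then immediate from $q\notin G_2$. For part (b), I would decompose
\[\int_{B_1(q)}|F_A|^2 = \sum_a\int_{B_1(q)\cap\cA_a}|F_A|^2 + \sum_b \int_{B_1(q)\cap\cB_b}|F_A|^2 + \sum_c\int_{B_1(q)\cap B_{r_c}}|F_A|^2,\]
estimate the small-ball term by monotonicity ($\sum_c\Lambda r_c^{n-4}<\Lambda\epsilon$), estimate the annular term by Theorem \ref{t:annular_region}.3 and content, and iterate Theorem \ref{t:bubble_region}.2 down the bubble tree for the bubble term: each bubble's $n$-dimensional integral is rewritten as $\omega_{n-4}$ times its $4$-dimensional slice integral plus smaller-scale ball integrals, the latter being resolved by the children regions. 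Since bubbles at different tree levels are set-theoretically disjoint (each child $\cB_{b'}$ sits inside a removed ball of its parent $\cB_b$), the collapsed sum yields $\omega_{n-4}\int_{\cB_q}|F_A|^2$ up to $O(N\epsilon)$ accumulated error, which is absorbed into the final $\epsilon$.

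Part (s) is the main obstacle, since it requires a count independent of $\delta$. The argument rests on the nontriviality clause built into Theorem \ref{t:bubble_existence}: each constructed bubble $\cB_b$ has a removed ball center $b_j$ with $|\overline\theta_{2}-\overline\theta_{r_b}|(b_j)>\epsilon(n,k)$, contributing a definite $\overline\theta$-drop between the bubble's outer scale $\delta^{-1}r_b$ and inner scale $r_b$. Because the $\cB_b$ in the tree are nested by scale with disjoint scale ranges (child's outer scale $\leq$ parent's removed-ball scale), any chain of bubbles met by a single slice $\cL^\perp_q$ corresponds to nested scale intervals at nearby centers, and the $\overline\theta$-drops telescope (up to constants absorbing the small center displacements) to at most $\overline\theta_2\leq\Lambda$. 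This forces $\#\{\cB_{q,b}\}\leq \Lambda/\epsilon(n,k)=N(n,k,\Lambda)$. The same scale structure controls $\#\{\cA_{q,a}\}$, since at each tree level the $\cL$-aligned Vitali arrangement of annular centers lets a single slice intersect at most $C(n)$ annular regions per level, giving $\#\{\cA_{q,a}\}\leq C(n)N$. The main technical subtlety is a cascade of constants: the per-bubble error $\epsilon$ in Theorem \ref{t:bubble_region}.2 must be chosen well below $\epsilon(n,k)/N$ so that accumulated errors in the energy-identity step (b) do not swamp the energy drops used in (s); this forces $\delta$ to be chosen only after $N$ is fixed, consistent with the allowed dependence $\delta<\delta(n,k,\Lambda,\epsilon)$.
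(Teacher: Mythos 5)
Your proposal captures the same essential ingredients as the paper's proof—the small-content exceptional set, the Fubini/Chebyshev reduction for the annular estimate, the slice energy identity of Theorem \ref{t:bubble_region}.2 iterated down the tree, and the $\overline\theta$-drop from the nontriviality clause of Theorem \ref{t:bubble_existence}—but the organization is genuinely different. The paper does not take the decomposition of Theorem \ref{t:quantitative_bubbletree} as a black box and then carve out a good slice set afterwards; it proves Theorems \ref{t:quantitative_bubbletree} and \ref{t:quantitative_eq} \emph{simultaneously} by building an inductive family of coverings indexed by a level $i$, where at each level a ``Claim'' (combining Theorem \ref{t:annular_existence} and Theorem \ref{t:bubble_existence}) converts $w$-balls into an annular region, bubbles, small-content $c$-balls, and new $w$-balls with $\overline\theta$ dropped by $\epsilon_{n,k}$. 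The slice identity (your step (b)) is carried as an \emph{invariant} of the inductive covering at each level, and all the slice-level bounds terminate once $i>\Lambda/\epsilon_{n,k}$ because no $w$-balls survive. Your post-hoc approach has the merit of being more transparent, but it conceals a logical dependence: the conclusion (s) depends on properties of the \emph{specific} covering built in the proof of Theorem \ref{t:quantitative_bubbletree} (namely, that every bubble was produced via the nontriviality clause of Theorem \ref{t:bubble_existence}), which are not part of the statement of Theorem \ref{t:quantitative_bubbletree}. You implicitly acknowledge this, but it means you are really rerunning the joint construction rather than deducing \ref{t:quantitative_eq} from \ref{t:quantitative_bubbletree} alone.

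One part of your argument that is too loose to be accepted as written is the count in (s). You describe the bubbles meeting a fixed slice as a ``chain'' whose $\overline\theta$-drops telescope. In the actual construction the bubbles meeting a slice form a \emph{tree}, not a chain: a single level-$i$ $w$-ball spawns an annular region whose singular centers produce several bubble regions, each of whose removed balls become level-$(i+1)$ $w$-balls, and a $4$-plane slice can pass through several of these simultaneously (the slice can meet $C(n)\,|\bb^\perp|$ removed balls of a single bubble, not just one). Pure telescoping therefore does not bound the total count; one must instead use that (i) the number of \emph{levels} is bounded by $\Lambda/\epsilon_{n,k}$, and (ii) the branching factor at each level is controlled via Theorem \ref{t:bubble_region}.3 (the $\leq N(n,k,\Lambda)$ concentration balls per slice) together with the Vitali arrangement of the annular centers. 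Likewise your bound $\#\{\cA_{q,a}\}\leq C(n)N$ ``per level from the $\cL$-aligned Vitali arrangement'' is not quite the right picture: annular regions appear one per $w$-ball, not Vitali-packed along $\cL$, so the count of annular regions grows with the count of $w$-balls reached at each level. The paper's inductive invariant structure is exactly what keeps these bookkeeping issues under control, and is the part of the argument your proposal would need to flesh out to be rigorous.
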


\vspace{.5cm}

\subsection{Proof of the Quantitative Bubble Tree Decomposition and the Quantitative Energy Identity}

In this subsection we will prove both Theorem \ref{t:quantitative_bubbletree} and Theorem \ref{t:quantitative_eq} simultaneously.  Let us first pick $\epsilon'>0$ which will be fixed later.  Now since the ball $B_4(p)$ is $\delta'$-weakly flat we have the estimate
\begin{align}
	\fint_{B_4} |F_A[\cL]|^2 \leq \delta'\, ,
\end{align}
where we will choose $\delta'$ sufficiently small later on in the proof.  Now for every point $x\in B_2$ let us define the radius
\begin{align}
m_x\equiv \inf\{0<r<1:\; r^{4-n}\int_{B_r(x)}|F[\cL]|^2 \leq \delta''\}\, ,	
\end{align}
where $\delta''=\delta''(n,k,\Lambda,\delta)$ is chosen so that Theorem \ref{t:bubble_existence}, Theorem \ref{t:annular_existence} and Theorem \ref{t:annular_region} hold with $\epsilon'>0$.  Additionally, let us consider the set 
\begin{align}
\cM \equiv \{x\in B_2 : m_x>0\}\, .
\end{align}
Note for each ball $B_{m_x}(x)$ with $m_x>0$ we have the estimate
\begin{align}
\int_{B_{m_x}(x)} |F[\cL]|^2 = \delta'' m_x^{n-4}\, .	
\end{align}

Now let us consider the covering $\{B_{10m_x}(x)\}_{x\in\cM}$ of $\cM$, and then we can take a Vitali subcovering
\begin{align}
\cM\subseteq \bigcup_c B_{r_{c,0}}(x_{c,0})\equiv \bigcup_s B_{r_{s}}(x_{s})\, ,	
\end{align}
where $r_{s} = 10 m_{x_{s}}$ and $\{B_{r_{s}/10}(x_{s})\}$ are disjoint.  We give this collection two names because the remaining $c$-balls which will built later in this construction will be done by different means with a different purpose, so that although in the end we will group them altogether we keep them distinct for now for intuition simplicity.  Note now that we have the estimate
\begin{align}
\sum_s r_{s}^{n-4} &=10^{n-4} \sum_s \Big(\frac{r_{s}}{10}\Big)^{n-4}	 = 10^{n-4}\delta''^{-1}\sum_s \int_{B_{r_{s}/10}(x_{s})} |F[\cL]|^2\notag\\
&\leq 10^{n-4}\delta''^{-1}\int_{B_3} |F[\cL]|^2 = 10^{n-4}(\delta'')^{-1} \delta' < \delta\, ,
\end{align}
where in the last line we have chosen $\delta'<\delta'(n,k,\Lambda,\delta)$.\\

In particular, we now know that for every $x\in B_2\setminus \bigcup_s B_{r_{s}}(x_{s})$ and all $0<r<2$ that we have the estimate
\begin{align}
r^{4-n}\int_{B_r(x)}|F[\cL]|^2 < \delta''\, .	
\end{align}
Intuitively, using Theorem \ref{t:bubble_existence} and Theorem \ref{t:annular_existence} this is telling us that every ball not strictly contained in $\bigcup_s B_{r_{s}}(x_{s})$ is either a bubble region or an annular region.  Finishing the proof of Theorem \ref{t:quantitative_bubbletree} is about making this intuition precise and keeping track of the estimates along the way.\\

To make this precise we will build a sequence of coverings.  If $E\equiv \sup_{B_1(p)}\theta_1 = \overline \theta_1(p)$, then the $i^{th}$ covering of the sequence looks like
\begin{align}\label{e:quant_bubbletree:inductive_covering}
B_1(p)\setminus \bigcup_c B_{r_{c,i}}(x_{c,i})\subseteq \bigcup_{a}\big(\cA_{a,i}\cap B_{r_{a,i}}(x_{a,i})\big)\cup\bigcup_{b} \cB_{b,i}\cup\bigcup_{w} B_{r_{w,i}}(x_{w,i})\, ,
\end{align}
and will satisfy
\begin{enumerate}
\item[(a)] $\cA_{a,i}\subseteq B_{r_{a,i}}(x_{a,i})$ are $\delta$-annular with $\int_{\cL^\perp_q\cap \cA_i} |F_A|^2<C(n,k,\Lambda,i)\epsilon'$ for $q\in \cL\cap B_1(p)\setminus \bigcup_c B_{r_{c,i}}(x_{c,i})$.
\item[(b)] $\cB_{b,i}\subseteq B_{\delta^{-1} r_{b,i}}(x_{b,i})$ are $\delta$-bubble regions.
\item[(c)] For $q\in\cL$ with $\cL^\perp_q\cap \bigcup_c B_{r_{c,i}}(x_{c,i})=\emptyset$ if $\cB_q\equiv \cL^\perp_q\cap \bigcup_{b} \cB_{b,i}$ then there exists $\{x'_{w,i}\}\subseteq \{x_{w,i}\}$ with $B_{r'_{w,i}}(x'_{w,i})\cap \cL^\perp_q\neq \emptyset$ such that $\big|\theta_1(q) - \omega_{n-4}\int_{\cB_q}|F_A|^2 - \sum \theta_{r_{w,i}}(x'_{w,i})\big|<C(n,k,\Lambda,i)\epsilon'$.
\item[(w)] $B_{r}(x_{w,i})$ are $\delta$-weakly flat wrt $\cL$ for $4^{-1}\delta r_{w,i}\leq r\leq 4r_{w,i}$ and satisfy $\bar\theta_{r_{w,i}}(x_{w,i})\leq E-\epsilon_{n,k}\cdot i$.
\item[(s)] $\sum_a r_{a,i}^{n-4}+\sum_b r_{b,i}^{n-4}+\sum_w r_{w,i}^{n-4}<C(n,k,\Lambda,i)$ and $\sum_c r_{c,i}^{n-4}\leq C(n,k,\Lambda,i)\epsilon'$.
\end{enumerate}

Let us begin by observing that if we can build this sequence of coverings then we have proved the Theorems.  Indeed, for $i>\Lambda\, \epsilon_{n,k}^{-1}=i(n,k,\Lambda)$ we have that there are no $w$-balls in the covering, as their energy would necessarily be negative.  However if $\epsilon'<c(n,k,\Lambda)\epsilon$ it is then clear from $(a)\to(s)$ that the covering in \eqref{e:quant_bubbletree:inductive_covering} would then satisfy the conditions of Theorems \ref{t:quantitative_bubbletree} and \ref{t:quantitative_eq}. \\

Our inductive covering will itself follow from a series of covering constructions.  Let us begin with the following claim, which is the first step in the process:\\

{\bf Claim:} Let $B_{s}(x)$ be $\delta$-weakly flat wrt $\cL$ for $4^{-1}\delta r\leq s\leq 4r$, then we have the decomposition\newline $B_r(x)\setminus \bigcup_s B_{r_{s}}(x_{s})\subseteq \cA \cup \bigcup_b \cB_b \cup \bigcup_c B_{r_c}(x_c) \cup \bigcup_w B_{r_w}(x_w)$ such that 
\begin{enumerate}
\item[(a)] $\cA\subseteq B_{2r}(x)$ is a $\delta$-annular region with $\int_{\cL^\perp_q\cap \cA} |F_A|^2<\epsilon'$ for $q\in \cL\cap B_r(x)\setminus \bigcup_c B_{r_c}(x_c)$.
\item[(b)] $\cB_b\subseteq B_{\delta^{-1}r_{b}}(x_{b})$ are $\delta$-bubble regions.
\item[(c)] For $q\in\cL$ with $\cL^\perp_q\cap \Big(\bigcup_c B_{r_{c}}(x_{c})\cup \bigcup_s B_{r_s}(x_s)\Big)=\emptyset$ and $\cB_q\equiv \cL^\perp_q\cap \bigcup_{b} \cB_{b}$, there exists $\{x'_{w}\}\subseteq \{x_w\}$ with $B_{r'_w}(x'_w)\cap \cL^\perp_q\neq \emptyset$ such that $\big|\theta_r(q) - \omega_{n-4}\int_{\cB_q}|F_A|^2 - \theta_{r_{w}}(x'_{w})\big|<\epsilon'$.
\item[(w)] $B_{r}(x_{w})$ is $\delta$-weakly flat wrt $\cL$ for $4^{-1}\delta r_w\leq r\leq 4r_w$ and satisfies $\bar\theta_{r_{w}}(x_{w})\leq \bar\theta_r(x)-\epsilon_{n,k}$.
\item[(s)] $\sum_b r_{b}^{n-4}+\sum_w r_{w}^{n-4}<C(n,k,\Lambda) r^{n-4}$ and $\sum_c r_c^{n-4}<\epsilon' r^{n-4}$.
\end{enumerate}
\vspace{.25cm}

To prove the claim will require an application of Theorem \ref{t:annular_existence} and Theorem \ref{t:bubble_existence}.  Indeed, let us first apply Theorem \ref{t:annular_existence} in order to build an annular region $\cA= B_{r}(x)\setminus \overline B_{r_x}(\cC)$.  Using the estimates on $\cC^c$ we can cover
\begin{align}
B_{r}(x)\setminus \bigcup_s B_{r_{s}}(x_s)\subseteq \cA \cup \bigcup_{b'} B_{r_{b'}}(x_{b'}) \cup \bigcup_c B_{r_{c}}(x_{c})\, , 	
\end{align}
where $B_{r_{b'}}(x_{b'})$ are $\delta$ weakly flat but $B_s(x_{b'})$ is not $\delta$-weakly flat for some $\delta^4 r_{b'}\leq s\leq r_{b'}$.  We also have the estimates $\sum r_{b'}^{n-4}<C(n,k,\Lambda) r^{n-4}$ and $\sum r_{c}^{n-4}<\epsilon' r^{n-4}$.  To each $b'$-ball we apply Theorem \ref{t:bubble_existence} in order to cover
\begin{align}
B_{r_{b'}} \subseteq \cB_{b}\cup \bigcup_w B_{r_{b,w}}(x_{b,w})\, ,	
\end{align}
where with $r_{b}\equiv \delta r_{b'}$ we have that $\cB_{b}\subseteq B_{\delta^{-1}r_{b}}(x_{b})$ is a $\delta$-bubble region, and $B_{s}(x_{b,w})$ are $\delta$-weakly flat for $\delta r_{b,w}<s<r_{b,w}$ with $\bar \theta_{r_{b,w}}(x_{b,w}) < \bar \theta_{\delta^{-1} r_{b}}(x_{b})-\epsilon_{n,k}<\bar\theta_r(x)-\epsilon_{n,k}$ and $\sum_w r_{b,w}^{n-4}\leq C(n,k,\Lambda) r_{b}^{n-4}$.  By taking a union over all $b'$ we obtain the covering
\begin{align}
B_{r}(x)\setminus \bigcup_s B_{r_s}(x_s)\subseteq \cA \cup \bigcup_{b} \cB_b \cup \bigcup_w B_{r_{w}}(x_{w}) \cup \bigcup_e B_{r_{e}}(x_{e})\, ,
\end{align}
which satisfy the estimates $\sum r_{b}^{n-4}+\sum_w r_{w}^{n-4}\leq C(n,k,\Lambda) r^{n-4}$ and $\sum r_{c}^{n-4}<\epsilon' r^{n-4}$.  Condition $(a)$ of the claim follows from the definition of $\cC^c$ in Theorem \ref{t:annular_existence}, while condition $(c)$ follows by combining this with $(2)$ of Theorem \ref{t:bubble_existence}, and thus we have finished the proof of the Claim. $\square$ \\

With the Claim in hand, we are ready to finish the construction of the inductive covering of \eqref{e:quant_bubbletree:inductive_covering}, which will itself finish the proof.  Note first that for $i=0$ we may take the trivial covering $B_1(p)=B_{r_{w,0}}(x_{w,0})$, where $x_{w,0}\equiv p$ and $r_{w,0}\equiv 1$.  Thus we have the base step of the inductive construction.  Now having built the covering at stage $i$, let us see how to build the covering at stage $i+1$.  More precisely, observe that for each ball $B_{r_{w,i}}(x_{w,i})$ we may apply the Claim.  If we do this to each $w$-ball then we arrive at a new covering

\begin{align}
B_1(p)\setminus \bigcup_s B_{r_s}(x_s)\subseteq \bigcup_{a}\big(\cA_{a,i+1}\cap B_{r_{a,i}}(x_{a,i+1})\big)\cup\bigcup_{b} \cB_{b,i+1}\cup \bigcup_{c} B_{r_{c,i+1}}(x_{c,i+1})\cup\bigcup_{w} B_{r_{w,i+1}}(x_{w,i+1})\, ,
\end{align}
where $\bar \theta_{r_{w,i+1}}(x_{w,i+1})<\bar \theta_{r_{w,i}}(x_{w,i})-\epsilon_{n,k}<E-\epsilon_{n,k}(i+1)$ and we have the estimates
\begin{align}
&\sum_a r_{a,i+1}^{n-4}+\sum_b r_{b,i+1}^{n-4}+\sum_w r_{w,i+1}^{n-4}\leq C(n,k,\Lambda,i)+C(n,k,\Lambda)\sum_w r_{w,i}^{n-4}\leq C(n,k,\Lambda,i+1)\, ,\notag\\
&\sum_c r_{c,i+1}^{n-4}\leq C(n,k,\Lambda,i)\epsilon'+\epsilon'\sum_w r_{w,i}^{n-4}\leq C(n,k,\Lambda,i+1)\epsilon'\, .
\end{align}
Conditions $(a)$, $(b)$, $(w)$, and now $(s)$ are all clearly satisfied by the new covering, while $(c)$ follows from the $(i)$-inductive hypothesis of $(c)$ combined with $(c)$ of the Claim.  This finishes the proof of the inductive covering, and hence of Theorem \ref{t:quantitative_bubbletree} and Theorem \ref{t:quantitative_eq}. $\square$
\vspace{.5cm}

\section{Proof of Energy Identity}\label{s:energy_identity}

In this section we use the quantitative energy identity from Theorem \ref{t:quantitative_eq} in order to finish the proof of the energy identity itself.  Thus recall our setup that $A_i\to A$ with $|F_{A_i}|^2dv_{g_i}\to |F_A|^2dv_g + \nu$, where $\nu=e(x)\lambda^{n-4}_S$ is the $n-4$ rectifiable defect measure.  For each $\epsilon,\delta>0$ let us consider the subset $E_{\epsilon,\delta}\subseteq \text{supp}[\nu]$ defined by $x\in E_{\epsilon,\delta}$ if there exists a $n-4$ plane $\cL_x$ through $x$ and points $q_i\to x$ such that the following hold:
\begin{enumerate}
\item $\exists$ $\delta$-bubbles $\cB_{q_i,\mathbf{b}}\subseteq B_{\delta^{-1}r_{q_i,b}}(x_{q_i,b})$ with $x_{q_i,b}\in \cL^\perp_{q_i}$
\item  $\#\{\cB_{q_i,\mathbf b}\}\leq N(n,\Lambda)$.
\item If $\cB_{q_i}\equiv \bigcup \cL^\perp_{q_i}\cap \cB_{q_i,\mathbf b}$ then $\big|\int_{\cB_{q_i}}|F_{A_i}|^2 - e(x)\big|<\epsilon$.
\end{enumerate}

Our goal is to show that for each $\epsilon>0$ there exists $\delta_\epsilon>0$ such that $E_{\epsilon,\delta}$ is a set of full measure in $\text{supp}[\nu]$.  Let us first observe that if this is the case then we have finished the proof of the energy identity itself.  Indeed, consider the collection of points $E_0\equiv \bigcap_{\epsilon>0}E_{\epsilon,\delta_\epsilon}$, which is itself a set of full measure.  Then for such a point $x\in E_0$ we can find a sequence $q_i\to x$ with $\big|\int_{\cB_{q_i}}|F_A|^2 - e(x)\big|\to 0$.  Since $\#\{\cB_{q_i,b}\}\leq N$ we can apply Theorem \ref{t:bubble_region}.3 in order to find points $c_{i,b}\in \cB_{q_i}$ and radii $s_{i,b}>0$ such that for all $\eta>0$ and $R\geq R(n,k,\Lambda,\eta)$ we have
\begin{align}
	\big|\int_{\cB_{q_i}}|F_{A_i}|^2 - \int_{\cB_{q_i}\cap \bigcup B_{Rs_{i,b}}(x_{i,b})}|F_{A_i}|^2\big|<\eta\, ,
\end{align}
so that
\begin{align}
	\big|e(x) - \int_{\cB_{q_i}\cap \bigcup B_{Rs_{i,b}}(x_{i,b})}|F_{A_i}|^2\big|<\eta\, .
\end{align}
Indeed, the pointed limits $s_{i,b}^{-1}A_i\to B_b\in \cB[x]$ are then bubbles at $x\in E_0$ such that the energy identity
\begin{align}
e(x) = \sum_b \int_{\cL^\perp} |F_{B_b}|^2\, ,
\end{align}
holds.  Since $E_0$ is a set of full measure this finishes the proof of the Theorem.\\

Thus we need to show that each $E_{\epsilon,\delta}$ is a set of full measure.  To accomplish this let us remark that by the definition of $\nu$ being rectifiable we have that a.e. $x\in \text{supp}\{\nu\}$ is such that the unique tangent measure at $x$ is $e(x) \lambda^{n-4}_{\cL}$, for some $n-4$ plane $\cL$.  In particular, for any $\delta'>0$ and all $0<r<r(x,\delta')$ fixed sufficiently small that with $i$ sufficiently large we have
\begin{align}\label{e:energy_identity:1}
&\big|r^{4-n}\int_{B_r(x)}|F_{A_i}|^2 - \omega_{n-4}\,e(x)\big|<\delta'\, ,	\notag\\
&\text{$B_{4r}(x)$ is $\delta'$-weakly flat}.
\end{align}

Let us choose $\delta'>0$ sufficiently small that we can apply Theorem \ref{t:quantitative_eq} with $\frac{1}{2}\epsilon$ for some $\delta\leq \delta(n,k,\Lambda,\epsilon)$.  Let us consider the sets $\cG_{\epsilon,i}\subseteq \cL\cap B_r(x)$ coming from the Theorem and consider their Hausdorff limit $\cG_{\epsilon,i}\to \cG_{\epsilon}\subseteq \cL\cap B_r(x)$.  Using \eqref{e:energy_identity:1} above and the content of Theorem \ref{t:quantitative_eq} it is immediate that $\cG_\epsilon \subseteq E_{\epsilon,\delta}$.  Since $0<r<r(x,\delta')$ was arbitrary, we have in particular that $x$ is a $\epsilon$-density point of $E_{\epsilon,\delta}$.  Since $x\in \text{supp}[\nu]$ was a set of full measure, we have that $E_{\epsilon,\delta}\subseteq \text{supp}[\nu]$ is a set of full measure as well.  This completes the proof. $\square$\\

\section{Annulus/Bubble Decomposition}\label{s:bubble_decomp}

In this section we introduce one last quantitative covering result, which we call the annulus/bubble decomposition.  The decomposition will split a ball into two pieces, one of which are bubble regions with uniformly bounded curvature, and the other are annulus regions.  The most important aspect of this decomposition for us will be the effective $n-4$ content estimates that come with the covering, which will be used quite crucially in the proof of our global estimates on the hessian.  Our main result in this subsection is the following:\\

\begin{theorem}[Annulus/Bubble Decomposition]\label{t:bubble_decomposition}
Let $A$ be a stationary Yang-Mills connection satisfying \eqref{e:YM_assumptions2} and $\fint_{B_2} |F_A|^2 \leq \Lambda$.   Then for each $\delta>0$ we can write
	\begin{align}
		&B_1(p)\subseteq \bigcup_a \big(\cA_a\cap B_{r_a}\big) \cup \bigcup_b  \cB_b \, ,
	\end{align}
	such that
	\begin{enumerate}
		\item[(a)] $\cA_a\subseteq B_{2r_a}(x_a)$ are $\delta$-annular regions.
		\item[(b)] $\cB_b\subseteq B_{\delta^{-1}r_b}(x_b)$ are $\delta$-bubble regions.
		\item[(s)] $\sum_a r_a^{n-4} + \sum_b r_b^{n-4}\leq C(n,k,K,\Lambda,\delta)$.
	\end{enumerate}
\end{theorem}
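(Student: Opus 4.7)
The plan is to combine the weakly-flat dichotomy of Theorem \ref{t:weakly_flat_decomp} with the quantitative bubble-tree decomposition of Theorem \ref{t:quantitative_bubbletree} using two nested inductions. The outer induction is on the energy level $\overline\theta_1(p) \leq \Lambda$ with decrements of $\eta = \eta(n,k,K,\Lambda,\delta)$ supplied by Theorem \ref{t:weakly_flat_decomp}; the inner iteration eliminates residual small-content balls through a geometric-series argument.

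First I would apply Theorem \ref{t:weakly_flat_decomp} to $B_4(p)$ with parameter $\delta' = \delta'(n,k,\Lambda,\delta)$ chosen small enough that Theorem \ref{t:quantitative_bubbletree} applies. In the weakly-flat case, Theorem \ref{t:quantitative_bubbletree} yields a covering $B_1(p) \subseteq \bigcup \cA_a \cup \bigcup \cB_b \cup \bigcup B_{r_c}(x_c)$ with $\sum_{a,b} r^{n-4} \leq C(n,k,\Lambda)$ and $\sum_c r_c^{n-4} < \delta'$. In the non-weakly-flat case, one obtains small-content $c$-balls together with $d$-balls of strictly smaller energy $\overline\theta_{r_d}(x_d) \leq \overline\theta_1(p) - \eta$ and $\sum_d r_d^{n-4} \leq C(n,k,K,\Lambda,\delta)$. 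Each $d$-ball is handled by the induction hypothesis at the lower energy level, and after at most $\lfloor \Lambda/\eta \rfloor + 1$ energy steps one reaches balls with $\overline\theta < \epsilon_{n,k}$, which are regular by $\epsilon$-regularity and trivially covered.

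Next I would recurse on the residual $c$-balls. The essential quantitative point is that each iteration multiplies the total residual $c$-content by a factor $\leq \delta'$ (from either the bubble tree or the dichotomy), so after $k$ levels the residual $c$-content is at most $(\delta')^k$. The annular/bubble content produced at level $k$ is bounded by a constant multiple of the level-$(k-1)$ $c$-content, giving a geometric series $C \sum_{k \geq 0} (\delta')^k < \infty$. Combined with the outer energy induction this produces the claimed bound $C(n,k,K,\Lambda,\delta)$.

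The hard part will be verifying set-theoretic coverage of $B_1(p)$ — not merely $n-4$ content — in the infinite iterative limit. A point $x$ not absorbed at any finite stage must lie in nested $c$-balls of radii $r_k \to 0$. For points in the smooth region of the connection, once $r_k$ falls below $r_A(x)$ the corresponding $c$-ball is $\delta'$-flat and hence $\delta'$-weakly flat, so the bubble tree absorbs $x$ into an annular or bubble region. For singular points, coverage follows from the maximality properties of annular regions built via Theorem \ref{t:annular_existence} and the Ahlfors regularity of Theorem \ref{t:annular_region}.1: the singular-center sets $\cC$ of the annular regions densely approximate the $n-4$ singular stratum, and each singular point is covered by an annular region whose center set includes a neighborhood of that point. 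Ensuring that the iteration actually terminates at every individual point (in the sense that some annular or bubble region eventually contains it), and not merely generically, is the principal technical subtlety, requiring careful choice of parameters and dyadic-scale bookkeeping between the outer energy induction and the inner geometric iteration.
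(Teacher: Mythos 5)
Your overall strategy matches the paper's: an inner geometric iteration alternating Theorem \ref{t:quantitative_bubbletree} and Theorem \ref{t:weakly_flat_decomp} to drive down residual $c$-ball content by a factor at each stage, wrapped inside an outer induction on energy with decrement $\eta$, terminating after $O(\Lambda/\eta)$ steps since no ball can have negative energy.

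Where you part ways with the paper is in the termination discussion, which you flag as ``the principal technical subtlety'' and propose to handle with pointwise dyadic bookkeeping plus a separate argument for ``singular points'' via maximality of annular regions and Ahlfors regularity. This is more complicated than what is needed, and the singular-point branch is not well-founded: the theorem as stated concerns a \emph{smooth} stationary connection, and the paper resolves termination in one line using exactly that. Since $A$ is smooth on the compact set $\overline{B_1}$, the regularity scale $r_A$ is bounded below by some $r_0>0$ there; once $\sup_c r_{c,i}$ drops below $c(\delta)r_0$, every remaining $c$-ball is uniformly $\epsilon$-flat and is therefore a trivial $\delta$-bubble region with $\mathbf{b}=\emptyset$. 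This is a global, uniform bound, so the inner iteration terminates at a finite (if uncontrolled) stage $i$ with no need for pointwise absorption arguments. Your appeal to annular-region maximality and Theorem \ref{t:annular_region}.1 to cover ``singular points'' is not what happens here: for a genuinely singular $A$ (the \cite{TaoTian_YM}/\cite{RiPe2} setting), the paper does not claim the iteration terminates pointwise at singular points; it instead Hausdorff-limits the $c$-balls and adds an extra closed set $\cS$ of $n-4$ measure zero to the covering, as noted in the remark following the theorem. In short, the decomposition machinery is right, but you should replace the hard termination argument by the compactness-plus-smoothness observation, and in the singular setting accept an extra measure-zero exceptional set rather than trying to force pointwise coverage.
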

\begin{remark}
If the connection $A$ is singular in the sense of \cite{TaoTian_YM} or \cite{RiPe2} then one must allow an extra piece to this decomposition, so that $B_1(p)\subseteq \bigcup_a \big(\cA_a\cap B_{r_a}\big) \cup \bigcup_b  \cB_b\cup \cS$, where $\cS$ is a closed set with $n-4$ measure zero.  We shall point out in the proof where this happens.
\end{remark}

\vspace{.25cm}

Note several major differences between the annulus/bubble and the quantitative bubble tree decomposition.  On the positive side, the original ball $B_1$ does not need to be weakly flat, and	 there are no $c$-balls in the covering.  Therefore every point lies in either an annular region or a bubble region.  On the negative side, the annular and bubble regions are not with respect to some fixed $\cL$.  This decomposition will be the more appropriate one in order to prove the $L^1$ hessian estimate, while as we have seen the quantitative bubble tree decomposition is the appropriate one to prove the energy identity.

\subsection{Proof of Theorem \ref{t:bubble_decomposition}}

The proof of Theorem \ref{t:bubble_decomposition} is by recursively applying the quantitative bubble tree decomposition of Theorem \ref{t:quantitative_bubbletree} with the weakly flat decomposition of Theorem \ref{t:weakly_flat_decomp}.  Let us begin with the following claim:\\

{\bf Claim 1:} Let $A$ be a stationary Yang-Mills connection satisfying \eqref{e:YM_assumptions2} and $\fint_{B_4} |F_A|^2 \leq \Lambda$.  Then for each $\delta>0$ we can write
	\begin{align}
		&B_1(p)\subseteq \bigcup_a \big(\cA_a\cap B_{r_a}\big) \cup \bigcup_b  \cB_b \cup \bigcup_d B_{r_d}(x_d) \, ,
	\end{align}
	such that
	\begin{enumerate}
		\item[(a)] $\cA_a\subseteq B_{2r_a}(x_a)$ are $\delta$-annular regions.
		\item[(b)] $\cB_b\subseteq B_{\delta^{-1}r_b}(x_b)$ are $\delta$-bubble regions.
		\item[(d)] $B_{r_{d}}(x_{d})$ satisfy $\bar\theta_{r_{d}}(x_{d})\leq \bar\theta_1(x)-\eta(n,k,K,\Lambda,\delta)$.
		\item[(s)] $\sum_a r_a^{n-4} + \sum_b r_b^{n-4}+\sum_d r_d^{n-4}\leq C(n,k,\Lambda,\delta)$.
	\end{enumerate}
\vspace{.25cm}

To prove the Claim let us pick $\delta'(n,k,\Lambda,\delta)$ so that the quantitative bubble tree decomposition of Theorem \ref{t:quantitative_bubbletree} holds with $\epsilon= 10^{-6n}\omega_n$.  We will produce a sequence of coverings of the form
	\begin{align}
		&B_1(p)\subseteq \bigcup_a \big(\cA_{a,i}\cap B_{r_{b,i}}\big) \cup \bigcup_b  \cB_{b,i} \cup \bigcup_c B_{r_{c,i}}(x_{c,i}) \cup \bigcup_d B_{r_{d,i}}(x_{d,i}) \, ,
	\end{align}
such that in addition to $(a)$, $(b)$, and $(d)$ holding we will also have the estimates
\begin{enumerate}
\item[(c-i)] $\sum_c r_{c,i}^{n-4}\leq \epsilon^i$ .
\item[(s-i)] $\sum_a r_{a,i}^{n-4} + \sum_b r_{b,i}^{n-4}+\sum_d r_{d,i}^{n-4}\leq C(n,k,\Lambda,\delta)\sum_{j=0}^{i-1}\epsilon^j$ .
\end{enumerate}

Let us first observe that if we can build this sequence of coverings then we have proved the Claim.  Indeed, this sequence differs from our desired covering in the claim only by the existence of $c$-balls.  However, by the content estimate we have that $\sup_c r_{c,i}\to 0$ as $i\to \infty$.  In particular, for $i$ sufficiently (but uncontrollably) large we have that any $c$-ball must define a smooth bubble region\footnote{This is not true if $A$ is not smooth.  However, the estimate $(c-i)$ allows one to hausdorff limit the $c$-balls $\{B_{r_{c,i}}(x_{c,i})\}$ to a closed $n-4$ measure zero set $\cS$, which is the additional piece of the covering in the singular case.}, and therefore we can take the $c$-balls to be empty in the covering for $i$ sufficiently large, which produces the covering of our claim.

Thus let us now focus on building this sequence.  Note that for $i=0$ we can let the covering be defined by the single ball $B_1(p)\equiv B_{r_c}(x_c)$.  Therefore our goal will be to produce the covering at the $i+1$ stage given that we have produced the covering at stage $i$.

To accomplish this let us focus on each $c$ ball $B_{r_{c,i}}(x_{c,i})$ in the covering.  Let us now consider two options, either the ball $B_{4r_{c,i}}$ is $\delta'$-weakly flat or it is not.  In the first case we may apply the quantitative bubble tree decomposition of Theorem \ref{t:quantitative_bubbletree} in order to produce the covering:
\begin{align}
B_{r_{c,i}}(x_{c,i})\subseteq \bigcup_a \big(\cA_{c,i,a'}\cap B_{r_{c,i,a'}}\big) \cup \bigcup_b  \cB_{c,i,b'} \cup \bigcup_c B_{r_{c,i,c'}}(x_{c,i,c'}) \, .
\end{align}
which satisfy the estimates $\sum_{c'} r_{c,i,c'}^{n-4}\leq \epsilon r_{c,i}^{n-4}$ and $\sum_{a'} r_{c,i,a'}^{n-4} + \sum_{b'} r_{c,i,b'}^{n-4}\leq C(n,k,\Lambda,\delta)r_{c,i}^{n-4}$.  On the other hand, if $B_{4r_{c,i}}$ is not $\delta'$-weakly flat then we may apply Theorem \ref{t:weakly_flat_decomp} in order to write
\begin{align}
B_{r_{c,i}}(x_{c,i})\subseteq \bigcup_c B_{r_{c,i,c'}}(x_{c,i,c'}) \cup \bigcup_d B_{r_{c,i,d'}}(x_{c,i,d'}) \, .	
\end{align}
with the estimates $\sum_{c'} r_{c,i,c'}^{n-4}\leq \epsilon r_{c,i}^{n-4}$ and $\sum_{d'} r_{c,i,d'}^{n-4}\leq C(n,k,\Lambda,\delta)r_{c,i}^{n-4}$.  If we take the union of all these coverings of every $c$-ball, and collect different pieces together, we arrive at the $i+1$ covering 
\begin{align}
		&B_1(p)\subseteq \bigcup_a \big(\cA_{a,i+1}\cap B_{r_{b,i+1}}\big) \cup \bigcup_b  \cB_{b,i+1} \cup \bigcup_c B_{r_{c,i}}(x_{c,i+1}) \cup \bigcup_d B_{r_{d,i+1}}(x_{d,i+1}) \, ,
\end{align}
with the desired estimates
\begin{align}
&\sum_c r_{c,i+1}^{n-4} \leq \epsilon \sum_c r_{c,i}^{n-4}\leq \epsilon^{i+1}\, ,\notag\\
&\sum_a r_{a,i+1}^{n-4} + \sum_b r_{b,i+1}^{n-4}+\sum_d r_{d,i+1}^{n-4}\leq \sum_a r_{a,i}^{n-4} + \sum_b r_{b,i}^{n-4}+\sum_d r_{d,i}^{n-4} + C(n,k,K,\Lambda)\sum_c r_{c,i}^{n-4}\notag\\
&\leq  C(n,k,\Lambda,\delta)\sum_{j=0}^{i-1}\epsilon^j	+C \epsilon^i = C\sum_{j=0}^{i}\epsilon^j\, ,
\end{align}
which therefore finishes the inductive step of the construction and hence the Claim. $\square$ \\

With the Claim in hand let us now return to finish the proof of the Theorem.  Indeed, we will see that the proof of the Theorem is just a repeated application of the Claim.  More precisely, let us produce a sequence of coverings 
	\begin{align}
		&B_1(p)\subseteq \bigcup_a \big(\cA_{a,i}\cap B_{r_{b,i}}\big) \cup \bigcup_b  \cB_{b,i} \cup \bigcup_d B_{r_{d,i}}(x_{d,i}) \, ,
	\end{align}
which in addition to satisfying $(a)$ and $(b)$ will satisfy the conditions:
\begin{enumerate}
	\item[(d-i)] $B_{r_{d}}(x_{d})$ satisfy $\bar\theta_{r_{d}}(x_{d})\leq \bar\theta_1(x)-\eta(n,k,K,\Lambda,\delta)\, i$.
	\item[(s-i)] $\sum_a r_{a,i}^{n-4} + \sum_b r_{b,i}^{n-4}+\sum_d r_{d,i}^{n-4}\leq C(n,k,\Lambda,\delta,i)$.
\end{enumerate}

Let us first observe that once we have proved the existence of this sequence of coverings then we are done.  In fact, for $i>\eta^{-1}\Lambda$ there cannot be any $d$-balls in the covering as any such ball would have negative energy.  Therefore for such an $i$ we have produced the desired covering of the Theorem.\\

Thus let us concentrate on proving the existence of this sequence of coverings.  To produce the covering for $i=1$ we simply apply the Claim. We now construct the covering at the $i+1$ stage inductively, using that we have already built the covering at the $i^{th}$ stage.  To accomplish this let us focus on each $d$-ball $B_{r_{d,i}}(x_{d,i})$ in the covering and apply the Claim to this ball.  Taking a union produces the $i+1$ covering.  Hence, we have finished the proof of Theorem \ref{t:bubble_decomposition}.

\vspace{.5cm}

\section{Proof of Theorem \ref{t:main_L1_hessian}}

In this section we put together the annulus/bubble decomposition of Theorem \ref{t:bubble_decomposition} with the annulus structure of Theorem \ref{t:annular_region} and the bubble structure of Theorem \ref{t:bubble_region} in order to complete the proof of the $L^1$ hessian estimate on the curvature.  Indeed, let us choose $\delta(n,k,\Lambda)$ sufficiently small so that Theorem \ref{t:annular_region} holds for $\epsilon_n=10^{-n}$.  Then we can estimate our $L^1$ norm of the hessian by
\begin{align}\label{e:L1_hess:1}
\int_{B_1(p)}|\nabla^2 F| \leq \sum_a \int_{\cA_a\cap B_{r_a}}|\nabla^2 F|+\sum_b \int_{\cB_b}|\nabla^2 F|\, .	
\end{align}

On the $\delta$-bubble regions we have by Theorem \ref{t:bubble_region} that
\begin{align}
\int_{\cB_b}|\nabla^2 F|\leq C(n,k,\Lambda,\delta)\,r_b^{n-4}\, .
\end{align}

On the other hand, by Theorem \ref{t:annular_region} we have on the annular regions $\cA_a$ the scale invariant estimate
\begin{align}
	\int_{\cA_a\cap B_{r_a}}|\nabla^2 F|\leq \epsilon_n\,r_a^{n-4}\, .
\end{align}
Putting these together with \eqref{e:L1_hess:1} we obtain
\begin{align}
	\int_{B_1(p)}|\nabla^2 F| \leq \epsilon_n\sum_a r_a^{n-4}+C\delta\sum_b r_b^{n-4}\leq C(n,k,\Lambda)\Big(\sum_a r_a^{n-4}+\sum_b r_b^{n-4}\Big)\leq C(n,k,\Lambda)\, ,
\end{align}
where the last inequality is due to the $n-4$ content estimate of Theorem \ref{t:bubble_decomposition}.  This completes the proof.\footnote{If the connection $A$ is singular in the sense of \cite{TaoTian_YM} or \cite{RiPe2} one must also show $\nabla^2 f$ is measurable as a distribution.  This may be done using that the singular set has $n-4$ measure zero together with the monotonicity formula.} $\square$\\

\bibliographystyle{aomalpha}
\bibliography{nv_ym}

\end{document}